\newcommand{\rednote}[1]{}
\newcommand{\redinote}[1]{}
\newcommand{\blueinote}[1]{}
\newcommand{\greeninote}[1]{}
\renewcommand*{\backref}[1]{}
\renewcommand*{\backrefalt}[4]{%
  \ifcase #1 %
    \relax
  \or
    {\small [cited on p.~#2]}%
  \else
    {\small [cited on pp.~#2]}%
  \fi%
}
\patchcmd{\@citex}{,}{;}{}{}
\renewcommand{\footnoterule}{%
  \kern -3pt
  \hrule width \textwidth height 0.4pt
  \kern 2.6pt
}
\newenvironment{itemize2}%
{\begin{itemize}

\setlength{\itemsep}{1pt}
\setlength{\parskip}{0pt}
\setlength{\parsep}{0pt}}%
{\end{itemize}}
\numberwithin{equation}{section}
\numberwithin{figure}{section}
\numberwithin{table}{section}
\newtheoremstyle{italicised}
        {0.5em}{0.5em}  
        {\itshape}  
        {}  
        {\bfseries}  
        {}  
        {1ex}  
        {}  
\theoremstyle{italicised}
\newtheorem{lemma}{Lemma}[section]
\newtheorem{proposition}[lemma]{Proposition}
\newtheorem{theorem}[lemma]{Theorem}
\newtheorem{corollary}[lemma]{Corollary}
\newtheorem{fact}[lemma]{Fact}
\newtheorem*{claim}{Claim}
\newtheorem{atheorem}{Theorem}
\newtheorem{acorollary}[atheorem]{Corollary}
\newtheorem*{theorem*}{Theorem}
\newtheoremstyle{upright}
        {0.5em}{0.5em}  
        {\upshape}  
        {}  
        {\bfseries}  
        {}  
        {1ex}  
        {}  
\theoremstyle{upright}
\newtheorem{df}[lemma]{Definition}
\newtheorem{remark}[lemma]{Remark}
\newtheoremstyle{italicised-restate}
        {0.5em}{0.5em}  
        {\itshape}  
        {}  
        {\bfseries}  
        {}  
        {1ex}  
        {\thmname{#1}\thmnote{ \bfseries #3}}  
\theoremstyle{italicised-restate}
\newtheorem{rthm}{Theorem}
\renewcommand*{\paragraph}{\@startsection{paragraph}{4}{\z@}%
  {2.5ex \@plus 1ex \@minus 0.2ex}%
  {-1.5ex \@plus -0.2ex}%
  {\normalfont\normalsize\bfseries}}
\renewcommand\r[1]{\rho_{#1}}			
\font\maljapanese=dmjhira at 1.75ex 
\newcommand\ro[1]{\textrm{\maljapanese\char"4D}_{#1}}			
\newcommand\ssnl[1]{\mathfrak{s}^{#1}}				
\newcommand\ssl[1]{\mathscr{S}^{#1}}						
\def\q{\mathbf{q}}
\newcommand\W[1]{C_{#1}^\delta(M;\theta)}                      
\newcommand\Wo[1]{C_{#1}(M;\theta)}                      
\newcommand\val[2]{(#2)_{#1}}                         
\def\Spec{\operatorname{Spec}}
\def\Id{\mathrm{Id}}
\def\endo{\mathrm{end}}
\newcommand{\sr}{\ensuremath{\mu}}
\newcommand{\srscan}{\ensuremath{\nu}}
\newcommand{\hconn}{\ensuremath{h\mathrm{conn}}}
\newcommand{\mbar}{\ensuremath{{\,\,\overline{\!\! M\!}\,}}}
\newcommand{\cyl}{\ensuremath{\bR^n\smallsetminus\{0\}}}
\newcommand{\RP}{\mathbb{RP}}
\renewcommand{\geq}{\geqslant}
\renewcommand{\leq}{\leqslant}
\newcommand{\cB}{\mathcal{B}}
\newcommand{\id}{\mathrm{id}}
\newcommand{\incl}[3][right]%
{%
\draw[<-,>=#1 hook] #2 to ($ #2!0.5!#3 $);
\draw[->,>=stealth'] ($ #2!0.5!#3 $) to #3;%
}
\newcommand{\inclusion}[5][right]%
{%
\draw[<-,>=#1 hook] #4 to ($ #4!0.5!#5 $) node[#2,font=\small]{#3};
\draw[->,>=stealth'] ($ #4!0.5!#5 $) to #5;%
}
\newcommand\mnote[1]{}
\newcommand\mmnote[1]{}
\newcommand{\bD}{\mathbb{D}}
\newcommand{\bF}{\mathbb{F}}
\newcommand{\bN}{\mathbb{N}}
\newcommand{\bQ}{\mathbb{Q}}
\newcommand{\bR}{\mathbb{R}}
\newcommand{\bZ}{\mathbb{Z}}
\newcommand\lra{\longrightarrow}
\newcommand\End{\mathrm{End}}
\newcommand\Th{\mathrm{Th}}
\newcommand\colim{\operatorname*{colim}}
\newcommand{\map}{\mathrm{Map}}
\tikzset{>=to}
\title[Stability for configurations in closed manifolds]{On homological stability for configuration spaces on closed background manifolds}
\author{Federico Cantero}
\thanks{Both authors were funded by Michael Weiss' Humboldt professor grant. The first author was partially supported by the Spanish Ministry of Economy and Competitiveness under grants MTM2010-15831 and MTM2013-42178-P}
\email{\texttt{fcant{\_}01@uni-muenster.de}}
\address{{\normalfont Mathematisches Institut, Universit{\"a}t M{\"u}nster, Einsteinstra{\ss}e 62, 48149 M{\"u}nster, Germany}}
\author{Martin Palmer}
\email{\texttt{mpalm{\_}01@uni-muenster.de}}
\subjclass[2010]{55R80, 55P60, 55R25}
\keywords{Homological stability, configuration spaces, replication map, scanning map, closed background manifolds.}
\date{}
\begin{document}
\begin{abstract}
We introduce a new map between configuration spaces of points in a background manifold -- the \emph{replication map} -- and prove that it is a homology isomorphism in a range with certain coefficients. This is particularly of interest when the background manifold is closed, in which case the classical stabilisation map does not exist.

We then establish conditions on the manifold and on the coefficients under which homological stability holds for configuration spaces on closed manifolds. These conditions are sharp when the background manifold is a two-dimensional sphere, the classical counterexample in the field. For field coefficients this extends results of Church \cite{Church} and Randal-Williams \cite{RW-hs-for-ucs} to the case of odd characteristic, and for $p$-local coefficients it improves results of Bendersky--Miller \cite{BM}.
\end{abstract}

\maketitle

\renewcommand{\setminus}{\smallsetminus}
\newcommand{\point}{\ensuremath{\{*\}}}

\section{Introduction}
Let $M$ be a smooth, connected manifold without boundary of dimension $n$, and with Euler characteristic $\chi$, and denote by $C_k(M)$ the unordered configuration space of $k$ points in $M$:
\[
C_k(M) := \{\q\subset M\mid |\q| = k\},
\]
which is topologised as a quotient space of a subspace of $M^n$. After removing a point $*$ from $M$ one can define a map
\[
C_k(M\setminus\point)\lra C_{k+1}(M\setminus\point),
\]
called the \emph{stabilisation map}, which expands the configuration away from $*$ and adds a new point near to it. More generally, one can define such a stabilisation map $C_k(M)\to C_{k+1}(M)$ using any properly embedded ray in $M$ to bring in a point from infinity (such a ray exists if and only if $M$ is non-compact).

Let us assume from now on that the manifold is endowed with a Riemannian metric with injectivity radius bounded below by $\delta>0$. Define $C_k^\delta(M)\subset C_k(M)\times (0,\delta)$ to be the space of pairs $(\q,\epsilon)$, where $\q$ is a configuration whose points are pairwise at distance at least $2\epsilon$. The projection to $C_k(M)$ is a fibre bundle with contractible fibres, hence a homotopy equivalence. The main theorem in \cite{McDuff} concerns the \emph{scanning map}
\[\ssl{}\colon C_k^\delta(M)\lra \Gamma_c(\dot{T}M)_k\]
which takes values in the space of degree-$k$ compactly-supported sections of the fibrewise one-point compactification of $TM$ (see \S \ref{section:scanning}).

\begin{df}\label{dStableRange}
Given a properly embedded ray in $M$ and an abelian group $A$, define the function $\sr = \sr[M]\colon \bN\to \bN$ to be the pointwise maximum $f\colon \bN\to \bN$ such that the stabilisation map $C_k(M) \to C_{k+1}(M)$ induces isomorphisms on $H_*(-;A)$ in the range $*\leq f(k)$. This is called the \emph{stable range of the stabilisation map}.

Given a Riemannian metric on $M$ with injectivity radius bounded below by $\delta>0$ and an abelian group $A$, the function $\srscan = \srscan[M]\colon \bN\to \bN$ is defined to be the pointwise maximum $f\colon \bN\to\bN$ such that the scanning map $\ssl{}\colon C_k^\delta(M)\to \Gamma_c(\dot{T}M)_k$ induces isomorphisms on $H_*(-;A)$ in the range $*\leq f(k)$. This is called the \emph{stable range of the scanning map}.
\end{df}

Henceforth the term ``stable range'' will by default refer to the stable range $\srscan$ of the scanning map.

\begin{theorem*}[\cite{McDuff}] For non-compact $M$ the function $\sr[M]$ diverges and
\[
\srscan[M](k) = \min_{j\geq k} \{\sr[M](j)\}.
\]
The inequality $\srscan[M] \geq \srscan[M\smallsetminus\point]$ holds for all $M$, so the function $\srscan[M]$ diverges for all $M$.
\end{theorem*}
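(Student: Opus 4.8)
The plan is to prove the three assertions in turn. Throughout I fix the coefficient group $A$, write $H_*(-)=H_*(-;A)$, and set $M'=M\smallsetminus\point$.

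\emph{Divergence of $\sr[M]$ for non-compact $M$.} This is classical homological stability for configuration spaces of open manifolds, which I would prove by the standard highly-connected-resolution method. Fixing the properly embedded ray, one forms an augmented semi-simplicial space $X_\bullet\to C_k(M)$ whose $p$-simplices over a configuration are $(p+1)$ ordered, pairwise disjoint, properly embedded arcs, parallel near infinity to the ray, each ending at a distinct point of the configuration. The two inputs are: (i) the augmentation $|X_\bullet|\to C_k(M)$ is roughly $k/2$-connected, proved by a link/colimit argument on the complex of arcs; and (ii) $X_p\simeq C_{k-p-1}(M)$, obtained by pushing the $p+1$ marked points out to infinity along their arcs. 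The spectral sequence of the resolution together with an induction on $k$ then gives that the stabilisation map is a homology isomorphism through a range growing with $k$, so $\sr[M]$ diverges. (One may instead cite McDuff--Segal.)

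\emph{The formula $\srscan[M](k)=\min_{j\geq k}\sr[M](j)$.} I would combine three ingredients. First, for non-compact $M$ there is a homotopy-commutative square
\[
\begin{CD}
C_k^\delta(M) @>>> C_{k+1}^\delta(M) \\
@V{\ssl{}}VV @VV{\ssl{}}V \\
\Gamma_c(\dot{T}M)_k @>>> \Gamma_c(\dot{T}M)_{k+1}
\end{CD}
\]
whose top arrow is the stabilisation map and whose bottom arrow connect-sums a fixed degree-one section supported near the chosen end of $M$; the bottom arrow is a \emph{homotopy equivalence}, since for non-compact $M$ the path-components of $\Gamma_c(\dot{T}M)$ are pairwise homotopy equivalent and this map realises one such equivalence. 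Second, the McDuff--Segal group-completion theorem gives that scanning induces an isomorphism $\colim_j H_*(C_j(M))\xrightarrow{\cong}H_*(\Gamma_c(\dot{T}M)_k)$ for every $k$. Third, for open $M$ the stabilisation maps are split injective on homology. By the first two, $\ssl{}_*\colon H_*(C_k(M))\to H_*(\Gamma_c(\dot{T}M)_k)$ is identified with the canonical map $H_*(C_k(M))\to\colim_{j\geq k}H_*(C_j(M))$; by the third, this colimit system consists of injections, so the canonical map is an isomorphism in degree $d$ precisely when every stabilisation map $H_d(C_j(M))\to H_d(C_{j+1}(M))$ with $j\geq k$ is an isomorphism, i.e.\ precisely when $d\leq\min_{j\geq k}\sr[M](j)$. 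With the first assertion this re-proves that $\srscan[M]$ diverges for non-compact $M$.

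\emph{The inequality $\srscan[M]\geq\srscan[M']$, and divergence for closed $M$.} Scanning is natural for the open inclusion $M'\hookrightarrow M$, giving a homotopy-commutative square with $i\colon C_k^\delta(M')\to C_k^\delta(M)$ the inclusion of configurations avoiding $*$ and $j\colon\Gamma_c(\dot{T}M')_k\to\Gamma_c(\dot{T}M)_k$ the extension of a section by the section-at-infinity near $*$, the vertical maps being scanning. Now $C_k(M)\smallsetminus C_k(M')$ is the closed submanifold of configurations through $*$, canonically $\cong C_{k-1}(M')$ with trivial normal bundle $\bR^n$, so the cofibre of $i$ has the homology of $\Sigma^n\bigl(C_{k-1}(M')_+\bigr)$; dually, evaluation at $*$ exhibits $\Gamma_c(\dot{T}M')_k$ as the fibre of $\Gamma_c(\dot{T}M)_k\to\dot{T}M|_*\cong S^n$, whose relative Serre spectral sequence over $S^n$ collapses (for $n\geq2$; the case $n=1$ forces $M=S^1$ and is trivial). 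Thus $H_*(C_k(M))$ sits in a long exact sequence with $H_*(C_k(M'))$ and $H_{*-n}(C_{k-1}(M'))$, and $H_*(\Gamma_c(\dot{T}M)_k)$ in one with $H_*(\Gamma_c(\dot{T}M')_k)$ and $H_{*-n}(\Gamma_c(\dot{T}M')_k)$, with scanning mapping the first to the second; a local computation with the scanning construction near $*$ identifies the map on the degree-$(*-n)$ terms with the scanning map of $C_{k-1}(M')$ (post-composed with the equivalence $\Gamma_c(\dot{T}M')_{k-1}\simeq\Gamma_c(\dot{T}M')_k$). The five lemma then shows that $\ssl{}\colon C_k^\delta(M)\to\Gamma_c(\dot{T}M)_k$ is a homology isomorphism in degrees $\leq\min\{\srscan[M'](k),\,\srscan[M'](k-1)+n-1\}$, which---$\srscan[M']$ being non-decreasing---is $\geq\srscan[M'](k-1)$, and equals $\srscan[M'](k)$ once one knows that the stabilisation stable range of an open $n$-manifold grows by at most $n-1$ when $k$ increases by one. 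Since $M'$ is non-compact whenever $M$ is closed, $\srscan[M']$ diverges by the first two assertions, hence so does $\srscan[M]\geq\srscan[M']$; for non-compact $M$ it diverges directly. The main obstacle is this last step: checking that scanning genuinely intertwines the two long exact sequences with the asserted identification of the ``extra'' terms (a hands-on computation near the removed point), and then either absorbing the ``$n-1$'' slack into the bookkeeping or supplying the Lipschitz-type estimate on stable ranges of open manifolds needed to remove it.
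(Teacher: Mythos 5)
Your proposal should first be situated correctly relative to the paper: the paper does not reprove this statement (it is quoted from \cite{McDuff}); what it proves is the labelled generalisation (Theorem \ref{thm:McDufflabelssection4}, via Appendices \ref{appendixB}--\ref{appendix2}), and by a different route from yours. There, homological stability with labels comes from twisted-coefficient stability (Proposition \ref{pImprovedRangeTwisted}), and the per-$k$ scanning statement is proved directly by a group-completion-in-families argument ($\psi^{\delta,\gamma}$-models, Hesselholt's scanning equivalence, a topological flag-complex resolution, and the McDuff--Segal homology-fibration criterion), rather than by quoting the limiting statement and converting it. Your first two steps are essentially correct: stability for open $M$ by resolution (or citation), and the derivation of $\srscan[M](k)=\min_{j\geq k}\sr[M](j)$ from the identification of scanning with the map $H_*(C_k(M))\to\colim_j H_*(C_j(M))$ together with split injectivity of stabilisation is a clean and valid argument. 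Do note, however, that the input ``scanning is a homology isomorphism on the colimit'' is itself the core of the theorem being cited; you treat it as a black box, whereas the paper's Appendix \ref{appendix2} is precisely a proof of (the labelled form of) that input.

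The genuine gap is in your third step. Granting the compatibility of scanning with the two long exact sequences (which is itself not formal, since the configuration-side sequence is only a homology cofibre sequence while the section-side one comes from the evaluation fibration, and you only sketch the local identification near the removed point), the five-lemma bookkeeping gives that scanning for $C_k(M)$ is a homology isomorphism for $*\leq\min\{\srscan[M\smallsetminus\point](k),\,\srscan[M\smallsetminus\point](k-1)+n-1\}$. This does prove divergence of $\srscan[M]$, but the statement asserts the inequality $\srscan[M]\geq\srscan[M\smallsetminus\point]$, and to extract that you need the growth bound $\min_{j\geq k}\sr[M\smallsetminus\point](j)\leq\sr[M\smallsetminus\point](k-1)+n-1$, i.e.\ that the stable range cannot jump by more than $n-1$ per added point. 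You do not prove this, and it is not a known general fact for an arbitrary coefficient group $A$: the results available (including those listed in the introduction and Appendix \ref{appendixA}) are lower bounds on $\sr$, and nothing controls $\sr(k)-\sr(k-1)$ from above. It is telling that when the paper runs a closed-versus-punctured comparison of exactly this shape in the proof of Theorem \ref{thm:e}, it does not remove this loss but builds it into the range function $\lambda[M](k)=\mathrm{min}\{\srscan(k),\srscan(k-1)+n-1,\dotsc\}$ (Remark \ref{rLambda}); for the clean inequality in the quoted theorem the paper relies on McDuff's original closed-case argument (as does Appendix \ref{appendix2}, which defers that case to p.~34 of \cite{McDuff}). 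To close your argument you would need either to reproduce that argument or to supply the missing Lipschitz-type estimate.
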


No explicit lower bound for $\sr[M]$ was given in \cite{McDuff}, but the following lower bounds have since been proved:
\begin{itemize2}\label{pageone}
\item $\sr[M](k)\geq \frac{k}{2}$ if $A=\bZ$ and $\dim(M)\geq 2$, by \cite{SegalRational,RW-hs-for-ucs}.
\item $\sr[M](k)\geq k$ if $A=\bQ$ and either $\dim(M)\geq 3$ or $M$ is non-orientable, by \cite{RW-hs-for-ucs,Knudsen2014}.
\item $\sr[M](k)\geq k-1$ if $A=\bQ$ and $M$ is orientable, by \cite{Church,Knudsen2014}.
\item $\sr[M](k)\geq k$ if $A=\bZ[\frac12]$ and $\dim(M)\geq 3$, by \cite{KupersMiller2014}.
\end{itemize2}
See also Propositions \ref{pImprovedRange} and \ref{pImprovedRangeTwisted}. Further improvements to the lower bound are possible under extra hypotheses (\cite[Proposition 4.1]{Church} and \cite[Remark~4.5]{KupersMiller2014}). Some of these results can be also deduced from \cite{LM:conf, BCT:conf, FT:conf}.

McDuff's theorem says that the homology of configuration spaces $C_k(M)$ on a non-compact manifold $M$ \emph{stabilises}, i.e., is independent of $k$ in a diverging range of degrees. For closed manifolds $M$ stabilisation maps do not exist -- this leaves open the question of when the homology of configuration spaces on closed background manifolds stabilises.

\paragraph{Stability for $p$-torsion.}
Let $\dot{T}M$ denote the fibrewise one-point compactification of the tangent bundle of $M$ and let $\Gamma_c(-)$ denote the space of compactly-supported sections. By the main result in \cite{Moller:nilpotent}, for each $k\in\bZ$ the localisation of the path-component $\Gamma_c(\dot{T}M)_k$ at a prime $p$ is homotopy equivalent to the path-component $\Gamma_c(\dot{T}M_{(p)})_k$ of the space of compactly-supported sections of the fibrewise localisation of $\dot{T}M$.\footnote{To ensure that the localisation of $\Gamma_c(\dot{T}M_{(p)})_k$ exists, we need to assume here that $M$ has the homotopy type of a finite complex. However, for the purpose of proving homological stability results, we may assume this without loss of generality; see \S\ref{ss:localisation-etc}.} In \cite{BM} Bendersky and Miller proved the existence of homotopy equivalences
\begin{equation}\label{eq:933}\Gamma_c(\dot{T}M_{(p)})_k\lra \Gamma_c(\dot{T}M_{(p)})_j\end{equation}
whenever 
\begin{itemize}
\item $p\geq \frac{n+3}{2}$ and $M$ is odd-dimensional,
\item $p\geq \frac{n+3}{2}$ and $\frac{2k-\chi}{2j-\chi}$ is a unit in $\bZ_{(p)}$, 
\item $\dot{T}M$ is trivial and $\frac{2k-\chi}{2j-\chi}$ is a unit in $\bZ_{(p)}$.
\end{itemize}
Using McDuff's theorem one obtains a zigzag of $\bZ_{(p)}$-homology isomorphisms in the stable range:
\begin{equation}\label{eq:932} C_k(M)\lra \Gamma_c(\dot{T}M)_k\lra \Gamma_c(\dot{T}M)_j\longleftarrow C_j(M).\end{equation}

We will show that linearly independent pairs of sections of $TM\oplus\epsilon$ give rise to families of fibrewise homotopy equivalences of $\dot{T}M$ after localisation, and hence maps as in \eqref{eq:933} for certain $k$ and $j$, from which we are able to extend the results of Bendersky and Miller to all odd primes and under certain conditions to the prime $2$. For a number $k\in \bZ$, we denote by $\val{p}{k}$ the $p$-adic valuation of $k$, and observe that $\frac{j}{k}$ is a unit in $\bZ_{(p)}$ if and only if $\val{p}{k} = \val{p}{j}$. If $\ell$ is a collection of primes, the $\ell$-adic valuation is the sequence of all $p$-adic valuations with $p\in \ell$.

\begin{atheorem}\label{thm:a}
Let $M$ be a closed, connected, smooth manifold. If $M$ is odd-dimensional, there are zigzags of maps as in \eqref{eq:932} inducing isomorphisms in the stable range\textup{:}
\begin{align}
H_*(C_k(M);\bZ)&\cong H_*(C_{k+1}(M);\bZ) \quad\text{if}\; \dim M = 3,7 \label{eq:thmaodd-37} \\
H_*(C_k(M);\bZ)&\cong H_*(C_{k+2}(M);\bZ) \label{eq:thmaodd-Z} \\
H_*(C_k(M);\bZ[\tfrac{1}{2}])&\cong H_*(C_{k+1}(M);\bZ[\tfrac{1}{2}]). \label{eq:thmaodd-Z12}
\end{align}
If $M$ is even-dimensional with Euler characteristic $\chi$, then for each set $\ell$ of primes \textup{(}assuming $2\not\in\ell$ if $\chi$ is odd\textup{)} there are zigzags of maps 
as in \eqref{eq:932} inducing isomorphisms in the stable range\textup{:}
\begin{equation}
H_*(C_k(M);\bZ_{(\ell)})\cong H_*(C_j(M);\bZ_{(\ell)}) \quad\text{if}\; \val{\ell}{2k-\chi}=\val{\ell}{2j-\chi}.
\end{equation}
In particular there are integral homology isomorphisms between $C_k(M)$ and $C_{\chi-k}(M)$ in the stable range.
\end{atheorem}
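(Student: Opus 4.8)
The plan is to work throughout with the zigzag \eqref{eq:932}. By McDuff's theorem \cite{McDuff}, recalled above, its two outer maps are isomorphisms on $H_*(-;A)$ in the range $*\leq\srscan[M]$, and $\srscan[M]$ diverges; so it suffices, case by case, to build a zigzag between the appropriate path-components of $\Gamma_c(\dot{T}M)$ that is an isomorphism on $H_*(-;A)$ --- preferably a homotopy equivalence, so that the full zigzag is an isomorphism in the entire stable range. Since $\dot{T}M$ is the sphere bundle $S(TM\oplus\epsilon)$ of the rank-$(n+1)$ bundle $TM\oplus\epsilon$, and --- using the fibrewise localisation of \cite{Moller:nilpotent} and the finiteness reduction of \S\ref{ss:localisation-etc} --- the $p$-localisation of a component of $\Gamma_c(\dot{T}M)$ is the space of sections of the fibrewise localisation $\dot{T}M_{(p)}$, the problem reduces to constructing fibrewise self-maps of $\dot{T}M$, or of $\dot{T}M_{(p)}$, and computing their effect on $\pi_0$. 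We use the standard labelling of $\pi_0\Gamma_c(\dot{T}M)$ by $\bZ$ in which $C_k(M)$ scans into the component indexed by $2k-\chi$ when $n$ is even, and by $k$ when $n$ is odd.

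Three kinds of fibrewise self-map are needed, generalising the maps \eqref{eq:933} of \cite{BM}. \textbf{(1)} The fibrewise antipodal map $a\colon S(TM\oplus\epsilon)\to S(TM\oplus\epsilon)$ is an honest fibrewise homeomorphism of fibre-degree $(-1)^{n+1}$, and on $\pi_0$ it sends the component $2k-\chi$ to $\chi-2k=2(\chi-k)-\chi$; splicing it into the scanning zigzag gives the integral isomorphism $H_*(C_k(M);\bZ)\cong H_*(C_{\chi-k}(M);\bZ)$ in the stable range, for every closed $M$ (this is the last assertion of the theorem). \textbf{(2)} For $n$ even, a fibrewise splitting $TM\oplus\epsilon\cong\epsilon^2\oplus\eta$ yields $\dot{T}M\cong S(\epsilon^2)\ast_M S(\eta)$ (fibrewise join), with $S(\epsilon^2)\cong M\times S^1$, and the degree-$d$ self-map $z\mapsto z^d$ of the $S^1$-factor induces a fibrewise self-map $F_d$ of $\dot{T}M$ of fibre-degree $d$, acting on $\pi_0$ by multiplication by $d$ on the label $2k-\chi$. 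Since a degree-$d$ self-map of $S^n_{(p)}$ with $p\nmid d$ is a $\bZ_{(p)}$-homology equivalence of simply-connected finite-type spaces, hence a homotopy equivalence, $F_d$ becomes a fibrewise homotopy equivalence after $p$-localisation whenever $p\nmid d$. \textbf{(3)} For $n$ odd, a nowhere-zero section of $TM\oplus\epsilon$ --- which exists since $TM\oplus\epsilon$ has rank $n+1>n=\dim M$ --- equips $\dot{T}M$ with a fibrewise co-$H$-structure; the failure of the resulting co-$H$-structure on $\Gamma_c(\dot{T}M)$ to be group-like is measured, in the relevant range, by the Whitehead square $[\iota_n,\iota_n]\in\pi_{2n-1}(S^n)$, which is annihilated by $2$ for $n$ odd and vanishes exactly when $n\in\{1,3,7\}$. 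This yields a fibrewise self-equivalence of $\dot{T}M$ that shifts the $\pi_0$-label by $2$ integrally for every odd $n$, by $1$ after inverting $2$ for every odd $n$, and by $1$ integrally when $n=3,7$.

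It remains to supply the bundle-theoretic inputs and to assemble. For $n$ even the splitting $TM\oplus\epsilon\cong\epsilon^2\oplus\eta$ needed in \textbf{(2)} has a single obstruction, lying in $H^n(M;\bZ/2)$ and equal to $w_n(TM)=\chi\bmod 2$; so it exists when $\chi$ is even, and always exists after inverting $2$ (its target group $\bZ/2$ vanishes), which is exactly why the prime $2$ must be excluded when $\chi$ is odd. Granting this, the even-dimensional statement follows: if $\val{\ell}{2k-\chi}=\val{\ell}{2j-\chi}$ then either $2k-\chi=2j-\chi=0$, whence $k=j$, or else, writing $2k-\chi=gr$ and $2j-\chi=gs$ with $g=\gcd(2k-\chi,2j-\chi)$, the nonzero integers $r,s$ are coprime to every prime of $\ell$, and in the zigzag
\[\Gamma_c(\dot{T}M)_{gr}\xrightarrow{\ F_s\ }\Gamma_c(\dot{T}M)_{grs}\xleftarrow{\ F_r\ }\Gamma_c(\dot{T}M)_{gs}\]
both maps are $\bZ_{(\ell)}$-homology isomorphisms; splicing in the scanning maps yields $H_*(C_k(M);\bZ_{(\ell)})\cong H_*(C_j(M);\bZ_{(\ell)})$ in the stable range. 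When $\chi$ is even one may take $\ell$ to be all primes; the resulting integral duality is also given directly by \textbf{(1)}. The odd-dimensional isomorphisms \eqref{eq:thmaodd-37}, \eqref{eq:thmaodd-Z} and \eqref{eq:thmaodd-Z12} follow from the three shift-equivalences of \textbf{(3)}, again spliced into the scanning zigzag.

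The main obstacle is proving the assertions of the second paragraph. The delicate points are: (i) fixing the correct $\bZ$-labelling of $\pi_0\Gamma_c(\dot{T}M)$ and showing that the $F_d$-fixed component is precisely the one labelled $0$ rather than an affine translate of it --- this is what makes the clean hypothesis $\val{\ell}{2k-\chi}=\val{\ell}{2j-\chi}$ come out; (ii) carrying out the co-$H$/$H$ comparison for $n$ odd and identifying $[\iota_n,\iota_n]$ as the exact obstruction to the shift-by-one equivalence; and (iii) the orientation-twisted cohomology and the low-dimensional and prime-$2$ book-keeping, in particular checking that the special behaviour in dimensions $3$ and $7$ and at the prime $2$ is compatible with the divergence of $\srscan[M]$.
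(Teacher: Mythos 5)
Your overall strategy --- scan, fibrewise-localise via M{\o}ller, and splice in fibrewise self-maps of $\dot{T}M$ of fibre degree $d$ obtained by winding a trivial circle factor split off from $TM\oplus\epsilon$, with Dold's theorem making them equivalences when $d$ is an $\ell$-unit --- is essentially the paper's own (its Lemmas \ref{lemma:localisation} and \ref{lemma:ultimate}), and your antipodal map for $k\leftrightarrow\chi-k$ and the gcd zigzag in the even case match the paper's assembly. But there is a genuine gap at exactly the point you flag as delicate point (i): you assert, without proof, that $F_d$ acts on $\pi_0\Gamma_c(\dot{T}M)$ by multiplication by $d$ on the label $2k-\chi$. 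A priori the construction only gives $\deg\mapsto d(\deg-c)+c$, where $c$ is the degree of the section of $\dot{T}M$ singled out by the splitting, and the entire even-dimensional statement hinges on $c=\chi/2$. In the paper this is the combination of the degree formula in Lemma \ref{lemma:ultimate} (proved by computing the action on $H_n(\dot{T}M)$ with the intersection matrix of Lemma \ref{lemma:cup-product}) and Proposition \ref{prop:lifts}, where the Euler class of the complementary sphere bundle pulled back along $\sigma_0$ is computed to be $2\deg(\sigma_0)-\chi$; this simultaneously forces any section extending to a $2$-frame to have degree $\chi/2$ and shows these are the only ones that lift, with separate arguments for the localised case (where the geometric self-intersection argument is unavailable) and for non-orientable $M$ (via the orientation double cover). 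Your obstruction $w_n(TM)=\chi\bmod 2$ only addresses existence of the splitting, not its effect on components, so the hypothesis $(2k-\chi)_\ell=(2j-\chi)_\ell$ is not yet justified.

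The odd-dimensional part is also not a proof as written. The ``fibrewise co-$H$-structure whose failure to be group-like is measured by $[\iota_n,\iota_n]$'' is never converted into an explicit fibrewise self-map with a computed effect on degrees, and for a general closed $M$ one cannot just quote the Whitehead square: the paper's route for $\dim M=3,7$ trivialises $\dot{T}M$ over a disc, identifies the single obstruction to a fibre-degree-one map of evaluation degree $b$ as $b[\iota,\iota]$ via the Whitehead/EHP analysis and Adams, and then extends by the identity over $M\smallsetminus D$ --- a step that only makes sense for fibre degree $1$ and which your sketch omits (Proposition \ref{prop:unstability-odd} and Remark \ref{rmk:Recovering-thma}). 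For the shifts \eqref{eq:thmaodd-Z} and \eqref{eq:thmaodd-Z12} the paper does not use the co-$H$ idea at all: in odd dimensions every section of every degree $d$ lifts to a $2$-frame (the top obstruction is a torsion Euler class in a torsion-free $H^n$, cf.\ Proposition \ref{prop:lifts}), so Corollary \ref{coro:existence} gives endomorphisms with $[f](k)=r(k-d)+d$ for all $r,d$, and $r=-1,\,d=k+1$ yields the integral shift by $2$ while $r=2,\,d=k-1$ yields the shift by $1$ away from $2$. Your claims and ingredients ($[\iota,\iota]$ is $2$-torsion for $n$ odd and vanishes iff $n=1,3,7$) are the right ones, but the constructions realising them and the verification of their effect on components are precisely what you have deferred, so the proposal is a credible plan rather than a complete proof.
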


Observe that since these isomorphisms are induced by zigzags of maps, they also give isomorphisms between the cohomology rings of configuration spaces. In Proposition \ref{prop:unstability-odd} we show that when $M$ is an odd-dimensional sphere, this method cannot be used to improve Theorem \ref{thm:a}. In Proposition \ref{prop:unstability-even} we prove that our theorem is sharp when $M$ is an even-dimensional sphere: if $n$ is even and $k$ is in the stable range with respect to homological degree $n-1$, then 
\begin{equation}\label{eq:counterexample}
H_{n-1}(C_k(S^n);\bZ)\cong \tau H_{n-1}(\Omega^n_0S^n;\bZ)\oplus \bZ/(2k-2),
\end{equation}
where $\tau G$ is the torsion of $G$. In particular, if $j$ is also in the stable range:
\[H_{n-1}(C_k(S^n);\bZ_{(\ell)})\cong H_{n-1}(C_j(S^n);\bZ_{(\ell)})\Leftrightarrow (2k-\chi)_\ell = (2j-\chi)_\ell.\]
This generalises the computation of $H_1(C_k(S^2);\bZ)$ (which follows from the presentation of $\pi_1(C_k(S^2))$ given by \cite{FadellVan1962}).

\paragraph{Replication maps.}
Our next result involves a new map between configuration spaces, defined whenever $M$ admits a non-vanishing vector field, which induces some of the homology isomorphisms of Theorem \ref{thm:a}. This map (or rather its effect on $\pi_1$) has been considered before in the case $M=\bR^2$ in the context of the Burau representations of the classical braid groups \cite{BlanchetMarin2007}. It has also appeared in \S 7 of \cite{MartinWoodcock2003}. However, to our knowledge its homological stability properties have not previously been studied. A homomorphism $\pi_1(C_k(M)) \to \pi_1(C_{k+1}(M))$ (which is not induced by a map of spaces) was defined using a similar idea in \cite{BerrickCohenWongWu2006} (see page 283), where it was used to show that the collection $\{\pi_1(C_k(M))\}$ is a crossed simplicial group when $M$ admits a non-vanishing vector field.

This map is especially interesting when $M$ is closed, in which case it allows one to compare configuration spaces which do not admit any stabilisation map. It is also useful when $M$ is open: we will use this map in the case of open manifolds to prove Theorem \ref{thm:e}, which concerns closed manifolds.

Let $v$ be a non-vanishing vector field on $M$ of norm $1$. Define the \emph{$r$-replication map} $\r{r} = \r{r}[v]\colon C_k^\delta(M)\to C_{rk}^\delta(M)$ by adding $r-1$ points near each point of the configuration in the direction of the vector field $v$:
\[
\r{r}[v](\q=\{q_1,\dotsc,q_k\},\epsilon) = \bigl(\bigl\lbrace \mathrm{exp}(\tfrac{j\epsilon}{r} v(q_i)) \bigm| \begin{smallmatrix} i=1,\dotsc,k \\ j=0,\dotsc,r-1 \end{smallmatrix} \bigr\rbrace ,\tfrac{\varepsilon}{2r} \bigr).
\]

\begin{atheorem}\label{thm:b}
Let $r\geq 2$. If $M$ admits a non-vanishing vector field $v$ and $\ell$ is a set of primes each not dividing $r$, then the homomorphism induced by $\r{r}[v]$\textup{:}
\[H_*(C_k^\delta(M);\bZ_{(\ell)}) \lra H_*(C_{rk}^\delta(M);\bZ_{(\ell)})\] 
is an isomorphism in the stable range. If $M$ is not closed, then it is always injective.
\end{atheorem}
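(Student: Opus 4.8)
The plan is to factor the $r$-replication map through the scanning map and reduce the statement to a computation with compactly-supported sections of $\dot{T}M$, where the vector field $v$ gives a geometric handle on the relevant self-map. Concretely, I would first check that the replication map $\r{r}[v]$ is compatible (up to homotopy) with scanning: there is a fibrewise map $\dot{T}M \to \dot{T}M$, induced by the self-map of $\bR^n$ that replicates a point $r$ times along a unit vector and rescales, whose effect on each fibre $S^n = \dot{T}_xM$ is a degree-$r$ self-map of the sphere (collapsing $r$ disjoint little disks onto $S^n$). Passing to spaces of compactly-supported sections, this induces a map $\Gamma_c(\dot{T}M)_k \to \Gamma_c(\dot{T}M)_{rk}$ fitting into a commutative square with $\r{r}[v]$ and the two scanning maps $\ssl{}$. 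Since scanning is a homology isomorphism in the stable range $\srscan[M]$ by McDuff's theorem, proving Theorem B is equivalent to showing that this induced map on section spaces is a $\bZ_{(\ell)}$-homology isomorphism in the stable range (and injective in general when $M$ is open).

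Next I would analyse the self-map on section spaces. The fibrewise degree-$r$ map on $\dot{T}M$ is, after localisation at the set $\ell$ of primes \emph{not} dividing $r$, a fibrewise homotopy equivalence: on each fibre $S^n$ it is multiplication by $r$, which is invertible in $\bZ_{(\ell)}$, and the existence of the non-vanishing vector field $v$ together with a null-homotopy argument (one is free to modify the map away from a disk in each fibre using $v$ to trivialise the relevant obstruction) lets one upgrade the fibrewise $\bZ_{(\ell)}$-equivalence to a genuine fibrewise $\ell$-local homotopy equivalence of $\dot{T}M_{(\ell)}$. This is exactly the mechanism advertised just before Theorem A: ``linearly independent pairs of sections of $TM\oplus\epsilon$ give rise to families of fibrewise homotopy equivalences of $\dot{T}M$ after localisation.'' A fibrewise homotopy equivalence over $M$ induces a homotopy equivalence on spaces of compactly-supported sections, hence an isomorphism on $\bZ_{(\ell)}$-homology in all degrees; combined with the previous paragraph this gives the stable-range isomorphism.

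For the injectivity statement when $M$ is not closed, I would instead use a retraction argument: when $M$ is open it admits a properly embedded ray, hence a stabilisation map, and more to the point one can define a ``de-replication'' or forgetful construction on section spaces that is a one-sided inverse to the replication map \emph{without inverting $r$}. Geometrically, along the ray one has room to homotope the $r$-fold replicated configuration back to the original one after a bounded amount of destabilisation; on section spaces this corresponds to the fact that the degree-$r$ self-map of $S^n$, while not an equivalence integrally, admits a stable splitting off after smashing with enough copies — here the non-compactness provides the extra room (a free $\Omega^n$-factor, via the group-completion picture) needed to realise a section-level retraction. Thus $\r{r}[v]$ is split injective on homology with any coefficients in this case.

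The main obstacle I expect is the second paragraph: carefully producing the \emph{fibrewise} $\ell$-local homotopy equivalence of $\dot{T}M$ from the degree-$r$ map, i.e. showing the obstruction to inverting it fibrewise over all of $M$ (not just fibrewise over a point) vanishes after localisation, and that this is precisely where the non-vanishing vector field $v$ is used. One must track that the self-map really is the one induced by $\r{r}[v]$ under scanning — this requires a compatibility check between McDuff's scanning construction and the explicit exponential-map formula defining $\r{r}[v]$ — and then invoke Møller's theorem (cited in the excerpt) to identify the localisation of $\Gamma_c(\dot{T}M)_k$ with sections of the fibrewise localisation, so that ``fibrewise $\ell$-local equivalence'' genuinely yields an $\ell$-local equivalence of section spaces.
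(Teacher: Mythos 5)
The first two paragraphs of your proposal follow essentially the paper's route: one constructs a fibrewise map $\phi_r$ of $\dot{T}^1M$ making the square with the two scanning maps commute up to homotopy, checks that it has degree $r$ on each fibre, and then, after fibrewise localisation (M\o ller's theorem) and Dold's theorem, concludes that it is a fibrewise homotopy equivalence because $r$ is a unit in $\bZ_{(\ell)}$; this gives an equivalence $\Gamma_c(\dot{T}^1M_{(\ell)})_k\to\Gamma_c(\dot{T}^1M_{(\ell)})_{rk}$, and McDuff's theorem converts it into the stable-range isomorphism. One correction of emphasis: the vector field is not needed to ``trivialise an obstruction'' to inverting the degree-$r$ map --- Dold's theorem requires no such input once the fibre maps are $\ell$-local equivalences. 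Rather, $v$ is what allows the fibrewise map to be defined globally at all: it supplies the direction in each fibre along which the $r$ replicated points are placed (in effect a section of the fibrewise Stiefel manifold $V_2(TM\oplus\epsilon)$). The genuine work you defer --- the compatibility of replication with scanning --- is done in the paper by passing through the non-linear scanning map, composing with an expansion to repair a discontinuity of the fibrewise replication, and a short metric estimate; your sketch is consistent with this but does not supply it.

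The injectivity argument, however, has a real gap. A ``de-replication'' one-sided inverse that works without inverting $r$ cannot exist, even merely on homology: if $\rho_r[v]$ were split injective with arbitrary coefficients, it would in particular be injective on $H_1(C_k(\bR^3);\bZ)\cong\bZ/2$, but the remark following Theorem B shows that $\rho_2$ induces the \emph{zero} map there, since on $\pi_1\cong\Sigma_k$ it sends a permutation $\sigma$ to $(\sigma,\sigma)$, whose sign is a square. (Note $\bR^3$ is open, so your claim of split injectivity ``with any coefficients'' for non-closed $M$ is false.) Likewise, there are no destabilisation maps of spaces, so ``homotoping the replicated configuration back after a bounded amount of destabilisation'' is not a construction, and the degree-$r$ self-map of $S^n$ admits no integral splitting. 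The argument the paper uses is different and elementary: for open $M$ the scanning map itself is split injective on homology in all degrees (stabilisation is split injective on homology, the corresponding section-space maps are equivalences, and scanning is a homology isomorphism in the colimit with finitely generated homology); then in the commutative square the composite $\phi_r\circ\text{scan}$ is split injective on $\bZ[\tfrac{1}{r}]$-homology because $\phi_r$ is an equivalence after inverting $r$, whence $\rho_r[v]$ is split injective on $\bZ[\tfrac{1}{r}]$-homology --- but only with coefficients in which $r$ is invertible.
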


\begin{remark}
Observe that the map $\r{r}$ does not induce isomorphisms on $r$-torsion in general. For example take $M$ to be simply-connected and of dimension at least $3$. Then $\pi_1(C_k(M)) \cong \Sigma_k$ and $H_1(C_k(M))\cong \bZ/2$, given by the sign of the permutation. The map $\Sigma_k\to\Sigma_{2k}$ induced by $\r{2}$ on $\pi_1$ sends a permutation $\sigma$ to the concatenation $(\sigma,\sigma)$, whose sign is the square of the sign of $\sigma$, therefore zero. Hence the map induced on first homology by $\r{2}$ is zero. In particular this shows that $\r{2}$ cannot be homotopic to a composition of stabilisation maps.
\end{remark}

\paragraph{Configurations with labels and the intrinsic replication map}
Given a fibre bundle $\theta\colon E\to M$ with path-connected fibres, one can define the \emph{configuration space $\Wo{k}$ with labels in $\theta$} by
\[
\Wo{k} = \{ \{q_1,\dotsc,q_k\} \subset E \mid \theta(q_i)\neq \theta(q_j) \text{ for } i\neq j \}.
\]
Configuration spaces with labels admit stabilisation maps, scanning maps and replication maps (see \cite{KupersMiller2014} and Definition \ref{dConfigLabels} in this article for the stabilisation map, and Section \ref{sUpToDim} for the other two maps) which induce homology isomorphisms in a range, which we call the \emph{stable range with labels in $\theta$}.

To define the replication and the scanning map it is more convenient to use the following alternative model:
\[
\W{k} = \{ (\q,\epsilon,s) \mid (\q,\epsilon)\in C_k^\delta(M),\; s\colon B_{\epsilon/2}(\q)\to E \text{ a section of } \theta \},
\]
where $B_{\epsilon/2}(\q)$ means the (disjoint) union of the $(\epsilon/2)$-balls around $q$ for each $q\in\q$. So a point in this space consists of a configuration $\q$ with prescribed pairwise separation, together with a choice of label on a small contractible neighbourhood of each configuration point.

If $\theta\colon E \to M$ factors through the unit sphere bundle of $TM$ with a map $\varphi\colon E\to S(TM)$, then it is possible to define a new map which we call the \emph{intrinsic replication map \label{intrinsicreplication}} $\ro{r}\colon \W{k}\lra \W{rk}$. It sends the labelled configuration $(\q=\{q_1,\dotsc,q_k\},\epsilon,s\colon B_{\epsilon/2}(\q)\to E)$ to the labelled configuration
\[
\bigl(\bigl\lbrace \mathrm{exp}(\tfrac{j\epsilon}{r} \varphi s(q_i)) \bigm| \begin{smallmatrix} i=1,\dotsc,k \\ j=0,\dotsc,r-1 \end{smallmatrix} \bigr\rbrace ,\tfrac{\varepsilon}{2r}, \text{ restriction of } s \bigr).
\]
In contrast with the (extrinsic) replication map of Theorem \ref{thm:b}, this map is defined for every manifold $M$.

\begin{atheorem}\label{thm:c}
Let $r\geq 2$ and let $\ell$ be a set of primes each not dividing $r$. Then the map $\ro{r}\colon \W{k} \to \W{rk}$ induces isomorphisms on homology with $\bZ_{\ell}$-coefficients in the stable range with labels in $\theta$.
\end{atheorem}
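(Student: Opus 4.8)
The plan is to reduce Theorem~\ref{thm:c} to Theorem~\ref{thm:b} by a fibrewise-localisation / section-space argument, rather than trying to analyse $\ro{r}$ directly on the labelled configuration spaces. The key observation is that the intrinsic replication map is built from exactly the same geometric recipe as the extrinsic one in Theorem~\ref{thm:b}, except that the vector field is not globally chosen but is read off pointwise from the label via $\varphi\colon E\to S(TM)$. So morally $\ro{r}$ is a "replication map with a locally varying vector field", and one should be able to see its homological effect by comparing it, over $M$, with the genuine replication map on a space where a vector field genuinely exists.

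Concretely, I would proceed as follows. First, set up the scanning map for labelled configuration spaces $\W{k}\to\Gamma_c(\dot{T}M;\theta)_k$ (the space of compactly supported sections of the fibrewise smash of $\dot{T}M$ with the fibrewise version of $\theta$, or whatever the precise target is in \S\ref{section:scanning}); by the labelled analogue of McDuff's theorem (cited from \cite{KupersMiller2014} and developed in \S\ref{sUpToDim}) this is a homology isomorphism in the stable range with labels in $\theta$. The strategy is then to show that $\ro{r}$ is compatible, up to homotopy, with a corresponding \emph{replication map on section spaces}, and that the latter induces an isomorphism on $\bZ_\ell$-homology. Since scanning intertwines the two, and scanning is a homology iso in the stable range on both sides, a diagram chase gives the result. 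The replication map on the section space side is modelled on a fibrewise self-map of $\dot{T}M$ (smashed appropriately with the label bundle): over a point $x\in M$, the label lies in $S(T_xM)$, i.e.\ determines a direction, and the $r$-replication on the fibre $\dot{T}_xM\simeq S^n$ is the degree-$r$-type self-map "spread $r$ copies along that direction" — which is exactly the kind of fibrewise homotopy self-equivalence after localisation at primes coprime to $r$ that is used in the proof of Theorems \ref{thm:a} and \ref{thm:b}. Linear independence of a pair of sections of $TM\oplus\epsilon$ (as mentioned in the discussion before Theorem~\ref{thm:a}) is what makes such fibrewise maps into localised fibrewise equivalences; here the "second section" is supplied by the label itself together with the $\epsilon$-summand, so the construction goes through for \emph{every} $M$, with no need for a global non-vanishing vector field.

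The main obstacle, and where I expect the real work to lie, is establishing the homotopy-commutativity of the square relating $\ro{r}$ on labelled configuration spaces to the replication self-map on section spaces through scanning — i.e.\ checking that "scan, then replicate fibrewise" agrees up to homotopy with "replicate configurations, then scan". This is a geometric compatibility statement: the scanning map records, near each configuration point, a small section of $\dot{T}M$ concentrated at that point, and $\ro{r}$ replaces each point by $r$ nearby points strung out in the direction $\varphi s(q_i)$; one must check that the scanning picture of the replicated configuration is fibrewise-homotopic, compatibly and naturally in $k$, to the fibrewise-replicated scanning picture of the original. This is essentially a local computation in a Euclidean chart (where it reduces to a statement about the usual scanning map on $C_k(\bR^n)$ and the effect of stringing out points along a line), combined with a partition-of-unity / locality argument to globalise — the same sort of argument already used to prove Theorem~\ref{thm:b}, so I would aim to isolate a single "local replication lemma" and feed it into both proofs. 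Once that compatibility is in hand, the rest is: (i) identify the fibrewise self-map of $\dot{T}M\wedge_M(\text{label bundle})$ induced by $r$-replication; (ii) show it is a $\bZ_\ell$-homology (indeed fibrewise $\bZ_\ell$-local homotopy) equivalence for $\ell$ coprime to $r$, using the degree-$r$/self-map-of-spheres argument from \S\ref{section:scanning}; (iii) conclude that it induces an isomorphism on $\bZ_\ell$-homology of section spaces; and (iv) transport back through scanning in the stable range to obtain the statement for $\ro{r}$.
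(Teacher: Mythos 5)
Your proposal follows essentially the same route as the paper's proof: show that scanning intertwines $\ro{r}$ with a fibrewise self-map of the labelled Thom-space bundle $\dot{T}^{1,\theta}M$ (the commutativity being the same local argument as in the proof of Theorem \ref{thm:b}), observe that on each fibre this self-map is built from a degree-$r$ self-map of the sphere factor and is therefore a fibrewise homotopy equivalence after localising away from $r$ (by Dold's theorem), and then transport the resulting $\bZ_{(\ell)}$-homology isomorphism of section spaces back through the labelled scanning map in the stable range. This is exactly the paper's argument, so no further comparison is needed.
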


\paragraph{An extension for field coefficients.} The homology of configuration spaces with field coefficients is better understood than the torsion of their integral homology. In fact, complete descriptions of the additive structure of $H_*(C_k(M);\bF)$ were given by \cite{LM:conf} when $\bF$ has characteristic $2$ and by \cite{BCT:conf} when either $\bF$ has characteristic $2$ or $M$ is odd-dimensional. The rational structure was further studied by \cite{FT:conf} and more recently by \cite{Knudsen2014}, who gave a complete description of the rational cohomology ring of $C_k(M)$. From their computations, it follows that the homology with field coefficients always stabilises, unless the manifold is even-dimensional and the field has odd characteristic. These results were proven again by \cite{Church} (in the rational case) and \cite{RW-hs-for-ucs} (in all cases) using homological stability methods (also improving the known stable ranges).

\begin{theorem*}[\cite{LM:conf,BCT:conf,FT:conf,Church,RW-hs-for-ucs,Knudsen2014}]
Let $M$ be a connected, smooth manifold of dimension $n$, let $\bF$ be a field of characteristic $p$ and assume that $p(n-1)$ is even. Then in the stable range we have isomorphisms $H_*(C_k(M);\bF) \cong H_*(C_{k+1}(M);\bF)$.
\end{theorem*}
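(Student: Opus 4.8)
The plan is to use scanning to reduce the statement to a comparison of two path-components of a section space, to dispatch the odd- and zero-characteristic cases with results already available, and to treat characteristic $2$ separately.

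\emph{Reduction.} If $M$ is non-compact there is nothing to prove: a properly embedded ray gives a stabilisation map $C_k(M)\to C_{k+1}(M)$, which induces isomorphisms on $H_*(-;\bF)$ for $*\leq\sr[M](k)$, and by McDuff's theorem $\srscan[M](k)=\min_{j\geq k}\{\sr[M](j)\}\leq\sr[M](k)$, so this map is an isomorphism throughout the stable range (the hypothesis on $p(n-1)$ is not used here). Assume henceforth that $M$ is closed. By McDuff's theorem the scanning maps $\ssl{}\colon C_k(M)\to\Gamma_c(\dot{T}M)_k$ and $C_{k+1}(M)\to\Gamma_c(\dot{T}M)_{k+1}$ are isomorphisms on $H_*(-;\bF)$ in the stable range, so it suffices to show that $\Gamma_c(\dot{T}M)_k$ and $\Gamma_c(\dot{T}M)_{k+1}$ have isomorphic $\bF$-homology there.

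\emph{Characteristic $0$ or odd.} If $\bF$ has characteristic $0$ it contains $\bQ$, and the claim follows by tensoring the rational homological-stability isomorphism of \cite{Church} with $\bF$ over $\bQ$. If $\bF$ has odd characteristic $p$, the hypothesis forces $n$ to be odd, which is precisely the setting of Theorem~\ref{thm:a}: equation~\eqref{eq:thmaodd-Z12} supplies a zigzag of maps as in~\eqref{eq:932} whose central arrow $\Gamma_c(\dot{T}M)_k\to\Gamma_c(\dot{T}M)_{k+1}$ is, by construction, a homotopy equivalence after inverting $2$. Since $2$ is a unit in $\bF$, that arrow -- and hence, together with the scanning arrows, the whole zigzag -- induces an isomorphism on $H_*(-;\bF)$ in the stable range. (Equivalently: base-change the $\bZ[\tfrac{1}{2}]$-isomorphism of~\eqref{eq:thmaodd-Z12} along $\bZ[\tfrac{1}{2}]\to\bF$, using naturality of the universal-coefficient sequence over the principal ideal domain $\bZ[\tfrac{1}{2}]$.)

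\emph{Characteristic $2$.} This is the substantive remaining case, and I expect it to be the main obstacle, since here Theorem~\ref{thm:a} does not suffice: in odd dimensions it identifies $C_k(M)$ only with $C_{k+2}(M)$ integrally, and in even dimensions its conclusion at the prime $2$ relates $C_k(M)$ to $C_j(M)$ only when $\val{2}{2k-\chi}=\val{2}{2j-\chi}$, which generally fails for $j=k+1$. The plan is to import the additive computations of $H_*(C_k(M);\bZ)$ in the stable range due to \cite{LM:conf,BCT:conf}, which display it as eventually constant in $k$ except for the exact orders of certain finite cyclic summands -- the ``$2k-\chi$''-type summands underlying the counterexample~\eqref{eq:counterexample}. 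When $\chi$ is even these orders are always even, so each such summand contributes a fixed number of copies of $\bF_2$ to $H_*(-;\bF_2)$ (one in its own degree through the tensor term, one in the next degree through the $\mathrm{Tor}$ term) regardless of its exact value; when $\chi$ is odd these orders are always odd, so the summands are invisible to $H_*(-;\bF_2)$. In either case $H_*(C_k(M);\bF)$ is $k$-independent in the stable range. A more self-contained alternative is to run the homological-stability argument of \cite{RW-hs-for-ucs} directly on the section space $\Gamma_c(\dot{T}M)$; the crux there is that the obstruction to comparing $\Gamma_c(\dot{T}M)_k$ and $\Gamma_c(\dot{T}M)_{k+1}$, governed by $w_1(M)$ and the mod-$2$ Euler class of $M$, degenerates modulo $2$ -- and making this degeneration precise is where the real work lies.
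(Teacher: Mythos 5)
The statement you are proving is quoted background: the paper itself offers no argument beyond the citations \cite{LM:conf,BCT:conf,FT:conf,Church,RW-hs-for-ucs,Knudsen2014}, so your proposal is to be measured against the literature those citations carry. Your reduction, your non-compact case, and your treatment of characteristic $0$ and odd characteristic are sound; in particular the odd-characteristic derivation from Theorem \ref{thm:a} via \eqref{eq:thmaodd-Z12} and base change along $\bZ[\tfrac{1}{2}]\to\bF$ is exactly the observation the paper records in the first line of the proof of Corollary \ref{coro:stable-homologies-extended}, and it is not circular since Theorem \ref{thm:a} is proved independently of this statement. (One small caveat: for characteristic $0$ with $M$ closed and non-orientable, \cite{Church} assumes orientability, so you should invoke \cite{Knudsen2014} or \cite{RW-hs-for-ucs} there instead.)

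The genuine gap is characteristic $2$, and you have flagged it yourself. Your first route rests on ``additive computations of $H_*(C_k(M);\bZ)$'' attributed to \cite{LM:conf,BCT:conf}; but those papers compute homology with \emph{field} coefficients (\cite{LM:conf} mod $2$, \cite{BCT:conf} for characteristic $2$ or odd-dimensional $M$), not integral homology with identified orders of cyclic summands, and your claim that the orders of the ``$2k-\chi$-type'' summands are always even when $\chi$ is even and always odd when $\chi$ is odd is only verified for spheres (equation \eqref{eq:counterexample}); no such integral description is available for a general closed even-dimensional manifold. Your second route -- running the argument of \cite{RW-hs-for-ucs} on section spaces and showing the obstruction degenerates mod $2$ -- is left unexecuted by your own admission. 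Note also that the paper's new machinery cannot fill this hole: at $p=2$ Theorem \ref{thm:a} only compares $C_k(M)$ and $C_j(M)$ when $(2k-\chi)_2=(2j-\chi)_2$, and the $p=2$ rows of Table \ref{table:stable-homologies} show that this is all the authors' methods yield. The clean fix is simply to cite the mod-$2$ computation of \cite{LM:conf} (or \S 9 of \cite{RW-hs-for-ucs}): in the stable range $H_*(C_k(M);\bF_2)$ depends only on $H_*(M;\bF_2)$ and is therefore independent of $k$ -- which is precisely the citation-level justification the paper itself relies on.
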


The last part (\S\ref{sPart2}) of this article addresses the question of homological stability when $p(n-1)$ is odd, in other words for even-dimensional (closed) manifolds and with coefficients in fields of odd characteristic. It does not involve section spaces, but rather uses the result of Theorem \ref{thm:b} in the case of open manifolds $M$ together with an argument similar to that of \cite[\S 9]{RW-hs-for-ucs}.

If $M$ is a closed, connected manifold one can choose a vector field on $M$ which is non-vanishing away from a point $*\in M$. This vector field (suitably normalised) therefore induces an $r$-replication map for configuration spaces on $M\setminus\point$, which induces isomorphisms on homology with $\bZ[\frac{1}{r}]$ coefficients in the stable range by Theorem \ref{thm:b}.

We can fit $C_k(M)$ into a cofibre sequence in which the other two spaces are suspensions of configuration spaces on $M\setminus\point$. We can then define stabilisation maps on the other two spaces using the $r$-replication map and the ordinary stabilisation map, which are isomorphisms on homology localised away from $r$ in the stable range. We will therefore have homological stability for $C_k(M)$, with field coefficients of characteristic coprime to $r$, as long as the square formed by this pair of stabilisation maps commutes. In fact it does \emph{not} commute in general, but the obstruction to commutativity on homology is a single homology class whose divisibility we can calculate. Thus we obtain the following theorem, where
\[
\lambda(k)=\lambda[M](k) \coloneqq \mathrm{min}\{ \srscan(k), \srscan(k-1)+n-1, \sr(rk-i) \mid i=2,\dotsc,r \}.
\]
Here $n$ is the dimension of $M$ and $\sr=\sr[M\smallsetminus\point]$ and $\srscan=\srscan[M\smallsetminus\point]$.

\begin{atheorem}\label{thm:e}
Let $M$ be a closed, connected, even-dimensional smooth manifold. Choose a field $\bF$ of positive characteristic $p$ and let $r\geq 2$ be an integer coprime to $p$ such that $p$ divides $(\chi-1)(r-1)$. Then there are isomorphisms
\[
H_*(C_k(M);\bF) \;\cong\; H_*(C_{rk}(M);\bF)
\]
in the range $*\leq\mathrm{min}(\lambda(k),\lambda(rk))$. 
\end{atheorem}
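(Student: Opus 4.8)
The plan is to relate $C_k(M)$ to $C_{rk}(M)$ through the configuration spaces of the punctured manifold $M^\circ:=M\setminus\point$, which is open and so within reach of both Theorem~\ref{thm:b} and McDuff's theorem. The first ingredient is a cofibre sequence. The subspace $C_k(M^\circ)\subset C_k(M)$ is open, and its closed complement is the locus of configurations having a (necessarily unique) point at $*$; this locus is homeomorphic to $C_{k-1}(M^\circ)$, and a tubular neighbourhood of it in $C_k(M)$ is a vector bundle with constant fibre $T_*M$, hence trivial of rank $n$. So excision identifies $H_*(C_k(M),C_k(M^\circ))$ with $\widetilde H_*(\Sigma^n C_{k-1}(M^\circ)_+)$, giving a cofibre sequence
\[
C_k(M^\circ)_+\longrightarrow C_k(M)_+\longrightarrow \Sigma^n C_{k-1}(M^\circ)_+ ,
\]
whose long exact sequence in $H_*(-;\bF)$ has connecting homomorphism $\partial_k\colon H_{*-n}(C_{k-1}(M^\circ))\to H_{*-1}(C_k(M^\circ))$. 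Everything now reduces to comparing this long exact sequence for $k$ with the one for $rk$.

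For that I would use stability on the two outer terms. Restricting the chosen vector field to $M^\circ$ gives a non-vanishing field $v$, so by Theorem~\ref{thm:b} the replication map $\rho_r\colon C_k(M^\circ)\to C_{rk}(M^\circ)$ is a $\bZ_{(\ell)}$-homology isomorphism, hence --- as $p\nmid r$ --- an $\bF$-homology isomorphism in degrees $*\leq\srscan(k)$; and, $M^\circ$ being non-compact, it carries a stabilisation map $s$ which by McDuff's theorem and the known lower bounds on $\sr$ is an $\bF$-homology isomorphism in degrees $*\leq\sr(j)$ at index $j$, so that $s^{r-1}\rho_r\colon C_{k-1}(M^\circ)\to C_{rk-1}(M^\circ)$ is an $\bF$-homology isomorphism in degrees $*\leq\min\{\srscan(k-1),\sr(rk-2),\dotsc,\sr(rk-r)\}$. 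Feeding $\rho_r$ into the first term of the cofibre sequence and $\Sigma^n(s^{r-1}\rho_r)$ into the third, the only obstruction to extending these to a morphism of the two long exact sequences is commutativity of the square built from the connecting homomorphisms --- horizontal maps $\partial_k,\partial_{rk}$, vertical maps $(s^{r-1}\rho_r)_*$ and $(\rho_r)_*$. Since $\bF$ is a field, morphisms in the derived category $D(\bF)$ are detected on homology, so once this square commutes on $\bF$-homology the cofibre sequences --- read as distinguished triangles in $D(\bF)$ --- can be completed to a morphism of triangles, and the five lemma delivers $H_*(C_k(M);\bF)\cong H_*(C_{rk}(M);\bF)$ in the asserted range.

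The heart of the matter, and the step I expect to be the main obstacle, is to show this square commutes on $\bF$-homology when $p\mid(\chi-1)(r-1)$. Up to suspension, $\partial_k$ is induced by the map $C_{k-1}(M^\circ)\times S^{n-1}\to C_k(M^\circ)$ sending $(\q,u)$ to $\q$ together with a new point at $\mathrm{exp}_*(\delta u)$ --- a point forced to orbit a small sphere about the puncture $*$. Deforming that sphere relates $\partial_k$ to the ``point orbiting point'' classes of $C_\bullet(M^\circ)$ and to the Euler class of $M$; for $M=S^n$ it makes $\partial_k$ multiplication by $2k-\chi$ in the relevant degree, which is why $H_{n-1}(C_k(S^n))$ carries the summand $\bZ/(2k-\chi)$. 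Now chase the square: one composite first adjoins the orbiting point and then replicates the whole configuration, stringing that point out into an $r$-tuple along $v(\mathrm{exp}_*(\delta u))$; the other first replicates-and-stabilises $\q$ and only then adjoins a single new point near $*$, so that $r-1$ members of the would-be $r$-tuple instead arrive from the puncture. The difference of the two maps is measured by sliding those $r-1$ points into place past $*$ and past the rest of the configuration; as $u$ sweeps out $S^{n-1}$ the direction $v(\mathrm{exp}_*(\delta u))$ winds with degree the Poincar\'e--Hopf index of $v$ at $*$, which is $\chi$, and each extra point inherits this winding. The computation --- identifying $\partial_k$ and $\partial_{rk}$ on homology, controlling $\rho_r$ near $*$, and keeping careful track of orientations, of the base point of the $r$-tuple, and of which homology class carries the defect --- should show that the obstruction, a single homology class of the form ``$\partial_{rk}$ of a generator minus $(\rho_r)_*$ of $\partial_k$ of a generator'', is for every $k$ divisible by $(\chi-1)(r-1)$: the factor $\chi-1=\chi(M^\circ)$ comes from the index of $v$ near $*$ together with an orientation correction for the sphere seen from inside $M^\circ$, and the factor $r-1$ from the replication multiplicity.

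Finally, when $p\mid(\chi-1)(r-1)$ this obstruction class vanishes with $\bF$-coefficients, the connecting-homomorphism square commutes on $\bF$-homology, and the argument of the second paragraph yields the isomorphism. The reason $\lambda(k)$ contains the terms $\sr(rk-i)$ rather than $\sr(rk-i)+n-1$ (as the five lemma alone would allow) is that recognising the obstruction class requires homological stability for $C_\bullet(M^\circ)$ at the indices $rk-2,\dotsc,rk-r$ without that slack; the parallel need to control the cofibre sequence of $C_{rk}(M)$ accounts for the $\min$ with $\lambda(rk)$ in the stated range $*\leq\min(\lambda(k),\lambda(rk))$.
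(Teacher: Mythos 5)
Your overall architecture coincides with the paper's: puncture $M$ at the zero of the vector field, use the cofibre sequence relating $C_k(M\smallsetminus\point)$, $C_k(M)$ and $\Sigma^n(C_{k-1}(M\smallsetminus\point)_+)$, map the outer terms by the replication map of Theorem \ref{thm:b} and by replication followed by $r-1$ stabilisations, check these are $\bF$-homology isomorphisms in the range $\lambda$, and reduce the theorem to commutativity on $\bF$-homology of the square built from the connecting homomorphisms. (The paper then concludes by a dimension count coming from exactness rather than by your ``complete to a morphism of triangles in $D(\bF)$ and apply the five lemma'' step; over a field the two are equivalent, since the long exact sequence splits into short exact sequences, so this difference is cosmetic.)

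The gap is the commutativity itself, which you correctly single out as the main obstacle but do not prove: your text says the computation ``should show'' that the obstruction class is divisible by $(\chi-1)(r-1)$. That divisibility is precisely where the hypothesis $p\mid(\chi-1)(r-1)$ enters, and establishing it is the substance of \S\ref{ssCylinders} of the paper: one needs the relations $\Delta_j=\Delta_0+2j\pi$ and $\tau_j=2j\pi$ in $H_{n-1}(C_r(\cyl);\bZ)$ (proved by pair-of-pants maps), the identity $\hat{\tau}=r(r-1)\pi$, and Lemma \ref{lFormula}, namely $\sigma_f=r\Delta_0+\deg(f)\,r(r-1)\pi$ for an arbitrary map $f\colon S^{n-1}\to S^{n-1}$ (including non-positive degree), applied with $\deg(f)=\chi$ by Poincar\'e--Hopf. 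Your heuristic --- the direction of $v$ winds around the puncture with degree $\chi$ and ``each extra point inherits this winding'' --- does not yield this formula, and your account of the factor $\chi-1$ (``index of $v$ near $*$ together with an orientation correction,'' identified with $\chi(M\smallsetminus\point)$) is not how it actually arises: in the computation it is $\deg(f)-1$, coming from comparing insertion of the $r$-tuple along the vector field (the class $\sigma_f$) with insertion along the radial direction used by the stabilisation leg of the square (the class $\sigma_{\id}$). Until this calculation in $H_{n-1}(C_r(\cyl);\bZ)$ is carried out --- including checking that the only possible failure of commutativity sits on the fundamental class of the sphere factor --- the proof is incomplete, since everything else in your argument is insensitive to the hypothesis on $p$, $r$ and $\chi$.
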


See Remark \ref{rLambda} for an explanation of how the function $\lambda[M]$ arises, and the remark that if $\sr[M\smallsetminus\point]$ is linear with slope $\leq \dim(M)-1$ and $r,k\geq 2$, then $\lambda[M](k)=\sr[M\smallsetminus\point](k)$.

This theorem also generalises to configuration spaces with labels in a fibre bundle over $M$ with path-connected fibres. See \S\ref{ssTheoremDproof} for the proof for configuration spaces without labels and \S\ref{ssDwithlabels} for a sketch of the generalisation to configuration spaces with labels (Theorem \hyperref[thm:dprime]{$\text{D}^\prime$}).

\begin{remark}
When $M$ is odd-dimensional the conclusion of Theorem \ref{thm:e} follows directly from Theorem \ref{thm:a}. Also, we note that our proof in \S\ref{ssTheoremDproof} also works for fields of characteristic zero: in this case we must asssume that $\chi=1$, but the proof then becomes simpler since the square \eqref{eSquare} commutes up to homotopy (not only on homology). Finally, in the case where the fibre bundle over $M$ factors through the unit sphere bundle $S(TM)\to M$, Theorem \hyperref[thm:dprime]{$\text{D}^\prime$} follows from Theorem \hyperref[thm:cprime]{$\text{C}^\prime$}.
\end{remark}

\paragraph{Combining Theorems \ref{thm:a} and \ref{thm:e}}

Theorem \ref{thm:a} says that in odd dimensions there are at most two stable integral homologies, depending on the parity of the number of points $k$. On the other hand, in even dimensions -- even when taking homology with $\bZ_{(p)}$ coefficients -- there may be infinitely many different stable homologies: one for each possible $p$-adic valuation of $2k-\chi$. In fact this is sharp, as the calculation \eqref{eq:counterexample} shows.

However, the situation is simpler when taking $\bF_p$ coefficients. From the calculation \eqref{eq:counterexample} we see that, when $n$ is even and $k$ is in the stable range with respect to degree $n-1$, we have
\begin{align*}
H_{n-1}(C_k(S^n);\bF_p) &\cong \mathrm{Tor}(H_{n-2}(\Omega_0^n S^n),\bF_p) \oplus (\tau H_{n-1}(\Omega_0^n S^n) \otimes \bF_p) \\
&\qquad \oplus (\bZ/(2k-2) \otimes \bF_p).
\end{align*}
Writing $d$ for the dimension of the first two summands on the right-hand side (which is independent of $k$), it follows that $H_{n-1}(C_k(S^n);\bF_p)$ is either $d$- or $(d+1)$-dimensional depending on whether or not $p$ divides $2k-2$, so there are at most two stable $\bF_p$-homologies in this special case. One can combine Theorems \ref{thm:a} and \ref{thm:e} to prove that this phenomenon holds more generally:

\begin{acorollary}\label{coro:stable-homologies}
Let $M$ be a closed, connected, even-dimensional smooth manifold and let $\bF$ be a field of odd characteristic $p$. Then there are canonical \textup{(}additive\textup{)} isomorphisms
\[
H_*(C_k(M);\bF)\cong H_*(C_j(M);\bF)
\]
under either of the following conditions\textup{:}
\begin{itemize}
\item $\mathrm{min}\{(2k-\chi)_p,(\chi)_p+1\} = \mathrm{min}\{(2j-\chi)_p,(\chi)_p+1\}$,
\item $\chi \equiv 1 \text{\upshape\ mod } p$,
\end{itemize}
in the range $*\leq\mathrm{min}(\lambda(k),\lambda(j))$.
\end{acorollary}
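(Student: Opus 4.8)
The plan is to combine the two main structural results available: Theorem~\ref{thm:a}, which produces integral (hence $\bF_p$) homology isomorphisms $H_*(C_k(M))\cong H_*(C_j(M))$ whenever $\val{p}{2k-\chi}=\val{p}{2j-\chi}$, and Theorem~\ref{thm:e}, which produces $\bF_p$-homology isomorphisms $H_*(C_k(M))\cong H_*(C_{rk}(M))$ whenever $r\geq 2$ is coprime to $p$ and $p\mid(\chi-1)(r-1)$. The idea is that these two families of isomorphisms together generate a large equivalence relation on the set $\bN$ of ``numbers of points'', and the first step is to identify exactly which $k,j$ become equivalent. The case $\chi\equiv 1\bmod p$ should be immediate: then \emph{every} $r\geq 2$ coprime to $p$ satisfies the hypothesis of Theorem~\ref{thm:e}, so $k\sim rk$ for all such $r$; since any two positive integers $k,j$ can be connected by a chain $k\mapsto rk\mapsto\cdots$ using multiplications and ``divisions'' by integers coprime to $p$ (formally: $k$ and $j$ are connected iff $\val{p}{k}=\val{p}{j}$, and then one also invokes Theorem~\ref{thm:a} to bridge the remaining discrepancy in $p$-valuation via the $2k-\chi$ relation — when $\chi\equiv 1$, note $2k-\chi\equiv 2k-1$ is always a $p$-adic unit, so Theorem~\ref{thm:a} already gives $k\sim j$ for all $k,j$). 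So that bullet is essentially a restatement of Theorem~\ref{thm:a}.

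The substantive case is the first bullet, $\chi\not\equiv 1\bmod p$. Here I would argue as follows. Set $a=\val{p}{\chi}$ and $b=\val{p}{2k-\chi}$. First, when $b\leq a$: from Theorem~\ref{thm:a} all $k'$ with $\val{p}{2k'-\chi}=b$ are equivalent, and one checks that the invariant $\min\{b,a+1\}=b$ is exactly $\val{p}{2k-\chi}$ in this regime, so the statement reduces to Theorem~\ref{thm:a}. The interesting regime is $b\geq a+1$, i.e.\ $\val{p}{2k-\chi}>\val{p}{\chi}$, equivalently (since then $\val{p}{2k}=\val{p}{\chi}=a$ and $2k-\chi\equiv 0$ to higher order) $k$ is ``$p$-adically close to $\chi/2$''. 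I must show all such $k$ are mutually $\bF_p$-equivalent. The plan is to find, for a target $k$ in this regime, a suitable $r$ coprime to $p$ with $p\mid(\chi-1)(r-1)$ — i.e.\ $r\equiv 1\bmod p$ if $p\nmid\chi-1$, which is the case at hand since $\chi\not\equiv 1$ — such that $\val{p}{2(rk)-\chi}$ can be steered: compute $2rk-\chi=r(2k-\chi)+(r-1)\chi$; with $r\equiv 1\bmod p$ and $\val{p}{2k-\chi}=b>a=\val{p}{\chi}$, we get $\val{p}{r(2k-\chi)}=b$ and $\val{p}{(r-1)\chi}=\val{p}{r-1}+a$, which can be made exactly $a+1<b$ by choosing $r-1$ of $p$-valuation exactly $1$ (e.g.\ $r=1+p$ if $p\nmid k$, or a suitable variant). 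Then Theorem~\ref{thm:e} gives $k\sim rk$ with $\val{p}{2rk-\chi}=a+1$, and now $rk$ lies in the ``$b=a+1$'' boundary stratum; Theorem~\ref{thm:a} then connects all elements of that stratum. One must also handle the degenerate sub-cases ($p\mid k$, or $2k\equiv\chi$ exactly at all orders — impossible for $k\neq\chi/2$, and if $\chi$ is even and $k=\chi/2$ then $\val{p}{2k-\chi}=\infty$ and a separate direct argument via Theorem~\ref{thm:a} applied symmetrically around $\chi/2$ is needed, or one notes $C_{\chi/2}(M)$ is handled by the $k\leftrightarrow\chi-k$ symmetry). Throughout, the homological-degree range is tracked: each application of Theorem~\ref{thm:a} is valid in the scanning stable range, each application of Theorem~\ref{thm:e} in the range $*\leq\min(\lambda(k),\lambda(rk))$, and since a \emph{bounded} chain of such isomorphisms connects $k$ to $j$ (the number of steps is bounded independently of $k,j$ — at most a move into the boundary stratum plus a move within it), the composite is valid in $*\leq\min(\lambda(k),\lambda(j))$ as claimed, after checking that the intermediate indices $rk$ etc.\ have $\lambda\geq$ the relevant bound (here one uses that $\lambda$ is essentially controlled by $\sr[M\smallsetminus\point]$ and that the intermediate indices differ from $k$ or $j$ by bounded multiplicative factors, combined with monotonicity of the stable ranges).

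The word ``canonical'' requires a small additional remark: the isomorphisms of Theorems~\ref{thm:a} and~\ref{thm:e} are each induced by an explicit zigzag of maps of spaces (replication maps, stabilisation maps, scanning maps), so the composite is likewise induced by a zigzag and is in particular natural in $M$ and compatible with the ring structure on cohomology; I would state this as part of the conclusion rather than belabour it.

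The main obstacle I anticipate is the elementary-but-fiddly \emph{number-theoretic bookkeeping} in the first bullet: verifying that for every $k$ in the regime $\val{p}{2k-\chi}>\val{p}{\chi}$ there genuinely exists $r\geq 2$, coprime to $p$ and with $p\mid(\chi-1)(r-1)$, that drives $\val{p}{2rk-\chi}$ down to exactly $\val{p}{\chi}+1$ (so that one lands precisely in the boundary stratum where Theorem~\ref{thm:a} takes over), uniformly and without running into the degenerate cases $p\mid k$ or $2k=\chi$; and dually, checking that no $k$ outside this regime can be connected into it (so that the stated equivalence relation is not just sufficient but optimal — though for the corollary as stated only sufficiency is needed). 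A secondary but real obstacle is confirming that the number of zigzag steps needed is bounded by an absolute constant, so that the final range is genuinely $\min(\lambda(k),\lambda(j))$ and not something that degrades with $|k-j|$; this should follow because one needs at most one application of Theorem~\ref{thm:e} (to reach the boundary stratum) followed by the single ``stratum-wide'' isomorphism packaged inside Theorem~\ref{thm:a}, but the details of how Theorem~\ref{thm:a}'s internal zigzag length depends on $k$ must be checked against the definition of $\lambda$.
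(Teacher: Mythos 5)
Your treatment of the second bullet ($\chi\equiv 1\bmod p$) contains a genuine error. You assert that $2k-\chi\equiv 2k-1$ is ``always a $p$-adic unit'' and conclude that Theorem \ref{thm:a} alone gives the isomorphism for all $k,j$. This is false: $p$ divides $2k-1$ whenever $k\equiv (p+1)/2\bmod p$ (for instance $p=3$, $\chi=1$, $k=2$ gives $2k-\chi=3$), and Theorem \ref{thm:a} only connects indices whose valuations $\val{p}{2k-\chi}$ agree. Nor do your Theorem \ref{thm:e} moves repair this by themselves: when $\chi\equiv 1\bmod p$ they multiply the index by an integer prime to $p$ and hence preserve $\val{p}{k}$, so they can never connect $k$ and $j$ with $\val{p}{k}\neq\val{p}{j}$. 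Thus the mixed case (say $p\mid j$, $p\nmid k$) is not covered by what you wrote, and the second bullet is emphatically not ``a restatement of Theorem \ref{thm:a}''. The paper fills exactly this hole with an auxiliary index: it connects $j$ to $jkl+\chi$ by Theorem \ref{thm:a} (both $2j-\chi$ and $2(jkl+\chi)-\chi$ are units mod $p$ because $\chi\equiv 1$) and then connects $jkl+\chi$ to $k$ by Theorem \ref{thm:e} moves, since both are prime to $p$; some idea of this kind is required.

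A secondary problem is your range bookkeeping: you appeal to ``monotonicity of the stable ranges'' to guarantee that $\lambda$ at the intermediate indices is at least $\min(\lambda(k),\lambda(j))$, but $\sr[M\smallsetminus\point]$ is not known to be monotone (only $\srscan$ is), and $\lambda$ involves values $\sr(rk-i)$ directly, so for a fixed small multiplier such as $r=1+p$ the needed inequality may fail. The paper's remedy, equally available to you, is that the multiplier is constrained only modulo $p$, so the intermediate index can be taken arbitrarily large and one needs only that $\lambda$ diverges. Modulo these two points, your route through the first bullet is sound and genuinely different from the paper's: you steer $k$ (and $j$) into the stratum $\val{p}{2x-\chi}=\val{p}{\chi}+1$ by a single replication with $r\equiv 1\bmod p$ and $\val{p}{r-1}=1$, then apply Theorem \ref{thm:a} within that stratum; the paper instead writes $x'=x/p^{\val{p}{x}}$, picks $l$ with $lk'\equiv 1\bmod p$, and multiplies $k$ by $lj'$ and $j$ by $lk'$, so that the two products coincide and no Theorem \ref{thm:a} step is needed in that case. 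Note also that your steering computation already covers $k=\chi/2$ (there $2rk-\chi=(r-1)\chi$ has valuation exactly $\val{p}{\chi}+1$), so the separate argument you worry about is unnecessary, and the ``$k\leftrightarrow\chi-k$ symmetry'' you propose instead would not help, since it fixes $\chi/2$.
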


This is proved in \S\ref{ssCorollaryE}, where we also partially recover the known homological stability results for odd-dimensional manifolds and fields of characteristic $2$ or $0$ (see Corollary \ref{coro:stable-homologies-extended}).

\paragraph{Number of stable homologies.}
Homological stability (without an explicit range) for configuration spaces with coefficients in a field $\bF$ can be rephrased as the statement that for each degree $i$, the set $\{\mathrm{dim} H_i(C_j(M);\bF) \mid j=k,\ldots,\infty\}$ contains only one element once $k$ is sufficiently large, in other words, the number
\[
\mathrm{nsh}_i(M;\bF) \;\coloneqq\; \mathrm{lim}_{k\to\infty} \lvert \{ \mathrm{dim} H_i(C_j(M);\bF) \}_{j=k}^\infty \rvert \;\in\; \{1,2,3,\ldots,\infty\}
\]
is equal to $1$. As mentioned earlier, we have $\mathrm{nsh}_i(M;\bF)=1$ whenever either $\mathrm{dim}(M)$ is odd or $\mathrm{char}(\bF)$ is even, and we also have the example that $\mathrm{nsh}_1(S^2;\bF_p)=2$. The above corollary can be viewed as proving that whenever $M$ has non-zero Euler characteristic, $\mathrm{nsh}_i(M;\bF)$ is finite and has the explicit upper bound:
\[
\mathrm{nsh}_i(M;\bF) \leq (\chi)_p + 2
\]
where $p=\mathrm{char}(\bF)$ and $\chi$ is the Euler characteristic of $M$. Moreover, we also have
\[
\mathrm{nsh}_i(M;\bF) = 1 \quad\text{when}\quad \chi \equiv 1 \;\mathrm{mod}\; p.
\]
In particular this means that when $\chi(M)=1$ we have $\mathrm{nsh}_i(M;\bF)=1$ for any field $\bF$, in other words homological stability holds for the sequence $\{C_k(M)\}_{k=1}^\infty$ with coefficients in any field.

\paragraph{Homological periodicity.}

A consequence of Corollary \ref{coro:stable-homologies} is that the sequence of homology groups $\{H_i(C_k(M);\bF)\}_{k=1}^\infty$ for fixed $M$, $\bF$ and $i$ is eventually periodic as $k\to\infty$, as long as $\chi\neq 0$. To see this, note that by Corollary \ref{coro:stable-homologies} it is enough to show that
\begin{equation}\label{eq:min-equals-min}
\mathrm{min}\{(2k-\chi)_p,(\chi)_p+1\} = \mathrm{min}\{(2(k+a)-\chi)_p,(\chi)_p+1\}
\end{equation}
for some natural number $a$ independent of $k$. Note that for any two natural numbers $x$, $y$ we have the inequality $(x+y)_p \geq \mathrm{min}\{(x)_p, (y)_p\}$ and a sufficient condition for equality is that $(x)_p$ and $(y)_p$ are distinct. Considering the cases $(2k-\chi)_p > (\chi)_p$ and $(2k-\chi)_p \leq (\chi)_p$ separately, and applying this fact, one can easily show that the equation \eqref{eq:min-equals-min} holds for $a=p^{(\chi)_p+1}$. Hence we have:

\begin{acorollary}\label{coro:periodicity}
Let $M$ be a closed, connected, even-dimensional smooth manifold with Euler characteristic $\chi\neq 0$ and let $\bF$ be a field of odd characteristic $p$. Then the sequence
\begin{equation}\label{eq:sequence}
\{H_i(C_k(M);\bF)\}_{k=1}^\infty
\end{equation}
is eventually periodic in $k$ as $k\to\infty$, with period equal to $p^{\mathrm{e}_i(M;\bF)}$ for some number $\mathrm{e}_i(M;\bF) \leq (\chi)_p + 1$. Equivalently, there are \textup{(}additive\textup{)} isomorphisms
\[
H_i(C_k(M);\bF) \;\cong\; H_i(C_{k+p^{\chi(M)_p + 1}}(M);\bF)
\]
for $k\gg i$ \textup{(}precisely, in the range $i\leq\lambda(k)$\textup{)}.
\end{acorollary}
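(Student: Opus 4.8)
The plan is to deduce Corollary~\ref{coro:periodicity} formally from Corollary~\ref{coro:stable-homologies}. Its first bullet point supplies, in the range $*\leq\min(\lambda(k),\lambda(j))$, an isomorphism $H_i(C_k(M);\bF)\cong H_i(C_j(M);\bF)$ whenever
\[
\min\{(2k-\chi)_p,(\chi)_p+1\}=\min\{(2j-\chi)_p,(\chi)_p+1\},
\]
so the whole statement reduces to the arithmetic claim that the function $k\mapsto\min\{(2k-\chi)_p,(\chi)_p+1\}$ is periodic in $k$, together with the identification of a period. I would show that $a=p^{(\chi)_p+1}$ works, i.e.\ that \eqref{eq:min-equals-min} holds for this value of $a$ and every $k$.

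The only input needed is the elementary estimate $(x+y)_p\geq\min\{(x)_p,(y)_p\}$, with equality whenever $(x)_p\neq(y)_p$. Apply it to $2(k+a)-\chi=(2k-\chi)+2a$, noting that, since $p$ is odd, $2$ is a $p$-adic unit and hence $(2a)_p=(\chi)_p+1$. If $(2k-\chi)_p\geq(\chi)_p+1$, then $(2(k+a)-\chi)_p\geq\min\{(2k-\chi)_p,(2a)_p\}\geq(\chi)_p+1$, so both minima in \eqref{eq:min-equals-min} equal $(\chi)_p+1$. If instead $(2k-\chi)_p\leq(\chi)_p$, then $(2k-\chi)_p<(\chi)_p+1=(2a)_p$, so these two valuations differ and therefore $(2(k+a)-\chi)_p=(2k-\chi)_p$ exactly, so the two minima again coincide. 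This yields $H_i(C_k(M);\bF)\cong H_i(C_{k+p^{(\chi)_p+1}}(M);\bF)$ in the stated range, and I would remark that it is the valuation of $2a$, not of $a$, that governs the comparison, so one really must take $a$ to be a power of $p$ rather than of $2p$.

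Finally I would upgrade ``$p^{(\chi)_p+1}$ is a period'' to the formulation in the corollary: the range function $\lambda$ diverges, so for each fixed $i$ we have $\lambda(k)\geq i$ and $\lambda(k+a)\geq i$ once $k$ is large enough, and hence the displayed isomorphism holds for all sufficiently large $k$, i.e.\ the sequence \eqref{eq:sequence} is eventually periodic; and the minimal period of an eventually periodic integer sequence divides every period, so it divides the prime power $p^{(\chi)_p+1}$ and is therefore of the form $p^{\mathrm{e}_i(M;\bF)}$ with $\mathrm{e}_i(M;\bF)\leq(\chi)_p+1$. I do not anticipate any real obstacle here: all of the homological content is packaged into Corollary~\ref{coro:stable-homologies}, and what remains is the one-line valuation computation above. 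The only point that requires genuine care is the role of the prime $2$, and the hypothesis that $\bF$ has odd characteristic is precisely what makes $2k-\chi$ and $2(k+p^{(\chi)_p+1})-\chi$ differ $p$-adically in a controlled way.
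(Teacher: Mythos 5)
Your argument is correct and is essentially the paper's own proof: the paper likewise reduces Corollary \ref{coro:periodicity} to Corollary \ref{coro:stable-homologies} via the identity \eqref{eq:min-equals-min}, verifies it for $a=p^{(\chi)_p+1}$ using the same estimate $(x+y)_p\geq\min\{(x)_p,(y)_p\}$ with equality when the valuations differ, and splits into the same two cases $(2k-\chi)_p>(\chi)_p$ and $(2k-\chi)_p\leq(\chi)_p$. Your closing remarks on the divergence of $\lambda$ and on the minimal period dividing $p^{(\chi)_p+1}$ are accurate and complete the deduction.
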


This is similar to a result of Nagpal~\cite[Theorem F]{Nagpal2015:arxiv}, who also proves that the sequence \eqref{eq:sequence} is eventually periodic in $k$ as $k\to\infty$ and obtains an explicit period of $p^{(i+3)(2i+2)}$. The difference is that his result also holds when $\chi=0$, but on the other hand he assumes that $M$ is orientable, and his upper bound on the period depends on the homological degree.

Note however that Corollary \ref{coro:stable-homologies} is much stronger than homological periodicity: it implies that the number of stable homologies $\mathrm{nsh}_i(M;\bF)$ is bounded above by $(\chi)_p + 2$, whereas Corollary \ref{coro:periodicity} alone only implies an upper bound of $p^{\chi(M)_p + 1}$.

\paragraph{Acknowledgements} We thank Oscar Randal-Williams for careful reading of an earlier draft of this paper and for enlightening discussions. The paper has also benefited from conversations with Frederick Cohen, Mark Grant, Fabian Hebestreit, Alexander Kupers and Jeremy Miller. We would also like to thank the anonymous referee for helpful comments and corrections.

\section{Homological stability via the scanning map}\label{section:2}

\subsection{Sphere bundles, localisation and fibrewise homotopy equivalences}\label{ss:localisation-etc}
Let $M$ be a connected manifold and $E\to M$ a rank $n$ inner product vector bundle. Let $\dot{E}$ be the fibrewise one point compactification of $E$. The topological bundle $\dot{E}$ is isomorphic to the unit sphere bundle $S(E\oplus\epsilon)$ of the Whitney sum of $E$ and a trivial line bundle. We denote by $\infty$ the point at infinity in each fibre. We denote by $\iota$ the section with value $\infty$ and by $z$ the zero section. During the next paragraph we assume temporarily that $M$ is a compact manifold with boundary.

Let $\Gamma_\partial(\dot{E})\subset \Gamma(\dot{E})$ be the subspace of those sections that take value $\infty$ on the boundary of $M$. Since the fibre of $\dot{E}\to M$ is nilpotent and the pair $(M,\partial M)$ has finitely many non-zero homology groups, then by \cite[Theorem~4.1]{Moller:nilpotent}, each connected component of $\Gamma_\partial(\dot{E})$ is also nilpotent. We may therefore consider, for each set of primes $\ell$, the localisation $\Gamma_\partial(\dot{E})_{(\ell)}$. We may also consider the fibrewise localisation $\dot{E} \to \dot{E}_{(\ell)}$, and \cite[Theorem ~5.3]{Moller:nilpotent} implies that the induced map $\Gamma_\partial(\dot{E})\to \Gamma_\partial(\dot{E}_{(\ell)})$ is a localisation in each component, since $(M,\partial M)$ is a finite relative complex. 

If $M$ is an arbitrary manifold, we can write it as a union $M=\bigcup M^i$ of compact codimension-$0$ submanifolds with boundary. Let $\dot{E}^i$ denote the restriction of $\dot{E}$ to the submanifold $M^i$. The map $\Gamma_c(\dot{E})\to \Gamma_c(\dot{E}_{(\ell)})$, induced by the fibrewise localisation $\dot{E}\to \dot{E}_{(\ell)}$, is then the colimit of the maps $\Gamma_\partial(\dot{E}^i)\to \Gamma_\partial(\dot{E}_{(\ell)}^i)$:
\[\xymatrix{
\Gamma_\partial(\dot{E}^i)\ar[r]\ar[d] & \Gamma_\partial(\dot{E}^{i+1})\ar[r]\ar[d] & \cdots\ar[r] &\Gamma_c(\dot{E})\ar[d] \\
\Gamma_\partial(\dot{E}^i_{(\ell)})\ar[r] & \Gamma_\partial(\dot{E}^{i+1}_{(\ell)})\ar[r] & \cdots\ar[r] &\Gamma_c(\dot{E}_{(\ell)}).
}\]
Since the vertical maps induce (componentwise) isomorphisms on homology with $\bZ_{(\ell)}$-coefficients, so does their colimit.

A bundle endomorphism $f$ of $\dot{E}_{(\ell)}$ is \emph{compactly supported} if $f\circ \iota = \iota$ outside a compact subset of $M$. We denote by $\End_c^r(\dot{E}_{(\ell)})$ the space of compactly supported endomorphisms which induce on fibres maps of degree $r$. We denote by $\endo^r(\dot{E}_{(\ell)})$ the bundle of pairs $(x,f_x)$, where $x\in M$ and $f_x\colon (\dot{E}_{(\ell)})_x\to (\dot{E}_{(\ell)})_x$ is a map of degree $r$. By definition $\End_c^r(\dot{E}_{(\ell)}) = \Gamma_c(\endo^r(\dot{E}_{(\ell)}))$. By Theorem 3.3 of \cite{Dold}, if $r$ is a unit in $\bZ_{(\ell)}$, then any endomorphism in $\End_c^r(\dot{E}_{(\ell)})$ admits a fibrewise homotopy inverse. Postcomposition with it induces a homotopy equivalence between path-components
\[\Gamma_c(\dot{E}_{(\ell)})_k\lra \Gamma_c(\dot{E}_{(\ell)})_{[f](k)},\]
where $[f]$ denotes the map induced by $f$ on $\pi_0\Gamma_c(\dot{E}_{(\ell)})$.

We summarize the discussion so far in the following lemma:
\begin{lemma}\label{lemma:localisation} If $r$ is a unit in $\bZ_{(\ell)}$, $f\in \End_c^r(\dot{E}_{(\ell)})$ and $[f](k)$ is an integer, then the zigzag
\[\Gamma_c(\dot{E})_k\lra \Gamma_c(\dot{E}_{(\ell)})_k\lra \Gamma_c(\dot{E}_{(\ell)})_{[f](k)}\longleftarrow \Gamma_c(\dot{E})_{[f](k)},\]
where the middle map is given by post-composition with $f$, induces an isomorphism on homology with $\bZ_{(\ell)}$-coefficients.
\end{lemma}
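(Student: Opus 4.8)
The plan is to assemble Lemma~\ref{lemma:localisation} from the three ingredients already developed in this subsection, so that the proof is essentially a matter of concatenating the relevant maps and checking that each induces an isomorphism on $\bZ_{(\ell)}$-homology. First I would treat the outer two maps of the zigzag. For each of these, the map is the localisation map $\Gamma_c(\dot E)_k \to \Gamma_c(\dot E_{(\ell)})_k$ (restricted to the relevant path-component), induced by the fibrewise localisation $\dot E \to \dot E_{(\ell)}$. The discussion preceding the lemma writes $M = \bigcup M^i$ as an increasing union of compact codimension-$0$ submanifolds with boundary, identifies $\Gamma_c(\dot E) = \colim_i \Gamma_\partial(\dot E^i)$ and likewise for the localisation, and invokes \cite[Theorems~4.1 and~5.3]{Moller:nilpotent} to conclude that on each compact piece the map $\Gamma_\partial(\dot E^i) \to \Gamma_\partial(\dot E^i_{(\ell)})$ is a localisation of nilpotent spaces, hence a $\bZ_{(\ell)}$-homology isomorphism componentwise; taking the colimit (and using that $\bZ_{(\ell)}$-homology commutes with filtered colimits) gives that the same holds for $\Gamma_c(\dot E) \to \Gamma_c(\dot E_{(\ell)})$. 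So both outer maps are $\bZ_{(\ell)}$-homology isomorphisms.

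Next I would handle the middle map. By hypothesis $f \in \End_c^r(\dot E_{(\ell)})$ with $r$ a unit in $\bZ_{(\ell)}$, so by Theorem~3.3 of \cite{Dold} $f$ admits a fibrewise homotopy inverse $g$, necessarily also compactly supported (of degree $r^{-1}$ on fibres). Postcomposition with $f$ then defines a map $\Gamma_c(\dot E_{(\ell)}) \to \Gamma_c(\dot E_{(\ell)})$ with homotopy inverse given by postcomposition with $g$; restricting to path-components it restricts to a homotopy equivalence $\Gamma_c(\dot E_{(\ell)})_k \to \Gamma_c(\dot E_{(\ell)})_{[f](k)}$, where $[f]$ is the induced self-map of $\pi_0\Gamma_c(\dot E_{(\ell)}) \cong \bZ$ and the hypothesis ensures $[f](k)$ is an integer (so the target component is one of the components occurring for the \emph{integral} bundle). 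In particular the middle map is a homotopy equivalence, hence certainly a $\bZ_{(\ell)}$-homology isomorphism.

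Finally I would note that the target of the middle map and the source of the third map agree — both are $\Gamma_c(\dot E_{(\ell)})_{[f](k)}$ — and that the third (leftward) arrow $\Gamma_c(\dot E)_{[f](k)} \to \Gamma_c(\dot E_{(\ell)})_{[f](k)}$ is exactly the localisation map on the component indexed by the integer $[f](k)$, covered by the first step. Composing, the zigzag induces an isomorphism on $H_*(-;\bZ_{(\ell)})$. I do not expect a genuine obstacle here: the only point requiring care is the bookkeeping around $\pi_0$, namely that $[f](k)$ is an integer (which is assumed) so that the rightmost space $\Gamma_c(\dot E)_{[f](k)}$ is a component of the \emph{unlocalised} section space and the localisation map applies to it; everything else is a direct citation of the preceding paragraphs, \cite{Moller:nilpotent}, and \cite{Dold}.
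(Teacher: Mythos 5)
Your proposal is correct and is essentially the paper's own argument: the lemma is stated as a summary of the discussion immediately preceding it, where the outer maps are handled by the colimit of M{\o}ller's componentwise localisations and the middle map by Dold's Theorem 3.3 giving a fibrewise homotopy inverse. Your write-up just makes these same steps explicit, including the bookkeeping that $[f](k)$ is an integer so the unlocalised component exists.
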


\begin{remark}
Note that if $\ell = \emptyset$, then $(-)_{(\ell)}$ is rationalisation (also denoted $(-)_{(0)}$), whereas if $\ell = \Spec \bZ$, the set of all primes, this localisation is the identity, i.e., we are not localising at all.
\end{remark}

\subsection{The degree of a section}\label{ss:degree}
Let $\beta$ be a compactly supported section of $\pi\colon \dot{T}M\to M$, and let $\Th(\beta)$ be the Thom class in $H^n(\dot{T}M;\pi^*{\mathcal O})$, where $\mathcal O$ is the orientation sheaf of $M$. The \emph{$\beta$-degree} of a compactly supported section $\alpha$ is
\[\deg_\beta(\alpha) = \alpha^*(\Th(\beta))^\vee\in H_0(M;\bZ),\]
the Poincare dual in $M$ of $\alpha^*\Th(\beta)\in H^n_c(M;\mathcal{O})$. If $M$ is orientable, then $\Th(\beta)$ is the Poincare dual of $\beta_*[M]\in H_n(\dot{T}M;\bZ)$, and $\deg_\beta(\alpha)$ is also equal to the intersection product of $\alpha_*[M]$ and $\beta_*[M]$ in $\dot{T}M$. We will write $\deg$ for $\deg_z$, where $z$ is the zero section of $\dot{T}M$. 

Assume now that $M$ is closed and orientable. The Gysin sequence for the sphere bundle $S^n\stackrel{i}{\to} \dot{T}M\stackrel{\pi}{\to} M$ splits an exact sequence
\begin{equation}\label{eq:362}
0\lra H_n(S^n;\bZ) \overset{i_*}{\lra}  H_n(\dot{T}M;\bZ) \overset{\pi_*}{\lra} H_n(M;\bZ)\lra 0.
\end{equation}
The zero section $z\colon M\to \dot{T}M$ is an inverse of $\pi$, so the group $H_n(\dot{T}M)\cong \bZ\oplus\bZ$ is generated by $i_*[S^n]$ and $z_*[M]$. The fibres over two different points give two disjoint representatives of $i_*[S^n]$, therefore $i_*[S^n]\cap i_*[S^n] = 0$. On the other hand, the intersection of the zero section with itself is the Euler characteristic $\chi$ of $M$. And it is also clear that the intersection of $i_*[S^n]$ and $z_*[M]$ consists of a single point. The intersection products of $4k$ (resp.\ $4k+2$) dimensional manifolds are symmetric (antisymmetric). Therefore we have:
\begin{lemma}\label{lemma:cup-product} If $M$ is connected, closed, orientable and of dimension $n$, then the intersection pairing of $\dot{T}M$ with respect to the above basis is given by 
\begin{align*}
\begin{pmatrix}0 & 1 \\  (-1)^n & \chi\end{pmatrix}. 
 \end{align*}
If $\alpha$ is a section of $\pi$, then $\alpha_*[M] = (\deg(\alpha)-\chi,1)$ in this basis.
\end{lemma}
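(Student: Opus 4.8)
The plan is to prove Lemma \ref{lemma:cup-product} in two stages: first computing the intersection matrix with respect to the basis $\{i_*[S^n], z_*[M]\}$ of $H_n(\dot{T}M;\bZ) \cong \bZ\oplus\bZ$, then expressing a general section class $\alpha_*[M]$ in this basis. The decomposition $H_n(\dot{T}M;\bZ)\cong \bZ\,i_*[S^n]\oplus\bZ\,z_*[M]$ is already established in the excerpt via the split Gysin sequence \eqref{eq:362}, together with the fact that $z$ is a section of $\pi$ so $\pi_* z_* = \id$.

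**Computing the intersection matrix.** I would compute the three relevant intersection numbers directly from geometric representatives, as sketched in the paragraph preceding the lemma. For $i_*[S^n]\cap i_*[S^n]$: two fibres over distinct points $x\neq y$ of $M$ are disjoint submanifolds both representing $i_*[S^n]$, so this self-intersection is $0$. For $i_*[S^n]\cap z_*[M]$: a single fibre $S^n_x$ meets the zero section transversely in exactly the one point $z(x)$, so this intersection number is $\pm 1$; a careful orientation check (which is the routine bookkeeping I would not grind through) gives $+1$ in one order. For $z_*[M]\cap z_*[M]$: this is the classical fact that the self-intersection of the zero section of a vector bundle equals the Euler number of that bundle, here $e(TM)[M] = \chi$. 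Finally, the symmetry/antisymmetry of the intersection form on a closed oriented $n$-manifold — symmetric if $n\equiv 0\pmod 4$ and antisymmetric if $n\equiv 2\pmod 4$, and in fact the sign on the off-diagonal pairing between an $n$-cycle and an $n$-cycle in a $2n$-manifold $\dot{T}M$ is governed by $(-1)^{n\cdot n}=(-1)^n$ — pins down the $(2,1)$-entry as $(-1)^n$ while the $(1,2)$-entry is $1$. Assembling these gives the matrix $\begin{pmatrix}0 & 1 \\ (-1)^n & \chi\end{pmatrix}$.

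**Expressing $\alpha_*[M]$ in the basis.** Write $\alpha_*[M] = a\,i_*[S^n] + b\,z_*[M]$. Applying $\pi_*$ and using $\pi_* i_* = 0$ (exactness of \eqref{eq:362}) and $\pi_* z_* = \id$ gives $b = 1$, so $\alpha_*[M] = a\,i_*[S^n] + z_*[M]$. To find $a$, I would pair with the basis using the intersection form just computed: the intersection product $\alpha_*[M]\cap z_*[M]$ equals $a\cdot(i_*[S^n]\cap z_*[M]) + (z_*[M]\cap z_*[M]) = a\cdot(\pm1) + \chi$. On the other hand, by the definition of the $\beta$-degree in \S\ref{ss:degree} (with $\beta = z$), when $M$ is orientable $\deg(\alpha) = \deg_z(\alpha)$ is precisely the intersection product of $\alpha_*[M]$ and $z_*[M]$ in $\dot{T}M$. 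Equating, $a = \deg(\alpha) - \chi$ (with the sign conventions matched so that the answer reads $(\deg(\alpha)-\chi, 1)$), giving the stated formula.

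**Main obstacle.** The genuinely content-bearing steps are the two geometric identifications — that $z_*[M]\cap z_*[M] = \chi$ and that the off-diagonal entry has the sign $(-1)^n$ — but both are standard. The real friction will be bookkeeping the orientation conventions consistently: making sure the orientation of $\dot{T}M = S(TM\oplus\epsilon)$ induced from that of $M$ and the fibre, the sign in $i_*[S^n]\cap z_*[M] = +1$, and the sign in the identification of $\deg_z$ with the intersection product all line up so that the final coefficient is $+( \deg(\alpha)-\chi)$ rather than its negative. I would handle this by fixing the convention from \S\ref{ss:degree} (Poincaré dual of $\alpha^*\Th(z)$) as the master definition and deriving the signs in the matrix from it, rather than the other way around; the antisymmetry/symmetry dichotomy then forces internal consistency. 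No deep input beyond Poincaré–Lefschetz duality, the Gysin sequence, and the Euler-class interpretation of the self-intersection of the zero section is needed.
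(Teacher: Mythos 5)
Your proposal is correct and follows essentially the same route as the paper: the matrix entries are obtained from the same geometric facts (disjoint fibres, a single transverse intersection of fibre and zero section, the Euler characteristic as the zero section's self-intersection, and (anti)symmetry of the intersection form for the sign $(-1)^n$), and the class $\alpha_*[M]$ is pinned down exactly as in the paper's proof, by noting its $z_*[M]$-component is $1$ since $\alpha$ is a section and then solving $\deg(\alpha)=\alpha_*[M]\cap z_*[M]=a+\chi$.
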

\begin{proof}
For the second claim, observe that $\alpha$ is an inverse of $\pi$ too, so the second component of $\alpha_*[M]$ is the same as the second component of $z_*[M]$. The first component is obtained from the following equation:
\begin{align*}
\deg(\alpha) &= \alpha_*[M]\cap z_*[M] = (a,1)\begin{pmatrix}0 &  1 \\ (-1)^n & \chi\end{pmatrix}\begin{pmatrix} 0\\1\end{pmatrix} = a+\chi.\qedhere
\end{align*}
\end{proof}
The Gysin sequence \eqref{eq:362} applied to the localised bundle $S^n_{(\ell)}\to \dot{T}M_{(\ell)}\to M$ shows that $H_n(\dot{T}M_{(\ell)})\cong \bZ_{(\ell)}\oplus \bZ$. The first factor is generated as a $\bZ_{(\ell)}$-module by the fundamental class of the fibre, and the second factor is generated by the image of the fundamental class of $M$ under the zero section. The following definition extends the notion of degree to sections of fibrewise localised sphere bundles.
\begin{df} The degree of a section $\alpha$ of $\dot{T}M_{(\ell)}$, denoted $\deg(\alpha)$, is the value $a+\chi\in \bZ_{(\ell)}$, where $(a,1) = \alpha_*[M]$ in our preferred basis. 
\end{df}

\subsection{Fibrewise homotopy equivalences of many degrees}

Let $V_2(E\oplus \epsilon)$ be the fibrewise Stiefel manifold of $E\oplus\epsilon$. If $\sigma$ is a section of $\Gamma(V_2(E\oplus\epsilon)_{(\ell)})$ we denote by $\sigma_0$ the image of $\sigma$ under the localisation of the map that forgets the second vector:
\[\varphi_{(\ell)}\colon \Gamma(V_2(E\oplus\epsilon)_{(\ell)})\lra \Gamma(S(E\oplus\epsilon)_{(\ell)}).\]
We denote by $\Gamma_c(V_2(E\oplus\epsilon)_{(\ell)})$ the space of sections $\sigma$ such that $\sigma_{0}$ is compactly supported.

\begin{lemma}\label{lemma:ultimate}
Let $E$ be a real inner product bundle over a manifold $M$. There is a bundle map
\begin{align*}
V_2(E\oplus \epsilon) &\lra \endo^r(\dot{E})
\end{align*}
for each $r\in\bZ$ and therefore, for each set of primes $\ell$, there are maps
\begin{align*}
\Phi_r^\ell\colon \Gamma_c(V_2(E\oplus\epsilon)_{(\ell)}) &\lra \End^r_c(\dot{E}_{(\ell)})
\end{align*}
which are natural with respect to pullback of bundles. If $M$ is closed and $E=TM$, then $\Phi_r^\ell(\sigma)$ sends sections of degree $k$ to sections of degree $r(k-\deg(\sigma_0)) + \deg(\sigma_0)$.\nopagebreak
\end{lemma}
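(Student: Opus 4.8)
The plan is to construct the bundle map $V_2(E\oplus\epsilon)\to\endo^r(\dot E)$ fibrewise and then sort out the degree bookkeeping using Lemma~\ref{lemma:cup-product}. First I would work over a single point: given a pair of orthonormal vectors $(u,w)$ in a fibre $E_x\oplus\bR$, the pair spans a $2$-plane $P$, and I want a self-map of $S(E_x\oplus\bR)=\dot E_x$ of degree $r$. On the orthogonal complement $P^\perp$ do nothing; on the $2$-sphere obtained by joining the unit circle of $P$ with $P^\perp$, use the standard degree-$r$ self-map of $S^2$ determined by the ordered frame $(u,w)$ (think of $u$ as $1\in\bC\cup\{\infty\}$ and $w$ as $i$, so the map is $z\mapsto z^r$ in that chart) — more precisely, one should take a degree-$r$ self-map of $S(E_x\oplus\bR)$ that is the identity outside a neighbourhood of the great circle through $u$ and fixes $\iota=\infty$; the second vector $w$ records the rotational parameter needed to make this depend continuously (and smoothly) on the frame rather than just on $u$. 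This is manifestly continuous in $(u,w)$ and natural in $M$, giving the desired bundle map and hence, after localising and passing to compactly-supported sections (using that $\sigma_0$ compactly supported forces $\Phi^\ell_r(\sigma)$ to agree with $\iota$ off a compact set), the maps $\Phi^\ell_r$.

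Next I would identify the effect on degrees when $M$ is closed and $E=TM$. Write $\sigma_0$ for the underlying section of $\dot{T}M_{(\ell)}$. By Lemma~\ref{lemma:cup-product} (and its localised version), a section $\alpha$ of degree $k$ has $\alpha_*[M]=(k-\chi,1)$ in the preferred basis $\{i_*[S^n], z_*[M]\}$, and similarly $(\sigma_0)_*[M]=(\deg(\sigma_0)-\chi,1)$. Postcomposing $\alpha$ with the fibrewise degree-$r$ map $\Phi^\ell_r(\sigma)$ has the following effect: in the fibre direction it multiplies the $S^n$-component by $r$, while the map is the identity near $\iota$ and is built relative to the frame $\sigma$, so the "base" contribution is that of $\sigma_0$ itself. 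Concretely, $(\Phi^\ell_r(\sigma)\circ\alpha)_*[M]$ should equal $\sigma_{0*}[M] + r\bigl(\alpha_*[M]-\sigma_{0*}[M]\bigr)$ in $H_n(\dot{T}M_{(\ell)})$: the second coordinates are both $1$ (consistency check: $1+r(1-1)=1$, good), and the first coordinate is $(\deg(\sigma_0)-\chi) + r\bigl((k-\chi)-(\deg(\sigma_0)-\chi)\bigr) = r(k-\deg(\sigma_0)) + \deg(\sigma_0) - \chi$. By the definition of degree this says $\deg(\Phi^\ell_r(\sigma)\circ\alpha) = r(k-\deg(\sigma_0)) + \deg(\sigma_0)$, as claimed.

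The main obstacle, I expect, is pinning down that intermediate formula $(\Phi^\ell_r(\sigma)\circ\alpha)_*[M] = \sigma_{0*}[M] + r(\alpha_*[M]-\sigma_{0*}[M])$ rigorously rather than heuristically — i.e.\ controlling how the fibrewise self-map acts on $H_n$ of the total space. The clean way is to compute $\deg_\beta$ directly: for a suitable reference section $\beta$ (e.g.\ a fibre-constant one, or $z$), one has $\deg_\beta(\Phi^\ell_r(\sigma)\circ\alpha) = (\Phi^\ell_r(\sigma)\circ\alpha)^*\Th(\beta)$ evaluated against $[M]$, and the fibrewise map $\Phi^\ell_r(\sigma)$ pulls back $\Th(\beta)$ to $r\cdot\Th(\beta) + (\text{a base class})$ where the base correction is exactly $\deg(\sigma_0)$-worth of the fibre class, by examining the construction near the great circle determined by $\sigma$. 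Equivalently one can choose $\beta$ so that $\deg_\beta$ is affine-linear on $\pi_0\Gamma_c$ and check the formula on the two basis classes $i_*[S^n]$ and $\sigma_{0*}[M]$ separately, where it is immediate: $\Phi^\ell_r(\sigma)$ multiplies the first by $r$ and fixes the second. I would spell this out using the naturality of $\Phi^\ell_r$ under pullback to reduce to the universal case over $V_2(\bR^{n+1})\to BO(n)$, where the computation is a single explicit check on $S^n$.
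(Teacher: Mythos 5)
Your construction of the bundle map is essentially the paper's: a $2$-frame in $(E\oplus\epsilon)_x$ determines a splitting $S(E_x\oplus\bR)\cong S^1 * S(V)$, with $V$ the orthogonal complement of its span, and one takes the join of the degree-$r$ self-map of $S^1$ with the identity on $S(V)$; this fixes the first frame vector $\sigma_0(x)$, and compact support of $\Phi_r^\ell(\sigma)$ follows because $\sigma_0=\iota$ off a compact set, exactly as you say. (Two small corrections: the fibrewise map does \emph{not} fix $\iota=\infty$ in a general fibre --- $\infty$ is a generic point of the join and is moved; only the first frame vector is fixed, which is all that is needed. And your ``identity outside a neighbourhood of the great circle through $u$'' variant is neither the natural construction nor obviously of degree $r$; the join model is the clean statement.) Also, the ``main obstacle'' you flag is not one: $\Phi_r^\ell(\sigma)$ is an honest map over $M$, so its effect on $H_n(\dot{T}M_{(\ell)})$ is a homomorphism, and it is pinned down by the two facts you yourself list at the end --- it multiplies the fibre class $i_*[S^n]$ by $r$ (fibrewise degree $r$) and it fixes $\sigma_{0*}[M]$ (since $\Phi_r^\ell(\sigma)\circ\sigma_0=\sigma_0$). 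As $\alpha_*[M]-\sigma_{0*}[M]$ is a $\bZ_{(\ell)}$-multiple of the fibre class (both have second coordinate $1$ in the basis of Lemma~\ref{lemma:cup-product}), your formula $(\Phi_r^\ell(\sigma)\circ\alpha)_*[M]=\sigma_{0*}[M]+r\bigl(\alpha_*[M]-\sigma_{0*}[M]\bigr)$ follows by linearity; this is precisely the paper's computation (written there as a $2\times 2$ matrix). The proposed detours via $\deg_\beta$ and Thom classes, or via a ``universal case over $BO(n)$'' (not a closed manifold, so the degree formula does not even make sense there), are unnecessary.

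The genuine gap is the non-orientable case. Everything in your degree computation --- the fundamental class $[M]$, the identification $\alpha_*[M]=(k-\chi,1)$, Lemma~\ref{lemma:cup-product} itself --- requires $M$ closed and \emph{orientable}, whereas the lemma is stated, and later used (e.g.\ for Theorem~\ref{thm:a}), for arbitrary closed $M$. The paper supplies the missing step by pulling back along the orientation double cover $f\colon\tilde{M}\to M$: by the naturality of $\Phi_r^\ell$ and of postcomposition, $\Phi_r^\ell(f^*\sigma)(f^*s)=f^*\bigl(\Phi_r^\ell(\sigma)(s)\bigr)$, while $\deg(f^*s)=2\deg(s)$ and $\chi(\tilde{M})=2\chi$; applying the orientable case upstairs gives $2\deg\bigl(\Phi_r^\ell(\sigma)(s)\bigr)=r\bigl(2\deg(s)-2\deg(\sigma_0)\bigr)+2\deg(\sigma_0)$, and dividing by $2$ finishes the proof. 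You would need to add this argument (or an equivalent one using the twisted Thom class of \S\ref{ss:degree}) for the statement as given.
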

\begin{proof} A $2$-frame in $V_2(E\oplus \epsilon)$ determines a linear embedding $\bR^2\to (E\oplus\epsilon)_x$. If we denote by $V$ its orthogonal complement, we obtain canonical isomorphisms $\bR^2\oplus V\cong (E\oplus\epsilon)_x$ which induce canonical isomorphisms $S^1*S(V)\cong S(E\oplus\epsilon)$. This allows to define a degree $r$ map
\[\xymatrix{S(E\oplus\epsilon)_x \cong S^1*S(V) \ar[rr]^{e^{2\pi i r}*\Id} && S^1*S(V)\cong S(E\oplus\epsilon)_x.}\]
After fibrewise localizing and taking sections, one obtains the second map. Observe that the above map fixes the first vector in the $2$-frame, hence the image of a section in $\Gamma_c(V_2(E\oplus\epsilon)_{(\ell)})$ will fix the section $\iota$ outside a compact subset.

By construction, $f^*(\Phi_r^\ell(\sigma)) = \Phi_r^\ell(f^*(\sigma))$, so these maps are natural. Similarly, observe that
\begin{equation}\label{eq:09}
\End_c^r(\dot{E}_{(\ell)})\times \Gamma(\dot{E}_{(\ell)})\lra \Gamma(\dot{E}_{(\ell)})
\end{equation}
is also natural with respect to pullback of bundles.

Now we describe the effect of $\phi_r:=\Phi_r^\ell(\sigma)$ on components of $\Gamma(\dot{T}M_{(\ell)})$ when $M$ is closed. Assume first that $M$ is orientable, in which case $\dot{T}M$ is also orientable and Lemma \ref{lemma:cup-product} applies. First we identify the induced map $(\phi_{r})_*\colon H_n(\dot{T}M_{(\ell)})\to H_n(\dot{T}M_{(\ell)})$. Since $\phi_r(\sigma_0) = \sigma_0$, we have
\[(\phi_r)_*(\deg(\sigma_0)-\chi,1) = (\deg(\sigma_0)-\chi,1).\]
On the other hand, $\phi_r$ acts on the fibre over a point as a map of degree $r$, hence 
\[(\phi_r)_*(1,0) = (r,0).\]
From this we deduce that $(\phi_r)_*$ has the form
\[\begin{pmatrix}
r & -(r-1)(\deg(\sigma_0)-\chi) \\
0 & 1
\end{pmatrix},\]
hence, for an arbitrary section $\alpha$, we have that 
\begin{align*}
(\deg(\phi_r(\alpha)_*[M])-\chi,1) &= \phi_{r}(\alpha)_*[M] = (\phi_r)_*(\alpha_*[M])\\
&= ( r(\deg(\alpha)-\chi) - (r-1)(\deg(\sigma_0)-\chi),1)
\end{align*}
and so $\deg(\phi_r(\alpha)) = r\deg(\alpha) - (r-1)\deg(\sigma_0) = r(\deg(\alpha)-\deg(\sigma_0)) + \deg(\sigma_0)$.

Assume now that $M$ is non-orientable. We take then the orientation cover $f\colon \tilde{M}\to M$. If $s$ is a section of $\dot{T}M_{(\ell)}$ and $\sigma$ is a section of $V_2(TM\oplus\epsilon)_{(\ell)}$, we can pull back both sections along $f$ to obtain a section $f^*s$ of $\dot{T}\tilde{M}_{(\ell)}$ and a section $f^*\sigma$ of $V_2(T\tilde{M}\oplus\epsilon)_{(\ell)}$. Then, because $f$ is a double cover, $\deg(f^*s) = 2\deg(s)$, and by the naturality of $\phi_r$ and \eqref{eq:09} we have that $\Phi_r^\ell(f^*\sigma)(f^*s) = f^*(\Phi_r^\ell(\sigma)(s))$. On the other hand, since $\tilde{M}$ is orientable, by the previous paragraph we know that $\deg(\Phi_r^\ell(f^*\sigma)(s)) = r\deg(f^*s)-(r-1)\deg(f^*\sigma_0)$. As a consequence:
\begin{align*}
2\deg(\Phi_r^\ell(\sigma)(s)) &= \deg(f^*(\Phi_r^\ell(\sigma)(s))) \\
&= \deg(\Phi_r^\ell(f^*\sigma)(f^*(s)) \\
&= r(\deg(f^*s)-\deg(f^*\sigma_0)) + \deg(f^*\sigma_0) \\
&= r(2\deg(s)-2\deg(\sigma_0)) + 2\deg(\sigma_0).\qedhere
\end{align*}
\end{proof}

We now face the following lifting problem:
\[\xymatrix{
&& V_2(TM\oplus\epsilon)_{(\ell)}\ar[d]^{\varphi_{(\ell)}} \\
M\ar[rr]^{\sigma_0}\ar@{-->}[urr]^{\sigma} && S(TM\oplus\epsilon)_{(\ell)},
}\]
\begin{proposition}\label{prop:lifts}
Let $M$ be closed and of dimension $n\geq 2$. When $n$ is odd every diagram has a lift, and when $n$ is even the diagram has a lift precisely for sections $\sigma_0$ of degree $\chi/2$ \textup{(}which exist whenever $\chi$ is even or $2\notin \ell$\textup{)}.
\end{proposition}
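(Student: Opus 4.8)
The lifting problem asks for a section $\sigma$ of the fibrewise Stiefel manifold $V_2(TM\oplus\epsilon)_{(\ell)}$ lifting a given section $\sigma_0$ of $S(TM\oplus\epsilon)_{(\ell)}$. The fibre of $\varphi\colon V_2(TM\oplus\epsilon)\to S(TM\oplus\epsilon)$ over a point of $S(TM\oplus\epsilon)$ is the unit sphere in the orthogonal complement of the chosen first vector, i.e.\ a sphere $S^{n-1}$ (the ambient space $(TM\oplus\epsilon)_x$ being $(n+1)$-dimensional). So after choosing $\sigma_0$ we are pulling back $V_2(TM\oplus\epsilon)\to S(TM\oplus\epsilon)$ along $\sigma_0\colon M\to S(TM\oplus\epsilon)$ to get an $S^{n-1}$-bundle over $M$ (fibrewise localised), and we must decide when it has a section. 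The plan is: first reduce to the case $n\geq 2$ where the fibre is connected, then run obstruction theory for sections of a fibrewise-localised sphere bundle over the $n$-manifold $M$, and finally identify the single obstruction that can occur — it lives in $H^n(M;\pi_{n-1}(S^{n-1}_{(\ell)}))$ with twisted coefficients — and compute it in terms of $\deg(\sigma_0)$ and $\chi$.

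First I would set up the obstruction theory. The bundle $\sigma_0^*V_2(TM\oplus\epsilon)_{(\ell)}\to M$ has fibre $S^{n-1}_{(\ell)}$, which is $(n-2)$-connected; its first nonvanishing homotopy group is $\pi_{n-1}(S^{n-1}_{(\ell)})=\bZ_{(\ell)}$. Since $M$ is a closed $n$-manifold, the only obstruction to extending a section over the skeleta lies in $H^n(M;\pi_{n-1}(S^{n-1}_{(\ell)}))$ with local coefficients twisted by the action of $\pi_1(M)$ on the fibre. When $n$ is odd, $S^{n-1}_{(\ell)}$ is even-dimensional and the relevant point is that the Euler-class-type obstruction can always be killed: more concretely, the complement of the line spanned by $\sigma_0$ inside $TM\oplus\epsilon$ is an $n$-plane bundle, and a nowhere-zero section of it — equivalently a lift $\sigma$ — exists because its Euler class (the obstruction) vanishes after localisation, or alternatively because $TM\oplus\epsilon$ always admits a nowhere-zero section and for $n$ odd one can upgrade this to a $2$-frame. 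I would make the odd case precise by noting that the obstruction class, suitably interpreted, is an integral class whose only constraint involves $\chi$ and the degree, and that for $n$ odd the twisting makes the relevant $H^n(M;\bZ_{(\ell)}^{\mathrm{tw}})$-obstruction automatically zero (this is the same phenomenon that makes every odd-dimensional closed manifold have Euler characteristic zero).

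For $n$ even, the obstruction is a genuine Euler-type class. I would identify $\sigma_0^*V_2(TM\oplus\epsilon)$ with the unit sphere bundle of the orthogonal complement bundle $W:=\sigma_0^\perp\subset TM\oplus\epsilon$, a rank-$n$ bundle over $M$; a section of the sphere bundle is a nowhere-zero section of $W$, which exists iff the (twisted, $\bZ_{(\ell)}$-coefficient) Euler class $e(W_{(\ell)})\in H^n(M;\bZ_{(\ell)}^{\mathrm{or}})$ vanishes. Now $W_{(\ell)}\oplus\langle\sigma_0\rangle\cong (TM\oplus\epsilon)_{(\ell)}$, and evaluating Euler classes (using that $\epsilon$ and the line bundle $\langle\sigma_0\rangle$ contribute in a controlled way) relates $e(W)$ to the Euler class of $TM$, i.e.\ to $\chi\cdot[M]$, but shifted by the degree of $\sigma_0$: concretely, reading off the $H_n(\dot TM_{(\ell)})$-computation of Lemma~\ref{lemma:cup-product} and the definition of $\deg(\sigma_0)$, the vanishing of the obstruction is equivalent to $2\deg(\sigma_0)-\chi=0$ in $\bZ_{(\ell)}$, i.e.\ $\deg(\sigma_0)=\chi/2$. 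Such a section $\sigma_0$ exists precisely when $\chi/2\in\bZ_{(\ell)}$, which happens when $\chi$ is even or $2\notin\ell$ (so that $2$ is invertible); and conversely the obstruction analysis shows no lift exists for any other degree.

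\textbf{Main obstacle.} The delicate point is pinning down the coefficient system and the precise identification of the obstruction class with the quantity $2\deg(\sigma_0)-\chi$. One must track the $\pi_1(M)$-action on $\pi_{n-1}(S^{n-1}_{(\ell)})$ — it is the orientation character of $W$, which agrees with that of $TM$ since $\sigma_0$ spans a trivial (hence orientable) line summand — and then match the twisted Euler class $e(W)$ against the description of degrees in $H_n(\dot TM_{(\ell)})$ from \S\ref{ss:degree}. Getting the normalisation right (the factor of $2$, and why it is $\chi/2$ rather than $\chi$) is where I would be most careful: it comes from the fact that the fibre sphere of $\dot TM$ contributes a hyperbolic summand, so that $\sigma_{0*}[M]=(\deg(\sigma_0)-\chi,1)$ has self-intersection $2\deg(\sigma_0)-\chi$ (using the pairing $\begin{pmatrix}0&1\\1&\chi\end{pmatrix}$ for $n$ even from Lemma~\ref{lemma:cup-product}), and this self-intersection number is exactly the Euler number of $W$. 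The non-orientable case is handled, as in Lemma~\ref{lemma:ultimate}, by pulling back to the orientation double cover, where the degree doubles and the same computation applies; one then checks the obstruction descends.
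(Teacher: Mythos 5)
Your overall strategy is the paper's: reduce to finding a section of the pullback $S^{n-1}_{(\ell)}$-bundle $\eta_{(\ell)}=\sigma_0^*V_2(TM\oplus\epsilon)_{(\ell)}$ over the $n$-manifold $M$, observe that the single obstruction is an Euler class in top (twisted) cohomology, identify it with $2\deg(\sigma_0)-\chi$ via the intersection pairing of Lemma \ref{lemma:cup-product}, and treat the non-orientable case by passing to the orientation double cover. The odd-dimensional case and the integral ($\ell=\Spec\bZ$) even-dimensional case are fine as you describe them.

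There is, however, a genuine gap exactly in the case that the parenthetical of the proposition is about. When $\ell$ is a proper set of primes, $\sigma_0$ is a section of the \emph{fibrewise localised} bundle $S(TM\oplus\epsilon)_{(\ell)}$, and its degree is only an element of $\bZ_{(\ell)}$; in the crucial situation ($\chi$ odd, $2\notin\ell$, $\deg(\sigma_0)=\chi/2$) it does not come from an honest section of $S(TM\oplus\epsilon)$. Then your bundle $W:=\sigma_0^\perp\subset TM\oplus\epsilon$ simply does not exist, and the identification ``Euler number of $W$ $=$ self-intersection $\sigma_{0*}[M]\cap\sigma_{0*}[M]$'' cannot be invoked: that identification is a geometric statement about zero sections of genuine vector bundles, not about sections of a localised fibration. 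This is precisely the point the paper flags (``the above computation is no longer valid, as it relies on a geometric interpretation of the Euler class'') and repairs by a naturality argument: the obstruction for $\eta_{(\ell)}$ is $\sigma_0^*(e)$ for the Euler class $e$ of $\varphi_{(\ell)}$ on the total space $\dot{T}M_{(\ell)}$; writing $e^\vee=(a,b)$ in the basis of Lemma \ref{lemma:cup-product}, pairing against the classes of \emph{integral} sections (where the self-intersection computation \eqref{eq:923} is valid and gives $2\deg(\sigma_0)-\chi$ for every integer degree) forces $e^\vee=(-\chi,2)$, and only then does one pull back along an arbitrary localised $\sigma_0$ to get $\sigma_0^*(e)^\vee=2\deg(\sigma_0)-\chi$. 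Your ``main obstacle'' paragraph gestures at matching the Euler class with the degree bookkeeping, but as written the argument would fail for non-integral $\deg(\sigma_0)$, which is the case needed to produce lifts when $\chi$ is odd and $2\notin\ell$. (A similar, smaller caveat applies to the non-orientable case: one should argue via naturality under the orientation cover, as in Lemma \ref{lemma:ultimate} and the paper's proof, rather than via a literal self-intersection downstairs.)
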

\begin{proof}
The above problem is equivalent to find a section of the pullback $\eta_{(\ell)}$ of $\varphi_{(\ell)}$ along $\sigma_0$, which is an $S_{(\ell)}^{n-1}$-bundle over an $n$-dimensional manifold. If $n$ is odd, $\eta_{(\ell)}$ has always a section, hence in that case every section $\sigma_0$ admits a lift. If $n$ is even, the complete obstruction (if $M$ is orientable) is the Euler class $e(\eta_{(\ell)})$ of $\eta_{(\ell)}$. We proceed to compute it:

Assume first that $M$ is orientable and $\ell=\Spec \bZ$. The bundle $\eta$ is the unit sphere bundle of $\sigma_0^*T^v(TM\oplus\epsilon)$, whose Euler number can be computed by taking the self-intersection of its zero section in the fibrewise one point compactification of $\sigma_0^*T^v(TM\oplus\epsilon)$, which is precisely $S(TM\oplus\epsilon)$. As the zero section of $\sigma_0^*T^v(TM\oplus\epsilon)$ is $\sigma_0$, we have that (we denote by $x^\vee$ the Poincar\'e dual of $x$)
\begin{align}\label{eq:923}
e(\eta)^\vee &= \sigma_0[M]\cap\sigma_0[M] \\
&= 
(\deg(\sigma_0) - \chi,1)
\begin{pmatrix}0 &  1 \\ 1 & \chi\end{pmatrix} 
\begin{pmatrix}\deg(\sigma_0)-\chi \\  1\end{pmatrix}
= 2\deg(\sigma_0) - \chi.
\end{align}
Hence a section admits a lift if and only if $\deg(\sigma_0) = \chi/2$. 

Let us assume now that $M$ is orientable and $\ell$ is a proper subset of $\Spec \bZ$. In this case, the above computation is no longer valid, as it relies on a geometric interpretation of the Euler class. We will first compute the Euler class $e$ of $\varphi_{(\ell)}$:
\begin{align*}
e(\eta_{(\ell)}) &= \sigma_0^*(e)^\vee = e\frown \sigma_0[M] = e^\vee\cap \sigma_0[M],
\end{align*}
and therefore, if $e^\vee = (a,b)$ in the basis described before, it holds that
\begin{align*}
e(\eta_{(\ell)})^\vee = \sigma_0^*(e)^\vee &= 
(a,b)
\begin{pmatrix}0 &  1 \\ 1 & \chi\end{pmatrix} 
\begin{pmatrix}\deg(\sigma_0)-\chi\\1\end{pmatrix}
= a+b\deg(\sigma_0).
\end{align*}
This, together with \eqref{eq:923} (which holds for integral values), implies that $e^\vee = (-\chi,2)$, and therefore that $\sigma_0^*(e)^\vee = 2\deg(\sigma_0)-\chi$. Hence, after localising we obtain that only sections of degree $\chi/2$ admit a lift.

Finally, let $M$ be non-orientable and let $f\colon \tilde{M}\to M$ be the orientation cover of $M$. Then $ \deg_{f^*\sigma_0}(f^*\sigma_0) = 2\deg_{\sigma_0}(\sigma_0)$ and the Euler characteristic of $\tilde{M}$ is $2\chi$, so $\deg_{\sigma_0}(\sigma_0) = 0$ if and only if $(2\chi)/2 = \deg(f^*\sigma_0) = 2\deg(\sigma_0)$. Hence only sections of degree $\chi/2$ have lifts.
\end{proof}

Combining Lemma \ref{lemma:ultimate} and Proposition \ref{prop:lifts}, we have the following (see immediately above Lemma \ref{lemma:localisation} for the notation $[f]$).

\begin{corollary}\label{coro:existence}
Let $\ell$ be a collection of primes. Suppose that $\dim(M)$ is odd and we are given any $r,d\in\bZ$. Then there exists an endomorphism $f\in\End_c^r(\dot{T}M_{(\ell)})$ with $[f](k) = r(k-d) +d$. Suppose that $\dim(M)$ is even and we are given any $r\in\bZ$. Assume also that $\chi/2 \in \bZ_{(\ell)}$, i.e., either $\chi$ is even or $2\not\in\ell$. Then there exists an endomorphism $f\in\End_c^r(\dot{T}M_{(\ell)})$ with $[f](k) = r(k-\chi/2)+\chi/2$.
\end{corollary}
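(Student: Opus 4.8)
The plan is to combine Lemma~\ref{lemma:ultimate} and Proposition~\ref{prop:lifts} directly. Recall from Lemma~\ref{lemma:ultimate} that for any section $\sigma\in\Gamma_c(V_2(TM\oplus\epsilon)_{(\ell)})$ the endomorphism $f:=\Phi_r^\ell(\sigma)\in\End_c^r(\dot{T}M_{(\ell)})$ acts on path-components by $[f](k)=r\bigl(k-\deg(\sigma_0)\bigr)+\deg(\sigma_0)$, where $\sigma_0=\varphi_{(\ell)}(\sigma)$. So the corollary reduces to producing, for the prescribed degree (namely an arbitrary $d\in\bZ$ when $\dim M$ is odd, and $\chi/2$ when $\dim M$ is even), a section $\sigma_0$ of $S(TM\oplus\epsilon)_{(\ell)}=\dot{T}M_{(\ell)}$ of that degree which admits a lift along $\varphi_{(\ell)}$ to a $2$-frame; then $f=\Phi_r^\ell(\sigma)$ is the desired endomorphism.

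When $\dim M$ is even there is nothing further to do: Proposition~\ref{prop:lifts} provides both the existence of a section $\sigma_0$ of degree $\chi/2$ -- valid exactly when $\chi$ is even or $2\notin\ell$, i.e.\ when $\chi/2\in\bZ_{(\ell)}$ -- and the fact that any such $\sigma_0$ lifts, so $[f](k)=r(k-\chi/2)+\chi/2$. When $\dim M$ is odd, Proposition~\ref{prop:lifts} tells us that \emph{every} section $\sigma_0$ lifts, so I only have to exhibit a section of $\dot{T}M_{(\ell)}$ of prescribed integer degree $d$. For this I would start from the zero section $z$, which has $\deg(z)=\chi=0$ (the Euler characteristic of a closed odd-dimensional manifold vanishes), and modify it inside a single coordinate ball $B\cong B^n$: over $B$ the bundle is trivial, $\dot{T}M|_B\cong B\times S^n$, and replacing $z|_B$ by the section given in this trivialisation by a map $(B^n,\partial B^n)\to(S^n,\{0\})$ of degree $m$ produces a section whose $\deg$ differs from $\deg(z)$ by $\pm m$, as is visible from the description of $\deg=\deg_z$ in \S\ref{ss:degree} as the (Poincar\'e dual of the) pullback of the Thom class. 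Letting $m$ range over $\bZ$ realises every integer degree; post-composing with the fibrewise localisation $\dot{T}M\to\dot{T}M_{(\ell)}$ gives the required $\sigma_0\in\Gamma_c(\dot{T}M_{(\ell)})$ of degree $d\in\bZ\subset\bZ_{(\ell)}$, and lifting it via Proposition~\ref{prop:lifts} and applying $\Phi_r^\ell$ finishes the proof.

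The argument is essentially pure bookkeeping built on the two previous results; the one point that is not completely formal is the last one, namely verifying that the local modification inside a ball changes the degree by precisely the degree of the clutching map (with the correct sign), and that this is compatible with passing to the localisation. Everything else is a direct appeal to Lemma~\ref{lemma:ultimate} and Proposition~\ref{prop:lifts}.
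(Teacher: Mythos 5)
Your argument is correct and is essentially the paper's own route: the corollary is stated there without proof, as the direct combination of Lemma \ref{lemma:ultimate} (the formula $[\Phi_r^\ell(\sigma)](k)=r(k-\deg(\sigma_0))+\deg(\sigma_0)$) with Proposition \ref{prop:lifts} (liftability of $\sigma_0$, and existence of degree-$\chi/2$ sections in the even case under the stated hypothesis on $\chi$ and $\ell$). Your extra step in the odd-dimensional case -- realising every integer degree $d$ by modifying the zero section (of degree $\deg(z)=\chi=0$) over a ball by a clutching map of degree $m$, then localising -- is a correct filling-in of the one existence statement the paper leaves implicit.
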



\subsection{Proof of Theorem \ref{thm:a}}

As promised in the introduction, in the next three propositions we will provide the middle map in the zigzag \eqref{eq:932}, from which the assertions in the theorem will follow by virtue of Lemma \ref{lemma:localisation}.

\begin{proposition}
If $\dim M$ is odd and $\ell$ is any set of odd primes, then there are homotopy equivalences
\begin{align*}
\Gamma_c(\dot{T}M)_{k}&\overset{\simeq}{\lra} \Gamma_c(\dot{T}M)_{k+2} \\
\Gamma_c(\dot{T}M_{(\ell)})_{k}&\overset{\simeq}{\lra} \Gamma_c(\dot{T}M_{(\ell)})_{k+1}.
\end{align*}
\end{proposition}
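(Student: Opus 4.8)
The plan is to apply Corollary \ref{coro:existence} together with Lemma \ref{lemma:localisation}, but actually we only need the homotopy-equivalence statement, which comes directly from the discussion preceding Lemma \ref{lemma:localisation}: an endomorphism $f\in\End_c^r(\dot{T}M_{(\ell)})$ with $r$ a unit in $\bZ_{(\ell)}$ induces a homotopy equivalence $\Gamma_c(\dot{T}M_{(\ell)})_k\to\Gamma_c(\dot{T}M_{(\ell)})_{[f](k)}$ between path-components. So the task reduces to finding endomorphisms with the right degree behaviour.

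First I would handle the second equivalence. Here $\ell$ is a set of odd primes, so $\dim M$ odd and $r=2$ is a unit in $\bZ_{(\ell)}$. Since $\dim M$ is odd, Corollary \ref{coro:existence} applies with arbitrary $r,d\in\bZ$; taking $r=2$ and $d=0$ (any $d$ would do, but $d=0$ is cleanest) gives $f\in\End_c^2(\dot{T}M_{(\ell)})$ with $[f](k)=2k$. Hmm — that gives $\Gamma_c(\dot{T}M_{(\ell)})_k\simeq\Gamma_c(\dot{T}M_{(\ell)})_{2k}$, not $k\mapsto k+1$. To get $k\mapsto k+1$ I instead want an endomorphism of odd degree $r$ with $[f](k)=r(k-d)+d = k+1$; choosing $r=-1$ (a unit in any $\bZ_{(\ell)}$) and solving $-(k-d)+d = k+1$, i.e.\ $2d-2k = k+1$ — that depends on $k$, which is not allowed. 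The correct move: Corollary \ref{coro:existence} produces, for \emph{fixed} $r,d$, a \emph{single} endomorphism $f$ whose induced map on $\pi_0$ is $k\mapsto r(k-d)+d$ for \emph{all} $k$ simultaneously. So to connect $\Gamma_c(\dot{T}M_{(\ell)})_k$ to $\Gamma_c(\dot{T}M_{(\ell)})_{k+1}$ by a homotopy equivalence, I should take a composite: apply $f$ with $(r,d)=(-1,0)$ to get $\Gamma_c(\dot{T}M_{(\ell)})_k\simeq\Gamma_c(\dot{T}M_{(\ell)})_{-k}$, and separately apply $f$ with $(r,d)=(-1,j)$ — wait, that still does not isolate a shift by $1$. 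The clean solution is that two such affine maps $k\mapsto r(k-d)+d$ generate (under composition) a large group of affine transformations of $\bZ$ with unit slopes localised at $\ell$; in particular, using $r=-1$ with two different centres $d_1,d_2$, the composite is translation by $2(d_2-d_1)$, which is even, and combined with an $r=-1$ reflection gives all affine maps $k\mapsto \pm k + c$. To reach $k\mapsto k+1$ we genuinely need an \emph{odd} translation, hence we need $r=-1$ centred so that a single application lands on $k+1$ — impossible for all $k$ at once. So the statement as phrased must really be: for the specified source degree $k$ there \emph{exists} a homotopy equivalence to degree $k+1$; and indeed Corollary \ref{coro:existence} with $r=-1$, $d$ chosen so that $-(k-d)+d=k+1$, i.e.\ $d=k+\tfrac12$ — not an integer. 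I must be misreading; the honest approach is: take $r=-1$, $d=0$, giving $\Gamma_c(\dot{T}M_{(\ell)})_k\simeq\Gamma_c(\dot{T}M_{(\ell)})_{-k}$; take $r=3$, $d=0$, giving $\simeq\Gamma_c(\dot{T}M_{(\ell)})_{3k}$ ($3$ is a unit at odd primes); composing suitable such maps we can reach \emph{any} odd multiple of $k$ and any value differing by an even amount, and crucially, since $-k\equiv k$ and $3k-k=2k$, we stay in a fixed residue class mod $2$. Therefore the map landing in degree $k+1$ must come from somewhere else — and it does: Lemma \ref{lemma:ultimate} with $E=TM$ produces, via a section $\sigma$ of $V_2(TM\oplus\epsilon)_{(\ell)}$ with $\deg(\sigma_0)=d$ arbitrary (such lifts exist freely when $\dim M$ is odd, by Proposition \ref{prop:lifts}), an endomorphism of degree $r$ sending degree $k$ to $r(k-d)+d$; choosing $r$ odd and $d$ appropriately \emph{after fixing $k$} gives the single required equivalence $\Gamma_c(\dot{T}M_{(\ell)})_k\to\Gamma_c(\dot{T}M_{(\ell)})_{k+1}$. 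Concretely: for the given $k$, set $r=-1$ is still stuck; set $r=3,\ d$ with $3(k-d)+d=k+1$, i.e.\ $2k-2d=1$ — parity obstruction again. The resolution is simply that the first (non-localised) equivalence jumps by $2$ because the only available unit degree is $r=-1$ (as $2$ is not a unit in $\bZ$), giving $k\mapsto -k$, and then composed with itself shifted — giving $k\mapsto k+2m$; whereas once we invert an odd prime, $r=2$ becomes available, and $2(k-d)+d=2k-d=k+1$ forces $d=k-1$, an integer, so $f$ with $(r,d)=(2,k-1)$ does the job and $[f](k)=k+1$.

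So, concretely and correctly: for the first equivalence, since $\dim M$ is odd, Corollary \ref{coro:existence} (with $\ell=\Spec\bZ$, $r=-1$, $d=0$) gives $f\in\End_c^{-1}(\dot{T}M)$ with $[f](k)=-k$ for all $k$; applying the construction of Lemma \ref{lemma:ultimate} twice with different centres — precisely, take $\sigma$ with $\deg(\sigma_0)=d$ and $r=-1$, yielding $[f](k)=2d-k$ — the composite of the reflections about $d=0$ and $d=1$ is translation by $2$, so we obtain a homotopy equivalence $\Gamma_c(\dot{T}M)_k\simeq\Gamma_c(\dot{T}M)_{k+2}$. For the second, $\ell$ is a set of odd primes so $2$ is a unit in $\bZ_{(\ell)}$; Corollary \ref{coro:existence} with $r=2$ and $d$ equal to the given $k$ minus $1$ yields $f\in\End_c^2(\dot{T}M_{(\ell)})$ with $[f](k)=2(k-(k-1))+(k-1)=k+1$, whence $\Gamma_c(\dot{T}M_{(\ell)})_k\overset{\simeq}{\lra}\Gamma_c(\dot{T}M_{(\ell)})_{k+1}$ by the paragraph preceding Lemma \ref{lemma:localisation}. (One may alternatively fix $d=0$ and compose: $k\mapsto 2k\mapsto\cdots$ together with the degree $(-1)$ reflections generates translation by $1$ on the orbit of $k$, but the single endomorphism above is more direct.)

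The main obstacle is bookkeeping of which degree-shifts are actually realisable: the point is that a \emph{single} endomorphism realises the affine map $k\mapsto r(k-d)+d$ uniformly in $k$, and a homotopy equivalence between two \emph{named} path-components only requires the existence of such an endomorphism carrying one named degree to the other, so one is free to let $d$ depend on $k$. Once this is recognised the proof is immediate from Corollary \ref{coro:existence} and the homotopy-inverse statement (via \cite{Dold}, Theorem 3.3) recorded just before Lemma \ref{lemma:localisation}; no homological input is needed since we are asserting genuine homotopy equivalences, not merely $\bZ_{(\ell)}$-homology isomorphisms.
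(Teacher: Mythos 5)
Your argument is correct and is essentially the paper's proof: both rest on Corollary \ref{coro:existence} plus Dold's theorem (the discussion above Lemma \ref{lemma:localisation}), with the key realisation that $d$ may be chosen depending on the named component $k$; your second map ($r=2$, $d=k-1$) is exactly the paper's. The only cosmetic difference is that for the integral map the paper uses a single degree $-1$ endomorphism with $d=k+1$ to land in degree $k+2$, whereas you compose two degree $-1$ reflections (centres $0$ and $1$), which is an equally valid use of the same ingredients.
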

\begin{proof}
By Corollary \ref{coro:existence} and Theorem 3.3 of \cite{Dold} (cf.\ the discussion above Lemma \ref{lemma:localisation}), there exist homotopy equivalences
\[\Gamma_c(\dot{T}M_{(\ell)})_{k}\lra \Gamma_c(\dot{T}M_{(\ell)})_{r(k-d)+d}\]
for all integers $r$ and $d$ such that $r\notin \ell\bZ$ (if $\ell = \Spec \bZ$), then the condition becomes that $r=1,-1$). Observe first that if $r$ is odd, then $k$ and $r(k-d)+d$ have the same parity. Hence if $k$ and $j$ have different parity then a homotopy equivalence
$\Gamma_c(\dot{T}M_{(\ell)})_{k}\lra \Gamma_c(\dot{T}M_{(\ell)})_{j}$ as above exists only if $2\notin \ell$. 

Taking $r=-1$ and $d=k+1$ (and $\ell=\Spec \bZ$) we obtain the first map. Taking $r=2$ and $d=k-1$ we obtain the second map.
\end{proof}

\begin{proposition}
Suppose that $\dim M$ is even, $\ell$ is a set of primes and $k$, $j$ are integers such that $(2k-\chi)_\ell = (2j-\chi)_\ell$. If $\chi$ is odd, assume also that $2\not\in\ell$. Then there is a
zigzag of homotopy equivalences between $\Gamma_c(\dot{T}M_{(\ell)})_{k}$ and $\Gamma_c(\dot{T}M_{(\ell)})_{j}$. If $j=\chi-k$, there is a homotopy equivalence $\Gamma_c(\dot{T}M)_{k} \to \Gamma_c(\dot{T}M)_{j}$.
\end{proposition}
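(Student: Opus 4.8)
The plan is to deduce the proposition from Corollary~\ref{coro:existence} together with Theorem~3.3 of \cite{Dold}. First I would record the mechanism. Since $\dim M$ is even and the hypotheses guarantee $\chi/2\in\bZ_{(\ell)}$ (either $\chi$ is even, or $\chi$ is odd and $2\notin\ell$), Corollary~\ref{coro:existence} supplies, for each integer $r$, an endomorphism $f\in\End_c^r(\dot{T}M_{(\ell)})$ with $[f](m)=r(m-\chi/2)+\chi/2$ for all $m$; and when $r$ is a unit in $\bZ_{(\ell)}$, post-composition with $f$ is a homotopy equivalence between the corresponding path-components of $\Gamma_c(\dot{T}M_{(\ell)})$, as in the discussion preceding Lemma~\ref{lemma:localisation}. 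The point is that if one re-parametrises the component of index $m$ by the integer $2m-\chi$, such an $f$ merely multiplies this parameter by $r$, since $2[f](m)-\chi=r(2m-\chi)$. So the proposition reduces to the elementary assertion that $2k-\chi$ and $2j-\chi$ can be linked by a zigzag of multiplications by integers that are units in $\bZ_{(\ell)}$, passing only through integers of the form $2m-\chi$ with $m\in\bZ$.

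To prove that assertion I would argue as follows. If $2k-\chi=0$ then $\chi$ is even and (provided $\ell\neq\emptyset$) the hypothesis $(2k-\chi)_\ell=(2j-\chi)_\ell$ forces $2j-\chi=0$, hence $j=k$; so we may assume $2k-\chi\neq 0$, and then also $2j-\chi\neq 0$. Set $d=\gcd(|2k-\chi|,|2j-\chi|)>0$. Since $2k-\chi\equiv 2j-\chi\equiv\chi\pmod 2$, the integer $d$ has the same parity as $\chi$, so $k'':=(d+\chi)/2$ is an integer; this is the one spot where a little care is needed, and it is why it is convenient to pass through a gcd rather than, say, through the $\ell$-primary part of $2k-\chi$. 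The integers $r:=(2k-\chi)/d$ and $r':=(2j-\chi)/d$ are units in $\bZ_{(\ell)}$: for each prime $p\in\ell$ the hypothesis gives $v_p(2k-\chi)=v_p(2j-\chi)$, whence $v_p(d)=v_p(2k-\chi)$ and thus $v_p(r)=v_p(r')=0$. Corollary~\ref{coro:existence} applied with $r$ and with $r'$ then yields $f\in\End_c^r(\dot{T}M_{(\ell)})$ and $f'\in\End_c^{r'}(\dot{T}M_{(\ell)})$ with $[f](k'')=k$ and $[f'](k'')=j$ (because $r(2k''-\chi)=rd=2k-\chi$, and likewise for $r'$), and post-composition with $f$ and with $f'$ gives homotopy equivalences $\Gamma_c(\dot{T}M_{(\ell)})_{k''}\xrightarrow{\simeq}\Gamma_c(\dot{T}M_{(\ell)})_{k}$ and $\Gamma_c(\dot{T}M_{(\ell)})_{k''}\xrightarrow{\simeq}\Gamma_c(\dot{T}M_{(\ell)})_{j}$; this is the required zigzag through the component $k''$.

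For the final statement, $j=\chi-k$ gives $2j-\chi=-(2k-\chi)$, so the multiplier is simply $r=-1$. When $\chi$ is even one may take $\ell=\Spec\bZ$, so that $\chi/2\in\bZ$ and $-1$ is a unit in $\bZ$: Corollary~\ref{coro:existence} then produces $f\in\End_c^{-1}(\dot{T}M)$ with $[f](k)=-(k-\chi/2)+\chi/2=\chi-k=j$, and post-composition with $f$ is a homotopy equivalence of the section spaces $\Gamma_c(\dot{T}M)_k\to\Gamma_c(\dot{T}M)_j$ themselves; when $\chi$ is odd the hypothesis forces $2\notin\ell$, and one obtains this equivalence only after inverting $2$. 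I do not expect a substantive obstacle here, since essentially all the content already sits in Lemma~\ref{lemma:ultimate}, Proposition~\ref{prop:lifts} and Corollary~\ref{coro:existence}; the only things that need attention are the arithmetic bookkeeping that the intermediate index $k''$ is a genuine integer (the parity check above) and the degenerate case $2k-\chi=0$, where no degree-$r$ endomorphism with $r\neq 1$ moves that component, so one must verify that the hypothesis already forces $j=k$.
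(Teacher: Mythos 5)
Your zigzag argument for the main claim is correct and rests on exactly the same mechanism as the paper (Corollary \ref{coro:existence} together with Dold's theorem, via the discussion above Lemma \ref{lemma:localisation}); the only real difference is the choice of intermediate component. You pass through $k''$ with $2k''-\chi=d:=\gcd(|2k-\chi|,|2j-\chi|)$ and map \emph{out} of it by endomorphisms of fibre degrees $(2k-\chi)/d$ and $(2j-\chi)/d$, whereas the paper passes through $h$ with $2h-\chi=\tfrac{1}{m}(2k-\chi)(2j-\chi)$, $m$ the $\ell$-part of $2k-\chi$, and maps \emph{into} it. Both intermediate indices are integers for the same parity reason, both multipliers are $\ell$-units for the same valuation reason, and the reversed direction of the zigzag is immaterial since all maps are homotopy equivalences; if anything your gcd variant makes the integrality of the intermediate index a little more transparent. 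Your handling of the degenerate case $2k-\chi=0$ (for $\ell\neq\emptyset$) is fine, and the remaining corner $\ell=\emptyset$, $2k-\chi=0\neq 2j-\chi$ is not reached by the paper's own argument either, so I do not count it against you.

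There is, however, a genuine gap in your treatment of the final sentence. That statement concerns the \emph{unlocalised} section spaces and is asserted for every closed even-dimensional $M$ --- it is what produces the integral homology isomorphisms between $C_k(M)$ and $C_{\chi-k}(M)$ in Theorem \ref{thm:a} --- but your argument only delivers it when $\chi$ is even, since Corollary \ref{coro:existence} with $\ell=\Spec\bZ$ needs $\chi/2\in\bZ$; for $\chi$ odd you concede you only obtain the equivalence after inverting $2$, which is strictly weaker than what is claimed. The paper closes this case with a separate observation: postcompose with the fibrewise antipodal map of $\dot{T}M\cong S(TM\oplus\epsilon)$. Since $\dim M=n$ is even, the antipodal map has degree $-1$ on the fibre $S^n$, it is a fibrewise homeomorphism (so no unit hypothesis or appeal to Dold is needed), it is automatically compactly supported because $M$ is closed, and by the degree formula (cf.\ the last part of Proposition \ref{prop:unstability-even}, applied with $r=-1$) it sends sections of degree $k$ to sections of degree $\chi-k$. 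No lift $\sigma$ of a degree-$\chi/2$ section, hence no parity or localisation assumption, enters; this is the one extra idea your proposal is missing.
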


\begin{proof}
By Corollary \ref{coro:existence} and Theorem 3.3 of \cite{Dold}, there exists, for each integer $r$ with trivial $\ell$-adic valuation, a homotopy equivalence
\[
\Gamma_c(\dot{T}M_{(\ell)})_{k} \longrightarrow \Gamma_c(\dot{T}M_{(\ell)})_{r(k-\chi/2)+\chi/2}.
\]
Equivalently, there exists a homotopy equivalence $\Gamma_c(\dot{T}M_{(\ell)})_{k} \to \Gamma_c(\dot{T}M_{(\ell)})_{j}$ whenever $(2j-\chi)=r(2k-\chi)$ for some $r$ such that $(r)_\ell = 0$.
Now let $k$ and $j$ be the two given integers, let $l=(2k-\chi)_\ell = (2j-\chi)_\ell$ and define $m = \prod_{p\in \ell}{p^{l(p)}}$. Note that the integer $\frac{1}{m}(2k-\chi)(2j-\chi)+\chi$ is always even, so we have
\[
\tfrac{1}{m}(2k-\chi)(2j-\chi) = (2h-\chi)
\]
for some integer $h$. Since $\frac{1}{m}(2j-\chi)$ and $\frac{1}{m}(2k-\chi)$ both have trivial $\ell$-adic valuation, the previous discussion implies that there are homotopy equivalences $\Gamma_c(\dot{T}M_{(\ell)})_{k} \to \Gamma_c(\dot{T}M_{(\ell)})_{h}$ and $\Gamma_c(\dot{T}M_{(\ell)})_{j} \to \Gamma_c(\dot{T}M_{(\ell)})_{h}$.

For the last claim, observe that if $M$ has even Euler characteristic $\chi$, taking $r=-1$ and $\ell=\Spec \bZ$ in Corollary \ref{coro:existence}, we obtain a homotopy equivalence $\Gamma_c(\dot{T}M)\to \Gamma_c(\dot{T}M)$ (without localising) that sends sections of degree $k$ to sections of degree $\chi-k$. In fact, such a homotopy equivalence exists regardless of whether $\chi$ is even or odd: it may be obtained by postcomposition with the antipodal map $\dot{T}M\to \dot{T}M$.
\end{proof}


If $\dim M$ is odd, Theorem \ref{thm:a} can only be improved when $2\in \ell$ and $k,j$ have different parity. To face this problem using a zigzag as in \eqref{eq:932}, we need to find a fibrewise homotopy equivalence $f$ of $\dot{T}M_{(2)}$ whose action on components of the section space changes the parity. The following proposition deals with this case (cf.\ \cite{Hansen,Hansen-manifolds}). We note that its proof can be used to recover Theorem \ref{thm:a} when $M$ is a sphere, as well as the integral homology isomorphisms of Theorem \ref{thm:a} when $M$ is an arbitrary odd-dimensional manifold (see Remark \ref{rmk:Recovering-thma}).

\begin{proposition}\label{prop:unstability-odd}
For $n$ odd and $2\in \ell$, the fibre bundle $\dot{T}S^n_{(\ell)}$ admits fibrewise homotopy equivalences that change the parity of the sections if and only if $n = 1,3,7$. If $\dim M = 1,3,7$, then $\dot{T}M$ admits a fibrewise homotopy equivalence that sends sections of degree $k$ to sections of degree $k+1$.
\end{proposition}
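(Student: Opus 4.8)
The plan is to reduce the existence of a parity-changing fibrewise homotopy self-equivalence of $\dot{T}S^n_{(\ell)}$ (with $2\in\ell$) to the classical fact that $S^{n-1}$ is parallelisable exactly when $n-1=0,2,6$, i.e.\ $n=1,3,7$. First I would recall that $\dot{T}S^n=S(TS^n\oplus\epsilon)$; since $TS^n\oplus\epsilon$ is the trivial rank-$(n+1)$ bundle over $S^n$, we get a canonical trivialisation $\dot{T}S^n\cong S^n\times S^n$, and under this identification the zero section $z$ corresponds to the diagonal and the section $\iota$ at infinity to (say) the antipode of the diagonal. A fibrewise self-map is then the same as a map $S^n\times S^n\to S^n$ which for each fixed first coordinate has a prescribed degree $r$ in the second coordinate; its effect on $\pi_0\Gamma$ is governed by Lemma \ref{lemma:ultimate} applied with $E=TS^n$ (so $\chi=2$ when $n$ even, but here $n$ is odd), together with the formula $[f](k)=r(k-\deg(\sigma_0))+\deg(\sigma_0)$ — except that for $S^n$ with $n$ odd one has $\chi=0$, and in any case the obstruction is really to producing a fibrewise self-\emph{equivalence} of degree $r=1$ on fibres that nonetheless shifts components by $1$, which cannot come from the $\Phi_r^\ell$ construction (that always preserves parity when $r$ is odd, by the computation in the previous proposition). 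So the point is that changing parity forces $r$ even, and then one needs the resulting degree-$r$ fibrewise map to actually be a fibrewise homotopy \emph{equivalence}, which it is not automatic for over the whole of $S^n$.

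The key observation I would use is the following dichotomy. A fibrewise homotopy self-equivalence of the trivial $S^n$-bundle $S^n\times S^n\to S^n$ is classified, up to fibrewise homotopy, by an element of $[S^n,\mathrm{hAut}(S^n)]$, and the identity component of $\mathrm{hAut}(S^n)$ consists of the degree-$1$ maps while the other component consists of degree-$(-1)$ maps; more refined information about fibrewise equivalences of varying fibre-degree is not available over a nontrivial base unless one localises. After localising at $\ell\ni 2$, the relevant space is $\mathrm{hAut}(S^n_{(\ell)})$, whose set of components is the group of units of $\bZ_{(\ell)}$, but $2$ is not a unit there, so a fibrewise equivalence that restricts to degree $2$ on each fibre does not exist over a general base — \emph{unless} there is extra structure. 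The extra structure available for $S^n$ is exactly an $H$-space structure when $n=1,3,7$ (coming from the norm-$1$ elements of $\bC$, $\bH$, $\bO$): in that case $S^n$ is a parallelisable sphere, $S^{n-1}$ is parallelisable, $V_2(\bR^{n+1})\to S^n$ admits a section, and the lifting problem of Proposition \ref{prop:lifts} — which for $S^n$ with $n$ odd always has a solution — can be combined with the $H$-space multiplication to build a self-map $\mu\colon S^n\times S^n\to S^n$ which on the first factor is degree $1$ and on the second factor is degree $1$, i.e.\ a fibrewise homotopy equivalence whose effect on sections is translation by $1$ on $\pi_0$. Concretely: identify $\dot{T}S^n\cong S^n\times S^n$ using the $H$-space trivialisation of $TS^n\oplus\epsilon$ so that the zero section is the diagonal; then the map $(x,y)\mapsto (x, x\cdot y)$ is a fibrewise equivalence, and precomposing the constant-degree-$0$ section (the diagonal shifted) appropriately, one checks it shifts degree by $1$. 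I would make this precise by computing $\mu_*$ on $H_n(S^n\times S^n)$ exactly as in the proof of Lemma \ref{lemma:ultimate}: $\mu_*$ sends the fibre class $(1,0)$ to $(1,0)$ (degree $1$ on fibres) and the diagonal class $(0,1)+(1,0)$ — wait, more carefully, to the class of $\{(x,x\cdot x_0)\}$, and the bidegree of that graph is $(1,1)$, from which $[\mu]$ on $\pi_0\Gamma$ is $k\mapsto k+c$ with $c$ computed from $x\mapsto x\cdot x_0$ having degree $1$; a rescaling of the $H$-space product then pins $c=1$.

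For the converse — that no parity-changing fibrewise equivalence of $\dot{T}S^n_{(\ell)}$ exists when $n\neq 1,3,7$ — I would argue as follows. Such an equivalence $f$ restricts on each fibre to a self-map of $S^n_{(\ell)}$ of some degree $r\in\bZ_{(\ell)}^\times$, and by the component-shift formula (the $\phi_r$ computation in the preceding proposition, or a direct computation of $f_*$ on $H_n(\dot{T}S^n_{(\ell)})\cong\bZ_{(\ell)}\oplus\bZ$ using that the zero section is preserved up to the relevant class), $[f](k)=r k + (\text{const})$ with the constant having the same parity behaviour as in the odd-$r$ case when $r$ is odd; so to change parity we need $r$ even, hence $r\notin\bZ_{(\ell)}^\times$ since $2\in\ell$ — contradiction — \emph{unless} $f$ does not arise merely from a fibrewise self-map of the trivial bundle but genuinely uses the bundle structure. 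The real content is therefore: over $S^n$, every fibrewise self-equivalence of $S^n\times S^n\to S^n$ is fibrewise homotopic to one of the form $(x,y)\mapsto(x, g_x(y))$ with $\deg g_x$ locally constant, hence equal to a fixed unit $r$, and the shift is $r$-linear as above; the only way to get a fibrewise equivalence with fibre-degree $r=1$ shifting components nontrivially is to have a nontrivial class in $\pi_0\mathrm{Map}(S^n, \mathrm{hAut}_1(S^n_{(\ell)}))$ of the right type, and unwinding this lands one on the section problem for $V_2(\bR^{n+1})_{(\ell)}\to S^n$, which after localisation at a set of primes containing $2$ has a section \emph{iff} $S^{n-1}_{(2)}$ is parallelisable iff (by Adams) $n-1\in\{0,2,6\}$. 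I expect this converse direction — pinning down that there is \emph{no other source} of parity-changing equivalences beyond the $V_2$-section — to be the main obstacle; it is essentially the Hopf-invariant-one phenomenon in disguise, and I would want to cite \cite{Hansen,Hansen-manifolds} for the precise classification of fibre homotopy equivalences of sphere bundles over spheres that makes this rigorous, rather than reprove it.
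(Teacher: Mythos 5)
Your existence direction is fine in spirit: for $n=1,3,7$ the $H$-space multiplication gives a fibrewise self-map $(x,y)\mapsto(x,x\cdot y)$ of the trivialised bundle $S^n\times S^n$ which has fibre degree $1$ and shifts the degree of sections by $1$; this is exactly the paper's case $r=1$, $b=1$ of the lifting problem through the evaluation fibration (a lift of a degree-$1$ map through $\mathrm{ev}\colon \map_1(S^n,S^n)\to S^n$ is precisely such an ``$H$-like'' structure). But your converse direction contains two genuine errors. First, the claim that ``changing parity forces $r$ even'' is wrong. After trivialising $\dot{T}S^n\cong S^n\times S^n$ (with the zero section a constant section, possible since $\chi=0$), a fibrewise endomorphism of fibre degree $r$ acts on degrees of sections by $k\mapsto rk+b$, where $b$ is the degree of the composite of its adjoint $S^n\to\map_r(S^n_{(\ell)},S^n_{(\ell)})$ with the evaluation map; $b$ is \emph{not} constrained to have the parity behaviour of the $\Phi^\ell_r$ construction (whose shifts have the special affine form $r(k-d)+d$). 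A parity-changing equivalence thus requires $r$ odd (a unit, since $2\in\ell$) and $b$ odd, and the entire content of the proposition is whether such a pair $(r,b)$ is realisable --- not whether $r$ can be even. Second, your reduction of non-existence to a section of $V_2(\bR^{n+1})_{(\ell)}\to S^n$, equivalently ``parallelisability of $S^{n-1}$'', fails: that bundle has a section for \emph{every} odd $n$ (this is the vector-field statement behind Proposition \ref{prop:lifts}, and in any case $S^{m}$ is parallelisable for $m=0,1,3,7$, not $m=0,2,6$), so it cannot detect $n=1,3,7$ among odd $n$. The correct obstruction, which is where the paper does its work, is the single obstruction to lifting a degree-$b$ map through the evaluation fibration; by Whitehead's identification of the boundary map it equals $b[-r\iota,\iota]=[\iota,\iota]$ (a $2$-torsion class for $n$ odd), and its vanishing is equivalent, via the EHP sequence and Adams, to the existence of Hopf invariant one elements, i.e.\ to $n=1,3,7$. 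You name the Hopf-invariant-one phenomenon but defer exactly this step --- which you yourself call the main obstacle --- to a citation of Hansen, so the converse is not proved.

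Separately, your proposal does not address the second sentence of the proposition: that for an \emph{arbitrary} manifold $M$ of dimension $1,3,7$ the bundle $\dot{T}M$ (not localised) admits a fibrewise homotopy equivalence shifting degree by $1$. The paper gets this by running the sphere argument with $r=1$, $b=1$ over the one-point compactification of a disc $D\subset M$, normalising the resulting fibrewise map to be the identity over the basepoint, extending it by the identity over $M\smallsetminus D$, and invoking Dold's theorem to conclude it is a fibrewise homotopy equivalence. Your global $H$-space construction over $S^n$ does not transplant to general $M$ without some such localisation-to-a-disc step, so this part needs an argument as well.
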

\begin{proof}
Spheres are stably parallelisable, and therefore $\dot{T}S^n$ is trivial, being the unit sphere bundle of $TS^n\oplus\epsilon$. After choosing a trivialisation, a fibrewise endomorphism $f$ of $\dot{T}S^n_{(\ell)}$ of degree $r$ is the same as a map
\[f^{\mathrm{t}}\colon S^n\lra \map_r(S^n_{(\ell)},S^n_{(\ell)}).\]
Since $n$ is odd, the Euler characteristic $\chi$ is $0$, so we may trivialise $\dot{T}S^n$ so that the zero section corresponds to a trivial section of the product bundle $S^n\times S^n_{(\ell)}$. Hence the matrix of $H_n(f)$ with the basis considered in Lemma \ref{lemma:cup-product} is of the form 
\[\begin{pmatrix}
r & b \\
0 & 1
\end{pmatrix},\]
where $b\in \bZ_{(\ell)}$ is the degree of the composition of $f^{\mathrm{t}}$ with the evaluation map. Such an $f$ sends sections of degree $k$ to sections of degree $rk+b$. For $f$ to be a fibrewise homotopy equivalence, $r$ must be odd (and moreover equal to $\pm 1$ when $\ell = \Spec \bZ$), and therefore $b$ has to be odd as well, because $[f](k) = rk+b$ and we want $k$ and $[f](k)$ to have different parity. Therefore, we need to solve the lifting problem
\[\xymatrix{
&& \map_r(S^n_{(\ell)},S^n_{(\ell)})\ar[d] \\
S^n \ar[rr]^{b} \ar@{-->}[urr]&& S^n_{(\ell)}
}\]
where the vertical map is the evaluation map and the horizontal map is some map of odd degree $b$. The single (and therefore complete) obstruction to the existence of a lift is a class in $H^n(S^n;\pi_{n-1}\Omega_r^{n}S^n_{(\ell)})\cong \pi_{n-1}\Omega_r^{n}S^n_{(\ell)}$. This class is $b$ times the image of the generator of $\pi_nS^n_{(\ell)}$ under the boundary homomorphism in the long exact sequence of homotopy groups. The boundary homomorphism is computed in \cite[Theorem 3.2]{Whitehead-products} with a correction in \cite{Whitehead-certain}, who proved that under the identification $\pi_i(\Omega^n_rS^n_{(\ell)})\cong \pi_{n+i}(S^n_{(\ell)})$, it corresponds to taking the Whitehead product with $-r\iota$, where $\iota$ is a generator of $\pi_n(S^n)$. Therefore our obstruction is $b[-r\iota,\iota]$. Because the Whitehead product is graded-commutative, $[\iota,\iota]$ has order two, so $-br[\iota,\iota] = [\iota,\iota]$. The EHP sequence shows that the vanishing of this class is equivalent to the existence of elements of Hopf invariant one in $\pi_{2n+1}(S^{n+1}_{(\ell)})$, which exist if and only if $n=0,1,3,7$ \cite{Adams}.

If $M$ is an arbitrary manifold of dimension $1$, $3$ or $7$, we first choose an open disc $D$ in $M$. Then we take $\ell=\emptyset, r=1, b=1$, and we consider the one-point compactification $\dot{D}$ of $D$. By the previous part, there is a fibrewise endomorphism $f$ of fibrewise degree $1$ of $\dot{T}\dot{D}$. Without loss of generality, we may assume that its value on the basepoint is the identity. Then we can extend this fibrewise endomorphism to the whole of $M$ by defining $x\mapsto (\Id\colon \dot{T}_xM\to \dot{T}_xM)$ if $x\notin D$. The extension sends sections of degree $k$ to sections of degree $rk+b = k+1$. By Theorem 3.3 of \cite{Dold}, it is a fibrewise homotopy equivalence.
\end{proof}

\begin{remark}\label{rmk:Recovering-thma}
The above proof recovers Theorem \ref{thm:a} in certain cases, as follows. Given some $k\in\bZ$, we take $r=2$ and $b=1-k$ to obtain homotopy equivalences $\Gamma_c(\dot{T}S^n_{(\ell)})_k \simeq \Gamma_c(\dot{T}S^n_{(\ell)})_{k+1}$ with $\ell$ the set of all odd primes. This recovers the isomorphisms \eqref{eq:thmaodd-Z12} when $M=S^n$. For the isomorphisms \eqref{eq:thmaodd-Z}, we take $r=1$ and $b=2$ (and $\ell$ the set of all primes, i.e., we do not localise) and extend the resulting fibrewise homotopy equivalence, defined over a disc in $M$, to the whole of $M$ by the identity. This gives homotopy equivalences $\Gamma_c(\dot{T}M)_k \simeq \Gamma_c(\dot{T}M)_{k+2}$ for any $M$.
\end{remark}


Proposition \ref{prop:unstability-odd} also shows that, when $M$ is an odd-dimensional sphere, Theorem \ref{thm:a} cannot be improved by finding a fibrewise homotopy equivalence of $\dot{T}S^n_{(\ell)}$. The following proposition (cf.\ \cite[Theorem 3.1]{Hansen} and \cite{Hansen-manifolds}) generalises the computation of $H_1(C_k(S^2);\bZ)$ (which follows from the presentation of $\pi_1(C_k(S^2))$ given by \cite{FadellVan1962}) and shows that Theorem \ref{thm:a} is sharp when $M$ is an even-dimensional sphere. 

\begin{proposition}\label{prop:unstability-even}
If $n$ is even and $k$ belongs to the stable range with respect to homological degree $n-1$, then
\[
H_{n-1}(C_k(S^n);\bZ)\cong \tau H_{n-1}(\Omega^n_0S^n)\oplus \bZ/(2k-2),
\]
where $\tau G$ is the torsion of $G$. If $M$ is a closed manifold of even dimension, then any fibrewise endomorphism of $\dot{T}M_{(\ell)}$ of degree $r\neq 0$ sends sections of degree $k$ to sections of degree $r(k-\chi/2)+\chi/2$.
\end{proposition}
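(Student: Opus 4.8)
The plan is to treat the two assertions separately, since the second (about fibrewise endomorphisms of $\dot{T}M_{(\ell)}$ for even-dimensional closed $M$) is essentially a re-run of the argument already given in Lemma \ref{lemma:ultimate}, while the first (the computation of $H_{n-1}(C_k(S^n);\bZ)$) is a genuine calculation using the scanning map.

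For the second statement, I would argue as follows. Let $f$ be a fibrewise endomorphism of $\dot{T}M_{(\ell)}$ of degree $r\neq 0$. Assume first $M$ is orientable, so Lemma \ref{lemma:cup-product} applies and $H_n(\dot{T}M_{(\ell)})\cong\bZ_{(\ell)}\oplus\bZ$ with the stated basis. Since $f$ acts on each fibre by a degree-$r$ map, $(f)_*(1,0)=(r,0)$. The key extra input is that $f$, being a fibrewise \emph{endomorphism} covering the identity, fixes the second basis vector $z_*[M]$ up to the fibre class: more precisely $(f)_*$ is upper-triangular of the form $\left(\begin{smallmatrix} r & c \\ 0 & 1\end{smallmatrix}\right)$ because $\pi\circ f=\pi$ forces the second row to be $(0,1)$. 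To pin down $c$, note that the self-intersection of $z_*[M]$ is $\chi$, so the Euler class of $\dot{T}M$ is $\chi$ times the fibre class; since $f$ multiplies the fibre class by $r$ but must carry the Euler class of the target back to a class with the same fibre-compatibility, a short computation with the intersection form $\left(\begin{smallmatrix}0&1\\(-1)^n&\chi\end{smallmatrix}\right)$ (here $n$ even, so this is $\left(\begin{smallmatrix}0&1\\1&\chi\end{smallmatrix}\right)$) forces $c=-(r-1)\chi/2\cdot$(appropriate correction)$=(r-1)(\chi/2-\chi)\cdot$... — the cleanest route is to observe that \emph{some} fibrewise homotopy equivalence $\Phi_r^\ell(\sigma)$ with $\deg(\sigma_0)=\chi/2$ exists by Corollary \ref{coro:existence}, that any two fibrewise degree-$r$ endomorphisms induce the same map on $H_n$ (since $[\dot{T}M,\dot{T}M]$-classes covering $\id_M$ and of fixed fibre degree are detected on $H_n$ when $\dim M$ is even by obstruction theory — the only obstruction group is $\pi_n$ of the fibre, matching $H_n$), and therefore $(f)_* = (\Phi_r^\ell(\sigma))_*$; then the degree formula from Lemma \ref{lemma:ultimate} gives $\deg(f(\alpha)) = r(\deg(\alpha)-\chi/2)+\chi/2$. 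The non-orientable case is handled exactly as in the proof of Lemma \ref{lemma:ultimate}, by pulling back along the orientation double cover $f\colon\tilde M\to M$, using $\deg(f^*s)=2\deg(s)$ and $\chi(\tilde M)=2\chi$.

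For the first statement, I would run the scanning map. Since $k$ is in the stable range with respect to degree $n-1$, the scanning map gives $H_{n-1}(C_k(S^n);\bZ)\cong H_{n-1}(\Gamma_c(\dot{T}S^n)_k;\bZ)$. Because $S^n$ is stably parallelisable, $\dot{T}S^n\cong S^n\times S^n$ is the trivial bundle, so $\Gamma_c(\dot{T}S^n)_k$ is a path-component of the mapping space $\Map(S^n,S^n)$ of degree $k$; by translating (a non-compactly-supported identification, valid because $S^n$ is closed) one has $\Gamma_c(\dot{T}S^n)_k \simeq \Map_k(S^n,S^n)$, the degree-$k$ component. There is the evaluation fibration $\Omega_k^n S^n \to \Map_k(S^n,S^n)\xrightarrow{\mathrm{ev}} S^n$, and $\Omega_k^n S^n$ is homotopy equivalent (but not canonically, via translation in the group $\Omega^n S^n$) to $\Omega_0^n S^n$; so I want to compute $H_{n-1}$ of the total space via the Serre spectral sequence. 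In the relevant range $H_*(\Omega_0^n S^n)$ has $H_0=\bZ$, $H_i=0$ for $0<i<n-1$ (for $n\geq 3$; the case $n=2$ being the classical one), and $H_{n-1}(\Omega_0^n S^n) = \bZ\{[\iota,\iota]\}$ roughly (for $n$ even, $\pi_{n-1}(\Omega_0^n S^n)\cong\pi_{2n-1}(S^n)$ contains the Whitehead square, and the Hurewicz map detects it). The spectral sequence has $E_2^{p,q}=H_p(S^n;H_q(\Omega^n_k S^n))$, and the only possibly-nonzero differential affecting total degree $n-1$ is $d_n\colon E_n^{n,0}\to E_n^{0,n-1}$, i.e. $\bZ\to H_{n-1}(\Omega_0^n S^n)$. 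The crucial computation is that this differential is multiplication by $2k-2$: this is exactly the transgression, which by the same Whitehead-product analysis used in Proposition \ref{prop:unstability-odd} (namely \cite{Whitehead-products}, \cite{Whitehead-certain}) is given by Whitehead product with $k\iota$ composed with the boundary, and $[\iota,\iota+k\iota]$-type bookkeeping — more precisely the boundary $\partial\colon\pi_n(S^n)\to\pi_{n-1}(\Omega_k^n S^n)\cong\pi_{2n-1}(S^n)$ sends $\iota$ to $[k\iota,\iota]$ minus the correction term, which for $n$ even has the form $(2k - 2)\cdot w$ where $w$ generates (the relevant summand of) $\pi_{2n-1}(S^n)$; I would need to unwind the Hansen/Whitehead computation to get the coefficient $2k-2$ rather than $2k$ or $k-1$. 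Assembling the spectral sequence: $H_{n-1}(\Gamma_c(\dot{T}S^n)_k) \cong \operatorname{coker}(d_n) \oplus (\text{possible }E^{n,\text{stuff}}\text{ contributions})$, and since $E_\infty^{0,n-1}=\operatorname{coker}(\bZ\xrightarrow{2k-2}H_{n-1}(\Omega_0^nS^n))$ while $E_\infty^{n,0}=0$ and $E_\infty^{1,n-2}$ etc.\ vanish in the range, one gets $H_{n-1}(C_k(S^n);\bZ) \cong \operatorname{coker}(H_{n-1}(\Omega_0^n S^n)\xrightarrow{?}\dots)$; writing $H_{n-1}(\Omega_0^n S^n)$ as $\bZ\oplus\tau$ with the free part generated by the Whitehead square and $\tau$ its torsion, multiplication by $2k-2$ on the free $\bZ$ yields $\bZ/(2k-2)$ and leaves $\tau$ untouched, giving the claimed $\tau H_{n-1}(\Omega^n_0 S^n)\oplus\bZ/(2k-2)$.

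The main obstacle I expect is getting the precise coefficient $2k-2$ in the transgression (equivalently, in the boundary map of the evaluation fibration), and checking that the Serre spectral sequence has no other contributions to $H_{n-1}$ in the stable range (in particular that the free part of $H_{n-1}(\Omega_0^n S^n)$ is exactly rank one and is exactly where the differential lands, with the torsion split off canonically). For the coefficient, the honest route is to cite \cite[Theorem 3.2]{Whitehead-products} with the correction \cite{Whitehead-certain} — as done in Proposition \ref{prop:unstability-odd} — which identifies the boundary with the Whitehead product with $-r\iota$ (here $r=k$, reflecting the degree of the component), and then use that for $n$ even $[\iota,\iota]$ generates a $\bZ$-summand of $\pi_{2n-1}(S^n)$ modulo 2-torsion on which $x\mapsto [x\iota,\iota]=x[\iota,\iota]$ and the self-map $[\text{shift by }k]$ contributes the extra $(k-1)$, producing $2(k-1)=2k-2$; alternatively one can cite Hansen \cite[Theorem 3.1]{Hansen} and \cite{Hansen-manifolds} directly for the sphere computation. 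Everything else is bookkeeping in a spectral sequence that is very sparse in this range.
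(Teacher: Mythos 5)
Your overall strategy coincides with the paper's (scanning plus the evaluation fibration for the sphere computation; uniqueness of fibrewise degree-$r$ endomorphisms plus comparison with those of Corollary \ref{coro:existence} for the second claim), but both halves have a genuine gap at exactly the point where the work is. For the first claim, the coefficient $2k-2$ is not something you can leave to ``unwinding'': it arises from two specific inputs you are missing. First, the trivialisation of $\dot{T}S^n$ does \emph{not} identify $\Gamma_c(\dot{T}S^n)_k$ with $\map_k(S^n,S^n)$; it carries sections of degree $k$ to maps of degree $k-\chi/2=k-1$ (this is the change of basis in Lemma \ref{lemma:cup-product}, stated explicitly in \cite[Proposition~3.6]{BM}), so the relevant component is the degree-$(k-1)$ one and the Whitehead boundary is the product with $-(k-1)\iota$, not with $k\iota$; your bookkeeping as written would produce $\bZ/2k$. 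Second, you assert that the transgression on the free part of $H_{n-1}(\Omega^n S^n)$ is multiplication by $2k-2$, but the factor $2$ has to be proved: the boundary sends the generator to $-(k-1)[\iota,\iota]$ in homotopy, and one must show that the Hurewicz image of $[\iota,\iota]$ (the Browder square of the fundamental class) has infinite order and is exactly twice a generator of the rank-one free part of $H_{n-1}(\Omega^n S^n)$. The paper proves this geometrically: the scanning map $C(\bR^n)\to\Omega^nS^n$ is little-discs equivariant, the Browder square in $H_{n-1}(C_2(\bR^n))\cong H_{n-1}(\RP^{n-1})\cong\bZ$ is the image of $[S^{n-1}]$ under the double cover, hence twice a generator, and split-injectivity of scanning transports this to $\Omega^nS^n$ (rank one coming from \cite{Serre}). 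Without these two inputs your spectral-sequence assembly does not yield $\tau H_{n-1}(\Omega_0^nS^n)\oplus\bZ/(2k-2)$.

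For the second claim, your ``cleanest route'' is the right shape but the uniqueness step is not justified as stated. The assertion that any two fibrewise endomorphisms of $\dot{T}M_{(\ell)}$ of the same fibre degree induce the same map on $H_n$ ``because the only obstruction group is $\pi_n$ of the fibre'' is not correct: such endomorphisms are sections of a bundle over $M$ with fibre $\map_r(S^n_{(\ell)},S^n_{(\ell)})$, and the obstructions to comparing two of them live in $H^i(M;\pi_i(\map_r(S^n_{(\ell)},S^n_{(\ell)})))$, which contain torsion that need not vanish before localising further. The correct argument first passes to the fibrewise rationalisation, noting $[f](k)=[f_{(0)}](k)$, and then uses that $\map_r(S^n_{(0)},S^n_{(0)})$ is $(2n-2)$-connected because $[\iota,r\iota]=r[\iota,\iota]\neq 0$ rationally for $n$ even and $r\neq 0$ (via the evaluation fibration and \cite{Whitehead-products,Whitehead-certain}); this is where evenness of $n$ and $r\neq 0$ actually enter, and it gives honest uniqueness up to homotopy over the $n$-dimensional base. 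Your first attempt to pin down the off-diagonal entry $c$ directly from the intersection form is abandoned mid-computation and indeed cannot work, since for $n$ odd that entry is genuinely not determined (Proposition \ref{prop:unstability-odd}). Finally, the degree of an endomorphism of $\dot{T}M_{(\ell)}$ is an element of $\bZ_{(\ell)}$, not necessarily an integer, so after establishing uniqueness one still has to treat $r=p/q$ by composing with an integral endomorphism of degree $q$ from Corollary \ref{coro:existence} and solving $[\phi_q][f_r]=[\phi_p]$, as the paper does; your comparison only covers integral $r$.
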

\begin{proof}
The target of the scanning map in this case is $\Gamma(\dot{T}S^n)$, and since $S^n$ is stably parallelisable, $\dot{T}S^n$ can be trivialised. The trivialisation gives a homotopy equivalence $\Gamma(\dot{T}S^n)\to \map(S^n,S^n)$ that sends sections of degree $k$ to maps of degree $k-\chi/2$, where $\chi$ is the Euler characteristic of $S^n$ (this corresponds to a change of basis in Lemma \ref{lemma:cup-product}, and is stated explicitly in \cite[Proposition~3.6]{BM}). We let now $r=k-\chi/2$.

The space of maps fits into the evaluation fibration
\begin{equation}\label{eq:510}
\Omega_{r}^nS^n\lra \map_{r}(S^n,S^n)\lra S^n
\end{equation}
for which we may consider the corresponding Wang sequence
\[\cdots\lra H_0(\Omega_r^nS^n)\overset{\delta_r}{\lra} H_{n-1}(\Omega_r^n S^n)\lra H_{n-1}(\map_r(S^n,S^n))\lra 0.\]
The map named $\delta_r$ is the transgression
\[
H_{n}(S^n;H_0(\Omega_r^nS^n)) \lra H_{0}(S^n;H_{n-1}(\Omega_r^n S^n))
\]
in the Serre spectral sequence of the evaluation fibration, and therefore under the identification $H_{n}(S^n;H_0(\Omega_r^nS^n)) = H_n(S^n)$ it fits into the commutative diagram
\[\xymatrix{
\pi_n(S^n)\ar[d]\ar[r]^-{\partial_r} & \pi_{n-1}(\Omega_r^n S^n) \ar[d]\\
H_n(S^n)\ar[r]^-{\delta_r} & H_{n-1}(\Omega_r^n S^n),
}\]
where $\partial_r$ is the boundary homomorphism in the long exact sequence of homotopy groups. As recalled in the previous proof, $\partial_r$ was identified by Whitehead as the adjoint of the operation of taking Whitehead product with $-r\iota$, where $\iota$ is the generator of $S^n$. Additionally, the left vertical arrow is an isomorphism and the rightmost vertical arrow sends the class $[\iota,\iota]$ to the Browder square of the generator of $H_0(\Omega^n_rS^n)$ (see Remark 1.2 in the third chapter of \cite{CLM}) (here we are using a canonical identification of $\Omega_r^nS^n$ and $\Omega_1^nS^n$ to define the Browder square and the Whitehead product). 

We claim, when $n$ is even, that this Browder square has infinite order and is divisible by two (but not by four). To see this, consider the scanning map $C(\bR^n)\to \Omega^nS^n$. Both spaces have an action of the little $n$-discs operad, and the scanning map is equivariant with respect to this action, hence it takes Browder squares to Browder squares. The adjoint $\alpha$ of the class $\iota\in \pi_{n}(S^n)$ is a generator of $\pi_0(\Omega_1^nS^n)$, so the Browder square $\lambda(\beta,\beta)$ of the Hurewicz image $\beta$ of $\alpha$ lives naturally (that is, before using the identification between the different components of $\Omega^nS^n$) in $H_{n-1}(\Omega_2^nS^n)$. We will now describe this class.

The generator $\gamma$ of $H_0(C_1(\bR^n))$ is mapped under the scanning map to $\beta$, hence $\lambda(\gamma,\gamma)$ is mapped to $\lambda(\beta,\beta)$. The class $\lambda(\gamma,\gamma)\in H_{n-1}(C_2(\bR^n))$ corresponds to moving one of the points around the other point in all possible directions, parametrised by $S^{n-1}$. The inclusion of the space $\bR P^{n-1}$ of antipodal points in $S^{n-1}$ into $C_2(\bR^n)$ is a homotopy equivalence, and our class $\lambda(\gamma,\gamma)$ is the image of the fundamental class of $S^{n-1}$ under the double covering map $S^{n-1}\to \bR P^{n-1}$, hence it is twice a generator of the group $H_{n-1}(C_2(\bR^{n}))\cong H_{n-1}(\bR P^{n-1})\cong \bZ$ (cf. the class $\tau$ on page \pageref{def:tau}). Now, since the scanning map is split-injective on homology (see \cite[p.\ 103]{McDuff} and the proof of Corollary \ref{cor:inj} in this article), it follows that $\lambda(\beta,\beta)$ also has infinite order and is divisible exactly by two.

Observe that, when $n$ is odd, the above argument shows that $\lambda(\beta,\beta)$ is zero, since $\bR P^{n-1}$ is non-orientable for $n-1$ even.

By results of Serre \cite{Serre} on the homotopy groups of spheres, and the rational Hurewicz theorem, $H_{n-1}(\Omega_r^n S^n)$ has rank 1, so
\[H_{n-1}(\map_r(S^n,S^n))\cong H_{n-1}(\Omega_r^n S^n)/(-r\lambda(\beta,\beta))\cong \tau H_{n-1}(\Omega_r^n S^n)\oplus \bZ/{2r}.\]
The first statement now follows from McDuff's theorem.

%
%

Let $f\colon \dot{T}M_{(\ell)}\to \dot{T}M_{(\ell)}$ be any fibrewise endomorphism of $\dot{T}M_{(\ell)}$, where we view the bundle $\dot{T}M_{(\ell)} \to M$ as a fibration. We can consider the fibrewise rationalisation $f_{(0)}\colon \dot{T}M_{(0)}\to \dot{T}M_{(0)}$, and observe that $[f](k) = [f_{(0)}](k)$ for $k\in\bZ_{(\ell)}$ (using the canonical inclusion of $\bZ_{(\ell)}$ in $\bQ$). Therefore the function $[f]$, describing the effect of $f$ on degrees of sections of $\dot{T}M_{(\ell)}$, is determined by the function $[f_{(0)}]$.

If $\dim M$ is even, there is a unique fibrewise endomorphism of $\dot{T}M_{(0)}$ of fibrewise degree $r$ up to homotopy. This is because such fibrewise endomorphisms are sections of a bundle over $M$ with fibre $\map_r(S^n_{(0)},S^n_{(0)})$, which, using the evaluation fibration \eqref{eq:510}, is $(2n-2)$-connected since $[\iota,r\iota]\neq 0$. Therefore, if $f$ and $g$ are fibrewise endomorphisms of $\dot{T}M_{(\ell)}$ with the same (non-zero) fibrewise degree, we have that $[f](k) = [f_{(0)}](k) = [g_{(0)}](k) = [g](k)$ for all $k\in\bZ_{(\ell)}$, so they act in the same way on the path-components of the section space.

Let $r=p/q$ be a non-zero rational number, with $p,q\in \bZ\setminus\{0\}$, and let $f_r$ be any fibrewise endomorphism of $\dot{T}M_{(0)}$ of degree $r$. Since $p$ and $q$ are integers, Corollary \ref{coro:existence} implies that there are fibrewise endomorphisms $\phi_p$ and $\phi_q$ of degrees $p$ and $q$ respectively. Let $f_p = \phi_qf_r$, which is a fibrewise endomorphism of degree $p$. By the previous paragraph, there is a unique fibrewise endomorphism of each degree, so it follows that $f_p$ is fibrewise homotopic to $\phi_p$. We therefore have the equation:
\[
[\phi_q][f_r](k) = [\phi_p](k).
\]
The last formula of Corollary \ref{coro:existence} determines the functions $[\phi_p]$ and $[\phi_q]$, from which we deduce that
\[
q([f_r](k) - \chi/2) + \chi/2 = p(k - \chi/2) + \chi/2
\]
and the result follows after solving the equation.
\end{proof}

\section{The extrinsic replication map}
\subsection{Scanning maps}\label{section:scanning}
Let $M$ be a connected manifold, for which we choose a Riemannian metric with injectivity radius bounded below by $\delta>0$. Let $T^1M$ denote the open unit disc bundle of the tangent bundle of $M$, and let $\dot{T}^1M$ and $\dot{T}M$ denote the fibrewise one point compactifications of $T^1M$ and $TM$. Let $\delta>0$ be smaller than the injectivity radius of $M$. Define the \emph{linear scanning map}
\[\ssl{}\colon C_k^\delta(M) \lra \Gamma_c(\dot{T}^1M)_k\]
to the space of degree $k$ compactly supported sections of $\dot{T}^1M$ as
\[\ssl{}(\q,\epsilon)(x) = \begin{cases}
\infty & \text{if $x\notin B_\epsilon(q)~\forall q\in \q$} , \\
\frac{\exp^{-1}_x(q)}{\epsilon} & \text{if $x\in B_\epsilon(q), q\in \q$}.
\end{cases}\]
The degree of a section $s$ is the fibrewise intersection, counted with multiplicity, of $s$ and the zero section (see also \S \ref{ss:degree}).

Let $D$ be the unit $n$-dimensional open disc, let $\dot{D}$ be its one point compactification and define $\psi^\delta(D)$ to be the quotient of $\bigcup_{k}C_k^\delta(\bR^n)$, where two configurations $(\q,\epsilon)$ and $(\q',\epsilon')$ are identified if $\q\cap D = \q'\cap D$ and either $\epsilon = \epsilon'$ or $\q\cap D = \emptyset$. We write $\psi^\delta(T^1M)$ for the result of applying this construction fibrewise to $T^1M$.

Let $\gamma$ be a number smaller than the injectivity radius of $M$. The \emph{radius $\gamma$ non-linear scanning map}\label{ssnl}
\[
\ssnl{}\colon C_k^\delta(M)\to \Gamma_c(\psi^\delta( T^1 M))
\]
sends a configuration $\q$ to $\frac{1}{\gamma}\exp_{x}^{-1}(\q)$ --- which may consist of more than one point.  

There is an inclusion $i\colon \dot{D}\hookrightarrow\psi^\delta(D)$ given by $i(q) = (q,\delta/2)$ as the subspace of configurations with at most one point. This inclusion has a homotopy inverse 
\[
h(\q,\epsilon) = \tfrac{\q}{\q_{\rm second}}
\]
where $\q_{\rm first}$ is the norm of a closest point in $\q$ to the origin, and $\q_{\rm second}$ is defined to be $1$ if $|\q|=1$ and $(\q')_{\rm first}$ otherwise, where $\q'$ is the result of removing a single closest point of $\q$ to the origin. The composite $hi$ is the identity and
$H_t(\q,\epsilon) = \left(\frac{\q}{(1-t) + t\q_{\rm second}},t\delta/2 + (1-t)\epsilon\right)$
gives a homotopy between the identity and $ih$.

Each of $i$, $h$ and $H_t$ is $O(n)$-equivariant, so they can be defined on the vector bundle $TM$, obtaining homotopy equivalences 
\[i\colon \dot{T}^1M \longleftrightarrow \psi^\delta(T^1M)\colon h\]
which induce by composition homotopy equivalences
\[i\colon \Gamma_c(\dot{T}^1 M) \longleftrightarrow \Gamma_c(\psi^\delta(T^1M))\colon h\]
that commute with the linear and non-linear scanning maps:
\begin{equation}\label{eq:11}
\centering
\begin{split}
\begin{tikzpicture}
[x=1mm,y=1mm]
\node (tl) at (0,15) {$C_k^\delta(M)$};
\node (tr) at (40,15) {$\Gamma_c(\psi^\delta T^1 M)$};
\node (br) at (40,0) {$\Gamma_c(\dot{T}^1 M).$};
\draw[->] (tl) to node[above,font=\small]{$\ssnl{\gamma}$} (tr);
\draw[->] (tl) to node[anchor=north east,font=\small]{$\ssl{}$} (br);
\draw[->] (tr) to[out=300,in=60] node[right,font=\small]{$h$} (br);
\draw[->] (br) to[out=120,in=240] node[left,font=\small]{$i$} (tr);
\node at (40,7.5) {$\simeq$};
\end{tikzpicture}
\end{split}
\end{equation}

\subsection{Homological stability}
\begin{rthm}[{\ref{thm:b}}.]
Let $M$ be a connected, smooth manifold and let $v$ be a non-vanishing section of $TM$. Then there exists a map $\phi_r\in \End^r_c(\dot{T}^1M)$ that makes the following diagram commute up to homotopy:
\begin{equation}\label{eq:thmbsquare}
\begin{split}
\xymatrix{
C_k^\delta(M) \ar[r]^-{\ssl{}}\ar[d]^{\r{r}[v]} & \Gamma_c(\dot{T}^1 M\to M)_k\ar[d]^{\phi_r} \\
C_{rk}^\delta(M) \ar[r]^-{\ssl{}}  & \Gamma_c(\dot{T}^1 M\to M)_{rk}.
}
\end{split}
\end{equation}
Hence the $r$-replication map induces an isomorphism on $\bZ_{(\ell)}$-homology in the stable range with $\bZ_{(\ell)}$ coefficients as long as $r$ is not divisible by any prime in $\ell$.
\end{rthm}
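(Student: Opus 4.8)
The plan is to realise $\phi_r$ as post-composition with a concrete fibrewise degree-$r$ endomorphism of $\dot{T}^1M$ built from $v$ and the homotopy equivalences $i,h$ of \eqref{eq:11}, to reduce the square \eqref{eq:thmbsquare} to the corresponding square for the non-linear scanning map $\ssnl{\gamma}$ of \S\ref{section:scanning}, and then to deduce the homological statement from Lemma~\ref{lemma:localisation} together with Definition~\ref{dStableRange}. We may assume $\|v\|\equiv 1$.

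First I would replace $\ssl{}$ by $\ssnl{\gamma}$ throughout, using $\ssl{}\simeq h\circ\ssnl{\gamma}$ from \eqref{eq:11}. The vector field $v$ assigns to each $x\in M$ a unit direction $v(x)$ in the fibre $T^1_xM$, and hence a fibrewise \emph{replication endomorphism} $R=R_r[v]$ of $\psi^\delta(T^1M)$: on each fibre it replaces a configuration $(\q,\epsilon)$ (represented inside the unit disc) by the $r$ equally spaced copies $\{\,p+\tfrac{j\epsilon}{r}v(x)\mid p\in\q,\ 0\le j<r\,\}$ restricted to the disc, with parameter $\tfrac{\epsilon}{2r}$; one checks this descends through the identifications defining $\psi^\delta$. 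The heart of the argument is the claim that the square
\[
\xymatrix{
C_k^\delta(M)\ar[r]^{\ssnl{\gamma}}\ar[d]_{\r{r}[v]} & \Gamma_c(\psi^\delta(T^1M))\ar[d]^{R_*}\\
C_{rk}^\delta(M)\ar[r]^{\ssnl{\gamma}} & \Gamma_c(\psi^\delta(T^1M))
}
\]
commutes up to homotopy, where $R_*$ is post-composition with $R$. Granting this, set $\phi_r:=h\circ R\circ i\colon\dot{T}^1M\to\dot{T}^1M$. Since $R$ fixes the empty configuration, $\phi_r$ fixes $\iota$ everywhere, so $\phi_r\in\End^r_c(\dot{T}^1M)$; and a one-fibre computation (identify the fibre with $\dot{D}$ and take coordinates with $v(x)=e_1$) shows that $\phi_r$ has fibrewise degree $r$ --- the origin of $\dot D$ has exactly the $r$ regular preimages $-\tfrac{j\delta}{2r}e_1$, $0\le j<r$, each of local degree $+1$ --- and therefore sends the degree-$k$ component to the degree-$rk$ one. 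Using $i\circ h\simeq\id$ and the square above,
\[
\phi_r\circ\ssl{}\;\simeq\;h\circ R_*\circ i\circ h\circ\ssnl{\gamma}\;\simeq\;h\circ R_*\circ\ssnl{\gamma}\;\simeq\;h\circ\ssnl{\gamma}\circ\r{r}[v]\;\simeq\;\ssl{}\circ\r{r}[v],
\]
which is exactly \eqref{eq:thmbsquare}.

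The homological conclusion is then formal. As $r$ is coprime to each prime of $\ell$ it is a unit in $\bZ_{(\ell)}$, so Lemma~\ref{lemma:localisation}, applied to the fibrewise localisation of $\phi_r$ (which lies in $\End^r_c((\dot{T}^1M)_{(\ell)})$ and acts by $k\mapsto rk$), shows that the induced map $\phi_r\colon\Gamma_c(\dot{T}^1M)_k\to\Gamma_c(\dot{T}^1M)_{rk}$ is an isomorphism on $\bZ_{(\ell)}$-homology. In \eqref{eq:thmbsquare} both horizontal scanning maps are $\bZ_{(\ell)}$-homology isomorphisms in the stable range by Definition~\ref{dStableRange} (taking $A=\bZ_{(\ell)}$), and $rk\ge k$, so homotopy-commutativity of \eqref{eq:thmbsquare} forces $\r{r}[v]$ to induce an isomorphism on $\bZ_{(\ell)}$-homology in the stable range. (When $M$ is not closed one may further note that the scanning map is split injective on homology, see Corollary~\ref{cor:inj}, whence $\r{r}[v]$ is injective on $\bZ_{(\ell)}$-homology in every degree.)

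The step I expect to be the main obstacle is the homotopy-commutativity of the $\ssnl{\gamma}$-square. On the nose, $\ssnl{\gamma}\circ\r{r}[v]$ replicates a scanned point $\tfrac1\gamma\exp_x^{-1}(q_i)$ in the direction $\tfrac1\gamma\,d(\exp_x^{-1})_{q_i}\bigl(v(q_i)\bigr)$ coming from the geodesic $t\mapsto\exp_{q_i}(tv(q_i))$, whereas $R_*\circ\ssnl{\gamma}$ replicates in the fibre direction $v(x)$; these two directions coincide in the limit $q_i\to x$, so by taking the scanning radius $\gamma$ small enough they lie in a common hemisphere and a fibrewise straight-line homotopy interpolates between the two maps. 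One must also absorb boundary bookkeeping: replicas of a configuration point lying just outside the scanning ball may re-enter it, so the two sides of the square differ by a contribution supported near the boundary sphere of the scanning disc --- precisely where $\psi^\delta$ smooths points in and out --- which a further homotopy shrinking $\gamma$ disposes of. Checking the well-definedness of $R$ on $\psi^\delta(T^1M)$ and these two homotopies is the only non-formal part; the rest of the argument is as above.
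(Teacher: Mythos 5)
Your overall strategy is the same as the paper's: reduce \eqref{eq:thmbsquare} to a square for the non-linear scanning map, realise $\phi_r$ as $h\circ(\text{fibrewise replication})\circ i$, compute its fibrewise degree by counting preimages of the origin, and then conclude via Lemma \ref{lemma:localisation} and the definition of the stable range. The degree count, the observation that $\phi_r$ fixes $\iota$, and the final localisation/stable-range argument (including the split-injectivity remark via Corollary \ref{cor:inj}) all match the paper.

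However, there is a genuine gap exactly at the step you flag as the obstacle, and your proposed remedies do not work as stated. First, the fibrewise endomorphism $R$ of $\psi^\delta(T^1M)$ that replicates within the unit disc is \emph{not} well defined (equivalently, not continuous): in $\psi^\delta(D)$ a configuration point is forgotten the moment it leaves $D$, but its replicas may still lie inside $D$ (whenever $v(x)$ points inward near the boundary), so the image jumps as a point crosses $\partial D$ --- your claim that the formula ``descends through the identifications defining $\psi^\delta$'' is false without a modification. Second, with the same scanning radius $\gamma$ on both rows the square does not commute on the nose, because a configuration point just outside $B_\gamma(x)$, at distance less than $\gamma+\epsilon$, can have replicas inside $B_\gamma(x)$; and ``shrinking $\gamma$'' makes this worse rather than better, since the problematic configurations are those with separation parameter $\epsilon$ comparable to the scanning radius, and $\epsilon$ ranges over all of $(0,\delta)$. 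The paper resolves both points simultaneously: it scans the source at radius $2\delta$ and the target at radius $\delta$, and composes the fibrewise replication $\r{r}[\exp_{2\delta}^*(v)]$ with the expansion $\mathbbm{2}$ (doubling pushes every replica of a point near $\partial D$ out of $D$, restoring continuity, and the estimate $d(\varsigma_r(q),x)\geq 2\delta-\epsilon\geq\delta$ makes the two-radius square commute strictly); only afterwards does it homotope $\exp_{2\delta}^*(v)$ to $\pi^*v$, which is the rigorous version of your ``straight-line homotopy'' for the direction discrepancy. Without some such device (different radii plus $\mathbbm{2}$, or an equivalent damping near $\partial D$), the central square of your argument is not established, so the proof as written is incomplete at its key step.
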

\begin{remark} One can prove that the map $\phi_r$ constructed below is homotopic to $\Phi_r^\ell(\iota,v)$ with $\ell = \Spec \bZ$.
\end{remark}
\begin{proof}
The proof has three steps. First, since $C^\delta_k(M)$ is independent of $\delta$ up to homotopy, we let 
$2\delta$ be smaller than the injectivity radius of $M$. We claim that the following diagram commutes:
\[\xymatrix{C_k^\delta(M) \ar[r]^-{\ssnl{2\delta}} \ar[d]^{\r{r}[v]} & \Gamma_c(\psi^\delta(T^1 M)) \ar[d]^{\varsigma_r}  \\
C_{rk}^\delta(M) \ar[r]^-{\ssnl{\delta}} & \Gamma_c(\psi^\delta(T^1 M)) 
}\]
where $\varsigma_r$ is given by postcomposition with the bundle map $\r{r}[\exp_{2\delta}^*(v)]\colon \psi^\delta(T^1M)\to \psi^\delta(T^1M)$ followed by the expansion $\mathbbm{2}\colon \psi^\delta(T^1M)\to \psi^\delta(T^1M)$ that sends each point $q$ in the configuration to $2q$. Observe that the bundle map $\r{r}[\exp_{2\delta}^*(v)]$ is not continuous but it becomes continuous after composing with $\mathbbm{2}$.

In order to understand this square, we check what happens with the adjoint of the scanning map $M\times C_k(M)\to \psi^\delta(T^1M)$ over each point $x\in M$:
\[\xymatrix{\{x\}\times C_k^\delta(M) \ar[r]^-{\ssnl{2\delta}_x} \ar[d]^{\r{r}[v]} & \psi^\delta(T^1_x M) \ar[d]^{\varsigma_r}&  \\
\{x\}\times C_{rk}^\delta(M) \ar[r]^-{\ssnl{\delta}_x} & \psi(T^1_x M).}\]
The square commutes on the nose unless there exists some $q\in \q$ such that 
\[\varsigma_r(q)\cap B_{\delta}(x)\neq \emptyset, \text{and}~q\notin B_{2\delta}(x).\]
But this is not possible, as $d(\varsigma_r(q),x) \geq d(q,x) - \max_{q'\in \varsigma_r(q)}{d(q,q')} \geq 2\delta - \epsilon \geq \delta$.

Second, observe that since the exponential map is homotopic to the projection $\pi\colon TM\to M$, the maps $\varsigma_r = \mathbbm{2}\r{r}[\exp_{2\delta}^*(v)]$ and $\sigma_r = \mathbbm{2}\r{r}[\pi^*v]$ are homotopic.

Third, consider now the diagram
\[\xymatrix{
\Gamma_c(\psi^\delta(T^1 M)) \ar[d]^{\sigma_r} & \Gamma_c(\dot{T}^1 M) \ar[d]\ar[l]_i \\
 \Gamma_c(\psi^\delta(T^1 M)) \ar[r]^h & \Gamma_c(\dot{T}^1 M)
}\]
whose maps are induced by the fibrewise maps which on each fibre are
\[\xymatrix{
\psi^\delta(T^1_x M) \ar[d]^{\sigma_r} & \dot{T}^1_x M \ar[d]\ar[l]_i \\
 \psi^\delta(T^1_x M) \ar[r]^h & \dot{T}^1_x M
}\]

Let us denote by $v$ the value of the vector field $v$ at the point $x$. Then $\sigma_r(q,1) = q\cup q+v \cup \ldots \cup q+(r-1)v
$ and
\[h\sigma_r i(q) =\begin{cases}
2\frac{q+jv}{\|q+(j-1)v\|} & \text{if $q+jv$ is the closest point and $\langle v,q+jv\rangle > 0$} \\
2\frac{q+jv}{\|q+(j+1)v\|} & \text{if $q+jv$ is the closest point and $\langle v,q+jv\rangle < 0$}.
\end{cases}\]
The inverse image of a point (for instance the origin) consists of $r$ points ($\{-jv\}_{j=0}^{r-1}$), all of them oriented according to the sign of $r$. Hence $h\sigma_r i$ induces a map of degree $r$ on fibres.
\end{proof}

\begin{corollary}\label{cor:inj} If $M$ is a connected open manifold of dimension at least $2$, then the homomorphism induced on $\bZ[\frac{1}{r}]$-homology by the $r$-replication map is split-injective.
\end{corollary}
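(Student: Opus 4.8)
The plan is to reduce the corollary to split-injectivity of the scanning map on homology for open manifolds, and then recall why the latter holds. Since $M$ is open it admits a non-vanishing vector field $v$, so $\r{r}=\r{r}[v]$ is defined, and by the restatement of Theorem \ref{thm:b} just proved there is $\phi_r\in\End^r_c(\dot{T}^1M)$ making the square \eqref{eq:thmbsquare} commute up to homotopy. As $r$ is a unit in $\bZ[\tfrac{1}{r}]$, Theorem~3.3 of \cite{Dold} together with the localisation discussion of \S\ref{ss:localisation-etc} --- concretely, Lemma \ref{lemma:localisation} applied to $f=\phi_r$ --- shows that $(\phi_r)_*$ is an isomorphism on $\bZ[\tfrac{1}{r}]$-homology from $\Gamma_c(\dot{T}^1M)_k$ to $\Gamma_c(\dot{T}^1M)_{rk}$. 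Consequently, if the scanning map is split-injective on homology with retraction $\pi$ for $C_k^\delta(M)$, then $\pi\circ(\phi_r)_*^{-1}\circ(\ssl{})_*$ (the last map being scanning for $C_{rk}^\delta(M)$) is a retraction of $(\r{r})_*$: the square gives $(\ssl{})_*\circ(\r{r})_*=(\phi_r)_*\circ(\ssl{})_*$, and $(\phi_r)_*$ is invertible. So everything reduces to the claim that for open $M$ of dimension $\geq 2$ the scanning map $\ssl{}\colon C_j^\delta(M)\to\Gamma_c(\dot{T}^1M)_j$ is split-injective on integral homology.

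To establish this claim --- which is what the phrase ``the proof of Corollary \ref{cor:inj}'' elsewhere in the paper refers to --- I would argue as follows. Fix a properly embedded ray in $M$; it induces stabilisation maps $t\colon C_j^\delta(M)\to C_{j+1}^\delta(M)$ and, by inserting a standard degree-one section far out along the ray, compatible stabilisation maps $t'\colon\Gamma_c(\dot{T}^1M)_j\to\Gamma_c(\dot{T}^1M)_{j+1}$ with $\ssl{}\circ t\simeq t'\circ\ssl{}$. For open $M$ the maps $t'$ are homotopy equivalences (for open manifolds all path-components of $\Gamma_c(\dot{T}^1M)$ are homotopy equivalent, by translation along the end using sections supported in a chart, whose components form $\pi_0\Omega^nS^n\cong\bZ$, and $t'$ realises such a translation); hence the natural map $\Gamma_c(\dot{T}^1M)_j\to\Gamma_c(\dot{T}^1M)_\infty:=\colim_j\Gamma_c(\dot{T}^1M)_j$ is a homotopy equivalence for every $j$. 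On the other hand, by McDuff's theorem $\ssl{}$ induces an isomorphism on homology in the range $\srscan[M](j)$, which diverges, so the induced map $C_\infty^\delta(M):=\colim_j C_j^\delta(M)\to\Gamma_c(\dot{T}^1M)_\infty$ is a homology isomorphism. Chasing the resulting square of homology groups, $\ssl{}\colon C_k^\delta(M)\to\Gamma_c(\dot{T}^1M)_k$ is split-injective on homology if and only if the canonical map $C_k^\delta(M)\to C_\infty^\delta(M)$ is.

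Finally, that $C_k^\delta(M)\to C_\infty^\delta(M)$ is split-injective on homology --- equivalently, that the stabilisation maps $C_j^\delta(M)\to C_{j+1}^\delta(M)$ are split-injective on homology for open $M$ --- is the classical stable splitting of configuration spaces on open manifolds; this goes back to \cite[p.\ 103]{McDuff} and can also be deduced from the Snaith-type splitting of the scanning map. I would take this as known, and I expect it, rather than the diagram chases above, to be the genuinely delicate ingredient; it is also where the hypothesis $\dim M\geq 2$ is used (via the homotopy-commutativity of the superposition of configurations underlying the splitting), the case $\dim M=1$ being vacuous since the configuration spaces of $\bR$ are contractible. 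Assembling the three steps proves the corollary, and along the way records the split-injectivity of $\ssl{}$ on homology invoked in the proof of Proposition \ref{prop:unstability-even}.
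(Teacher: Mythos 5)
Your proposal is correct and follows essentially the same route as the paper: you commute $\r{r}$ past the scanning map via the square \eqref{eq:thmbsquare}, use Dold's theorem and the localisation discussion to see that $\phi_r$ is a $\bZ[\tfrac{1}{r}]$-homology isomorphism, and reduce to split-injectivity of the scanning map, which you deduce from McDuff's facts (split-injective stabilisation maps, section-space stabilisation equivalences, and the homology isomorphism in the colimit). The only cosmetic difference is that you pass to the colimit using divergence of $\srscan$, whereas the paper phrases the same deduction via finite generation of the stable homology and a finite composition of split injections.
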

\begin{proof}
The scanning map is split-injective on homology in all degrees, as may be deduced from \cite{McDuff}. To see this, recall the following facts from the referenced paper: (a) the stabilisation maps $C_k(M)\to C_{k+1}(M)$ are split-injective on homology in all degrees (see p.\ 103), (b) the analogous stabilisation maps for section spaces $\Gamma_c(\dot{T}M)_k \to \Gamma_c(\dot{T}M)_{k+1}$ are homotopy equivalences, (c) the scanning map $C_k(M) \to \Gamma_c(\dot{T}M)_k$ is a homology equivalence in the colimit as $k\to\infty$ and (d) the homology in the colimit is finitely-generated. It then follows from the fact that stabilisation and scanning commute that the homomorphism induced on homology by the scanning map is the composition of a finite sequence of split-injections, and therefore itself a split-injection.

Thus, in the commutative square \eqref{eq:thmbsquare}, the composite $\phi_r \circ \ssl{}$ is split-injective on $\bZ[\frac{1}{r}]$-homology, hence $\ssl{} \circ \r{r}[v]$ is split-injective on $\bZ[\frac{1}{r}]$-homology, and so $\r{r}[v]$ is also split-injective on $\bZ[\frac{1}{r}]$-homology. 
\end{proof}

\section{The intrinsic replication map}
\label{sUpToDim}
\subsection{Stabilisation, replication and scanning maps with labels} Let $\theta\colon E\to M$ be a fibre bundle, and define the following spaces (the first two were also defined in the introduction; the third space is defined whenever $\gamma$ is smaller than the injectivity radius of $M$, in particular when $\gamma<\delta$):
\begin{align*}
C_k(M;\theta) &=  \{(\q,f)\mid  \q\in C_n(M), f\in \Gamma(\theta|_{\q})\} \\
C_k^\delta(M;\theta) &= \{(\q,\epsilon,f)\mid (\q,\epsilon)\in C_k^\delta(M), f\in \Gamma(\theta|_{B_\q(\epsilon)})\}\\
C_k^{\delta,\gamma}(M;\theta) &= \{(\q,\epsilon,\{f_q\}_{q\in\q})\mid (\q,\epsilon)\in C_k^\delta(M), f_q\in \Gamma(\theta|_{B_q(\gamma)})\}.
\end{align*}

\begin{lemma}\label{lTwoEquivalences}
The forgetful maps 
\[C_k^{\delta,\gamma}(M;\theta)\lra C_k^\delta(M;\theta)\lra C_k(M;\theta)\]
that restrict the section first from balls of radius $\gamma$ to balls of radius $\epsilon$, and then to the centres of the balls, are weak homotopy equivalences.
\end{lemma}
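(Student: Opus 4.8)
The plan is to show that each of the two forgetful maps in the zigzag $C_k^{\delta,\gamma}(M;\theta)\to C_k^\delta(M;\theta)\to C_k(M;\theta)$ is a fibration (or at least a quasifibration) whose fibres are contractible, or — more robustly — to identify each map as the restriction of a bundle of sections along a cofibration of base spaces and invoke the fact that restriction of sections along a trivial cofibration is a weak equivalence. First I would fix the underlying configuration: there is an evident commuting diagram over the base $C_k^\delta(M)$ (and, after applying the bundle projection $C_k^\delta(M)\to C_k(M)$, over $C_k(M)$), where each of the three spaces is the total space of a fibration recording the extra labelling data. Concretely, $C_k^{\delta,\gamma}(M;\theta)\to C_k^\delta(M)$ has fibre over $(\q,\epsilon)$ equal to $\prod_{q\in\q}\Gamma(\theta|_{B_q(\gamma)})$, while $C_k^\delta(M;\theta)\to C_k^\delta(M)$ has fibre $\Gamma(\theta|_{B_\q(\epsilon)})$, and $C_k(M;\theta)\to C_k(M)$ has fibre $\Gamma(\theta|_\q)$.

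The key step is then to observe that for each fixed $(\q,\epsilon)$ the restriction maps on fibres are homotopy equivalences. For the second map this is immediate: $\q\hookrightarrow B_\q(\epsilon)$ is a deformation retract (each ball $B_q(\epsilon)$ is geodesically convex since $\epsilon<\delta$ is below the injectivity radius, hence contracts to its centre), so restriction $\Gamma(\theta|_{B_\q(\epsilon)})\to\Gamma(\theta|_\q)$ is a weak equivalence — restriction of sections of a fibre bundle along a deformation retract of the base. For the first map, $B_\q(\epsilon)\hookrightarrow\bigsqcup_{q\in\q}B_q(\gamma)$ is likewise a fibrewise deformation retract of base spaces (here one uses $\gamma<\delta$ to ensure the balls $B_q(\gamma)$ are convex, and that the inclusion $B_\q(\epsilon)\hookrightarrow B_\q(\gamma)$ is a deformation retract componentwise), so the corresponding restriction on section spaces is again a weak equivalence; the splitting $\Gamma(\theta|_{\bigsqcup B_q(\gamma)})\cong\prod_q\Gamma(\theta|_{B_q(\gamma)})$ is just the fact that sections over a disjoint union are a product. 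So fibrewise over $C_k^\delta(M)$ the maps are weak equivalences.

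To upgrade this fibrewise statement to a statement about total spaces, I would check that all three maps to $C_k^\delta(M)$ (resp. the third to $C_k(M)$, which is connected to $C_k^\delta(M)$ by the known homotopy equivalence $C_k^\delta(M)\xrightarrow{\simeq}C_k(M)$) are fibrations, or fibre bundles: locally over a trivialising chart for $\theta$ and for the tautological family of configurations, each is a product of the base with the relevant section space, and the restriction maps are products of identity with the fibrewise restriction. Then the two-out-of-three property and the long exact sequence of homotopy groups (or simply the gluing lemma for weak equivalences of fibrations over a common base) give that the total maps are weak homotopy equivalences.

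The main obstacle, and the only point requiring genuine care, is verifying that these projections to $C_k^\delta(M)$ really are fibre bundles (or Hurewicz fibrations) rather than merely set-theoretic fibrations — one must produce local trivialisations, which requires trivialising $\theta$ over neighbourhoods of the (moving) balls $B_q(\epsilon)$, using that $\epsilon$ can be taken locally constant and the balls vary continuously and stay uniformly below the injectivity radius. Once local trivialisations are in hand the rest is the routine fibrewise-equivalence argument sketched above, so I would spend the bulk of the written proof on setting up those trivialisations cleanly (for instance by working over a single $C_k^{\delta}$-chart where the configuration is the graph of $k$ local sections and $\epsilon$ is constant), and then dispatch the homotopy-theoretic conclusion in a sentence.
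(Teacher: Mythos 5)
Your proposal is correct, but it is organised differently from the paper's proof, so a comparison is worthwhile. The paper handles the second forgetful map by factoring it through the pullback $\dot{C}_k^\delta(M;\theta)$ of $C_k^\delta(M)\to C_k(M)$ along $C_k(M;\theta)\to C_k(M)$: the pullback projection $\dot{C}_k^\delta(M;\theta)\to C_k(M;\theta)$ is a fibre bundle with contractible fibres (being pulled back from $C_k^\delta(M)\to C_k(M)$), the map $C_k^\delta(M;\theta)\to \dot{C}_k^\delta(M;\theta)$ remembering only the label at the centre of each ball is again a fibre bundle with contractible fibres, and the first forgetful map is directly such a bundle; so every map in sight has contractible fibres and weak equivalence is immediate, with no fibration-comparison machinery needed. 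You instead fibre all three labelled spaces over the unlabelled configuration spaces, identify the fibres as section spaces $\prod_q\Gamma(\theta|_{B_q(\gamma)})$, $\Gamma(\theta|_{B_{\q}(\epsilon)})$, $\Gamma(\theta|_{\q})$, show the fibrewise restriction maps are equivalences (restriction of sections along a deformation retract of geodesic balls onto smaller balls, resp.\ onto their centres), and conclude by a five-lemma comparison of fibrations, using the equivalence $C_k^\delta(M)\simeq C_k(M)$ to deal with the fact that the source and target of the second map live over different bases. Both arguments rest on the same routine-but-unproved point, namely local triviality of the relevant projections, which you rightly flag as the only place requiring genuine care (the paper asserts its bundle structures with the same level of detail). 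What each approach buys: the paper's factorisation keeps every fibre contractible and so avoids comparing non-contractible fibres over distinct bases; your route avoids introducing the intermediate pullback space and makes the role of the label data explicit, at the cost of the long-exact-sequence bookkeeping and of invoking the equivalence $C_k^\delta(M)\to C_k(M)$ as an input rather than only its bundle structure.
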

\begin{proof}
A point in $C_k^\delta(M;\theta)$ consists of a configuration $\q$ with prescribed pairwise separation, together with a choice of label on a small contractible neighbourhood of each configuration point. On the other hand, the pullback $\dot{C}_k^\delta(M;\theta)$ of $C_k^\delta(M)$ along the map $C_k(M;\theta)\to C_k(M)$ which forgets the labels consists of a configuration with prescribed pairwise separation, together with a choice of label just over each configuration point. Since $C_k^\delta(M)\to C_k(M)$ is a fibre bundle with contractible fibres, so is its pullback $\dot{C}_k^\delta(M;\theta) \to C_k(M;\theta)$, which is therefore a weak equivalence. There is also a forgetful map $C_k^\delta(M;\theta)\to \dot{C}_k^\delta(M;\theta)$ which just remembers the label at the centre of each ball. This is also a fibre bundle with contractible fibres, so a weak equivalence. Hence the composition $C_k^\delta(M;\theta)\to C_k(M;\theta)$ which completely forgets the labels (the second map in Lemma \ref{lTwoEquivalences}) is a weak equivalence. The first map in Lemma \ref{lTwoEquivalences} is a fibre bundle with contractible fibres, so it is also a weak equivalence.
\end{proof}

\begin{df} If the manifold $M$ is open, a choice of an embedding of a ray, together with a point $y$ in the ray and a label $f_y\in \theta^{-1}(y)$ of this point defines a \emph{stabilisation map with labels}
\[s^\theta\colon C_k(M;\theta)\lra C_{k+1}(M;\theta)\]
by pushing the configuration outside the ray and adding the labelled point $(y,f_y)$.
\end{df}
\begin{df}
If the manifold $M$ is open or has trivial Euler characteristic, a choice of a non-vanishing vector field defines a \emph{replication map with labels}
\[\r{r}^\theta\colon C_k^\delta(M)\lra C_{rk}^\delta(M;\theta)\]
\[\r{r}^{\theta,\gamma}\colon C_k^{\delta,\gamma}(M;\theta)\lra C_{rk}^{\delta,\gamma/r}(M;\theta)\]
by sending a configuration $((\q,\epsilon),f)$ to the configuration $\r{r}(\q,\epsilon)$ together with the restriction of the section $f$ to the balls of radius $\frac{\epsilon}{2r}$ (and radius $\frac{\gamma}{r}$) centered at the points in the new configuration.
\end{df}

The pullback $\theta^*TM\to E$ is also fibred over $M$, and the fibres are vector bundles. We denote by $\dot{T}^\theta M$ the fibrewise Thom construction of $\theta^*TM$ viewed as a bundle over $M$. The inclusion of the points at infinity define a cofibre sequence over $M$
\begin{equation}\label{eq:261} E\lra \theta^*\dot{T}M \lra \dot{T}^\theta M.\end{equation}
The pullback map $\theta^*\dot{T}M\to \dot{T}M$ factors through the bundle maps 
\begin{equation}\label{eq:99}
\theta^*\dot{T}M\lra \dot{T}^\theta M\stackrel{\xi}{\lra} \dot{T}M.
\end{equation}
We define the \emph{degree} of a section $s$ as the degree of $\xi(s)$. If the fibres of $\theta$ are path connected, then the $n$-skeleton of the fibres of $\dot{T}^\theta M$ is homotopic to $S^n$, therefore the forgetful map
\[\Gamma_c(\dot{T}^\theta M)\lra \Gamma_c(\dot{T}M)\]
induces a bijection on connected components. We write $\dot{T}^{1,\theta}M$ for the analogous construction with $T^1M$.

\begin{df} Define the \emph{linear} scanning map 
\[\ssl{\theta}\colon C_k^\delta(M;\theta)\lra \Gamma(\dot{T}^{1,\theta} M)\]
by sending a configuration $(\q,\epsilon,f)$ to the section whose value at a point $x\in M$ is $\ssl{}(\q,\epsilon)(x)$ together with the label $f|_{x}$ if $\ssl{}(\q,\epsilon)(x)\neq \infty$.
\end{df}

Let $D$ be the unit $n$-dimensional open disc, let $\dot{D}$ be its one-point compactification and let $F$ be a space. Define $\psi^\delta(D;F)$ to be the quotient of the space $\bigcup_k{C_k^\delta(\bR^n;\theta\colon F\times \bR^n\to \bR^n)}$, where two labeled configurations $(\q,\epsilon,f)$ and $(\q',\epsilon',f')$ are identified if $\q\cap D = \q'\cap D$, $f|_{B_\epsilon(q)} = f'|_{B_\epsilon(q)}$ for all $q\in\q\cap D$, and either $\q\cap D=\emptyset$ or $\epsilon = \epsilon'$.

Let $\psi^\delta(T^1M;\theta)$ be the result of applying this construction fibrewise to the unit ball of the tangent bundle of $M$ and the fibre bundle $\theta$, so that 
\[\psi^\delta(T^1M;\theta) = \bigcup_{x\in M} \psi^\delta(T^1_xM;\theta^{-1}(x)).\]
\begin{df} The \emph{non-linear scanning map with labels in $\theta$}
\[\ssnl{\theta,\gamma}\colon C_k^{\delta,\gamma}(M;\theta)\lra \Gamma(\psi^\delta(T^1M;\theta))\]
sends a configuration $(\q,\epsilon,\{f_q\}_{q\in\q})$ to the section that assigns to the point $x\in M$ the triple $(\q',\epsilon',\{f'\}_{q'\in\q'})$, where $(\q',\epsilon') = \ssnl{}(\q,\epsilon)\in \psi^\delta(T_x^1M)$, and $f'_{q'}$ is constant with value $f_{q}(x)$ if $q' = \frac{1}{\gamma}\exp^{-1}(q)$.  
\end{df}

There are fibrewise maps
\begin{align*}
i\colon \dot{T}^{1,\theta} M&\hookrightarrow \psi^\delta(T^1M;\theta) \\
h\colon \psi^\delta(T^1M;\theta) & \lra \dot{T}^{1,\theta} M
\end{align*}
defined by $i(\infty) = \infty$, $h(\infty) = \infty$ and
\begin{align*}
i(q,y)&= (q,\delta/2,f) \qquad\qquad\quad\text{with $f$ constant with value $y$} \\
h(\q,\epsilon,\{f_q\}_{q\in\q})&= \begin{cases}
\left(\frac{\q}{\q_{\rm second}}, f\left(q\right)\right)&\text{ if $\q_{\rm first}<\q_{\rm second}$ and $q\in\q$ with $\|q\| = \q_{\rm first}$,}\\
\infty & \text{ if $\q_{\rm first}=\q_{\rm second}$}
\end{cases}
\end{align*}
which are mutually fibrewise homotopy inverses by the same argument as on page \pageref{ssnl} (where the definition of $\q_{\rm second}$ is also given). 

In Appendix \ref{appendixB} we show that the stabilisation map with labels induces an isomorphism in a range $*\leq \sr[M;\theta](k)$, and we give lower bounds for it in Proposition \ref{pImprovedRangeTwisted} and Remark \ref{rImprovedRangeTwisted}  -- these are either the same or one degree less than the lower bounds given on page \pageref{pageone} for unlabelled configuration spaces. 

In Appendix \ref{appendix2} we give a proof of McDuff's theorem with labels, as follows (where $\srscan[M;\theta]$ is the range in which the scanning map for $M$ is an isomorphism on homology):
\begin{theorem}\label{thm:McDufflabelssection4}
Let $\theta\colon E\to M$ be a fibre bundle with path-connected fibres. If $M$ is non-compact then we have
\[
\srscan[M;\theta](k) = \min_{j\geq k}\{\sr[M;\theta](j)\}.
\]
The inequality $\srscan[M;\theta]\geq \srscan[M\smallsetminus\point;\theta|_{M\smallsetminus\point}]$ holds for all $M$, so the function $\srscan[M;\theta]$ diverges for all $M$.
\end{theorem}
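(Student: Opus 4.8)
The plan is to carry McDuff's proof through with labels, in three stages: a local group-completion input over $\bR^n$, an inductive globalisation over an exhaustion for non-compact $M$, and a comparison of fibration sequences for the inequality with $M\smallsetminus\point$. Throughout I would work with the non-linear models $C_k^{\delta,\gamma}(M;\theta)$ and $C_k^\delta(M;\theta)$ and their scanning maps $\ssnl{\theta,\gamma}$ and $\ssl{\theta}$, freely using the weak equivalences of Lemma~\ref{lTwoEquivalences} and the fibrewise homotopy equivalences $i$ and $h$ defined above to pass between the linear and non-linear pictures; the reason for working with $\ssnl{}$ is that ``scanning near a point'' is then an honest continuous map, which is what makes the last stage run.

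\textbf{Stage 1 (local model).} First I would prove that the labelled radius-$\gamma$ scanning map $\bigsqcup_k C_k^{\delta,\gamma}(\bR^n;\theta)\to\Gamma_c(\psi^\delta(T^1\bR^n;\theta))$ is a homology equivalence after group completion; equivalently, that on the degree-$k$ component it is an isomorphism on $H_*(-;\bZ)$ in the range $*\leq\min_{j\geq k}\{\sr[\bR^n;\theta](j)\}$. This is the classical statement that the configuration-space model computes the homology of $\Omega^n$ of the relevant Thom space after group completion (May--Segal--McDuff); the labelled refinement uses the cofibre sequence~\eqref{eq:261} to identify the target section space, and the stabilisation map with labels of Appendix~\ref{appendixB} -- split-injective on homology and a homology isomorphism in a range -- to feed the group-completion argument. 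I would also record the analogous relative statement for a half-open handle $D^i\times\bR^{n-i}$ with $\infty$-boundary conditions, since this is the shape of the piece attached in the inductive step.

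\textbf{Stage 2 (non-compact $M$).} Write $M=\bigcup_i M^i$ as in \S\ref{ss:localisation-etc}, an increasing union of compact codimension-$0$ submanifolds with boundary, and prove by induction on a handle decomposition of $M^{i+1}$ relative to $M^i$ that the scanning map with $\infty$-boundary conditions $C^\delta(M^i,\partial M^i;\theta)\to\Gamma_\partial(\dot{T}^{1,\theta}M^i)$ is a homology isomorphism in a range controlled by $\sr[M^i;\theta]$. The inductive step attaches a handle; one compares the scanning maps over the two halves of a Mayer--Vietoris decomposition, the new contribution being handled by Stage~1 together with the inductive hypothesis and McDuff's homology-fibration criterion applied to ``restrict to the co-core''. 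Passing to the colimit over $i$ and using that the vertical maps in the diagram of \S\ref{ss:localisation-etc} are $\bZ$-homology isomorphisms gives that $\ssl{\theta}\colon C_k^\delta(M;\theta)\to\Gamma_c(\dot{T}^{1,\theta}M)_k$ is a homology isomorphism in the asserted range. The precise shape $\srscan[M;\theta](k)=\min_{j\geq k}\{\sr[M;\theta](j)\}$ then follows because scanning intertwines the stabilisation maps on $C_\bullet(M;\theta)$ (via~\eqref{eq:261} and the definitions) with those on the section-space components, the latter being homotopy equivalences; so in the colimit $k\to\infty$ scanning is an equivalence, the ``$\min_{j\geq k}$'' reflects that $C_k(M;\theta)\to C_{k+1}(M;\theta)$ is only a homology isomorphism through degree $\sr[M;\theta](k)$, and the reverse inequality comes by two-out-of-three from the same commuting square.

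\textbf{Stage 3 (arbitrary $M$), and the main obstacle.} Fix $*\in M$, a small closed disc $\overline{D}$ around $*$, and set $M'=M\smallsetminus\point$. ``Scanning near $*$'' is a continuous map $C_k^\delta(M;\theta)\to\psi^\delta(T^1_*M;\theta^{-1}(*))$; the target is homotopy equivalent (via $h$) to the fibre of $\dot{T}^{1,\theta}M$ over $*$, and its strict fibre over the basepoint $\infty$ is $C_k^\delta(M\smallsetminus\overline{D};\theta)\simeq C_k^\delta(M';\theta)$. McDuff's homology-fibration criterion, in the range governed by the stabilisation map for $M'$, identifies this as a homology fibration $C_k^\delta(M';\theta)\to C_k^\delta(M;\theta)\to\psi^\delta(T^1_*M;\theta^{-1}(*))$ through degree $\srscan[M';\theta](k)$. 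On the section side, restriction to $\overline{D}$ is an honest fibration $\Gamma_c(\dot{T}^{1,\theta}M')_k\to\Gamma_c(\dot{T}^{1,\theta}M)_k\to\Gamma(\dot{T}^{1,\theta}M|_{\overline{D}})$, and the labelled scanning map carries the first sequence to the second, inducing homotopy equivalences on bases (again via $h$) and, by Stage~2 since $M'$ is non-compact, a homology isomorphism on fibres in the range $\srscan[M';\theta](k)$. Comparing the two homology Leray--Serre spectral sequences in that range forces $\ssl{\theta}$ to be a homology isomorphism there, i.e. $\srscan[M;\theta]\geq\srscan[M\smallsetminus\point;\theta|_{M\smallsetminus\point}]$; divergence is then immediate, since $\sr[M';\theta]$ diverges by Appendix~\ref{appendixB}, hence so does $\srscan[M';\theta]=\min_{j\geq k}\sr[M';\theta](j)$. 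I expect the genuine obstacle to be precisely the homology-fibration input used in Stages~2 and~3: verifying, with labels and with the prescribed pairwise separations and scanning radius in play, that ``scan near a point'' and ``restrict to the co-core of a handle'' are homology fibrations through the expected range, while keeping the bookkeeping between the models $C_k^\delta$, $C_k^{\delta,\gamma}$, $\psi^\delta(-;\theta)$ and $\dot{T}^{1,\theta}M$ under control. Once that is established the remainder is a formal diagram chase.
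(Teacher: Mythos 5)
Your proposal is correct in outline, but it follows a genuinely different route from the one the paper actually takes: it is the ``adapt McDuff step by step'' strategy (local group completion over $\bR^n$, handle/exhaustion induction with homology-fibration gluing for non-compact $M$, then scanning near a point for closed $M$), which the paper explicitly acknowledges as a possible proof but deliberately avoids. The paper's Appendix \ref{appendix2} instead proceeds by a shortcut: it glues a cylindrical end $c\colon D^{n-1}\times(0,1]$ onto $M$ and forms the space $\psi^{\delta,\gamma}(M;\theta)$ of labelled configurations allowed to escape through that end, invokes Hesselholt \cite{Hesselholt:derivative} to see that scanning is a homotopy equivalence from this space to the \emph{full} section space $\Gamma(\psi^\delta(T^1M;\theta))$, resolves it by a semi-simplicial flag complex whose augmentation is an equivalence by \cite{GR-W2}, and then descends to the individual components $C_k^{\delta,\gamma}(M;\theta)$ by the McDuff--Segal group-completion/homology-fibration criterion \cite{McDuff-Segal}, fed by the labelled homological stability theorem already proved in Appendix \ref{appendixB} (Proposition \ref{pImprovedRangeTwisted}); in particular no separate $\bR^n$ case and no handle induction occur. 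Both arguments treat the closed case identically (your Stage 3 is exactly the comparison of the ``scan near $\point$'' homology fibration with the restriction fibration of section spaces, which the paper handles by citing McDuff's Theorem 1.1, Corollary \ref{cor:blabla}). The trade-off: your route is self-contained but forces you to re-verify, with labels and with the $\delta,\gamma$ bookkeeping, the homology-fibration statements for handle attachment -- the heaviest part of \cite{McDuff}, and exactly the obstacle you flag -- whereas the paper outsources the geometric input to Hesselholt and \cite{GR-W2} and reuses its own stability theorem, which is why its proof is short. One small point of looseness: in Stage 2 your ``two-out-of-three from the commuting square'' only yields $\sr(j)\geq\min(\srscan(j),\srscan(j{+}1))$, which does not by itself give $\srscan(k)\leq\min_{j\geq k}\sr(j)$; the clean way to get the exact equality is the one you implicitly set up anyway, namely split-injectivity of the labelled stabilisation map on homology plus the colimit statement and the fact that the section-space stabilisations are equivalences, so this is a presentational gap rather than a mathematical one.
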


\subsection{Homological stability} The fibrewise homotopy equivalences of Lemma \ref{lemma:ultimate} lift to fibrewise homotopy equivalences of $\theta^*\dot{T}^1M_{(p)}$, which in turn descend to fibrewise homotopy equivalences $\dot{T}^{1,\theta} M_{(p)}$ if and only if they fix the section at infinity (c.f. cofibration \eqref{eq:261}). This implies that $\sigma_0 = \iota$ in that lemma, and therefore each of these fibrewise homotopy equivalences sends sections of degree $k$ to sections of degree $rk$. Hence we only recover part of Theorem \ref{thm:a} and the whole of Theorem \ref{thm:b}:
\begin{rthm}[$\text{A}^\prime$]\label{thm:aprime}
If $M$ is a closed, connected manifold with trivial Euler characteristic, then $H_*(C_k(M;\theta))\cong H_*(C_j(M;\theta))$ in the stable range with labels if $k$ and $j$ have the same $p$-adic valuation.
\end{rthm}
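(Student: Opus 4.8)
The plan is to adapt the proof of Theorem \ref{thm:a}, restricting attention to those fibrewise self-maps of the tangent sphere bundle that fix the section at infinity: these are exactly the ones that descend, along the cofibre sequence \eqref{eq:261}, to the fibrewise Thom construction $\dot{T}^{1,\theta}M$ whose section space receives the scanning map for $C_k(M;\theta)$. Concretely, I would produce a zigzag
\[
C_k(M;\theta)\lra\Gamma_c(\dot{T}^{1,\theta}M)_k\lra\Gamma_c(\dot{T}^{1,\theta}M)_{rk}\lla C_{rk}(M;\theta)
\]
in which the outer maps are scanning maps with labels (interpreting the leftmost arrow through the weak equivalence of Lemma \ref{lTwoEquivalences}, which induce isomorphisms on homology with any coefficients in the stable range with labels in $\theta$ by Theorem \ref{thm:McDufflabelssection4}), and the middle map is the zigzag of a labelled version of Lemma \ref{lemma:localisation}; one then iterates over suitable $r$.

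\textbf{Step 1: the middle equivalence.} Since $\chi(M)=0$, the sphere bundle $S(TM)\to M$ admits a section, i.e.\ $M$ carries a unit non-vanishing vector field $v$; because $\iota$ is the unit vector in the $\epsilon$-direction of $\dot{T}M=S(TM\oplus\epsilon)$, the pair $\sigma=(\iota,v)$ is a section of $V_2(TM\oplus\epsilon)$ with $\sigma_0=\iota$. (This is the only place the hypothesis $\chi=0$ enters, and it is sharp: $\iota$ extends to a $2$-frame over $M$ precisely when $M$ admits a non-vanishing vector field.) Localising at a prime $p$ and applying Lemma \ref{lemma:ultimate} with $E=TM$ to $\sigma$ produces, for each $r\in\bZ$, an endomorphism $\Phi_r^{\{p\}}(\sigma)\in\End_c^r(\dot{T}M_{(p)})$ which fixes the section at infinity and --- since $\deg(\sigma_0)=\deg(\iota)=0$ in the formula of that lemma, as noted immediately before the statement --- carries sections of degree $k$ to sections of degree $rk$. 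Fixing the section at infinity allows this endomorphism to be pulled back over $\theta$ and pushed through \eqref{eq:261} to an endomorphism $\bar\phi_r$ of $\dot{T}^{1,\theta}M_{(p)}$, still multiplying degrees by $r$ (the labelled degree of a section being the degree of its image under $\xi$, which is intertwined with the descent). On each fibre $\bar\phi_r$ is the identity of the $\theta$-fibre smashed with a localised degree-$r$ self-map of $S^n$; when $p\nmid r$ this is a homotopy equivalence, so $\bar\phi_r$ is a fibrewise homotopy equivalence by Theorem~3.3 of \cite{Dold}, and post-composition with it gives a homotopy equivalence $\Gamma_c(\dot{T}^{1,\theta}M_{(p)})_k\simeq\Gamma_c(\dot{T}^{1,\theta}M_{(p)})_{rk}$.

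\textbf{Step 2: conclusion.} The localisation map $\Gamma_c(\dot{T}^{1,\theta}M)\to\Gamma_c(\dot{T}^{1,\theta}M_{(p)})$ is a $\bZ_{(p)}$-homology isomorphism, because the argument of \S\ref{ss:localisation-etc} goes through with $\dot{T}^{1,\theta}M$ in place of a sphere bundle (its fibres are suspensions, hence nilpotent). Splicing the two localisation maps around the equivalence of Step 1 realises the middle map of the display, so for every $r$ coprime to $p$ one obtains an isomorphism $H_*(C_k(M;\theta);\bZ_{(p)})\cong H_*(C_{rk}(M;\theta);\bZ_{(p)})$ in the stable range with labels. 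Finally, if $\val{p}{k}=\val{p}{j}=m$, write $k=p^mk'$ and $j=p^mj'$ with $p\nmid k'j'$; applying the previous isomorphism with $r=j'$ and then with $r=k'$ gives $H_*(C_k(M;\theta);\bZ_{(p)})\cong H_*(C_{p^mk'j'}(M;\theta);\bZ_{(p)})\cong H_*(C_j(M;\theta);\bZ_{(p)})$, which is the assertion.

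\textbf{Main obstacle.} The difficulty is not geometric but a matter of careful bookkeeping in Step 1: checking that the construction of Lemma \ref{lemma:ultimate}, which a priori lives on the sphere bundle $\dot{T}M$, genuinely descends along \eqref{eq:261} to $\dot{T}^{1,\theta}M$ and there still scales the labelled notion of degree by $r$, and that the localisation machinery of \S\ref{ss:localisation-etc} transfers verbatim from sphere bundles to the fibrewise Thom construction. The conceptual reason the conclusion is weaker than Theorem \ref{thm:a} is that the descent condition forces $\sigma_0=\iota$, hence $\deg(\sigma_0)=0$, so only the equivalences scaling degrees by $r$ survive --- in particular this method cannot compare $C_k(M;\theta)$ with $C_j(M;\theta)$ once $\val{p}{k}\neq\val{p}{j}$.
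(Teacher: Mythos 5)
Your argument is correct and is essentially the paper's own proof: the paper establishes Theorem \hyperref[thm:aprime]{$\text{A}^\prime$} by exactly this observation — the fibrewise equivalences of Lemma \ref{lemma:ultimate} descend along the cofibration \eqref{eq:261} to $\dot{T}^{1,\theta}M_{(p)}$ precisely when they fix the section at infinity, forcing $\sigma_0=\iota$ (available since $\chi=0$) and hence the degree-scaling $k\mapsto rk$ — combined with the labelled scanning theorem and the localisation argument of \S\ref{ss:localisation-etc}. Your write-up simply fills in the same details (the $2$-frame $(\iota,v)$, $\deg(\iota)=0$, Dold's theorem on the descended map, and the $p$-adic valuation bookkeeping) that the paper leaves implicit.
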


\begin{rthm}[$\text{B}^\prime$]\label{thm:bprime}
If $v$ is a non-vanishing vector field on a connected manifold $M$ and $p\nmid r$, then the $r$-replication map $\r{r}^\theta$ with labels induces isomorphisms in $\bZ_{(p)}$-homology in the stable range with labels.
\end{rthm}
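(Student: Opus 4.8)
The plan is to mimic the proof of Theorem~\ref{thm:b} step by step, inserting labels at each stage. First I would set up the labelled analogue of the commutative square \eqref{eq:thmbsquare}: using the model $C_k^{\delta,\gamma}(M;\theta)$ and the non-linear scanning map with labels $\ssnl{\theta,\gamma}$, I claim that the square
\[
\xymatrix{
C_k^{\delta,\gamma}(M;\theta) \ar[r]^-{\ssnl{\theta,2\delta}} \ar[d]^{\r{r}^{\theta,\gamma}} & \Gamma_c(\psi^\delta(T^1M;\theta)) \ar[d]^{\varsigma_r^\theta} \\
C_{rk}^{\delta,\gamma/r}(M;\theta) \ar[r]^-{\ssnl{\theta,\delta}} & \Gamma_c(\psi^\delta(T^1M;\theta))
}
\]
commutes on the nose, where $\varsigma_r^\theta$ is postcomposition with the fibrewise $r$-replication-in-the-direction-$v$ map followed by the doubling map $\mathbbm 2$, all carrying the labels along for the ride. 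The key point (just as in Theorem~\ref{thm:b}) is that because the labels are locally constant near each configuration point and the geometric support of a replicated point stays within distance $\epsilon \leq \delta$ of the original, the triangle inequality $d(\varsigma_r(q),x)\geq 2\delta-\epsilon\geq\delta$ rules out any discrepancy between the two composites, so the labelling data match verbatim. The exponential-map-versus-projection homotopy and the passage from $\ssnl{\theta}$ to the linear $\ssl{\theta}$ via the fibrewise equivalences $i,h$ of Lemma~\ref{lTwoEquivalences} and the displayed $i,h$ for $\psi^\delta(T^1M;\theta)$ go through exactly as before, producing a homotopy-commutative square
\[
\xymatrix{
C_k^\delta(M;\theta) \ar[r]^-{\ssl{\theta}}\ar[d]^{\r{r}^\theta} & \Gamma_c(\dot{T}^{1,\theta}M)_k \ar[d]^{\phi_r^\theta} \\
C_{rk}^\delta(M;\theta) \ar[r]^-{\ssl{\theta}} & \Gamma_c(\dot{T}^{1,\theta}M)_{rk}
}
\]
with $\phi_r^\theta$ a fibrewise degree-$r$ endomorphism of $\dot{T}^{1,\theta}M$ fixing the section at infinity.

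Next I would argue that $\phi_r^\theta$ is a $\bZ_{(p)}$-homology equivalence on section spaces whenever $p\nmid r$. This is where the discussion in \S\ref{ss:localisation-etc} enters: after fibrewise localisation at $p$, a fibrewise endomorphism of $\dot{T}^{1,\theta}M_{(p)}$ of fibrewise degree $r$ with $r$ a unit in $\bZ_{(p)}$ admits a fibrewise homotopy inverse by Theorem~3.3 of \cite{Dold} (the same argument that produced Lemma~\ref{lemma:localisation}, applied to the bundle $\theta^*\dot{T}^1M$, or rather to $\dot{T}^{1,\theta}M$, over $M$; one uses the cofibration \eqref{eq:261} and the fact that the $n$-skeleton of the fibre of $\dot{T}^{1,\theta}M$ is homotopy equivalent to $S^n$ when the fibres of $\theta$ are path-connected, so that degree is well defined and Dold's criterion applies fibrewise). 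Hence postcomposition with $\phi_r^\theta$ induces a homotopy equivalence $\Gamma_c(\dot{T}^{1,\theta}M_{(p)})_k\to\Gamma_c(\dot{T}^{1,\theta}M_{(p)})_{rk}$, and combined with the localisation statement $\Gamma_c(\dot{T}^{1,\theta}M)\to\Gamma_c(\dot{T}^{1,\theta}M_{(p)})$ being a $\bZ_{(p)}$-homology equivalence componentwise, one gets a zigzag of $\bZ_{(p)}$-homology isomorphisms between $\Gamma_c(\dot{T}^{1,\theta}M)_k$ and $\Gamma_c(\dot{T}^{1,\theta}M)_{rk}$.

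Finally, I would invoke the labelled version of McDuff's theorem (Theorem~\ref{thm:McDufflabelssection4}): the scanning maps $\ssl{\theta}\colon C_k^\delta(M;\theta)\to\Gamma_c(\dot{T}^{1,\theta}M)_k$ and $\ssl{\theta}\colon C_{rk}^\delta(M;\theta)\to\Gamma_c(\dot{T}^{1,\theta}M)_{rk}$ are $\bZ_{(p)}$-homology isomorphisms in the stable range with labels $\srscan[M;\theta]$. Chasing the homotopy-commutative square above, in the stable range the map $\phi_r^\theta\circ\ssl{\theta}$ is a $\bZ_{(p)}$-homology isomorphism; since the top $\ssl{\theta}$ is too, so is $\ssl{\theta}\circ\r{r}^\theta$, and cancelling the bottom $\ssl{\theta}$ (also an isomorphism in the stable range) gives that $\r{r}^\theta$ induces an isomorphism on $\bZ_{(p)}$-homology in the stable range with labels, as claimed. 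The main obstacle is the bookkeeping in the first step: one must check carefully that the labels transported along the two sides of the geometric square genuinely agree (this uses that replicated points are close enough that the prescribed-radius neighbourhoods on which the section is defined still contain the relevant points, so no label information is lost or forced to be discarded), and that passing between the models $C_k^{\delta,\gamma}$, $C_k^\delta$ and the various $\psi^\delta(T^1M;\theta)$ constructions is compatible with the replication and scanning maps — all of which is routine but needs the weak equivalences of Lemma~\ref{lTwoEquivalences} and the displayed fibrewise $i,h$ to be in place.
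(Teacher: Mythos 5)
Your proposal is correct and follows essentially the same route as the paper: generalise the commutative square from Theorem \ref{thm:b} to the labelled setting via $\ssnl{\theta,\gamma}$ and the fibrewise $i,h$, identify the induced endomorphism of $\dot{T}^{1,\theta}M$ (which on each fibre is the map of cofibres $\Sigma^n F_+\to\Sigma^n F_+$ coming from a degree-$r$ map on the sphere factor, hence an equivalence after inverting $r$ by Dold), and conclude with the labelled scanning theorem. The only differences are cosmetic points of bookkeeping, not of substance.
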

\begin{proof}
The relevant diagram is the following:
\[\xymatrix{
C_k^{\delta,\gamma} \ar[r]^-{\ssnl{\theta,\gamma}} \ar[d]^-{\r{r}} & \Gamma_c(\psi^\delta(T^1M;\theta)) \ar[d] & \Gamma_c(\dot{T}^{1,\theta} M) \ar[d]\ar[l]_-{i} \\
C_{rk}^{\delta,\gamma/r} \ar[r]^-{\ssnl{\theta,\gamma/r}} & \Gamma_c(\psi^\delta(T^1M;\theta)) \ar[r]^-h & \Gamma_c(\dot{T}^{1,\theta} M) 
}\]
The argument in the proof of Theorem \ref{thm:b} generalizes step by step to give that the left hand-side square commutes up to homotopy and that the rightmost vertical arrow is given by postcomposition with a bundle map which on the fibre over $x\in M$ induces a map
\[\xymatrix{
F \ar[r]\ar[d]^\Id & F\times S^n \ar[d]^{\Id\times f_r}\ar[d]\ar[r] & \Sigma^n F_+ \ar[d]\\
F \ar[r] & F\times S^n \ar[r] & \Sigma^n F_+ \\
}\]
where $F = \theta^{-1}(x)$, $f_r$ is a degree $r$ map, hence the rightmost vertical map is a $\bZ[\frac{1}{r}]$-homotopy equivalence.
 degree $r$ between sphere bundles.
\end{proof}

Let $\theta$ be the projection $S(TM)\to M$. Recall the definition of the intrinsic replication map $\ro{r}\colon \W{k}\lra \W{rk}$ from page \pageref{intrinsicreplication}.

\begin{rthm}[{\ref{thm:c}}]
If $M$ is a connected smooth manifold and $\ell$ is a set of primes not dividing an integer $r$, then the map $\ro{r}\colon \W{k} \to \W{rk}$ induces isomorphisms on homology with $\bZ_{(\ell)}$-coefficients in the stable range with labels.
\end{rthm}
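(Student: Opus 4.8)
The plan is to run the three-step argument proving Theorem~\ref{thm:b}, in its labelled form (as for Theorem~\hyperref[thm:bprime]{$\text{B}^\prime$}), with the fixed non-vanishing vector field $v$ replaced by a \emph{label-dependent} direction. The hypothesis that $\theta$ factors through $S(TM)$ via $\varphi$ means precisely that $E$ carries a canonical nowhere-zero section $\tau$ of $\theta^*TM$, namely $e\mapsto\varphi(e)\in S(T_{\theta(e)}M)$ (the tautological section when $\theta$ is the projection $S(TM)\to M$), and the intrinsic replication map replicates each labelled point $q$ in the direction $\varphi(s(q))$ determined by its own label. By Lemma~\ref{lTwoEquivalences} one may replace $\W{k}$ by the model $C_k^{\delta,\gamma}(M;\theta)$, which carries an analogous intrinsic replication map $C_k^{\delta,\gamma}(M;\theta)\to C_{rk}^{\delta,\gamma/r}(M;\theta)$; I would then build a homotopy-commutative diagram
\[
\xymatrix{
C_k^{\delta,\gamma}(M;\theta) \ar[r]^-{\ssnl{\theta,\gamma}} \ar[d]_-{\ro{r}} & \Gamma_c(\psi^\delta(T^1M;\theta)) \ar[d] & \Gamma_c(\dot{T}^{1,\theta}M)_k \ar[d]^{\phi_r}\ar[l]_-{i} \\
C_{rk}^{\delta,\gamma/r}(M;\theta) \ar[r]^-{\ssnl{\theta,\gamma/r}} & \Gamma_c(\psi^\delta(T^1M;\theta)) \ar[r]^-{h} & \Gamma_c(\dot{T}^{1,\theta}M)_{rk}
}
\]
exactly as in the proof of Theorem~\ref{thm:b}, with $\phi_r$ post-composition with a fibrewise self-map of $\dot T^{1,\theta}M$. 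The left-hand square commutes up to homotopy by the distance estimate and the $\exp\simeq\pi$ argument there; the only new point is that the replication direction seen from a scanning centre $x$ --- the transport to $T_xM$ of $\varphi$ of the label at the nearby point $q$ --- differs from the scanned label at $x$ by $O(d(x,q))$, which is absorbed by a homotopy.

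Applying the fibrewise cofibre sequence~\eqref{eq:261} over $M$ identifies the fibre of $\dot T^{1,\theta}M$ over $x$ with the half-smash $F\ltimes S^n=\Sigma^n(F_+)$, where $F=\theta^{-1}(x)$ and $S^n=\dot T^1_xM$; for $n\ge 2$ this is $S^n\vee\Sigma^n F$, with $\xi\colon\dot T^{\theta}M\to\dot TM$ restricting to the projection onto the bottom cell $S^n$. Unwinding the construction, $\phi_r$ acts on $F\ltimes S^n$ by $(e,q)\mapsto(e,g_e(q))$, where $g_e$ is the degree-$r$ self-map of $S^n=\dot T^1_xM$ obtained from the fibrewise map $h\sigma_r i$ of the proof of Theorem~\ref{thm:b} built from the constant vector field $\varphi(e)$ --- ``replicate in the direction $\varphi(e)$, then rescale''. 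Each $g_e$ fixes $\infty$, so $\phi_r$ fixes the section at infinity, and commutativity of the square forces $[\phi_r](k)=rk$. The key claim is that $\phi_r$ is multiplication by $r$ on $\widetilde H_*(F\ltimes S^n)$, on both summands: on the bottom cell $\{e_0\}\ltimes S^n$ it is $g_{e_0}$, of degree $r$ by the count in Theorem~\ref{thm:b} (the preimage of the origin is $r$ points, each with the sign of $r$); and on the top cell $F\wedge S^n$, obtained by collapsing $\{e_0\}\ltimes S^n$, the induced map is $e\wedge w\mapsto e\wedge g_e(w)$, whose degree one reads off from a regular value $e_1\wedge w_1$: the preimages are the points $e_1\wedge w$ with $g_{e_1}(w)=w_1$, and at each the Jacobian is block-triangular with diagonal blocks the identity (in the $F$-direction) and $Dg_{e_1}(w)$, so the local degrees sum to $\deg g_{e_1}=r$. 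Hence $\phi_r$ is a $\bZ[\tfrac1r]$-homology equivalence on every fibre; as the fibres of the fibrewise localisation $\dot T^{1,\theta}M_{(\ell)}$ are simply connected (for $n\ge2$) and $\bZ_{(\ell)}$-local and $r$ is a unit in $\bZ_{(\ell)}$, $\phi_r$ is then a homotopy equivalence on each fibre, so the localised $\phi_r$ is a fibrewise homotopy equivalence by Theorem~3.3 of \cite{Dold}.

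Post-composition with the localised $\phi_r$ therefore gives a homotopy equivalence $\Gamma_c(\dot T^{1,\theta}M_{(\ell)})_k\xrightarrow{\simeq}\Gamma_c(\dot T^{1,\theta}M_{(\ell)})_{rk}$, and --- as in \S\ref{ss:localisation-etc}, these fibres being nilpotent --- the componentwise localisation maps $\Gamma_c(\dot T^{1,\theta}M)_\bullet\to\Gamma_c(\dot T^{1,\theta}M_{(\ell)})_\bullet$ are $\bZ_{(\ell)}$-homology isomorphisms, so the vertical zigzag $\Gamma_c(\dot T^{1,\theta}M)_k\to\Gamma_c(\dot T^{1,\theta}M)_{rk}$ through $\phi_r$ is a $\bZ_{(\ell)}$-homology isomorphism. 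By Theorem~\ref{thm:McDufflabelssection4} the labelled scanning map is a $\bZ$-homology isomorphism in the stable range with labels in $\theta$; a diagram chase of the diagram above (together with the equivalences $i$ and $h$, and Lemma~\ref{lTwoEquivalences}) then gives that $\ro{r}\colon\W{k}\to\W{rk}$ is a $\bZ_{(\ell)}$-homology isomorphism in that range. I expect the main work to be, first, checking that the scanning square genuinely commutes up to homotopy --- the routine but delicate estimates of Theorem~\ref{thm:b}, now with a label-dependent replication direction --- and, more essentially, the top-cell degree computation, which is what replaces the clean factorisation $\id_F\ltimes f_r$ available in the extrinsic labelled case of Theorem~\hyperref[thm:bprime]{$\text{B}^\prime$}.
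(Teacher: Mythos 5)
Your proposal is correct and follows essentially the same route as the paper: the same labelled scanning diagram \eqref{eq:98}, the same identification of the induced right-hand vertical map as post-composition with a fibrewise self-map $g$ of $\dot{T}^{1,\theta}M$ preserving the label coordinate and fixing infinity, and the same conclusion via Dold's theorem over $M$ after fibrewise localisation. The only genuine divergence is at the fibre level: the paper does not compute degrees on the half-smash at all, but instead observes that $g_x$ sits in a map of cofibre sequences \eqref{eq:261} whose middle map $S(T_xM)\times\dot{T}^1_xM\to S(T_xM)\times\dot{T}^1_xM$ is a map \emph{over} $S(T_xM)$ of fibrewise degree $r$, hence (after localising, and by Dold over $S(T_xM)$) a homotopy equivalence, so the induced map on cofibres is one too; your alternative — degree $r$ on the bottom cell and a regular-value computation of degree $r$ on the top cell, then homology Whitehead on the localised simply-connected fibre — is also valid, but the Jacobian argument uses that the fibre of $\theta$ is a closed manifold and smoothability of $g_e$, so it does not transfer verbatim to Theorem \hyperref[thm:cprime]{$\text{C}^\prime$} (arbitrary path-connected fibres), whereas the paper's cofibre-comparison does; if you want your computation in that generality, replace the regular-value count by the cohomological observation that $A^*u=r\,\mathrm{pr}^*u$ in $\widetilde{H}^n(F_+\wedge S^n)$, where $A(e,w)=g_e(w)$ and $u$ generates $\widetilde{H}^n(S^n)$, which gives multiplication by $r$ under the Thom/cross-product isomorphism for any $F$.
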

\begin{proof}
Define $\sigma_r\colon \psi(TM;\theta) \to \psi(TM;\theta)$ to be the fibrewise version of $\ro{r}$ composed with $\mathbbm{2}$ as in the proof of Theorem \ref{thm:b}. The first square in the following diagram
\begin{equation}
\begin{split}
\label{eq:98}\xymatrix{
C_k^{\delta,\gamma}(M;\theta) \ar[r]^-{\ssnl{\theta,\gamma}}\ar[d]^-{\ro{r}} & \Gamma_c(\psi^\delta(T^1M;\theta))\ar[d]^-{\sigma_r} & \Gamma_c(\dot{T}^{1,\theta} M)_k \ar[l]_-i\ar[d]^-{h\sigma_ri} \\
C_k^{\delta,\gamma/r}(M;\theta) \ar[r]^-{\ssnl{\theta,\gamma/r}} & \Gamma_c(\psi^\delta(T^1M;\theta)) \ar[r]^-h& \Gamma_c(\dot{T}^{1,\theta} M)_{rk}.
}
\end{split}
\end{equation}
commutes, by the same argument as the first step the proof of Theorem \ref{thm:b}. Therefore we obtain a fibrewise map $h\sigma_r i$ on the right hand side. The map $h\sigma_r i$ is obtained by postcomposition with a fibrewise map $g\colon \dot{T}^{1,\theta} M\to \dot{T}^{1,\theta} M$. The map $g_x$ on the fibre over the point $x$ restricts to the identity on the points at infinity, therefore it extends to the following diagram of cofibre sequences
\[\xymatrix{
S(T_x M)\times\{\infty\}\ar[d]^\Id\ar[r]& S(T_xM)\times \dot{T}_x^1M \ar[r]\ar[d]^{f}& \dot{T}^{1,\theta}_x M \ar[d]^{g_x}\\
S(T_x M)\times\{\infty\}\ar[r]& S(T_xM)\times \dot{T}_x^1M \ar[r]& \dot{T}^{1,\theta}_x M \\
}\] 
where the leftmost horizontal maps are the inclusion of the points at infinity. After localizing the diagram at $\ell$, the map $f_{(\ell)}$ is a map of sphere bundles that induces a map of degree $r$ on fibres (as in the proof of Theorem \ref{thm:b}). Since $r$ is a unit in $\bZ_{(\ell)}$, it follows that $f_{(\ell)}$ is a homotopy equivalence, and therefore that $(g_x)_{(\ell)}$ is too. Since $g_{(\ell)}$ induces a homotopy equivalence on fibres, it follows (using Theorem 3.3 of \cite{Dold}) that $g_{(\ell)}$ is a fibrewise homotopy equivalence, and therefore that $(h\sigma_r i)_{(\ell)}$ is a homotopy equivalence.
\end{proof}

If the bundle $\theta\colon E\to M$ factors through $S(TM)$ and has path-connected fibres (for instance the oriented frame bundle of $TM$, if $M$ is orientable), then Theorem \ref{thm:c} generalises to:
\begin{rthm}[$\text{C}^\prime$]\label{thm:cprime}
If $\ell$ is a set of primes, none of them dividing the integer $r$, then the intrinsic replication map with labels 
\[\ro{r}\colon C_k(M;\theta)\lra C_{rk}(M;\theta)\]
induces isomorphisms on homology with $\bZ_{(\ell)}$ coefficients in the stable range with labels.
\end{rthm}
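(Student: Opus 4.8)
The plan is to imitate the proof of Theorem~\ref{thm:c} almost verbatim, replacing the fibre $S(T_xM)$ by the general path-connected fibre $F=\theta^{-1}(x)$ wherever it occurs; the factorisation $\varphi\colon E\to S(TM)$ enters only in the definition of the replication direction and is otherwise inert. Using the weak equivalences of Lemma~\ref{lTwoEquivalences} I would pass to the models $C_k^{\delta,\gamma}(M;\theta)$ and $C_{rk}^{\delta,\gamma/r}(M;\theta)$ and set up the diagram
\[\xymatrix{
C_k^{\delta,\gamma}(M;\theta) \ar[r]^-{\ssnl{\theta,\gamma}}\ar[d]^-{\ro{r}} & \Gamma_c(\psi^\delta(T^1M;\theta))\ar[d]^-{\sigma_r} & \Gamma_c(\dot{T}^{1,\theta} M)_k \ar[l]_-i\ar[d]^-{h\sigma_r i} \\
C_{rk}^{\delta,\gamma/r}(M;\theta) \ar[r]^-{\ssnl{\theta,\gamma/r}} & \Gamma_c(\psi^\delta(T^1M;\theta)) \ar[r]^-h& \Gamma_c(\dot{T}^{1,\theta} M)_{rk},
}\]
where $\sigma_r$ is the fibrewise version of $\ro{r}$ post-composed with the expansion $\mathbbm{2}$. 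The left-hand square commutes up to homotopy by the same argument as the first step of the proof of Theorem~\ref{thm:b} -- labels are simply carried along by $\ro{r}$ and do not affect that distance estimate -- and $i,h$ are mutually inverse fibrewise homotopy equivalences, so the right-hand composite $h\sigma_r i$ is post-composition with a fibrewise self-map $g\colon \dot{T}^{1,\theta}M\to\dot{T}^{1,\theta}M$.

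The core of the argument is to show that $g$ becomes a fibrewise homotopy equivalence after localising at $\ell$. As in the proof of Theorem~\ref{thm:c}, the fibre map $g_x$ is the identity on the locus of points at infinity, so it lifts to a map of the cofibre sequences obtained from~\eqref{eq:261} over $x$:
\[\xymatrix{
F\times\{\infty\}\ar[d]^-{\Id}\ar[r]& F\times \dot{T}_x^1M \ar[r]\ar[d]^-{f}& \dot{T}^{1,\theta}_x M \ar[d]^-{g_x}\\
F\times\{\infty\}\ar[r]& F\times \dot{T}_x^1M \ar[r]& \dot{T}^{1,\theta}_x M .
}\]
Here $f$ covers $\Id_F$ and restricts on each slice $\{y\}\times\dot{T}_x^1M$ to the degree-$r$ self-map of $S^n$ given by $r$-fold replication in the direction $\varphi(y)$; thus $f$ is a fibrewise (over $F$) self-map of the trivial $S^n$-bundle that has degree $r$ on every fibre. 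Since $r$ is a unit in $\bZ_{(\ell)}$, fibrewise localisation makes $f_{(\ell)}$ a homotopy equivalence on each fibre, hence -- by Theorem~3.3 of~\cite{Dold} over the base $F$ -- a fibrewise homotopy equivalence, so the induced map $(g_x)_{(\ell)}$ on cofibres is a homotopy equivalence. Running this for every $x$ and then applying Theorem~3.3 of~\cite{Dold} again, now over $M$ and using that $g$ fixes the section at infinity outside a compact set, shows that $g_{(\ell)}$ is a fibrewise homotopy equivalence, whence $(h\sigma_r i)_{(\ell)}$ is a homotopy equivalence of compactly supported section spaces and in particular a $\bZ_{(\ell)}$-homology isomorphism in all degrees.

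To finish, $\bZ_{(\ell)}$-homology commutes with the homotopy-commutative diagram above; the horizontal scanning maps are $\bZ_{(\ell)}$-homology isomorphisms through the ranges $*\leq\srscan[M;\theta](k)$ and $*\leq\srscan[M;\theta](rk)$ by Theorem~\ref{thm:McDufflabelssection4} (which also exhibits $\srscan[M;\theta]$ as non-decreasing), and $h\sigma_r i$ is a $\bZ_{(\ell)}$-homology isomorphism in all degrees; a diagram chase then yields that $\ro{r}$ induces an isomorphism on $\bZ_{(\ell)}$-homology in the range $*\leq\srscan[M;\theta](k)$, that is, in the stable range with labels. I expect the only genuine difficulty -- beyond the already-established scanning machinery -- to be the observation that, although the replication direction now varies with the label, the associated middle map $f$ is still a fibrewise degree-$r$ self-map of $F\times\dot{T}_x^1M$ fixing the section at infinity; once this is in place, the localisation and compact-support bookkeeping in the passage from fibrewise homotopy equivalences to equivalences of $\Gamma_c$ is routine and identical to that of Theorem~\ref{thm:c}.
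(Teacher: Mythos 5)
Your proposal is correct and is essentially the paper's own argument: the paper states Theorem \hyperref[thm:cprime]{$\text{C}^\prime$} as a direct generalisation of Theorem \ref{thm:c}, and your write-up carries out exactly that generalisation -- the diagram \eqref{eq:98} with fibre $F=\theta^{-1}(x)$ in place of $S(T_xM)$, the observation that the middle map of the cofibre sequences is a fibrewise (over $F$) self-map of degree $r$ fixing infinity, and then Dold's theorem after localisation, followed by the comparison with the labelled scanning map. No changes needed.
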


\section{Homological stability via vector fields with exactly one zero}
\label{sPart2}

We now use some different techniques to extend our results a bit further for homology with field coefficients. Section spaces are not involved in this part; instead we apply Theorem \ref{thm:b} (homological stability with respect to the $r$-replication map) to $M\smallsetminus\{*\}$ and classical homological stability for $M\smallsetminus\{*\}$ to obtain Theorem \ref{thm:e}.

\subsection{Vector fields}

Let $M$ be a closed connected manifold with Euler characteristic $\chi$.

\begin{df}\label{dIndex}
Given a vector field $v\in \Gamma(TM)$ with an isolated zero $z\in M$, define the degree $\deg_v(z)$ of $z$ as follows. Choose a coordinate chart $U\cong\bR^n$, with $z\in U$ corresponding to $0\in\bR^n$, such that $v$ has no other zeros in $U$. The differential of the diffeomorphism $U\cong \bR^n$ is a bundle isomorphism $TM|_U = TU \cong T\bR^n = \bR^n \times \bR^n$. The restriction of $v$ to $U\smallsetminus\{z\}$ therefore determines a map $\cyl\to\cyl$. The degree of this map is by definition $\deg_v(z)$.
\end{df}

A simple observation is that $M$ admits a vector field with at most one zero, which will therefore have index $\chi$ by the Poincar{\'e}-Hopf theorem. Moreover we can choose exactly what this zero looks like locally:

\begin{lemma}\label{lExtendingVF}
Suppose we are given a vector field $v$ on a closed ball $B\subseteq M$ with exactly one zero which lies in its interior and has index $\chi$. Then this extends to a vector field $\hat{v}$ on $M$ which is non-vanishing on $M\smallsetminus B$.
\end{lemma}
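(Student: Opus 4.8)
The plan is to reduce the statement to the classical cancellation principle for vector-field singularities, using the Poincar\'e--Hopf theorem to see that the relevant obstruction vanishes. First I would extend $v|_{\partial B}$ to a vector field $v_0$ on $N\coloneqq M\smallsetminus\mathrm{int}(B)$ that restricts to $v$ on $\partial N=\partial B$ and has only finitely many isolated zeros, all lying in $\mathrm{int}(N)$: on a collar $\partial B\times[0,1)\subset N$ take $v_0$ to be (a parallel transport of) $v|_{\partial B}$, hence nowhere zero; extend arbitrarily outside the collar; then perturb $v_0$, supported away from $\partial B$, to make its zeros isolated. Gluing $v$ on $B$ to $v_0$ on $N$ gives a vector field $w$ on the closed manifold $M$ whose zeros are the unique zero of $v$ in $\mathrm{int}(B)$ --- of index $\chi$ by hypothesis --- together with the zeros of $v_0$. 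By the Poincar\'e--Hopf theorem the indices of all zeros of $w$ sum to $\chi$, so the zeros of $v_0$ have indices summing to $0$.

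The second step is to remove the zeros of $v_0$ rel $\partial N$. Since $\dim M=n\geq 2$ and $M$ is connected, $\mathrm{int}(N)$ is connected, so I would choose an ambient isotopy of $N$, fixing a neighbourhood of $\partial N$ pointwise, that moves all the zeros of $v_0$ into a single coordinate ball $D\cong\bR^n$; pushing $v_0$ forward along it changes nothing near $\partial B$ and replaces the zeros by zeros inside $D$ with the same indices, still summing to $0$. Trivialising $TM|_D\cong D\times\bR^n$, the restriction of the pushed-forward field to $\partial D$ is a map $S^{n-1}\to\bR^n\smallsetminus\{0\}\simeq S^{n-1}$ of degree equal to the sum of the enclosed indices, i.e.\ of degree $0$; such a map is null-homotopic and hence extends over $D^n$, and using that extension to redefine the field on $D$ yields a nowhere-zero vector field on $N$ unchanged near $\partial N$. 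Declaring $\hat v$ to be $v$ on $B$ and this modified field on $N$ gives the desired extension. (When $n=1$, i.e.\ $M=S^1$, the lemma is elementary by hand, and all constructions above can be carried out smoothly by routine smoothing.)

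The genuine content --- and the only place a little care is needed --- is this last step: the combination of ``collect finitely many zeros of total index $0$ into one ball'' with the homotopy fact ``a degree-$0$ self-map of a sphere bounds'' is exactly the classical singularity-cancellation lemma. What remains to be checked is purely bookkeeping, namely that the perturbation making $v_0$'s zeros isolated and the isotopy collecting them are both supported away from $\partial B$, so that $\hat v$ really does restrict to the given $v$ on all of $B$; everything else is a direct application of the Poincar\'e--Hopf theorem.
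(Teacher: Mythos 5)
Your argument is correct. It differs from the paper's proof in its decomposition, though both rest on the same two ingredients: the Poincar\'e--Hopf theorem and the fact that maps $S^{n-1}\to\bR^n\smallsetminus\{0\}$ are classified up to homotopy by their degree. You extend $v|_{\partial B}$ over the complement $N=M\smallsetminus\mathrm{int}(B)$ rel the boundary, use Poincar\'e--Hopf on the glued field to see that the new zeros have total index $0$, collect them in a ball and cancel them there via a degree-$0$ null-homotopy --- the classical singularity-cancellation argument. The paper instead starts from an arbitrary vector field $w$ on $M$ with isolated zeros, pushes all of its zeros into an annulus $B'\smallsetminus B$ around $B$, notes that $v|_{\partial B}$ and $w|_{\partial B'}$ both have degree $\chi$ (the latter by Poincar\'e--Hopf), and interpolates between them on the annulus; the zeros of $w$ are simply overwritten by this interpolation, so no explicit cancellation step is needed. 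The paper's route buys a slightly shorter argument with less bookkeeping (no rel-boundary extension, no perturbation to isolated zeros rel a collar, no isotopy collecting zeros --- only the milder step of pushing the zeros of $w$ into the annulus), while yours is the more standard-issue cancellation lemma and makes the ``total index $0$ away from $B$'' mechanism explicit; as you note, its only delicate point is keeping every modification supported away from $\partial B$, which you address. Both proofs are complete modulo routine smoothing, which the paper also leaves implicit.
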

\begin{proof}
First choose a vector field $w$ on $M$ which has only isolated (and therefore finitely many) zeros. Choose a larger closed ball $B^\prime\supset B$ and a trivialisation of $TM|_{B^\prime}$. Now homotope $w$ if necessary so that all its zeros lie in $\mathrm{int}(B^\prime)\smallsetminus B$. The restriction of $w$ to $\partial B^\prime$ is a map $\partial B^\prime \to \cyl$, whose degree is the sum of the degrees of all zeros of $w$, which is $\chi$ by the Poincar{\'e}-Hopf theorem. The restriction of the vector field $v$ to $\partial B$ is a map $\partial B\to \cyl$ which also has degree $\chi$ by assumption. Since any two maps $S^{n-1}\to \cyl$ of the same degree are homotopic, there is a map $x\colon B^\prime \smallsetminus \mathrm{int}(B) \cong S^{n-1} \times [0,1] \to \cyl$ agreeing with $w$ on $\partial B^\prime$ and with $v$ on $\partial B$. We can therefore define $\hat{v}$ to be equal to $v$ on $B$, $x$ on $B^\prime \smallsetminus B$ and $w$ on $M\smallsetminus B^\prime$.
\end{proof}

\subsection{A cofibre sequence of configuration spaces}\label{ssCofibresequence}

Choose a Riemannian metric on $M$ and an isometric embedding $D\hookrightarrow M$ of the closed unit disc $D\subseteq \bR^n$. Following \cite[\S 6]{RW-hs-for-ucs} we define $U_k(M)$ to be the subspace of $C_k(M)$ of configurations which have a unique closest point in $D$ to its centre $0\in D$. There is an open cover of $C_k(M)$ given by the subsets $U_k(M)$ and $C_k(M\smallsetminus\{0\})$, with intersection $U_k(M\smallsetminus\{0\})$. By excision, the induced map of mapping cones
\begin{equation}\label{eExcision}
(U_k(M),U_k(M\smallsetminus\{0\})) \longrightarrow (C_k(M),C_k(M\smallsetminus\{0\}))
\end{equation}
is a homology equivalence. The space $U_k(M)$ decomposes up to homeomorphism as $D^n \times C_{k-1}(M\smallsetminus\{0\})$ and similarly $U_k(M\smallsetminus\{0\}) \cong (D^n\smallsetminus\{0\}) \times C_{k-1}(M\smallsetminus\{0\})$, so the left-hand side of \eqref{eExcision} is homeomorphic to
\[
(D^n,D^n\smallsetminus\{0\})\wedge C_{k-1}(M\smallsetminus\{0\})_+.
\]
Composing with the homotopy equivalence $\partial D^n \to D^n \smallsetminus\{0\}$ we obtain the following diagram:
\begin{equation}\label{eCofibreSequences}
\centering
\begin{split}
\begin{tikzpicture}
[x=1mm,y=1mm,font=\small]
\node (t1) at (0,15) {$C_k(M\smallsetminus\{0\})$};
\node (t2) at (40,15) {$C_k(M)$};
\node (t3) at (80,15) {$(C_k(M),C_k(M\smallsetminus\{0\}))$};
\node (b1) at (0,0) {$S^{n-1}\times C_{k-1}(M\smallsetminus\{0\})$};
\node (b2) at (40,0) {$D^n\times C_{k-1}(M\smallsetminus\{0\})$};
\node (b3) at (80,0) {$\Sigma^n(C_{k-1}(M\smallsetminus\{0\})_+)$};
\draw[->] (t1) to (t2);
\draw[->] (t2) to (t3);
\draw[->] (b1) to (b2);
\draw[->] (b2) to (b3);
\draw[->] (b1) to node[left,font=\footnotesize]{$t_{k-1}$} (t1);
\draw[->] (b2) to (t2);
\draw[->] (b3) to node[right,font=\footnotesize]{$(\star)$} (t3);
\end{tikzpicture}
\end{split}
\end{equation}
where the rows are cofibre sequences and the rightmost vertical map $(\star)$ is a homology equivalence. The map $t_{k-1}$ may be described as radially expanding the configuration in $C_{k-1}(M\smallsetminus\{0\})$ away from $0$ until it has no points in $D$, and then adding the point in $S^{n-1} = \partial D$ to the configuration.

\begin{remark}\label{rKuenneth}
Note that the bottom left horizontal map of \eqref{eCofibreSequences} is homotopy split-surjective, so the maps
\[
\Sigma^{n-1}(C_{k-1}(M\smallsetminus\{0\})_+) \dashrightarrow S^{n-1}\times C_{k-1}(M\smallsetminus\{0\}) \longrightarrow D^n\times C_{k-1}(M\smallsetminus\{0\})
\]
induce split short exact sequences on homology, corresponding to the K\"unneth decomposition for $S^{n-1}\times C_{k-1}(M\smallsetminus\{0\})$. The dotted arrow is the connecting homomorphism and only exists on homology.
\end{remark}

The upshot of this discussion is the following lemma, where $\dashrightarrow$ indicates a map which is only defined on homology.

\begin{lemma}\label{lCofibreSequence}
There are maps $C_k(M\smallsetminus\{0\}) \longrightarrow C_k(M) \dashrightarrow \Sigma^n(C_{k-1}(M\smallsetminus\{0\})_+)$ which induce a long exact sequence on homology. The connecting homomorphism is the composite
\[
\Sigma^{n-1}(C_{k-1}(M\smallsetminus\{0\})_+) \dashrightarrow S^{n-1} \times C_{k-1}(M\smallsetminus\{0\}) \longrightarrow C_k(M\smallsetminus\{0\}).
\]
The first of these two maps is the inclusion of a direct summand of the homology of $S^{n-1} \times C_k(M\smallsetminus\{0\})$ and the second is the map $t_{k-1}$ described above.
\end{lemma}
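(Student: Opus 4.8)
The plan is to read off both assertions from the diagram \eqref{eCofibreSequences}, whose construction (via the excision statement \eqref{eExcision} and the identifications of $U_k(M)$ and $U_k(M\smallsetminus\{0\})$) has already done all the geometric work. First I would take the long exact sequence in homology associated to the top row of \eqref{eCofibreSequences}, i.e.\ the long exact sequence of the pair $(C_k(M),C_k(M\smallsetminus\{0\}))$. Since the rightmost vertical map $(\star)$ is a homology equivalence, it identifies $H_*(C_k(M),C_k(M\smallsetminus\{0\}))$ with $\widetilde H_*(\Sigma^n(C_{k-1}(M\smallsetminus\{0\})_+))$; substituting this identification into the long exact sequence of the pair yields the maps
\[
C_k(M\smallsetminus\{0\})\lra C_k(M)\dashrightarrow \Sigma^n(C_{k-1}(M\smallsetminus\{0\})_+),
\]
the dotted arrow being $H_*(C_k(M))\to H_*(C_k(M),C_k(M\smallsetminus\{0\}))$ postcomposed with $(\star)_*^{-1}$, together with the required long exact sequence. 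This gives the first assertion.

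Next I would identify the connecting homomorphism, which after this substitution is $\partial\circ(\star)_*$ with $\partial$ the boundary map of the pair. The bottom row of \eqref{eCofibreSequences} is the long exact sequence of the pair $(D^n\times C_{k-1}(M\smallsetminus\{0\}),\,S^{n-1}\times C_{k-1}(M\smallsetminus\{0\}))$, and \eqref{eCofibreSequences} is a map of pairs from this bottom pair to the top pair, with left vertical map $t_{k-1}$ and right vertical map inducing $(\star)_*$. Naturality of the boundary map then gives $\partial\circ(\star)_* = (t_{k-1})_*\circ\partial_{\mathrm{bot}}$, where $\partial_{\mathrm{bot}}$ is the boundary map of the bottom pair. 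Hence the connecting homomorphism is $(t_{k-1})_*$ precomposed with $\partial_{\mathrm{bot}}$.

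Finally I would identify $\partial_{\mathrm{bot}}$ with the first of the two maps in the statement. The bottom pair is the product of $(D^n,S^{n-1})$ with $C_{k-1}(M\smallsetminus\{0\})$, and the boundary map of the cofibre sequence $S^{n-1}\to D^n\to S^n$ is the suspension isomorphism $\widetilde H_*(S^n)\xrightarrow{\,\cong\,}\widetilde H_{*-1}(S^{n-1})$; by the naturality of the K\"unneth theorem and its compatibility with boundary maps, $\partial_{\mathrm{bot}}$ is the composite of the suspension isomorphism $\widetilde H_*(\Sigma^n(C_{k-1}(M\smallsetminus\{0\})_+))\cong \widetilde H_{*-1}(\Sigma^{n-1}(C_{k-1}(M\smallsetminus\{0\})_+))$ with the inclusion of the K\"unneth summand $\widetilde H_{n-1}(S^{n-1})\otimes H_*(C_{k-1}(M\smallsetminus\{0\}))$ into $H_{*-1}(S^{n-1}\times C_{k-1}(M\smallsetminus\{0\}))$. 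This is exactly the homology map labelled $\Sigma^{n-1}(C_{k-1}(M\smallsetminus\{0\})_+)\dashrightarrow S^{n-1}\times C_{k-1}(M\smallsetminus\{0\})$ in Remark \ref{rKuenneth}, so composing with $t_{k-1}$ gives the claimed description of the connecting homomorphism. The only point that takes any care is the bookkeeping of the degree shift --- the boundary map $\partial_{\mathrm{bot}}$ has domain $\widetilde H_*(\Sigma^n(-))$ whereas the map in the statement is written starting from $\Sigma^{n-1}(-)$, the gap being absorbed by the suspension isomorphism --- together with the signs and naturality of the two boundary maps; there is no genuine obstacle here, since the whole argument lives on homology and $(\star)$ enters only as a homology equivalence.
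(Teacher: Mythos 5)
Your proof is correct and follows essentially the same route as the paper: the lemma is stated there as the "upshot" of the excision argument, the diagram \eqref{eCofibreSequences} with $(\star)$ a homology equivalence, and the K\"unneth splitting of Remark \ref{rKuenneth}, which is exactly what you assemble via the long exact sequence of the pair and naturality of boundary maps. Your explicit bookkeeping of the suspension shift and the identification of the bottom boundary map with the K\"unneth summand inclusion is just a spelled-out version of what the paper leaves implicit.
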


\subsection{Configuration spaces on cylinders}\label{ssCylinders}

For the remainder of this section $n=\dim(M)$ will always be assumed even.

\paragraph{Some natural homology classes.}
We will need to do some calculations inside the homology group $H_{n-1}(C_k(\cyl);\bZ)$ of punctured Euclidean space. There are certain natural elements of this group which one can write down. For example we have the following elements (see also Figure \ref{fHomologyClasses}):

\begin{enumerate}[(a)]
\item For any $0\leq j\leq k-1$ we have a map $\Delta_j\colon S^{n-1}\to C_k(\cyl)$ which sends $v\in S^{n-1}$ to the configuration $\{v,p_1,\dotsc,p_{k-1}\}$, where $p_1,\dotsc,p_{k-1}$ are arbitrary fixed points in $\cyl$ with $\lvert p_i\rvert <1$ for $i\leq j$ and $\lvert p_i\rvert >1$ for $i>j$.
\item[] By abuse of notation we denote the element $(\Delta_j)_*([S^{n-1}])$ simply by $\Delta_j \in H_{n-1}(C_k(\cyl);\bZ)$. We will systematically use this abuse of notation for maps $S^{n-1}\to C_k(\cyl)$.
\item We also have a map $\pi\colon \RP^{n-1}\to C_k(\cyl)$ which sends $\{v,-v\}\in \RP^{n-1} = S^{n-1}/\sim$ to the configuration $\{\underline{2}+v,\underline{2}-v,p_1,\dotsc,p_{k-2}\}$, where $\underline{2} = (2,0,\dotsc,0)$ and $p_1,\dotsc,p_{k-2}$ are fixed points in $\bR^n\smallsetminus B_1(\underline{2})$. This gives us an element $\pi\in H_{n-1}(C_k(\cyl);\bZ)$.
\item Composing this map with the double covering $S^{n-1}\to \RP^{n-1}$ gives a map representing $2\pi$. This is homotopic to the \label{def:tau} map $\tau\colon S^{n-1}\to C_k(\cyl)$ which sends $v\in S^{n-1}$ to the configuration $\{p_1,s(v),p_2,\dotsc,p_{k-1}\}$, where $s\colon S^{n-1} \to \cyl$ is an embedding so that $p_1$ is in the interior of $s(S^{d-1})$ and $0,p_2,\dotsc,p_{k-1}$ are in its exterior.
\item More generally, for any $1\leq j\leq k-1$ we can define a map $\tau_j \colon S^{n-1} \to \cyl$ which sends $v\in S^{n-1}$ to the configuration $\{p_1,\dotsc,p_j,s(v),p_{j+1},\dotsc,p_{k-1}\}$, where $p_1,\dotsc,p_j$ are in the interior of $s(S^{n-1})$ and $0,p_{j+1},\dotsc,p_{k-1}$ are in its exterior. So $\tau_1 = \tau = 2\pi$.
\end{enumerate}

\begin{figure}[ht]
\begin{tikzpicture}
[x=1mm,y=1mm]
\draw[black!50] (-13,-10) rectangle (53,16);
\draw[black!50] (13,-10) -- (13,16);
\draw[black!50] (27,-10) -- (27,16);
\begin{scope}
\node at (0,0) [draw,circle,inner sep=1pt] {};
\foreach \y in {2,4,6,8,10,12} {\node at (0,\y) [fill,circle,inner sep=1pt] {};}
\draw[-angle 90] (8,0) arc (0:360:8);
\node at (1.5,-13) {$\Delta_3$};
\end{scope}
\begin{scope}[xshift=20mm]
\node at (0,0) [draw,circle,inner sep=1pt] {};
\foreach \y in {2,6,8,10,12,14} {\node at (0,\y) [fill,circle,inner sep=1pt] {};}
\draw[-angle 90]  (2,4) arc (0:180:2);
\draw[-angle 90]  (-2,4) arc (180:360:2);
\node at (0,-13) {$\pi$};
\end{scope}
\begin{scope}[xshift=40mm]
\node at (0,0) [draw,circle,inner sep=1pt] {};
\foreach \y in {4,6,8,10,12,14} {\node at (0,\y) [fill,circle,inner sep=1pt] {};}
\draw[-angle 90] (3,5) arc (0:360:3);
\node at (0,-13) {$\tau_2$};
\end{scope}
\end{tikzpicture}
\caption{\small Examples of homology classes in $H_1(C_6(\bR^2\smallsetminus\{0\});\bZ)$. The small circle denotes the puncture $0$ and bullets denote points of the configuration.}\label{fHomologyClasses}
\end{figure}
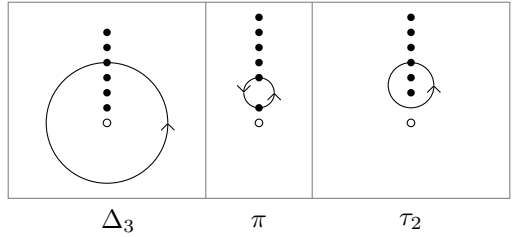

\paragraph{Relations between homology classes.}

Let $P^n$ denote the closed $n$-dimensional disc $\bD^n$ with two open subdiscs (whose closures are disjoint) removed; this is the $n$-dimensional pair-of-pants. Consider the map $r\colon P^n \to C_k(\cyl)$ pictured in Figure \ref{fPairofpants}. The image $r_*([\partial P^n])$ of the fundamental class of its boundary is the class $\Delta_{j+1}-\Delta_j-\tau_1$, which is therefore equal to zero in $H_{n-1}(C_k(\cyl);\bZ)$. Similarly the map $r^\prime \colon P^n \to C_k(\cyl)$ pictured in Figure \ref{fPairofpants} shows that $\tau_{j+1}-\tau_j-\tau_1 = 0$. Hence by induction and the fact that $\tau_1 = 2\pi$ we have
\begin{equation}\label{eRelationA}
\Delta_j = \Delta_0 + 2j\pi \qquad\text{and}\qquad \tau_j = 2j\pi.
\end{equation}

\begin{figure}[ht]
\centering
\begin{tikzpicture}
[x=1mm,y=1mm]

\draw[black!50] (-10,-10) rectangle (122,10);
\draw[black!50] (56,-10) -- (56,10);

\fill[black!20] (13,6) --(0,6) arc (90:270:6) -- (26,-6) arc (270:450:6) -- (13,6);
\fill[white] (26,4) arc (450:90:4);
\fill[white] (8,4) -- (16,4) arc (450:270:4) -- (0,-4) arc (270:90:4) -- (8,4);
\node at (0,0) [draw,circle,inner sep=1pt] {};
\foreach \x in {4,6,14,16,26} {\node at (\x,0) [fill,circle,inner sep=1pt] {};}
\node at (10,0) {$\ldots$};
\draw[->] (13,6) --(0,6) arc (90:270:6) -- (26,-6) arc (270:450:6) -- (13,6);
\draw[->] (26,4) arc (450:90:4);
\draw[->] (8,4) -- (16,4) arc (450:270:4) -- (0,-4) arc (270:90:4) -- (8,4);
\foreach \x in {36,38,46,48} {\node at (\x,0) [fill,circle,inner sep=1pt] {};}
\node at (42,0) {$\ldots$};
\node at (56,-10) [anchor=south east] {$r$};

\begin{scope}[xshift=70mm]
\fill[black!20] (13,6) --(4,6) arc (90:270:6) -- (26,-6) arc (270:450:6) -- (13,6);
\fill[white] (26,4) arc (450:90:4);
\fill[white] (10,4) -- (16,4) arc (450:270:4) -- (4,-4) arc (270:90:4) -- (10,4);
\node at (-6,0) [draw,circle,inner sep=1pt] {};
\foreach \x in {4,6,14,16,26} {\node at (\x,0) [fill,circle,inner sep=1pt] {};}
\node at (10,0) {$\ldots$};
\draw[->] (13,6) --(4,6) arc (90:270:6) -- (26,-6) arc (270:450:6) -- (13,6);
\draw[->] (26,4) arc (450:90:4);
\draw[->] (10,4) -- (16,4) arc (450:270:4) -- (4,-4) arc (270:90:4) -- (10,4);
\foreach \x in {36,38,46,48} {\node at (\x,0) [fill,circle,inner sep=1pt] {};}
\node at (42,0) {$\ldots$};
\node at (52,-10) [anchor=south east] {$r^\prime$};
\end{scope}

\end{tikzpicture}
\caption{\small Pictures of maps $r,r^\prime \colon P^n \to C_k(\cyl)$ such that $r_*([\partial P^n]) = \Delta_{j+1}-\Delta_j-\tau_1$ and $r^{\prime}_*([\partial P^n]) = \tau_{j+1}-\tau_j-\tau_1$. In each case there are $j+1$ fixed points in the bounded white region and $k-j-2$ fixed points in the unbounded white region. The remaining point is in the shaded region; its position is parametrised by $P^n$.}\label{fPairofpants}
\end{figure}
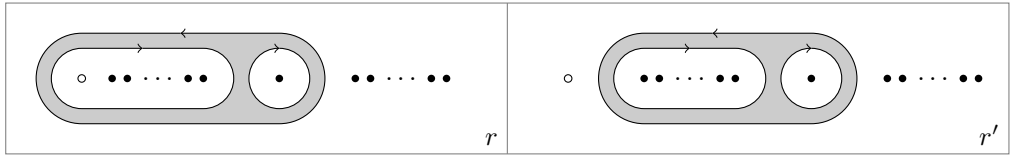

Now let $\hat{\Delta}$ denote the image of the fundamental class under the map $S^{n-1} \to C_k(\cyl)$ which sends $v$ to $\{v,2v,\dotsc,kv\}$. This map can be homotoped to the map
\begin{equation}\label{e:collapsing-equators}
S^{n-1} \longrightarrow S^{n-1}\vee\dotsb\vee S^{n-1} \longrightarrow C_k(\cyl)
\end{equation}
which collapses $k-1$ equators to get a wedge sum of $k$ copies of $S^{n-1}$ and then applies the maps $\Delta_0,\dotsc,\Delta_{k-1}$ on these summands. To see this, note that the $k-1$ equators divide $S^{n-1}$ into $k$ slices (see Figure \ref{f:sphere-slices}) and let $q_i\colon S^{n-1}\to S^{n-1}$ be the quotient map that collapses everything but the $i$th slice of the sphere to a point. Note that the map $q_{\{1,\ldots,k\}} \colon S^{n-1} \to C_k(\cyl)$ defined by sending $v\in S^{n-1}$ to $\{q_1(v), 2q_2(v),\ldots, kq_k(v)\}$ is precisely the map \eqref{e:collapsing-equators} described above. On the other hand, each $q_i$ is homotopic to the identity, and these homotopies assemble to give a homotopy from $q_{\{1,\ldots,k\}}$ to $v\mapsto \{v,2v,\ldots,kv\}$.

\begin{figure}[ht]
\centering
\begin{tikzpicture}
[x=10mm,y=10mm,scale=0.5]

\def\x{9.826695394059044}
\def\y{-1.3454014474356621}
\def\circleradius{2.6839256230691935cm}
\draw (\x,\y) circle (\circleradius);
\draw [rotate around={-51.921952297553496:(8.53409490180479,-2.3549729771042025)}] (8.53409490180479,-2.3549729771042025) ellipse (2.093873126062705cm and 0.42737394601423706cm);
\draw [rotate around={-74.35019395297131:(9.119136664387407,-1.5442561401718586)}] (9.119136664387407,-1.5442561401718586) ellipse (2.5769635506472355cm and 0.5499969993457372cm);
\draw [rotate around={76.40308257125182:(10.399136664387399,-1.484256140171852)}] (10.399136664387399,-1.484256140171852) ellipse (2.615853429092247cm and 0.5568876407585037cm);
\draw [rotate around={54.892308432369305:(11.042240923024323,-2.1990304093806126)}] (11.042240923024323,-2.1990304093806126) ellipse (2.2121467742791974cm and 0.46375843253128557cm);

\def\centerarc(#1)(#2:#3:#4)
{ \draw ($(#1)+({#4*cos(#2)},{#4*sin(#2)})$) arc (#2:#3:#4); }

\begin{scope}[xshift=7cm]
\draw [rotate around={-51.921952297553496:(8.53409490180479,-2.3549729771042025)}] (8.53409490180479,-2.3549729771042025) ellipse (2.093873126062705cm and 0.42737394601423706cm);
\centerarc(\x,\y)(168:268:\circleradius)
\end{scope}

\begin{scope}[xshift=8.15cm,yshift=5mm]
\draw [rotate around={-51.921952297553496:(8.53409490180479,-2.3549729771042025)}] (8.53409490180479,-2.3549729771042025) ellipse (2.093873126062705cm and 0.42737394601423706cm);
\draw [rotate around={-74.35019395297131:(9.119136664387407,-1.5442561401718586)}] (9.119136664387407,-1.5442561401718586) ellipse (2.5769635506472355cm and 0.5499969993457372cm);
\centerarc(\x,\y)(123:168:\circleradius)
\end{scope}

\begin{scope}[xshift=9.5cm,yshift=1cm]
\draw [rotate around={-74.35019395297131:(9.119136664387407,-1.5442561401718586)}] (9.119136664387407,-1.5442561401718586) ellipse (2.5769635506472355cm and 0.5499969993457372cm);
\draw [rotate around={76.40308257125182:(10.399136664387399,-1.484256140171852)}] (10.399136664387399,-1.484256140171852) ellipse (2.615853429092247cm and 0.5568876407585037cm);
\centerarc(\x,\y)(63:123:\circleradius)
\end{scope}

\begin{scope}[xshift=10.8cm,yshift=5mm]
\draw [rotate around={76.40308257125182:(10.399136664387399,-1.484256140171852)}] (10.399136664387399,-1.484256140171852) ellipse (2.615853429092247cm and 0.5568876407585037cm);
\draw [rotate around={54.892308432369305:(11.042240923024323,-2.1990304093806126)}] (11.042240923024323,-2.1990304093806126) ellipse (2.2121467742791974cm and 0.46375843253128557cm);
\centerarc(\x,\y)(20:63:\circleradius)
\end{scope}

\begin{scope}[xshift=12cm]
\draw [rotate around={54.892308432369305:(11.042240923024323,-2.1990304093806126)}] (11.042240923024323,-2.1990304093806126) ellipse (2.2121467742791974cm and 0.46375843253128557cm);
\centerarc(\x,\y)(-90:20:\circleradius)
\end{scope}

\end{tikzpicture}
\caption{The slices of $S^{n-1}$ when $k=5$.}\label{f:sphere-slices}
\end{figure}

Hence $\hat{\Delta} = \Delta_0 +\dotsb +\Delta_{k-1}$ and so by \eqref{eRelationA},
\begin{equation}\label{eRelationB}
\hat{\Delta} = k\Delta_0 + k(k-1)\pi.
\end{equation}
Similarly we let $\hat{\tau}$ denote the image of the fundamental class under the map $S^{n-1} \to C_k(\cyl)$ which sends $v$ to $p_1+\{0,v,2v,\dotsc,(k-1)v\}$, where $p_1$ is a fixed point in $\bR^n$ with $\lvert p_1\rvert \geq k$. Just as above, we can homotope this to see that $\hat{\tau} = \tau_1 +\dotsb \tau_{k-1}$ and so by \eqref{eRelationA},
\begin{equation}\label{eRelationC}
\hat{\tau} = k(k-1)\pi.
\end{equation}
One can see this very directly in the case $n=2$. In this case we are talking about $H_1(C_k(\bR^2);\bZ) = \beta_k / [\beta_k,\beta_k] = \bZ\{\pi\}$, where $\beta_k$ denotes the braid group on $k$ strands. Any one of the standard generators $\sigma_1,\dotsc,\sigma_{k-1}$ of $\beta_k$, which interchange two consecutive strands, is sent to the generator $\pi$. The element $\hat{\tau}$ is the image of the full twist of all $k$ strands, which can be written as a product of $k(k-1)$ generating elements, and so after abelianisation we have $\hat{\tau} = k(k-1)\pi$.

We now apply the above discussion to prove the following:
\begin{lemma}\label{lFormula}
For any map $f\colon S^{n-1} \to S^{n-1}$ define $\sigma_f\colon S^{n-1} \to C_k(\cyl)$ by sending $v$ to $\{ v,v+\frac{1}{k}f(v),\dotsc,v+\frac{k-1}{k}f(v) \}$. Denoting the image of the fundamental class under this map also by $\sigma_f$ we have
\begin{equation}\label{eRelationD}
\sigma_f = k\Delta_0 + \deg(f)k(k-1)\pi.
\end{equation}
\end{lemma}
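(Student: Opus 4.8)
The plan is to lift $\sigma_f$ to the ordered configuration space $\widetilde{C}_k(\cyl)$, where the relevant homology has a transparent basis, compute there, and then push the resulting class down to $C_k(\cyl)$ along the covering map $\rho\colon \widetilde{C}_k(\cyl)\to C_k(\cyl)$.

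First I would introduce the lift $\widetilde\sigma_f\colon S^{n-1}\to \widetilde{C}_k(\cyl)$, $v\mapsto (v,\, v+\tfrac1k f(v),\dotsc,\, v+\tfrac{k-1}{k}f(v))$, and check that it is well defined: the $(j{+}1)$st and $(l{+}1)$st coordinates differ by $\tfrac{j-l}{k}f(v)\neq 0$ and hence are distinct, while $v+\tfrac jk f(v)=0$ would force $1=|v|=\tfrac jk|f(v)|\leq\tfrac{k-1}{k}<1$, so no coordinate is $0$. Since $\rho\circ\widetilde\sigma_f=\sigma_f$, it then suffices to compute $\widetilde\sigma_{f*}[S^{n-1}]\in H_{n-1}(\widetilde{C}_k(\cyl);\bZ)$ and apply $\rho_*$.

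Next I would recall (e.g.\ from \cite{CLM}, using that $\widetilde{C}_k(\cyl)\simeq\widetilde{C}_{k+1}(\bR^n)$ with one point playing the role of the puncture) that $H_{n-1}(\widetilde{C}_k(\cyl);\bZ)$ is free abelian on the Arnold-type classes $A_j^0$ (the $(j{+}1)$st point circling the puncture, the rest fixed, $0\leq j\leq k-1$) and $A_{jl}$ (the $(j{+}1)$st point circling the $(l{+}1)$st, the rest fixed, $0\leq j<l\leq k-1$), and that the projections $p_j\colon q\mapsto q_j$ and difference maps $d_{jl}\colon q\mapsto q_j-q_l$ into $\cyl\simeq S^{n-1}$ form the dual system of coordinate functionals. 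The key computation is then short: $p_j\circ\widetilde\sigma_f$ is $v\mapsto v+\tfrac jk f(v)$, homotopic in $\cyl$ to the identity (shrink the coefficient to $0$), so of degree $1$; and $d_{jl}\circ\widetilde\sigma_f$ for $j<l$ is $v\mapsto \tfrac{j-l}{k}f(v)$, which retracts to $v\mapsto -f(v)$ and so has degree $(-1)^n\deg(f)=\deg(f)$ because $n$ is even --- this is the only point at which evenness of $n$ enters. Hence $\widetilde\sigma_{f*}[S^{n-1}]=\sum_{j}A_j^0+\deg(f)\sum_{j<l}A_{jl}$.

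Finally I would push forward along $\rho$. Forgetting the labels, the loop underlying $A_j^0$ becomes ``one point circles the puncture, the others fixed away from it'', which after a radial isotopy is $\Delta_0$; and the loop underlying $A_{jl}$ becomes ``one point traces an embedded $(n{-}1)$-sphere enclosing one fixed configuration point, the others outside'', which is exactly the class $\tau_1$ of the text, equal to $2\pi$ by \eqref{eRelationA}. Therefore
\[
\sigma_{f*}[S^{n-1}]=\rho_*\widetilde\sigma_{f*}[S^{n-1}]=k\Delta_0+\deg(f)\tbinom{k}{2}(2\pi)=k\Delta_0+\deg(f)\,k(k-1)\pi,
\]
which is \eqref{eRelationD}. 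The case $f=\id$ reproduces $\sigma_{\id}=\hat{\Delta}$ from \eqref{eRelationB}, which serves as a consistency check and fixes the orientation conventions. The main obstacle I anticipate is exactly this last identification: verifying (with consistent orientations of $S^{n-1}$, of the basis classes $A_j^0,A_{jl}$, and of $\Delta_0,\pi$) that $\rho_*A_j^0=\Delta_0$ and $\rho_*A_{jl}=2\pi$, and invoking the precise integral homology of $\widetilde{C}_k(\cyl)$; the cross-check against $\hat{\Delta}$ removes the residual sign ambiguity.
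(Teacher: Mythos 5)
Your proposal is correct, but it takes a genuinely different route from the paper. The paper never leaves the unordered configuration space: it writes a degree-$d$ map of $S^{n-1}$ as $(\id+\dotsb+\id)\circ c_d$ using the pinch map $c_d$ collapsing $d-1$ equators (with a reflection trick for $d\leq 0$), homotopes $\sigma_f$ to $(\hat{\Delta}+\hat{\tau}+\dotsb+\hat{\tau})\circ c_d$, and concludes $\sigma_f=\hat{\Delta}+(d-1)\hat{\tau}$, which gives \eqref{eRelationD} by the previously established relations \eqref{eRelationB} and \eqref{eRelationC}. You instead lift to the ordered configuration space $\widetilde{C}_k(\cyl)\simeq\widetilde{C}_{k+1}(\bR^n)$, use the Arnold--Cohen fact that $H_{n-1}$ there is free on the classes $A_j^0$, $A_{jl}$ with dual functionals the degrees of the projection and difference maps, read off each coefficient by an elementary winding-number computation, and push forward along $\rho$. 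Your computations check out: the well-definedness of the lift, $\deg(p_j\circ\widetilde\sigma_f)=1$, $\deg(d_{jl}\circ\widetilde\sigma_f)=(-1)^n\deg(f)=\deg(f)$ for $n$ even, and $\rho_*A_j^0=\Delta_0$, $\rho_*A_{jl}=\tau_1=2\pi$ with standard small-sphere representatives; the residual sign in the last identification is indeed pinned down by the $f=\id$ comparison with \eqref{eRelationB}, since $\pi$ has infinite order (rationally $H_{n-1}(C_k(\cyl))$ is spanned by $\Delta_0,\Delta_1$). What the two approaches buy: yours is uniform in $\deg(f)$ (no case split on the sign of $d$) and makes the dependence on $\deg(f)$ completely transparent, at the cost of importing the integral (co)homology of ordered configuration spaces and the orientation bookkeeping you flag; the paper's argument is more self-contained, using only the classes and pair-of-pants relations already set up in that subsection, but needs the separate reflection argument for non-positive degree.
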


\begin{proof}
Note that if $\deg(f)=1$ then $\sigma_f = \sigma_{\mathrm{id}} = \hat{\Delta}$ so this is just \eqref{eRelationB}. In general this can be seen as follows. Write $d=\deg(f)$ and first assume that $d>0$.

Denote the constant map to the basepoint by $*\colon S^{n-1} \to S^{n-1}$ and the map $S^{n-1} \to S^{n-1} \vee\dotsb\vee S^{n-1}$ which collapses $d-1$ equators by $c_d$. Then $\sigma_f$ can be homotoped to the map
\[
v \mapsto \bigl\lbrace s(v),s(v)+\tfrac{1}{k}g(v),\dotsc,s(v)+\tfrac{k-1}{k}g(v)\bigr\rbrace
\]
where $s=(\id+*+\dotsb +*)\circ c_d$ and $g=(\id +\id +\dotsb +\id)\circ c_d$, which is in turn homotopic to the map $(\hat{\Delta}+\hat{\tau}+\dotsb +\hat{\tau})\circ c_d \colon S^{n-1} \to C_k(\cyl)$. Therefore the homology class $\sigma_f$ is equal to $\hat{\Delta}+(d-1)\hat{\tau}$, which is the claimed formula by \eqref{eRelationB} and \eqref{eRelationC}.

If $d\leq 0$ we can instead take $s=(\id +*+\dotsb +*)\circ c_{2-d}$ and $g=(\id +r+\dotsb +r)\circ c_{2-d}$, where $r$ is a reflection of $S^{n-1}$, to see that $\sigma_f$ is homotopic to the map $(\hat{\Delta}+\hat{\tau}\circ r +\dotsb +\hat{\tau}\circ r)\circ c_{2-d}$. The image of the fundamental class $[S^{n-1}]$ under $\hat{\tau}\circ r$ is just $-\hat{\tau}$, so we again get that the homology class $\sigma_f$ is equal to $\hat{\Delta}+(d-1)\hat{\tau}$.
\end{proof}

\begin{remark}
Rationally, the $(n-1)$st homology of $C_k(\cyl)$ is known to be two-dimensional by the presentation of the bigraded $\bQ$-algebra $H_*(C_*(\cyl);\bQ)$ given in Proposition 3.4 of \cite{RW:tch}. Specifically, it is generated by the elements $\Delta_0$ and $\Delta_1$, corresponding to $[k-1]\cdot\Delta$ and $[k-2]\cdot\Delta\cdot [1]$ in the notation of the cited paper.
\end{remark}

\subsection{Proof of Theorem \ref{thm:e}}\label{ssTheoremDproof}

Abbreviate $C_l(M\smallsetminus\{0\})$ by just $C_l$. Fix a field $\bF$ of characteristic $p>0$ and write $\widetilde{H}_*(-) = \widetilde{H}_*(-;\bF)$. Recall from Lemma \ref{lCofibreSequence} that we have a long exact sequence on homology
\[
\cdots \longrightarrow \widetilde{H}_*(\Sigma^{n-1}((C_{k-1})_+)) \longrightarrow \widetilde{H}_*(C_k) \longrightarrow \widetilde{H}_*(C_k(M)) \longrightarrow \cdots
\]
and denote the left-hand map above by $T_{k,*}$. By exactness we have:
\begin{align}
\begin{split}\label{eDimensionFormula}
\dim (\widetilde{H}_* (C_k(M))) &= \dim (\mathrm{codomain}(T_{k,*})) + \dim (\mathrm{domain}(T_{k,*-1})) \\
&\phantom{=}\ - \mathrm{rank}(T_{k,*}) - \mathrm{rank}(T_{k,*-1}).
\end{split}
\end{align}
Hence in order to identify $\widetilde{H}_*(C_k(M))$ and $\widetilde{H}_*(C_{rk}(M))$ in a range it suffices to identify the linear maps $T_{k,*}$ and $T_{rk,*}$ in a range.

\begin{proof}[Proof of Theorem \ref{thm:e}]
Fix a positive integer $r\geq 2$ coprime to $p$. We will construct maps $a$, $b$ and $c$ such that the square
\begin{equation}\label{eSquare}
\centering
\begin{split}
\begin{tikzpicture}
[x=1mm,y=1mm]
\node (tl) at (0,15) {$S^{n-1}\times C_{k-1}$};
\node (tm) at (35,15) {$S^{n-1}\times C_{rk-r}$};
\node (tr) at (70,15) {$S^{n-1}\times C_{rk-1}$};
\node (bl) at (0,0) {$C_k$};
\node (br) at (70,0) {$C_{rk}$};
\draw[->] (tl) to node[above,font=\small]{$a$} (tm);
\draw[->] (tm) to node[above,font=\small]{$b$} (tr);
\draw[->] (bl) to node[below,font=\small]{$c$} (br);
\draw[->] (tl) to node[left,font=\small]{$t_{k-1}$} (bl);
\draw[->] (tr) to node[right,font=\small]{$t_{rk-1}$} (br);
\end{tikzpicture}
\end{split}
\end{equation}
commutes on homology with coefficients in $\bF$. Applying $\widetilde{H}_*(-)$ and restricting to a direct summand (see Remark \ref{rKuenneth}) on the top row gives a commutative square
\begin{equation}\label{eSquareOnHomology}
\centering
\begin{split}
\begin{tikzpicture}
[x=1mm,y=1mm]
\node (tl) at (0,15) {$\widetilde{H}_{*+1}(\Sigma^n(C_{k-1})_+)$};
\node (tm) at (40,15) {$\widetilde{H}_{*+1}(\Sigma^n(C_{rk-r})_+)$};
\node (tr) at (80,15) {$\widetilde{H}_{*+1}(\Sigma^n(C_{rk-1})_+)$};
\node (bl) at (0,0) {$\widetilde{H}_*(C_k)$};
\node (br) at (80,0) {$\widetilde{H}_*(C_{rk}).$};
\draw[->] (tl) to node[above,font=\small]{$\alpha$} (tm);
\draw[->] (tm) to node[above,font=\small]{$\beta$} (tr);
\draw[->] (bl) to node[below,font=\small]{$c_*$} (br);
\draw[->] (tl) to node[left,font=\small]{$T_{k,*}$} (bl);
\draw[->] (tr) to node[right,font=\small]{$T_{rk,*}$} (br);
\end{tikzpicture}
\end{split}
\end{equation}
Throughout this section we will abbreviate $\sr=\sr[M\smallsetminus\{0\}]$ and $\srscan=\srscan[M\smallsetminus\{0\}]$. Recall that we defined the function
\[
\lambda(k) = \lambda[M](k) = \mathrm{min}\{ \srscan(k), \srscan(k-1)+n-1, \sr(rk-i) \mid i=2,\dotsc,r \},
\]
where $n$ is the dimension of $M$. We will show that $\alpha$, $\beta$ and $c_*$ are isomorphisms in the range $*\leq\lambda(k)$, therefore identifying the maps $T_{k,*}$ and $T_{rk,*}$ in this range. Hence by \eqref{eDimensionFormula} the vector spaces $\widetilde{H}_*(C_k(M))$ and $\widetilde{H}_*(C_{rk}(M))$ have the same dimension for $*\leq\lambda(k)$, which is Theorem \ref{thm:e}.

\begin{remark}\label{rLambda}
The first two terms of $\lambda(k)$ come from our use of the replication map and Theorem \ref{thm:b}, which tells us that the $r$-replication map induces isomorphisms in the stable range $\srscan$. The remaining terms come from our use of the classical stabilisation map, which by definition induces isomorphisms in the range $\sr$. If we assume that $\sr$ is non-decreasing (so $\srscan=\sr$) and $r,k\geq 2$ then the range $*\leq\lambda(k)$ simplifies to
\[
*\leq \mathrm{min}\{ \sr(k),\sr(k-1)+n-1 \}.
\]
For example if $\sr(k)=ak+b$ then this is
\begin{align*}
* &\leq ak+b &&\text{if } n\geq a+1 \\
* &\leq ak+b - (a+1-n) &&\text{if } n<a+1,
\end{align*}
i.e.\ the same as the stable range, except possibly shifted down by a constant if the manifold is low-dimensional compared to the slope of the stable range.
\end{remark}

\paragraph{Constructing the maps.}
Fix a basepoint $0\in M$. By Lemma \ref{lExtendingVF} we can choose a vector field $v$ on $M$ which is non-vanishing except possibly at $0$. This has an associated one-parameter family of diffeomorphisms $\phi_t$. Define the $r$-replication map
\[
\rho_{r,k} \colon C_k(M\smallsetminus\{0\}) \longrightarrow C_{rk}(M\smallsetminus\{0\})
\]
to take a configuration $c=\{x_1,\dotsc,x_k\}$ to the configuration
\[
\{ \phi_{it/r}(x_1),\dotsc,\phi_{it/r}(x_k) \mid 0\leq i\leq r-1 \},
\]
where $t=t(c)>0$ is sufficiently small that $\phi_s(x_i)\neq \phi_u(x_j)$ for $s,u\in (0,t)$ unless $i=j$ and $s=u$. This agrees up to homotopy with the earlier definition of the $r$-replication map under the identifications $C_k^\delta(M\smallsetminus\{0\}) \simeq C_k(M\smallsetminus\{0\})$ and $C_{rk}^\delta(M\smallsetminus\{0\}) \simeq C_{rk}(M\smallsetminus\{0\})$. We now define
\begin{align*}
a &= \mathrm{id}\times \rho_{r,k-1} \\
b &= (\mathrm{pr}_1, t_{rk-2}) \circ\dotsb\circ (\mathrm{pr}_1, t_{rk-r}) \\
c &= \rho_{r,k}.
\end{align*}
In other words $a$ and $c$ replace each point of the configuration by $r$ copies in the direction determined by the vector field, whereas $b$ adds $r-1$ new points near the missing point $0$ in the direction determined by the vector in $S^{n-1}$.

\paragraph{Isomorphisms in a range.}
The vector field is non-vanishing on $M\smallsetminus\{0\}$, so Theorem \ref{thm:b} tells us that the $r$-replication map $\rho_{r,k}$ induces isomorphisms in the stable range on homology with $\bZ_{(p)}$ coefficients, and hence also with $\bF$ coefficients. Hence $c_*$ is an isomorphism in the stable range $*\leq\srscan(k)$.

The map $\rho_{r,k-1}$ induces isomorphisms on $\bF$-homology up to degree $\srscan(k-1)$, so its suspension $\Sigma^n((\rho_{r,k-1})_+)$ induces isomorphisms up to degree $\srscan(k-1)+n$. The map that this induces on $\widetilde{H}_{*+1}(-)$ is $\alpha$, which is therefore an isomorphism in the range $*\leq\srscan(k-1)+n-1$.

For the map $\beta$ consider the map of (trivial) fibre bundles
\begin{center}
\begin{tikzpicture}
[x=1mm,y=1mm]
\node (tl) at (-20,10) {$S^{n-1}\times C_{rk-i}$};
\node (tr) at (20,10) {$S^{n-1}\times C_{rk-i+1}$};
\node (b) at (0,0) {$S^{n-1}$};
\draw[->] (tl) to node[above,font=\small]{$(\mathrm{pr}_1,t_{rk-i})$} (tr);
\draw[->] (tl) to (b);
\draw[->] (tr) to (b);
\end{tikzpicture}
\end{center}
for $i=2,\dotsc,r$. Its fibre over a point in $S^{n-1}$ is the classical stabilisation map and therefore induces isomorphisms on $\bF$-homology up to degree $\sr(rk-i)$. Hence by the relative Serre spectral sequence the map $(\mathrm{pr}_1,t_{rk-i})$ also induces isomorphisms on $\bF$-homology in this range. So the map $b$ induces isomorphisms on $\bF$-homology up to degree $\min\{ \sr(rk-i) \mid 2\leq i\leq r \}$.

In general, for a map $f\colon S^d \times A \to S^d \times B$ over $S^d$, the map on homology under the K{\"u}nneth isomorphism, $f_*\colon H_*(A)\oplus H_{*-d}(A) \to H_*(B)\oplus H_{*-d}(B)$, is triangular -- more precisely the component $H_*(A)\to H_{*-d}(B)$ is zero. To see this note that a representing cycle $c$ for an element in the $H_*(A)$ component can be taken to have support in a single fibre. Since $f$ is a map over $S^d$ the image $f_{\sharp}(c)$ will also have support in a single fibre, and therefore the image $f_*([c])$ will be in the $H_*(B)$ component of the K{\"u}nneth decomposition of the right-hand side. Hence if $f$ induces an isomorphism on homology, it also restricts to isomorphisms between each of the direct summands in the K\"unneth decompositions of the source and target. Applying this fact to the map $b$ we obtain that $\beta$ is an isomorphism in the range $*\leq\min\{ \sr(rk-i) \mid 2\leq i\leq r \}$.

Hence each of $\alpha$, $\beta$ and $c_*$ are isomorphisms in the range $*\leq\lambda(k)$.

\paragraph{Commutativity.}
It therefore remains to show that the square \eqref{eSquare} commutes on $\bF$-homology. Choose a coordinate neighbourhood $U\cong \bR^n$ of $0\in M$ and define the map
\begin{equation}\label{eModuleStructure}
\zeta\colon C_r(\cyl) \times C_{k-1}(M\smallsetminus\{0\}) \longrightarrow C_{rk}(M\smallsetminus\{0\})
\end{equation}
to first apply the map $\rho_{r,k-1}$ to the configuration in $M\smallsetminus\{0\}$, i.e.\ replace each point by $r$ copies according to the vector field, then push the resulting configuration radially away from $0$ so that it is disjoint from $U$, and finally insert the configuration of $r$ points in $\cyl = U\smallsetminus\{0\}$ into the vacated space. Choosing a trivialisation of $TM$ over $U\cong \bR^n$, the vector field $v$ restricts to a map $\bR^n\to \bR^n$ which is non-vanishing on $S^{n-1}$, so we may rescale it to obtain a map $f\colon S^{n-1}\to S^{n-1}$. Recall from Lemma \ref{lFormula} that such a map induces a map $\sigma_f\colon S^{n-1}\to C_r(\cyl)$. One can then easily see that the two ways $c\circ t_{k-1}$ and $t_{rk-1}\circ b\circ a$ around the square \eqref{eSquare} are homotopic to
\[
\zeta \circ (\sigma_f \times \id) \;\text{ and }\; \zeta \circ (\sigma_{\id} \times \id) \colon\; S^{n-1}\times C_{k-1}(M\smallsetminus\{0\}) \longrightarrow C_{rk}(M\smallsetminus\{0\})
\]
respectively. It suffices to show that $\sigma_f$ and $\sigma_{\id} \colon S^{n-1}\to C_r(\cyl)$ induce the same map on $\bF$-homology, and we only need to check this on the fundamental class. Using our abuse of notation from \S\ref{ssCylinders} this means that we just need to check that the homology classes $\sigma_f$ and $\sigma_{\id}$ in $H_{n-1}(C_r(\cyl);\bF)$ are equal.

The degree of $f\colon S^{n-1}\to S^{n-1}$ is $\chi$ by the Poincar{\'e}-Hopf theorem (c.f.\ Definition \ref{dIndex}) so by Lemma \ref{lFormula} we have
\begin{align*}
\sigma_f &= r\Delta_0 + \chi r(r-1)\pi \\
\sigma_{\id} &= r\Delta_0 + r(r-1)\pi
\end{align*}
in $H_{n-1}(C_r(\cyl);\bZ)$. Their difference is $(\chi-1)r(r-1)\pi$, which is divisible by $p=\mathrm{char}(\bF)$ by hypothesis. Hence the difference $\sigma_f -\sigma_{\id}$ is indeed zero in $H_{n-1}(C_r(\cyl);\bF)$, and so the square \eqref{eSquare} commutes on $\bF$-homology.
\end{proof}

\subsection{The case of the two-sphere}

For $M=S^2$ we have the well-known calculation $H_1(C_k(S^2);\bZ) \cong \bZ/(2k-2)\bZ$ for $k\geq 2$ obtained from a presentation for $\pi_1(C_k(S^2))$ (see \cite{FadellVan1962}). The degree-one $\bF_p$-homology is therefore either one- or zero-dimensional, depending on whether $p\mid 2k-2$ or not. So the statement of Theorem \ref{thm:e} in degree $1$ for $M=S^2$ for mod-$p$ coefficients reduces to the following purely number-theoretic statement: if $p$ is a prime and $r$ is a positive integer such that $p\mid r-1$ then $p\mid 2k-2$ if and only if $p\mid 2rk-2$. This is of course obviously true: we have $r-1 = ap$ for some $a$, so $2rk-2 = 2k + 2kap -2 \equiv 2k-2 \;(\text{mod } p)$.

\subsection{Generalisation to configurations with labels in a bundle}\label{ssDwithlabels}

Theorem \ref{thm:e} generalises directly to configuration spaces $C_k(M;\theta)$ with labels in a bundle $\theta\colon E\to M$ with path-connected fibres.

\begin{rthm}[$\text{D}^\prime$]\label{thm:dprime}
Let $M$ be a closed, connected, smooth manifold with Euler characteristic $\chi$ and let $\theta\colon E\to M$ be a fibre bundle with path-connected fibres. Choose a field $\bF$ of positive characteristic $p$ and let $r\geq 2$ be an integer coprime to $p$ such that $p$ divides $(\chi-1)(r-1)$. Then there are isomorphisms
\[
H_*(C_k(M;\theta);\bF) \;\cong\; H_*(C_{rk}(M;\theta);\bF)
\]
in the range $*\leq\lambda[M;\theta](k)$.
\end{rthm}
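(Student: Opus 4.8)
The plan is to replay the proof of Theorem~\ref{thm:e} essentially verbatim, systematically replacing every configuration space by its version with labels in $\theta$ and every ingredient (the $r$-replication map, classical homological stability, the homology computations in punctured Euclidean space) by the corresponding labelled statement already available above. When $\theta$ factors through $S(TM)\to M$ this is unnecessary --- as remarked after Theorem~\ref{thm:e}, Theorem~$\text{D}^\prime$ then follows from Theorem~$\text{C}^\prime$ --- so the content lies in the general case. Throughout, $\lambda[M;\theta]$ denotes the labelled analogue of $\lambda[M]$, with $\sr[M\smallsetminus\{0\};\theta]$ and $\srscan[M\smallsetminus\{0\};\theta]$ in place of $\sr[M\smallsetminus\{0\}]$ and $\srscan[M\smallsetminus\{0\}]$.

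First I would establish the labelled analogue of Lemma~\ref{lCofibreSequence}. Fix $0\in M$, a label $y_0\in\theta^{-1}(0)$, and an isometric embedding $D\hookrightarrow M$ of the closed unit disc centred at $0$. The excision argument of \S\ref{ssCofibresequence} sees only the underlying configurations, so it carries over unchanged to give maps
\[
C_k(M\smallsetminus\{0\};\theta)\longrightarrow C_k(M;\theta)\dashrightarrow\Sigma^n\bigl(C_{k-1}(M\smallsetminus\{0\};\theta)_+\bigr)
\]
inducing a long exact sequence on homology, whose connecting homomorphism is the composite of a K\"unneth inclusion with a labelled map $t^\theta_{k-1}\colon S^{n-1}\times C_{k-1}(M\smallsetminus\{0\};\theta)\to C_k(M\smallsetminus\{0\};\theta)$: radial expansion away from $0$, followed by insertion of the new point (on $\partial D$, in the direction prescribed by $S^{n-1}$) carrying the fixed label $y_0$. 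Abbreviating $C_l:=C_l(M\smallsetminus\{0\};\theta)$ and fixing a field $\bF$ of characteristic $p>0$, the dimension identity \eqref{eDimensionFormula} then holds as stated, so it suffices to identify the maps $T_{k,*}$ and $T_{rk,*}$ in the range $*\leq\lambda[M;\theta](k)$.

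Next I would build the labelled square \eqref{eSquare}. By Lemma~\ref{lExtendingVF}, choose a vector field on $M$ non-vanishing away from $0$, let $\r{r}^\theta$ denote the associated labelled $r$-replication map (on the various $C_l(M\smallsetminus\{0\};\theta)$), and set $a=\id\times\r{r}^\theta$, $c=\r{r}^\theta$, and $b$ equal to the composite over $S^{n-1}$ of the labelled stabilisation maps $t^\theta_{rk-i}$, $i=2,\dots,r$. By Theorem~\ref{thm:bprime} the map $\r{r}^\theta$ induces a $\bZ_{(p)}$-homology, hence $\bF$-homology, isomorphism in the stable range with labels, so $c_*$ is an isomorphism for $*\leq\srscan[M\smallsetminus\{0\};\theta](k)$ and its suspension makes $\alpha$ an isomorphism for $*\leq\srscan[M\smallsetminus\{0\};\theta](k-1)+n-1$; classical homological stability with labels (Appendix~\ref{appendixB}), combined with the relative Serre spectral sequence and the K\"unneth-triangularity observation from \S\ref{ssTheoremDproof}, makes $\beta$ an isomorphism for $*\leq\min\{\sr[M\smallsetminus\{0\};\theta](rk-i)\mid 2\leq i\leq r\}$. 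Thus $\alpha$, $\beta$ and $c_*$ are simultaneously isomorphisms for $*\leq\lambda[M;\theta](k)$, and by \eqref{eDimensionFormula} this will identify $H_*(C_k(M;\theta);\bF)$ additively with $H_*(C_{rk}(M;\theta);\bF)$ in that range --- provided the square \eqref{eSquare} commutes on $\bF$-homology.

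The step I expect to be the main obstacle is precisely this commutativity. As in \S\ref{ssTheoremDproof}, after trivialising $TM$ and $\theta$ over a coordinate chart $U\cong\bR^n$ around $0$ one forms a labelled module-structure map $\zeta\colon C_r(\cyl;\theta|_U)\times C_{k-1}(M\smallsetminus\{0\};\theta)\to C_{rk}(M\smallsetminus\{0\};\theta)$, and one must check that the two composites around \eqref{eSquare} are homotopic to $\zeta\circ(\sigma^\theta_f\times\id)$ and $\zeta\circ(\sigma^\theta_{\id}\times\id)$, where $f\colon S^{n-1}\to S^{n-1}$ is the rescaled restriction of the vector field and $\sigma^\theta_f,\sigma^\theta_{\id}\colon S^{n-1}\to C_r(\cyl;\theta|_U)$ are the labelled analogues of the maps of Lemma~\ref{lFormula}. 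The crucial point is that every configuration point occurring in $\sigma^\theta_f$ or $\sigma^\theta_{\id}$ carries the single constant label $y_0$ --- each such point is either brought in by a labelled stabilisation map or is an $r$-fold replica of such a point obtained by restricting a constant section --- so both maps factor through the inclusion $j\colon C_r(\cyl)\hookrightarrow C_r(\cyl;\theta|_U)$ that attaches the constant label $y_0$ to every point. Hence $\sigma^\theta_f=j_*\sigma_f$ and $\sigma^\theta_{\id}=j_*\sigma_{\id}$ in $H_{n-1}(C_r(\cyl;\theta|_U);\bZ)$, and since $\deg(f)=\chi$ by Poincar\'e--Hopf, Lemma~\ref{lFormula} gives
\[
\sigma^\theta_f-\sigma^\theta_{\id}=(\chi-1)\,r(r-1)\,j_*\pi,
\]
which is divisible by $p$ under the hypothesis $p\mid(\chi-1)(r-1)$ and therefore vanishes in $H_{n-1}(C_r(\cyl;\theta|_U);\bF)$. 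The difficulty is thus purely one of bookkeeping: one must set up the models $C_k^{\delta,\gamma}(M;\theta)$ of \S\ref{sUpToDim} so that the labels near $0$ remain honestly constant throughout all the homotopies, and verify that $\zeta$ together with the homotopies $c\circ t^\theta_{k-1}\simeq\zeta\circ(\sigma^\theta_f\times\id)$ and $t^\theta_{rk-1}\circ b\circ a\simeq\zeta\circ(\sigma^\theta_{\id}\times\id)$ respects the label data; no homological input beyond Lemma~\ref{lFormula} is required, and the conclusion then follows exactly as for Theorem~\ref{thm:e}.
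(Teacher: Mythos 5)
There is a genuine gap, and it occurs exactly where you claim the generalisation is automatic: the cofibre sequence. The excision argument of \S\ref{ssCofibresequence} does \emph{not} see only the underlying configurations. In the labelled setting the subspace $U_k(M;\theta)$ of configurations with a unique closest point to $0$ in $D$ decomposes (after trivialising $\theta$ over $D$) as $D^n\times F\times C_{k-1}(M\smallsetminus\{0\};\theta)$, because the distinguished point carries a label that ranges over the whole fibre $F$; it is not pinned to a fixed $y_0$. Hence the correct cofibre sequence is
\[
C_k(M\smallsetminus\{0\};\theta)\longrightarrow C_k(M;\theta)\dashrightarrow \Sigma^n\bigl((F\times C_{k-1}(M\smallsetminus\{0\};\theta))_+\bigr),
\]
with the extra factor of $F$, and the connecting homomorphism is induced by a map defined on $S^{n-1}\times F\times C_{k-1}(M\smallsetminus\{0\};\theta)$ in which the label of the inserted point is determined by the $F$-coordinate via the trivialisation. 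Your version of the sequence, and therefore your dimension count \eqref{eDimensionFormula}, is only correct when $F$ is $\bF$-acyclic; for a general path-connected fibre the relative homology genuinely contains $H_*(F)$.

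This propagates into the commutativity step. In the corrected square the top three spaces acquire a factor of $F$, and the maps one must compare are $\sigma_f,\sigma_{\id}\colon F\times S^{n-1}\to C_r(\cyl;F)$. Your reduction through the constant-label inclusion $j\colon C_r(\cyl)\hookrightarrow C_r(\cyl;F)$ and the unlabelled Lemma \ref{lFormula} only controls the summand $H_0(F)\otimes H_{n-1}(S^{n-1})$ of the K\"unneth decomposition $H_*(F\times S^{n-1})\cong H_*(F)\oplus H_{*-n+1}(F)$; when $F$ has nontrivial $\bF$-homology you must also compare $(\sigma_f)_*$ and $(\sigma_{\id})_*$ on classes $x\times[*]$ and $x\times[S^{n-1}]$ for arbitrary $x\in H_*(F)$. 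The paper's argument does exactly this: it defines labelled classes $\Delta_i(x)$, $\pi(x)$, $\tau_i(x)$ in $H_{*+n-1}(C_r(\cyl;F);\bZ)$ and proves the labelled analogue of Lemma \ref{lFormula}, namely $(\sigma_f)_*(x\times[S^{n-1}])=r\Delta_0(x)+\deg(f)\,r(r-1)\pi(x)$, together with $(\sigma_f)_*(x\times[*])=(\sigma_{\id})_*(x\times[*])$, so that the obstruction is $(\chi-1)r(r-1)\pi(x)$, which dies mod $p$. The rest of your outline (labelled replication via Theorem \hyperref[thm:bprime]{$\text{B}^\prime$}, labelled stabilisation via Appendix \ref{appendixB}, the Serre spectral sequence and triangularity argument for $\beta$) matches the paper, but as written the proof does not go through without reinstating the $F$ factor and the labelled homology classes.
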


The function $\lambda[M;\theta]$ is defined just as $\lambda[M]$, namely:
\[
\lambda[M;\theta](k) = \mathrm{min}\{ \srscan(k), \srscan(k-1)+n-1, \sr(rk-i) \mid i=2,\dotsc,r \},
\]
where $\sr=\sr[M^\star;\theta^\star]$, $\srscan=\srscan[M^\star;\theta^\star]$ and $M^\star$, $\theta^\star$ denote $M\smallsetminus\point$ and $\theta|_{M\smallsetminus\point}$ respectively. These two functions are defined analogously to Definition \ref{dStableRange}, using the stabilisation and scanning maps for configuration spaces with labels in a bundle.

In the remainder of this subsection we sketch how to generalise the proof of Theorem \ref{thm:e} to a proof of Theorem \hyperref[thm:dprime]{$\text{D}^\prime$}. The proof follows the same steps. In \S\ref{ssCofibresequence} one has to additionally choose a trivialisation of $\theta$ over the embedded disc $D\subseteq M$, and analogously to Lemma \ref{lCofibreSequence} there is a cofibre sequence
\[
C_k(M^\star;\theta^\star) \longrightarrow C_k(M;\theta) \dashrightarrow \Sigma^n((F\times C_{k-1}(M^\star;\theta^\star))_+),
\]
where $F$ is the typical fibre of $\theta$. The description of the connecting homomorphism for the long exact sequence on homology is exactly analogous, using the trivialisation of $\theta$ over $D$ to determine the label of the new point which is added to the configuration near $0\in D$.

In the diagram \eqref{eSquare} the top three spaces are replaced by their cartesian products with $F$. The maps $c_*$ and $\alpha$ are isomorphisms in the range $*\leq\lambda(k)$ for the same reasons as before, using Theorem \hyperref[thm:bprime]{$\text{B}^\prime$} instead of Theorem \ref{thm:b}. The map $b$ is a composition of maps of fibre bundles over $F\times S^{n-1}$ and the maps of fibres are classical stabilisation maps for configuration spaces with labels in a bundle, and so are isomorphisms on homology in the stable range for the stabilisation map (c.f.\ Proposition \ref{pImprovedRangeTwisted} and the appendix of \cite{KupersMiller2014Encellattachments}). The rest of the argument that $\beta$ is an isomorphism in the range $*\leq\lambda(k)$ goes through as before.

For commutativity: the map $\zeta$ can be defined similarly, using the chosen trivialisation of $\theta$ over $D$. The input is now a configuration of $k-1$ points in $M\smallsetminus\{0\}$ with labels in $\theta$ and a configuration of $r$ points in $\cyl$ with labels in the trivial bundle with fibre $F$, and the output is a configuration of $rk$ points in $M\smallsetminus\{0\}$ with labels in $\theta$. The map $f\colon S^{n-1}\to S^{n-1}$, corresponding to the restriction of the vector field to $\partial D$, induces a map $\sigma_f\colon F\times S^{n-1}\to C_r(\cyl;F)$. The two ways around the square \eqref{eSquare} are homotopic to $\zeta \circ (\sigma_f \times \mathrm{id})$ and $\zeta \circ (\sigma_{\mathrm{id}} \times \mathrm{id})$. Hence we just need to show that $\sigma_f$ and $\sigma_{\mathrm{id}}\colon F\times S^{n-1}\to C_r(\cyl;F)$ induce the same map on $\bF$-homology.

Now as in \S\ref{ssCylinders} we need to find formulas, in terms of more basic classes, for $(\sigma_{f})_*(x)$, for any class $x\in H_*(F\times S^{n-1})$. Previously we showed that when $F=*$ and $x=[S^{n-1}]$ we have
\[
(\sigma_f)_*([S^{n-1}]) = r\Delta_0 + \deg(f)r(r-1)\pi \in H_{n-1}(C_r(\cyl);\bZ).
\]
By the K\"unneth decomposition $H_*(F\times S^{n-1}) = H_*(F)\oplus H_{*-n+1}(F)$ it suffices to show that $(\sigma_f)_*(x\times [*]) - (\sigma_{\mathrm{id}})_*(x\times [*])$ and $(\sigma_f)_*(x\times [S^{n-1}]) - (\sigma_{\mathrm{id}})_*(x\times [S^{n-1}])$ are zero on $\bF$-homology for any class $x\in H_*(F)$. It is easy to see that in fact
\[
(\sigma_f)_*(x\times [*]) = (\sigma_{\mathrm{id}})_*(x\times [*]) \in H_*(C_r(\cyl;F);\bZ)
\]
and therefore also on $\bF$-homology. One can define classes $\pi(x), \Delta_i(x), \tau_i(x)$ etc.\ in $H_{*+n-1}(C_r(\cyl;F);\bZ)$ just as in \S\ref{ssCylinders} and by the same arguments as before we have
\[
(\sigma_f)_*(x\times [S^{n-1}]) = r\Delta_0(x) + \deg(f)r(r-1)\pi(x) \in H_{*+n-1}(C_r(\cyl;F);\bZ).
\]
Hence $(\sigma_f)_*(x\times [S^{n-1}]) - (\sigma_{\mathrm{id}})_*(x\times [S^{n-1}])$ is equal to $(\deg(f)-1)r(r-1)\pi(x) = (\chi-1)r(r-1)\pi(x)$ and is therefore zero on $\bF$-homology since $p$ divides $(\chi-1)(r-1)$. This completes the sketch of the proof of Theorem \hyperref[thm:dprime]{$\text{D}^\prime$}.

\subsection{Combining Theorems \ref{thm:a} and \ref{thm:e}}\label{ssCorollaryE}

We now prove Corollary \ref{coro:stable-homologies}, concerning the homology of configuration spaces on even-dimensional manifolds with coefficients in a field of odd characteristic. In fact, our methods also partially recover the known homological stability results for odd-dimensional manifolds and for fields of characteristic $2$ or $0$. The complete statement of what may be deduced by combining Theorems \ref{thm:a} and \ref{thm:e} is as follows:

\begin{corollary}\label{coro:stable-homologies-extended}
Let $M$ be a closed, connected, smooth manifold with Euler characteristic $\chi$ and let $\bF$ be a field of characteristic $p$. Then, in the stable range \textup{(}resp.\ the range $*\leq\lambda(k)$ for lines 4--9\textup{)}, the homology group $H_*(C_k(M);\bF)$ depends only on the quantity stated in Table \ref{table:stable-homologies}.
\end{corollary}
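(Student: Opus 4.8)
The plan is to glue together the isomorphisms supplied by Theorem~\ref{thm:a} and by Theorem~\ref{thm:e}, together with the previously known stability results recalled above, into the statement recorded in Table~\ref{table:stable-homologies}. Since all of these isomorphisms are either induced by (zigzags of) maps or are isomorphisms of integral or $\bZ_{(\ell)}$-homology, they pass to $\bF$-coefficients by the universal coefficient theorem and base change, so I will freely tensor everything with $\bF$. I would first dispose of the odd-dimensional rows, which use only Theorem~\ref{thm:a}: if $\mathrm{char}(\bF)$ is odd or zero, then \eqref{eq:thmaodd-Z12} gives $H_*(C_k(M);\bF)\cong H_*(C_{k+1}(M);\bF)$ in the stable range, so the homology is independent of $k$; if $\mathrm{char}(\bF)=2$, then \eqref{eq:thmaodd-Z} shows it depends only on the parity of $k$, and when $\dim M\in\{1,3,7\}$ equation \eqref{eq:thmaodd-37} upgrades this to full independence of $k$. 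The even-dimensional rows with $\mathrm{char}(\bF)=2$, and the rational case, I would simply quote from \cite{LM:conf} and \cite{Church,Knudsen2014} (when $\chi=1$ the rational case can instead be read off from the argument below).

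The heart of the matter is the even-dimensional case with $p\coloneqq\mathrm{char}(\bF)$ odd, and here I would prove that $H_*(C_k(M);\bF)$ depends, in the range $*\leq\lambda(k)$, only on $m(k)\coloneqq\min\{(2k-\chi)_p,\,(\chi)_p+1\}$, and that it is independent of $k$ altogether when $\chi\equiv 1\bmod p$. Theorem~\ref{thm:a} applied with $\ell=\{p\}$ (legitimate because $p$ is odd) gives at once that $H_*(C_k(M);\bF)\cong H_*(C_j(M);\bF)$ in the stable range whenever $(2k-\chi)_p=(2j-\chi)_p$; this settles the claim when $m(k)=m(j)\leq(\chi)_p$. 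What remains is to show that all configuration spaces $C_k(M)$ with $(2k-\chi)_p\geq(\chi)_p+1$ have a common $\bF$-homology (and, when $\chi\equiv 1\bmod p$, that this common value already agrees with the one below it).

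For that step I would use Theorem~\ref{thm:e} to pass between replica configurations. If $r\geq 2$ is coprime to $p$ and $p\mid(\chi-1)(r-1)$, then $H_*(C_k(M);\bF)\cong H_*(C_{rk}(M);\bF)$ in a range containing $*\leq\lambda(k)$, and the identity
\[
2rk-\chi \;=\; r\,(2k-\chi)+(r-1)\chi
\]
yields $(2rk-\chi)_p=\min\{(2k-\chi)_p,\,(r-1)_p+(\chi)_p\}$ whenever these two valuations differ. If $p\nmid\chi-1$, then $p\mid(\chi-1)(r-1)$ forces $p\mid r-1$; taking $r=p+1$ gives $(r-1)_p=1$, so $(2rk-\chi)_p=(\chi)_p+1$ for every $k$ with $(2k-\chi)_p>(\chi)_p+1$, and iterating this replication reduces any such $C_k$ to some $C_{k'}$ with $(2k'-\chi)_p=(\chi)_p+1$ without changing $\bF$-homology; these $C_{k'}$ are then all identified by Theorem~\ref{thm:a}. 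If instead $p\mid\chi-1$, then $(\chi)_p=0$, and choosing $r=2$ (legitimate since $p$ is odd and $p\mid\chi-1$ already supplies $p\mid(\chi-1)(r-1)$) makes $(r-1)_p+(\chi)_p=0$; hence for every $k$ with $p\mid 2k-\chi$ we get $(2rk-\chi)_p=0$, so replication merges $C_k$ into the family of configuration spaces with $(2j-\chi)_p=0$, and all of these are identified by Theorem~\ref{thm:a}, giving independence of $k$. Finally I would collate these statements and check that the combinations of the parity of $\dim M$ with $\mathrm{char}(\bF)\in\{0\}\cup\{\text{primes}\}$ exactly exhaust the rows of Table~\ref{table:stable-homologies}, recovering in particular Corollary~\ref{coro:stable-homologies} and the partial results for $\mathrm{char}(\bF)\in\{2\}\cup\{0\}$.

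I expect the main obstacle to be the bookkeeping of ranges rather than any conceptual difficulty. Connecting $C_k(M)$ to $C_j(M)$ proceeds through a short zigzag $C_k\to C_{rk}\leftrightarrow C_{r'j}\leftarrow C_j$ that passes through intermediate indices, and one must check that the intersection of all the ranges involved still contains $*\leq\min(\lambda(k),\lambda(j))$. This should follow from the fact that, by McDuff's theorem, the scanning-map stable range $\srscan=\srscan[M\smallsetminus\point]$ is non-decreasing, so the first two terms of $\lambda$ only grow along the zigzag, and the intermediate indices may be chosen large enough that the remaining terms do not shrink the range; writing this out carefully is the fiddly part. A minor secondary point is the boundary case $(2k-\chi)_p=(\chi)_p+1$ in the valuation computation above, where the two summands of $2rk-\chi$ can have equal $p$-valuation so one only gets $(2rk-\chi)_p\geq(\chi)_p+1$ — but this is harmless, since such $C_k$ already sit at the target level.
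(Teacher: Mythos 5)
Your proposal is correct in substance and, for the rows that carry the new content of the corollary (lines 4--6 of Table \ref{table:stable-homologies}), it follows essentially the same route as the paper: Theorem \ref{thm:a} with $\ell=\{p\}$ identifies indices with equal $p$-adic valuation of $2k-\chi$, Theorem \ref{thm:e} supplies replication isomorphisms that change this valuation, and divergence of $\lambda$ plus freedom in the choice of replication parameter handles the range bookkeeping. The zigzag itself differs in detail: in the case $(2k-\chi)_p,(2j-\chi)_p>(\chi)_p$ the paper passes to the $p$-primitive parts $k',j'$, picks $l$ with $lk'\equiv 1 \bmod p$ and meets at the common index $lj'k=lk'j$ via two applications of Theorem \ref{thm:e}, whereas you lower each side once (using $r\equiv p+1$, resp.\ $r\not\equiv 1 \bmod p$ when $\chi\equiv 1$) to the level $(\chi)_p+1$ (resp.\ $0$) and bridge with Theorem \ref{thm:a}; both work, and your identity $2rk-\chi=r(2k-\chi)+(r-1)\chi$ is exactly the valuation computation underlying the paper's choices. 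As you anticipate, the fixed choices $r=p+1$ and $r=2$ should be replaced by arbitrarily large $r$ in the same congruence class, so that the intermediate index $rk$ is large and its $\lambda$- and scanning ranges exceed $\mathrm{min}(\lambda(k),\lambda(j))$; this is precisely the role of the arbitrarily large parameter $l$ in the paper's proof. One caveat: for line 3 and lines 7--9 you propose to quote \cite{Church,Knudsen2014,LM:conf} rather than deduce them from Theorems \ref{thm:a} and \ref{thm:e} (the paper gets line 3 from Theorem \ref{thm:a} by choosing a prime dividing neither $2k-\chi$ nor $2j-\chi$, and lines 7--9 from Theorem \ref{thm:e} with $r$ any odd integer, since $2\mid r-1$). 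The quoted theorems give stability only in their explicit ranges, whereas the corollary asserts the stated ranges --- the (maximal) scanning stable range, resp.\ $*\leq\lambda(k)$ --- which may strictly exceed those explicit ranges for particular $M$; since your own machinery covers these rows in the stated ranges, this defect is easily repaired but should be fixed in a final write-up.
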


\begin{table}[ht]
\centering
\setlength{\tabcolsep}{1ex}
\begin{tabular}{lcllcc}
\toprule
dimension & \multicolumn{2}{c}{conditions} & $H_*(C_k(M);\bF)$ depends only on & $\sharp$ & \\
\midrule
odd & $p\neq 2$ & --- & --- & $1$ & \ref{thm:a} \\
& $p=2$ & --- & parity of $k$ & $2$ & \ref{thm:a} \\
\midrule
even & $p=0$ & --- & whether $2k=\chi$ & $1$ or $2^*$ & \ref{thm:a} \\
\cmidrule{2-6}
& $p$ odd & --- & $\mathrm{min}\{(2k-\chi)_p,(\chi)_p+1\}$ & $(\chi)_p+2$ & \ref{thm:a},\ref{thm:e} \\
& & $\chi\not\equiv 0 \text{ mod } p$ & whether $p$ divides $2k-\chi$ & $2$ & \ref{thm:a},\ref{thm:e} \\
& & $\chi\equiv 1 \text{ mod } p$ & --- & $1$ & \ref{thm:a},\ref{thm:e} \\
\cmidrule{2-6}
& $p=2$ & --- & $(k)_2$ & $\infty$ & \ref{thm:e} \\
& & $(\chi)_2\geq 1$ & $\mathrm{min}\{(k)_2,(\chi)_2\}$ & $(\chi)_2 + 1$ & \ref{thm:a},\ref{thm:e} \\
& & $(\chi)_2=1$ & parity of $k$ & $2$ & \ref{thm:a},\ref{thm:e} \\
\bottomrule
\end{tabular}
\caption{\small The second-from-right column is the maximum number of different values of $H_*(C_k(M);\bF)$ in the stable range (resp.\ the range $*\leq\lambda(k)$ in lines 4--9). The rightmost column indicates which theorem(s) each line follows from. \\ ${}^*$ There is only one ``stable homology'' on the third line, although when $\chi$ is even there may be a single exception in the stable range when $k=\frac{\chi}{2}$.}
\label{table:stable-homologies}
\end{table}

\begin{proof}
The first two lines follow directly from the odd part of Theorem \ref{thm:a}, noting that a map of spaces which induces an isomorphism with $\bZ[\frac12]$ coefficients also induces isomorphisms with coefficients in any field of characteristic different from $2$. The third line follows directly from the even part of Theorem \ref{thm:a}, since a map of spaces inducing isomorphisms on homology with $\bZ_{(p)}$ coefficients also induces isomorphisms with $\bQ$ or $\bF_p$ coefficients, and therefore with coefficients in any field of characteristic $0$ or $p$. (Given $j,k$ in the stable range and not equal to $\frac{\chi}{2}$, choose any prime $p$ which divides neither $2j-\chi$ nor $2k-\chi$ and apply Theorem \ref{thm:a} to get an isomorphism with $\bZ_{(p)}$ coefficients.)

For the fourth line there are two cases to consider. For the first case suppose that $\val{p}{2j-\chi} = \val{p}{2k-\chi} \leq \val{p}{\chi}$. In this case the result follows from Theorem \ref{thm:a}. Now suppose that $\val{p}{2j-\chi},\val{p}{2k-\chi} > \val{p}{\chi}$ and write $x^\prime$ for $x/p^{\val{p}{x}}$ for each number $x$. The assumption implies that $\val{p}{j} = \val{p}{\chi} = \val{p}{k}$ and that $p$ divides both $2k'-\chi'$ and $2j'-\chi'$, so that $j'\equiv k' \not\equiv 0 \text{ mod } p$. Hence we may choose $l$ such that $lk^\prime \equiv 1 \text{ mod } p$, so that by Theorem \ref{thm:e} we have:
\begin{align*}
p|(lj'-1) &\quad\text{so}\quad H_*(C_k(M);\bF_p) \cong H_*(C_{lj'k}(M);\bF_p) \text{ for } *\leq\mathrm{min}(\lambda(k),\lambda(lj'k)), \\
p|(lk'-1) &\quad\text{so}\quad H_*(C_j(M);\bF_p) \cong H_*(C_{lk'j}(M);\bF_p) \text{ for } *\leq\mathrm{min}(\lambda(j),\lambda(lk'j)).
\end{align*}
Since $lj'k = lk'j$ we have the required isomorphism in the intersection of these two ranges. Note that $l$ may be chosen arbitrarily large and $\lambda$ is divergent, so this is precisely the range $*\leq\mathrm{min}(\lambda(k),\lambda(j))$.

The fifth line is the special case of the fourth line when $(\chi)_p = 0$.

Sixth line: Suppose that $*\leq\mathrm{min}(\lambda(k),\lambda(j))$; we will consider three cases. First, if $j$ and $k$ are both in $p\bZ$ then $(2j-\chi)_p = 0 = (2k-\chi)_p$, so we have an isomorphism $H_*(C_j(M);\bF) \cong H_*(C_k(M);\bF)$ by Theorem \ref{thm:a}. Second, if $j$ and $k$ are both not in $p\bZ$ then $(j)_p = 0 = (k)_p$ and the isomorphism follows from Theorem \ref{thm:e} (since $p\mid \chi-1$, we may take $r$ in Theorem \ref{thm:e} to be any integer coprime to $p$). Finally, suppose that $j\in p\bZ$ and $k\notin p\bZ$. Then
\[
(2j-\chi)_p = 0 = (2(jkl+\chi)-\chi)_p \qquad\text{and}\qquad (jkl+\chi)_p = 0 = (k)_p
\]
for any $l$. Since $\lambda$ diverges and $l$ may be chosen arbitrarily large we have an isomorphism $H_*(C_j(M);\bF) \cong H_*(C_k(M);\bF)$ by Theorems \ref{thm:a} and \ref{thm:e}.

The seventh line follows directly from Theorem \ref{thm:e}: when $p=2$ we may take $r$ to be any odd integer, so there are isomorphisms in the range $*\leq\mathrm{min}(\lambda(j),\lambda(k))$ between any $j,k$ with the same $2$-adic valuation. To deduce the eighth line from the seventh we need to show that there are isomorphisms in this range whenever $(j)_2$ and $(k)_2$ are both at least $(\chi)_2$. In this case we have $(2k-\chi)_2 = (\chi)_2 = (2j-\chi)_2$, and so we can apply Theorem \ref{thm:a} (since we assumed that $\chi$ is even). Finally, the ninth line is a special case of the eighth line.
\end{proof}

\appendix
\section{The stability range for the torsion in configuration spaces}\label{appendixA}

In this appendix we show that the stable range for homological stability of unordered configuration spaces may be improved to have slope $1$ when taking $\bZ[\frac12]$ coefficients. We note that this has also recently been proved by a different method by \cite{KupersMiller2014}. We begin by proving this in a larger range when $M=\bR^n$ using Salvatore's description \cite{Salvatore:conf-sphere} of Cohen's calculations \cite{CLM}, and then use this to deduce the slope $1$ statement for general open, connected manifolds $M$. Our method for this second step is a slight variation of an argument due to Oscar Randal-Williams in \cite[\S 8]{RW-hs-for-ucs}.

From Salvatore's description \cite[page 537]{Salvatore:conf-sphere} of the homology of $C(\bR^n)$ (based on \cite[page 227]{CLM}), we obtain the following. A non-empty sequence of positive integers $I=(i_1,\ldots,i_{\ell(I)})$ with $\ell(I)\geq 0$ is said to be \emph{$(n,p)$-admissible} if it is weakly monotone and strictly bounded above by $n$. If $p$ is odd, an admissible function for $I$ is a function $\epsilon \colon \{1,\dotsc,\ell(I)\}\to \{0,1\}$ satisfying 
\begin{align*}
\epsilon(j) &\equiv i_j + i_{j-1} \text{ mod } 2 \qquad\text{for } 2\leq j\leq \ell(I).
\end{align*}
Observe that $\epsilon$ is determined by $I$ and $\epsilon(1)$. If $p=2$, define $\epsilon$ to be constant with value $0$.
 
If $p=2$, then $H_*(C(\bR^n);\bF_2)$ is isomorphic to the free commutative graded algebra generated by the symbols $Q_{\epsilon(1),I}(\iota)$ where $I$ is an $(n,p)$-admissible sequence and $\epsilon$ is an admissible function.

If $p$ is odd, then $H_*(C(\bR^n);\bF_p)$ is isomorphic to the free commutative graded algebra generated by the symbols $Q_{\epsilon(1),I}(\iota)$ where $I$ is an $(n,p)$-admissible sequence with $i_{\ell(I)}$ even and $\epsilon$ is an admissible function for $I$, and also (if $n$ is even) the symbols $Q_{\epsilon(1),I}([\iota,\iota])$ where $I$ is an $(n,p)$-admissible sequence with $i_{\ell(I)}$ odd and $\epsilon$ is an admissible function for $I$.

The homological degrees of $\iota:=Q_{\varnothing,\varnothing}(\iota)$ and $[\iota,\iota]:=Q_{\varnothing,\varnothing}([\iota,\iota])$ are $0$ and $n-1$, and the configuration degrees are $1$ and $2$. The homological and configuration degrees of the other generators are
\begin{align*}
h(Q_{\epsilon(1),(i_1,\ldots,i_k)}(\alpha)) &= ph(Q_{\epsilon(2),(i_2,\ldots,i_k)}(\alpha)) + i_1(p-1) - \epsilon(1) \\
\nu(Q_{\epsilon(1),(i_1,\ldots,i_k)}(\alpha)) &= p\nu(Q_{\epsilon(2),(i_2,\ldots,i_k)}(\alpha)),
\end{align*}
where $\alpha=\iota$ or $[\iota,\iota]$. The degrees of a product of generators are:
\begin{align*}
h(xy) &= h(x)+ h(y),& \nu(xy) &= \nu(x)+ \nu(y).
\end{align*}
Multiplication by the class $\iota$ raises the configuration degree by $1$ and hence defines a homomorphism
\[
H_*(C_{k-1}(\bR^n))\lra H_*(C_{k}(\bR^n)),
\]
which is the same as that induced by the stabilisation map.

We say that a class in $H_*(C_k(\bR^n);\bF_p)$ is \emph{$p$-inceptive} if it is not in the image of the stabilisation map $C_{k-1}(\bR^n) \to C_k(\bR^n)$ on mod-$p$ homology. By the above a class is $p$-inceptive if and only if it is not in the principal ideal generated by $\iota$.

\begin{lemma}\label{lem:improved-range}
In $H_*(C_{k}(\bR^n);\bF_p)$ the first $p$-inceptive class in a fixed configuration degree $k$ is given in Table \ref{table:p-inceptive-classes}, where $a=\lfloor k/p\rfloor$ and $m_p(k)$ is the remainder after dividing $k$ by $p$. Any case not covered in the table has no $p$-inceptive classes. Hence by the above discussion the stabilisation map $C_{k-1}(\bR^n) \to C_k(\bR^n)$ induces an isomorphism on $H_*(-;\bF_p)$ for smaller homological degrees.
\end{lemma}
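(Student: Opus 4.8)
The plan is to recast the statement as a combinatorial optimisation over monomials in Cohen's generators, solve that optimisation greedily, and then read Table \ref{table:p-inceptive-classes} off from a case analysis. Since $H_*(C(\bR^n);\bF_p)$ is the free graded-commutative algebra on the generators $Q_{\epsilon(1),I}(\iota)$ (together with the $Q_{\epsilon(1),I}([\iota,\iota])$ when $n$ is even and $p$ odd), a basis of $H_*(C_k(\bR^n);\bF_p)$ is given by the monomials in these generators of total configuration degree $k$; multiplication by $\iota = Q_{\varnothing,\varnothing}(\iota)$ realises the stabilisation map, so the ideal $(\iota)$ is the span of the basis monomials containing $\iota$, and hence the $p$-inceptive classes in configuration degree $k$ are exactly the span of the monomials built from generators other than $\iota$. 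Recording bidegrees from $\nu(Q_{\epsilon(1),I}(\alpha)) = p^{\ell(I)}\nu(\alpha)$ and the recursion for $h$, every generator other than $\iota$ has configuration degree $p^{\ell}$ with $\ell\geq 1$, or --- only when $n$ is even and $p$ odd --- also $2p^{\ell}$ with $\ell\geq 0$; when $p=2$ the configuration degrees are $2^{\ell}$ with $\ell\geq 1$. The only non-$\iota$ generator of configuration degree prime to $p$ is $[\iota,\iota]$, which has odd homological degree $n-1$ and hence is usable at most once, and this forces the empty cases of the table: a monomial in non-$\iota$ generators of configuration degree $k$ exists only if $m_p(k)=0$, or ($n$ even and $p$ odd and) $m_p(k)=2$, in which case exactly one factor $[\iota,\iota]$ must appear and the remaining configuration degree $k-2$ (divisible by $p$) is built from the other generators.

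Next I would solve the optimisation of minimising $\sum_s h(g_s)$ over families of non-$\iota$ generators with $\sum_s \nu(g_s)=k$, subject to each odd-degree generator occurring at most once. Unwinding the recursion for $h$ on the cheapest generator of configuration degree $p^{\ell}$ --- the one with index sequence $(1,\dots,1,2)$, which requires $n\geq 3$ --- one finds that its homological degree is exactly $(2-\tfrac2p)p^{\ell}$ up to a bounded correction: $0$ or $-1$ for $\ell=1$ (according to the value of $\epsilon(1)$, the $-1$ version being exterior), and $-2$ for $\ell\geq 2$ (this generator being polynomial, hence freely repeatable). Thus the homological degree of a family realising a given configuration degree is $(2-\tfrac2p)$ times that degree plus the sum of these corrections, which one minimises by a knapsack argument: use as many configuration-degree-$p^{2}$ factors as fit and fill the remainder with configuration-degree-$p$ factors, making one of the latter exterior. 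Carrying out this count in each of the surviving cases --- $m_p(k)=0$ versus $m_p(k)=2$, parity of $n$, and the exceptions $n=2$ (where the configuration-degree-$p$ generator is absent, so one uses the configuration-degree-$2p$ generators built on $[\iota,\iota]$, producing an extra distinction on the parity of $a=\lfloor k/p\rfloor$) and $p=2$ (where all generators are polynomial and the smallest index sequence is $(1)$) --- produces the entries of Table \ref{table:p-inceptive-classes}. The last sentence of the lemma is then immediate, since any class of configuration degree $k$ in strictly smaller homological degree must lie in $(\iota)$, i.e.\ in the image of the stabilisation map (which is injective throughout, as $\iota$ is a non-zero-divisor in the free algebra).

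The main obstacle will be the greedy-optimality step: one must prove from the recursion for $h$ that no combination of higher Dyer--Lashof operations, and (for $n$ even) no substitution of $[\iota,\iota]$-type generators for $Q(\iota)$-type ones, ever beats the naive choice. Concretely this is an inductive comparison showing that splitting a configuration-degree-$p^{\ell}$ factor into $p$ configuration-degree-$p^{\ell-1}$ factors never decreases the homological degree by more than the resulting extra corrections warrant, together with the bookkeeping of which factors are forced to be exterior and of the parity sub-cases that arise when $n=2$.
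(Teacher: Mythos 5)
Your starting point is the same as the paper's: identify $H_*(C(\bR^n);\bF_p)$ with the free graded-commutative algebra on Cohen's generators, observe that stabilisation is multiplication by $\iota$, so that $p$-inceptive classes are spanned by the monomials avoiding $\iota$, and then minimise homological degree in fixed configuration degree. But the step that carries the whole lemma is exactly the one you postpone in your ``main obstacle'' paragraph: you never actually verify that your greedy choice is optimal, you only say what would have to be checked. The paper does this verification in a few lines, by a different and much shorter route: the displayed inequalities showing that any iterated operation $Q_{\epsilon(1),(i_1,\ldots,i_k)}(\alpha)$ has homological degree at least that of the $p$-th power of its truncation $Q_{\epsilon(2),(i_2,\ldots,i_k)}(\alpha)$ (with the same configuration degree), which reduce the minimisation at a stroke to products of powers of the length-one operations $Q_1(\iota)$, $Q_{1,(2)}(\iota)$ and $[\iota,\iota]$, followed by the single numerical comparison $4p-6$ versus $p(n-1)$ at configuration degree $2p$, which produces the split ``$n\geq 6$, or $n=4$ and $p=3,5$''. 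No knapsack over higher powers of $p$ is needed in the paper's argument, because under its accounting the length-one class has the best degree-to-weight ratio.

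More seriously, your bookkeeping cannot ``produce the entries of Table \ref{table:p-inceptive-classes}'', because it contradicts them. You impose that odd-degree generators are exterior and usable at most once --- $[\iota,\iota]$ of degree $n-1$, and the $\epsilon(1)=1$ class $Q_{1,(2)}(\iota)$ of degree $2p-3$, which you yourself call ``the $-1$ version being exterior''. But the table's minimal classes are powers of precisely these classes ($Q_{1,(2)}(\iota)^a$ in the second row, $[\iota,\iota]^m$ with $m\geq 2$ in the later rows), and rows three to five cover configuration degrees that your residue argument ($m_p(k)\in\{0,2\}$, else no inceptive classes) excludes. Note that the paper's own computation tacitly treats every generator as polynomial (it compares against $h([\iota,\iota]^p)=p(n-1)$), so your exterior constraint is a genuine divergence from the paper, not a refinement of it: carried out consistently, your knapsack (one exterior weight-$p$ factor, then as many weight-$p^2$ polynomial factors as fit) outputs different minimal classes with larger minimal degrees (roughly $a(2p-2)-2\lfloor a/p\rfloor-1$ in place of the table's $a(2p-3)$), i.e.\ a different table. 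So either drop the exterior constraints and argue entirely in the paper's conventions, in which case your greedy step changes and the higher operations never enter, or keep them and accept that what you would prove is not the lemma as stated; only its final sentence about the stabilisation map would survive unchanged (and with it the application in Proposition \ref{pImprovedRange}), since any correct accounting can only push the first $p$-inceptive degree upward. As written, the proposal is internally inconsistent and the decisive optimisation is not carried out.
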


\begin{table}[ht]
\begin{adjustbox}{center}
\centering
\small
\begin{tabular}{ccccc}
\toprule
$p$ & $n$ & $k$ & \textbf{class} & \textbf{homological degree} \\
\midrule
even & all & even & $Q_{0,(1)}(\iota)^a$ & $a$ \\
\midrule
odd & odd & $\in p\bZ$ & $Q_{1,(2)}(\iota)^a$ & $a(2(p-1)-1)$ \\
\midrule
\begin{tabular}{c} odd \\\midrule $3,5$ \end{tabular} & \begin{tabular}{c} $\geq 6$, even \\\midrule $4$ \end{tabular} & $\geq p$, odd & $Q_{1,(2)}(\iota)^{a}[\iota,\iota]^{m_p(k)/2}$ & $a(2(p-1)-1) + (n-1)m_p(k)/2$ \\
\midrule
\begin{tabular}{c} odd \\\midrule $3,5$ \end{tabular} & \begin{tabular}{c} $\geq 6$, even \\\midrule $4$ \end{tabular} & even & $Q_{1,(2)}(\iota)^{a-1}[\iota,\iota]^{(p+m_p(k))/2}$ & $(a-1)(2(p-1)-1)+ (n-1)(p+m_p(k))/2$ \\
\midrule
$\neq 2,3,5$ & $4$ & \begin{tabular}{c} even \\\midrule $\geq p$, odd \end{tabular} & $Q_{1,(2)}(\iota)^{m_2(k)}[\iota,\iota]^{\lfloor k/2\rfloor}$ & $m_2(k)(2(p-1)-1) + (n-1)\lfloor k/2\rfloor$ \\
\midrule
odd & $2$ & even & $[\iota,\iota]^{k/2}$ & $k/2$ \\
\bottomrule
\end{tabular}
\end{adjustbox}
\caption{The first $p$-inceptive class in degree $k$.}
\label{table:p-inceptive-classes}
\end{table}

\begin{proof}
First observe that
\begin{align*}
h(Q_{\epsilon(1),(i_1,\ldots,i_k)}(\iota))&\geq h(Q_{\epsilon(i_2),(i_2,\ldots,i_k)}(\iota)^p) \\
\nu(Q_{\epsilon(1),(i_1,\ldots,i_k)}(\iota)) &= \nu(Q_{\epsilon(i_2),(i_2,\ldots,i_k)}(\iota)^p) \\
h(Q_{\epsilon(1),(i_1,\ldots,i_k)}([\iota,\iota]))&\geq h(Q_{\epsilon(i_2),(i_2,\ldots,i_k)}([\iota,\iota])^p) \\
\nu(Q_{\epsilon(1),(i_1,\ldots,i_k)}([\iota,\iota])) &= \nu(Q_{\epsilon(i_2),(i_2,\ldots,i_k)}([\iota,\iota])^p)\\
h(Q_{1,(i)}(\iota)) &\leq h(Q_{\epsilon,(j)}(\iota)) \\
\nu(Q_{1,(i)}(\iota)) &= \nu(Q_{\epsilon,(j)}(\iota)) 
\end{align*}
where $i\leq j$ in the bottom two rows. Hence the lowest $p$-inceptive class in a fixed configuration degree is a product whose factors are
\[
\begin{cases}
Q_1(\iota) & \text{ if $p=2$} \\
Q_{1,(2)}(\iota) & \text{ if $p$ odd and $n$ odd} \\
Q_{1,(2)}(\iota), [\iota,\iota] & \text{ if $p$ odd and $n$ even} \\
[\iota,\iota] &\text{ if $p$ is odd and $n=2$.}
\end{cases}
\]
This is enough to deduce the first two rows of the table, as well as the sixth. Second observe that, if $p$ is odd and $n$ is even, the first configuration degree in which a power of $Q_{1,(2)}(\iota)$ and a power of $[\iota,\iota]$ both live is $2p$, where $\nu(Q_{1,(2)}(\iota)^{2})=\nu([\iota,\iota]^p)$, and \[h(Q_{1,(2)}(\iota)^{2}) = 4p-6< p(n-1) = h([\iota,\iota]^p) \Leftrightarrow \text{ $n\geq 6$ or $n=4, p=3,5$,}\]
from which the third, fourth and fifth rows of the table follow.
\end{proof}

Lemma \ref{lem:improved-range} tells us in particular that for odd primes $p$ the stabilisation map $C_k(\bR^n) \to C_{k+1}(\bR^n)$ induces an isomorphism on homology with $\bF_p$ coefficients in the range $*\leq k$. We now show that this implies that the same is true for the stabilisation map $C_k(M) \to C_{k+1}(M)$ for any smooth, connected, open manifold $M$ of dimension at least $3$. Our method for this is a slight variation of an argument due to Oscar Randal-Williams in \cite[\S 8]{RW-hs-for-ucs}.

\begin{proposition}\label{pImprovedRange}
Let $M$ be a smooth, connected, open $n$-manifold with $n\geq 3$ and let $A$ be an abelian group. If the stabilisation map on $A$-homology
\[
H_*(C_k(\bR^n);A) \longrightarrow H_*(C_{k+1}(\bR^n);A)
\]
is an isomorphism in the range $*\leq k$ then so is the stabilisation map on $A$-homology
\[
H_*(C_k(M);A) \longrightarrow H_*(C_{k+1}(M);A).
\]
So by Lemma \ref{lem:improved-range} the stabilisation map $C_k(M)\to C_{k+1}(M)$ induces isomorphisms on homology with $\bF_p$ coefficients in the range $*\leq k$ for any odd prime $p$.
\end{proposition}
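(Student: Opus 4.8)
The plan is to deduce the statement for a general open manifold from the Euclidean case by an induction over a handle decomposition, in the spirit of \cite[\S8]{RW-hs-for-ucs}, with the bookkeeping arranged so that no part of the range is lost.

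\emph{Reduction to a finite handlebody.} Stabilisation maps commute with the inclusions induced by codimension-$0$ open embeddings, and if $M=\bigcup_i M_i$ is an exhaustion by connected open subsets with compact closure then $C_k(M)=\colim_i C_k(M_i)$. Since singular homology commutes with filtered colimits, and a filtered colimit of isomorphisms in the fixed range $*\leq k$ is again an isomorphism in that range, it suffices to prove the proposition when $M$ is the interior of a compact connected $n$-manifold with non-empty boundary. Such a manifold has a finite handle decomposition with all handles of index $\leq n-1$ (build it from a collar of the boundary), so, reordering the handles, $M$ is obtained from $\bR^n$ by successively attaching handles of index $\lambda\in\{1,\dots,n-1\}$, each intermediate stage being a connected open $n$-manifold. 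It therefore suffices to show: if $W=W'\cup H^\lambda$ with $1\leq\lambda\leq n-1$ and $W'$ a connected open $n$-manifold for which the conclusion holds, then it holds for $W$ — the base case $W=\bR^n$ being the hypothesis.

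\emph{The inductive step.} Let $c\subseteq W$ be the cocore of the attached handle: a properly embedded disc of dimension $n-\lambda$ and codimension $\lambda\geq1$, with $W\smallsetminus c$ diffeomorphic to $W'$ (push the external collar $(D^\lambda\smallsetminus\{0\})\times D^{n-\lambda}$ back in), so that $C_j(W\smallsetminus c)\cong C_j(W')$ for all $j$. Running the excision argument of \S\ref{ssCofibresequence} with the embedded disc $D\subseteq M$ replaced by a tubular neighbourhood $T\cong D^{n-\lambda}\times D^\lambda$ of $c$ — and filtering $C_k(W)$ by the number $j$ of configuration points lying on $c$, the $j=0$ stratum being $C_k(W\smallsetminus c)\cong C_k(W')$ — yields a filtration of $C_k(W)$ whose $j$-th subquotient is, up to homology equivalence, a suspension
\[
\Sigma^{j\lambda}\bigl((C_j(\bR^{n-\lambda})\times C_{k-j}(W'))_+\bigr),
\]
the factor $\Sigma^{j\lambda}$ arising (via the trivial normal bundle of $c$ and the homotopy equivalence $\partial D^\lambda\to D^\lambda\smallsetminus\{0\}$) from the $\lambda$ normal directions at each of the $j$ points on $c$; this is exactly the mechanism by which $\Sigma^n(C_{k-1}(M\smallsetminus\{0\})_+)$ appears in Lemma \ref{lCofibreSequence}, which is the case $\lambda=n$ (where only $j=0,1$ occur). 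The stabilisation map respects this filtration, acting on the $j$-th subquotient by stabilising the $C_{k-j}(W')$ factor; since $\lambda\geq1$, the suspension degree $j\lambda$ dominates the drop of $j$ configuration points, so by the inductive hypothesis and the K\"unneth theorem (handled as for the map $b$ in the proof of Theorem \ref{thm:e}) it is an isomorphism on every subquotient in the range $*\leq k$. Comparing the filtrations at levels $k$ and $k+1$, the four-lemma then shows that $H_*(C_k(W);A)\to H_*(C_{k+1}(W);A)$ is surjective for $*\leq k$; and it is injective in all degrees, since the stabilisation map of an open manifold is split injective on homology (see the proof of Corollary \ref{cor:inj}). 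Hence it is an isomorphism for $*\leq k$, closing the induction; the final sentence of the proposition then follows by inserting Lemma \ref{lem:improved-range}.

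\emph{Main obstacle.} The non-formal part — and the precise sense in which this is a ``slight variation'' of \cite[\S8]{RW-hs-for-ucs} — is the inductive step: carrying out the excision and ``closest point'' arguments of \S\ref{ssCofibresequence} when the cocore is a positive-dimensional disc rather than a point, identifying the subquotients of the resulting filtration, and checking that their suspension degrees grow at least as fast as the number of configuration points removed, so that slope $1$ is preserved under the filtration comparison (which uses the connectivity of the subquotients and Remark \ref{rKuenneth}-type splittings, as in \cite[\S8]{RW-hs-for-ucs}). The hypothesis $n\geq3$ enters here — to keep the relevant codimensions and fundamental groups under control — and is in any case necessary, since by Lemma \ref{lem:improved-range} the hypothesis on $C_\bullet(\bR^n)$ holds for no slope-$1$ range when $n\leq2$.
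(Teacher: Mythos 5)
Your overall strategy (handle induction over a compact exhaustion, with the Euclidean case as base) is genuinely different from the paper's proof, which never decomposes $M$: the paper fixes $M$ and inducts on the number of points $k$, resolving $C_k(M)$ by the semi-simplicial space $C_k(M)^\bullet$ of configurations with marked points and comparing it, via maps $R_{k-j-1}(M)\wedge R_j(\bR^n)^\bullet\to R_k(M)^\bullet$ and relative Serre spectral sequences, with the hypothesis for $\bR^n$; the endgame (surjectivity in the range plus split-injectivity of stabilisation, hence isomorphism) is the same in spirit as yours. However, your inductive step has a genuine gap exactly where the content lies. The filtration of $C_k(W)$ ``by the number $j$ of configuration points lying on the cocore $c$'' is not constructed: since $c$ has positive codimension, configurations generically miss $c$, and the paper's excision argument in \S\ref{ssCofibresequence} (the subspace $U_k(M)$ of configurations with a unique closest point to the centre of $D$) uses in an essential way that the removed object is a single point, so that at most one configuration point is ``near'' it and the subquotient splits off a genuine $\Sigma^n$. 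For a positive-dimensional cocore, several points can sit in the tube $T\cong D^{n-\lambda}\times D^{\lambda}$ and need not have distinct projections to $c$; the correct subquotients (after real work, essentially a fixed-$k$ version of the stable handle/Snaith-type filtration) are not the plain suspensions $\Sigma^{j\lambda}\bigl((C_j(\bR^{n-\lambda})\times C_{k-j}(W'))_+\bigr)$ you state, but rather smash products of $C_{k-j}(W')_+$ with the equivariant half-smash $C_j(\bR^{n-\lambda})_+\wedge_{\Sigma_j}(S^{\lambda})^{\wedge j}$, i.e.\ Thom spaces of generally non-trivial $j\lambda$-dimensional bundles over $C_j(\bR^{n-\lambda})$. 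Your range bookkeeping would survive this correction (only the $(j\lambda-1)$-connectivity matters, and $\lambda\geq 1$), but the existence of such a filtration for fixed $k$, the identification of its subquotients, and its compatibility with the stabilisation map are precisely the non-formal heart of the argument, which your ``Main obstacle'' paragraph defers rather than supplies; as it stands this is an unproved claim of roughly the same order of difficulty as the proposition itself.

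Two smaller points. First, the ``four-lemma'' comparison through the filtration needs, at intermediate stages, injectivity statements that you do not have (split-injectivity is only known for the stabilisation map on the actual configuration spaces, not for the filtration stages); this is repairable by instead comparing the spectral sequences of the filtrations, which yields isomorphisms in total degree $\leq k-1$ and surjectivity in degree $k$, after which split-injectivity of $C_k(W)\to C_{k+1}(W)$ closes the argument — but as written the step is loose. Second, the reduction to a finite handlebody requires choosing stabilisation maps (rays or boundary collars) compatibly with the exhaustion and with each handle attachment; this is routine but should be said. None of this affects the base case or the final sentence invoking Lemma \ref{lem:improved-range}, but without an actual construction and identification of the cocore filtration the inductive step is not a proof.
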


This result has also been recently proved by \cite{KupersMiller2014} using a different method along the lines of \cite{SegalRational}.

\begin{proof}
We will just write $H_*(-)$ for $H_*(-;A)$. Define $R_k(M)$ to be the homotopy cofibre of the stabilisation map $C_k(M)\to C_{k+1}(M)$. Now the stabilisation map $C_k(M)\to C_{k+1}(M)$ is split-injective on homology (see \cite[page 103]{McDuff}) so it induces an isomorphism on homology in degree $*$ if and only if $\widetilde{H}_*(R_k(M))=0$. So the hypothesis of the proposition says that $\widetilde{H}_*(R_k(\bR^n))=0$ for $*\leq k$ and we would like to show that $\widetilde{H}_*(R_k(M))=0$ for $*\leq k$. We refer to \cite{RW-hs-for-ucs} for background and any details which we omit in this proof -- the line of argument is very similar. The proof is by induction on $k$. The base case $k=0$ is obvious so we now fix $k\geq 1$ for the inductive step.

For $i\geq 0$ let $C_l(M)^i$ be the space of $l$-point subsets $c$ of $M$ together with an injection $\{0,\dotsc,i\}\to c$. These fit together to form an semi-simplicial space $C_l(M)^\bullet$ augmented by $C_l(M)$. The stabilisation map lifts to a map $C_l(M)^\bullet \to C_{l+1}(M)^\bullet$ of augmented semi-simplicial spaces. There is a fibre bundle $\pi\colon C_l(M)^i \to \widetilde{C}_{i+1}(M)$, where $\widetilde{C}$ denotes the \emph{ordered} configuration space, given by sending an injection $\{0,\dotsc,i\}\to c$ to its image and remembering the induced ordering. Its fibre over a point is homeomorphic to $C_{l-i-1}(M_{i+1})$, where $M_{i+1}$ denotes the manifold $M$ with $i+1$ points removed. Moreover the projection $\pi$ commutes with the stabilisation map $C_l(M)^i \to C_{l+1}(M)^i$ and the map of fibres over a point is the stabilisation map $C_{l-i-1}(M_{i+1}) \to C_{l-i}(M_{i+1})$. Any map of Serre fibrations over a fixed base space has an associated relative Serre spectral sequence; in this case it has second page
\[
{}^i \widetilde{E}^2_{s,t} \cong H_s( \widetilde{C}_{i+1}(M) ; \widetilde{H}_t( R_{l-i-1}(M_{i+1}) ) )
\]
and converges to $\widetilde{H}_*(R_l(M)^i)$, where $R_l(M)^i$ denotes the homotopy cofibre of the lift $C_l(M)^i \to C_{l+1}(M)^i$ of the stabilisation map.

For $1\leq j\leq k$ there are maps of augmented semi-simplicial spaces $C_{k-j}(M) \times C_j(\bR^n)^\bullet \to C_k(M)^\bullet$ defined similarly to the stabilisation map, except one stabilises by adding the given configuration in $\bR^n$ instead of just a single point. In \cite[\S 8]{RW-hs-for-ucs} it is explained how these induce maps of semi-simplicial spaces $R_{k-j-1}(M)\wedge R_j(\bR^n)^\bullet \to R_k(M)^\bullet$ for $1\leq j\leq k$. Note that when $j=k$ we have $R_{-1}(M)=S^0$ and this is just the map $R_k(\bR^n)^\bullet \to R_k(M)^\bullet$ induced by an embedding $\bR^n \hookrightarrow M$. Each semi-simplicial space has an associated spectral sequence so we obtain a map ${}^j\bar{E}\to E$ of spectral sequences whose first pages are
\begin{align*}
{}^j\bar{E}^1_{s,t} &\cong \widetilde{H}_t(R_{k-j-1}(M)\wedge R_j(\bR^n)^s) \\
E^1_{s,t} &\cong \widetilde{H}_t(R_k(M)^s).
\end{align*}
Note that these are first quadrant plus an extra column $\{s=-1,t\geq 0\}$.

The spectral sequence $E$ converges to $\widetilde{H}_{*+1}$ of the homotopy cofibre of the map $\lVert R_k(M)^\bullet \rVert \to R_k(M)$ induced by the augmentation map. Since taking homotopy cofibres commutes with taking geometric realisation of semi-simplicial spaces this space can also be obtained as follows: first take the homotopy cofibres of the maps $\lVert C_k(M)^\bullet \rVert \to C_k(M)$ and $\lVert C_{k+1}(M)^\bullet \rVert \to C_{k+1}(M)$; these are related by a map induced by stabilisation; then take the homotopy cofibre of this map. Now the augmented semi-simplicial space $C_k(M)^\bullet$ is a $(k-1)$-resolution \cite[Proposition 6.1]{RW-hs-for-ucs}, i.e.\ the map $\lVert C_k(M)^\bullet \rVert \to C_k(M)$ is $(k-1)$-connected. Hence the spectral sequence $E$ converges to zero in total degree $*\leq k-1$.

The inductive hypothesis says that
\begin{equation}\tag{IH}\label{IH}
\widetilde{H}_*(R_l(M))=0 \text{ for } *\leq l<k
\end{equation}
and the hypothesis of the proposition says that
\begin{equation}\tag{Hyp}\label{Hyp}
\widetilde{H}_*(R_l(\bR^n))=0 \text{ for } *\leq l.
\end{equation}
From (IH) we deduce that ${}^i\widetilde{E}^2_{s,t}=0$ for $t\leq l-i-1$ so the spectral sequence ${}^i\widetilde{E}$ converges to zero in total degree $*\leq l-i-1$, so
\begin{equation}\label{Eq1}
\widetilde{H}_*(R_l(M)^i)=0 \text{ for } *\leq l-i-1 \text{ and } i\geq 0.
\end{equation}
In other words:
\begin{equation}\label{Eq1a}
E^1_{s,t}=0 \text{ for } t\leq k-s-1 \text{ and } s\geq 0.
\end{equation}
Also, using the K{\"u}nneth theorem, \eqref{Eq1} and \eqref{IH} we deduce that
\begin{equation}\label{Eq2}
{}^j\bar{E}^1_{s,t}=0 \text{ for } t\leq k-s-1,
\end{equation}
where for the case $\{s=-1 \text{ and } j=k\}$ we also need to use \eqref{Hyp}. We now make the following:

\begin{claim}
For $1\leq j\leq k$ the map ${}^j\bar{E}^1_{j,k-j} \to E^1_{j,k-j}$ is surjective.
\end{claim}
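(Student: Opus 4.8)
The plan is to identify both $E^1$-terms explicitly and then to recognise the comparison map between them as one induced by a geometric map whose surjectivity on the relevant homology can be read off.

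\emph{Identifying ${}^j\bar E^1_{j,k-j}$.} First I would note that $C_j(\bR^n)^j=\varnothing$ (there is no injection $\{0,\dots,j\}\hookrightarrow c$ when $|c|=j$), while $C_{j+1}(\bR^n)^j=\widetilde C_{j+1}(\bR^n)$, the ordered configuration space. Hence $R_j(\bR^n)^j$, being the homotopy cofibre of $\varnothing\to\widetilde C_{j+1}(\bR^n)$, is $\widetilde C_{j+1}(\bR^n)_+$. Since $n\geq 3$ the space $\widetilde C_{j+1}(\bR^n)$ is connected, and by the inductive hypothesis \eqref{IH} we have $\widetilde H_*(R_{k-j-1}(M))=0$ for $*<k-j$; so the K\"unneth theorem identifies ${}^j\bar E^1_{j,k-j}=\widetilde H_{k-j}(R_{k-j-1}(M)\wedge\widetilde C_{j+1}(\bR^n)_+)$ with $\widetilde H_{k-j}(R_{k-j-1}(M))$, the summand being carried by a single fibre $R_{k-j-1}(M)\wedge\{x\}_+$ for any $x\in\widetilde C_{j+1}(\bR^n)$.

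\emph{Identifying $E^1_{j,k-j}$.} Here $E^1_{j,k-j}=\widetilde H_{k-j}(R_k(M)^j)$, and $R_k(M)^j\to\widetilde C_{j+1}(M)$ is a fibre bundle with fibre $R_{k-j-1}(M_{j+1})$. Applying \eqref{IH} to the open $n$-manifold $M_{j+1}$ gives $\widetilde H_t(R_{k-j-1}(M_{j+1}))=0$ for $t\leq k-j-1$, so the relative Serre spectral sequence ${}^j\widetilde E^2_{s,t}=H_s(\widetilde C_{j+1}(M);\widetilde H_t(R_{k-j-1}(M_{j+1})))$ is concentrated in rows $t\geq k-j$. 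Therefore $\widetilde H_{k-j}(R_k(M)^j)\cong {}^j\widetilde E^2_{0,k-j}=H_0(\widetilde C_{j+1}(M);\widetilde H_{k-j}(R_{k-j-1}(M_{j+1})))$ is the $\pi_1(\widetilde C_{j+1}(M))$-coinvariants of the fibre homology, and the inclusion of any single fibre $R_{k-j-1}(M_{j+1})\hookrightarrow R_k(M)^j$ induces precisely the projection onto these coinvariants, in particular a surjection onto $E^1_{j,k-j}$.

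\emph{The comparison map and the main obstacle.} It then remains to show that the composite $\widetilde H_{k-j}(R_{k-j-1}(M))\to {}^j\bar E^1_{j,k-j}\to E^1_{j,k-j}$ is onto. Unwinding the construction of the maps $R_{k-j-1}(M)\wedge R_j(\bR^n)^\bullet\to R_k(M)^\bullet$ from \cite[\S8]{RW-hs-for-ucs} --- where one stabilises by inserting a whole configuration near the end of the stabilising ray --- restricting to the fibre $R_{k-j-1}(M)\wedge\{x\}_+$ with $x$ a configuration of $j+1$ points in a small disc near that end yields a map $R_{k-j-1}(M)\to R_k(M)^j$ which, followed by the projection to $\widetilde C_{j+1}(M)$, is constant at $x$; so it is a ``push-off'' map $R_{k-j-1}(M)\to R_{k-j-1}(M_{j+1})$ (push the $(k-j-1)$-point configuration off the disc containing $x$) followed by the inclusion of the fibre over $x$. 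By the previous paragraph the fibre inclusion is surjective on $\widetilde H_{k-j}$, so it suffices to prove that the push-off map is surjective on $\widetilde H_{k-j}$ modulo the $\pi_1(\widetilde C_{j+1}(M))$-action. This is the main obstacle, and I would handle it as in \cite[\S8]{RW-hs-for-ucs}: writing $M_{j+1}$ as the union of the complement of the disc with $\bR^n$ minus $j+1$ points, a Mayer--Vietoris argument together with \eqref{IH} (for $M$ and for the disc complement) and \eqref{Hyp} (for $\bR^n$) shows that in degree $k-j$ --- the first non-vanishing degree --- the homology of $R_{k-j-1}(M_{j+1})$ is generated by the push-off image together with classes supported near the punctures, and the latter become trivial in the coinvariants under the monodromy sweeping the punctures out to infinity along the ray. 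The hypothesis $n\geq 3$ enters here (and in the connectivity of the resolutions), and I expect the cleanest implementation to be a chase through the induced map of relative Serre spectral sequences over $\widetilde C_{j+1}(\bR^n)\hookrightarrow\widetilde C_{j+1}(M)$ rather than an ad hoc geometric argument. The case $j=k$ is immediate: there $R_j(\bR^n)^j\simeq\widetilde C_{k+1}(\bR^n)_+$ and $R_k(M)^k\simeq\widetilde C_{k+1}(M)_+$, and the map is the isomorphism $\widetilde H_0(\widetilde C_{k+1}(\bR^n)_+)\to\widetilde H_0(\widetilde C_{k+1}(M)_+)$ induced by an embedding $\bR^n\hookrightarrow M$.
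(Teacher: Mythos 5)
Your reductions are correct and are essentially the ones the paper makes: $R_j(\bR^n)^j=\widetilde{C}_{j+1}(\bR^n)_+$; the K\"unneth theorem plus \eqref{IH} identifies ${}^j\bar{E}^1_{j,k-j}$ with $\widetilde{H}_{k-j}(R_{k-j-1}(M))$ carried by a single fibre; the relative Serre spectral sequence ${}^j\widetilde{E}$ identifies $E^1_{j,k-j}$ with the $\pi_1(\widetilde{C}_{j+1}(M))$-coinvariants of $\widetilde{H}_{k-j}(R_{k-j-1}(M_{j+1}))$, with the fibre inclusion surjecting onto it (this is exactly the generalisation of Fact \ref{Fact3} used for the map called $\bar{\beta}$ in the paper); and the comparison map factors as a push-off $R_{k-j-1}(M)\to R_{k-j-1}(M_{j+1})$ followed by that fibre inclusion (the paper's $\bar{\delta}\circ\bar{\alpha}=\bar{\beta}\circ\bar{\gamma}$). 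But the step you yourself flag as the main obstacle --- that the push-off hits everything in degree $k-j$, at least modulo the monodromy --- is precisely where your argument stops being a proof, and the mechanism you sketch does not work as stated. Unordered configuration spaces do not satisfy a naive Mayer--Vietoris with respect to a cover of $M_{j+1}$ by the disc complement and a punctured $\bR^n$ (a configuration can meet both pieces, so neither $C_l(M_{j+1})$ nor $R_{k-j-1}(M_{j+1})$ decomposes this way), and ``sweeping the punctures out to infinity along the ray'' is not a loop in $\widetilde{C}_{j+1}(M)$, hence not a monodromy element, so it cannot be invoked to kill ``classes supported near the punctures'' in the coinvariants; no argument is given either for the asserted generation statement or for the vanishing.

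The missing ingredient is already available inside the proof, and is how the paper concludes: the push-off $\bar{\gamma}$ is a composite of $j+1$ instances of the map $p$ of Fact \ref{Fact2}, and $p_*$ is surjective (indeed an isomorphism) in degrees $\leq k-j$ because $p$ admits a homotopy semi-inverse $u$ with $u\circ p\simeq\id$, while the argument of \cite[Proposition 6.3]{RW-hs-for-ucs} together with \eqref{IH} gives $\hconn_A(u)\geq (k-j-2)+\dim(M)\geq k-j+1$ when $\dim(M)\geq 3$ --- this, not a Mayer--Vietoris patching, is where the hypothesis $n\geq 3$ genuinely enters. (The paper records that using Facts \ref{Fact2} and \ref{Fact3} here is not circular.) With that input your factorisation immediately finishes the claim, and in fact yields the stronger statement, proved in the paper, that $R_{k-j-1}(M)\wedge R_j(\bR^n)^j\to R_k(M)^j$ is surjective on $A$-homology in all degrees $\leq k-j$; as it stands, however, your proposal leaves the decisive surjectivity unproved.
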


The verification of this claim is delayed until the end of the proof. Now a diagram chase in the following:
\begin{center}
\begin{tikzpicture}
[x=1mm,y=1mm]
\node (t1) at (0,15) {${}^j\bar{E}^1_{j,k-j}$};
\node (t2) at (20,15) {${}^j\bar{E}^{j+1}_{j,k-j}$};
\node (t3) at (50,15) {${}^j\bar{E}^{j+1}_{-1,k}$};
\node (t4) at (70,15) {${}^j\bar{E}^1_{-1,k}$};
\node (b1) at (0,0) {$E^1_{j,k-j}$};
\node (b2) at (20,0) {$E^{j+1}_{j,k-j}$};
\node (b3) at (50,0) {$E^{j+1}_{-1,k}$};
\node (b4) at (70,0) {$E^1_{-1,k}$};
\node at (79,15) {$=0$};
\draw[->>] (t1) to (t2);
\draw[->] (t2) to node[above,font=\small]{$\bar{d}_{j+1}$} (t3);
\draw[->>] (t4) to (t3);
\draw[->>] (b1) to (b2);
\draw[->] (b2) to node[below,font=\small]{$d_{j+1}$} (b3);
\draw[->>] (b4) to (b3);
\draw[->>] (t1) to (b1);
\draw[->] (t2) to (b2);
\draw[->] (t3) to (b3);
\draw[->] (t4) to (b4);
\end{tikzpicture}
\end{center}
shows that the differential $d_{j+1}\colon E^{j+1}_{j,k-j} \to E^{j+1}_{-1,k}$ is zero for $1\leq j\leq k$.

Now we can deduce that the first differential $d_1\colon E^1_{0,t} \to E^1_{-1,t}$ is surjective in a range. First, for $t\leq k-1$ note that the differentials hitting $E^{\square}_{-1,t}$ have source $E^j_{j-1,t-j+1}$ for $1\leq j\leq t+1$. By \eqref{Eq1a} these groups are all zero, so $E^1_{-1,t} = E^{\infty}_{-1,t}$. The spectral sequence $E$ converges to zero in total degree $t-1$ so $E^1_{-1,t}=E^{\infty}_{-1,t}=0$ and so the first differential $d_1\colon E^1_{0,t} \to E^1_{-1,t}$ is vacuously surjective. For $t=k$ we use the result of the diagram chase above, which tells us that the only possible non-zero differential hitting $E^{\square}_{-1,k}$ is the first differential. We know that $E^{\infty}_{-1,k}=0$ since $E$ converges to zero in total degree $k-1$ so the first differential $d_1\colon E^1_{0,k} \to E^1_{-1,k}$ must be surjective. This can be identified as the map on homology induced by the augmentation map $R_k(M)^0 \to R_k(M)$. Hence we have established:

\begin{fact}\label{Fact1}
The augmentation map $a\colon R_k(M)^0 \to R_k(M)$ induces surjections on $A$-homology up to degree $k$.
\end{fact}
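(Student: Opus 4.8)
The plan is to deduce Fact~\ref{Fact1} formally from the spectral sequence $E$ of the augmented semi-simplicial space $R_k(M)^\bullet$; the only non-formal ingredient is the Claim stated above, whose proof I sketch at the end. Recall that $E^1_{s,t}\cong\widetilde{H}_t(R_k(M)^s)$ for $s\geq 0$, that $E^1_{-1,t}\cong\widetilde{H}_t(R_k(M))$, that the differential $d_1\colon E^1_{0,t}\to E^1_{-1,t}$ is exactly the map $a_*$ induced by the augmentation, and that $E$ converges to zero in total degrees $\leq k-1$ since $C_k(M)^\bullet$ is a $(k-1)$-resolution \cite{RW-hs-for-ucs}. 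It therefore suffices to show $d_1\colon E^1_{0,t}\to E^1_{-1,t}$ is surjective for every $t\leq k$. For $t\leq k-1$ this is automatic: by \eqref{Eq1a} all differentials into the $(-1,t)$ column have vanishing source, so $E^1_{-1,t}=E^{\infty}_{-1,t}$, and the latter vanishes by convergence since its total degree is at most $k-2$. For $t=k$, granting the Claim, the diagram chase carried out above kills all higher differentials $d_r\colon E^r_{r-1,k-r+1}\to E^r_{-1,k}$ with $r\geq 2$, so $E^{\infty}_{-1,k}=E^2_{-1,k}=\mathrm{coker}(d_1\colon E^1_{0,k}\to E^1_{-1,k})$; as the bidegree $(-1,k)$ has total degree $k-1$, convergence forces this cokernel to vanish, which is precisely surjectivity of $a_*$ in degree $k$.

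So the real work is the Claim: for $1\leq j\leq k$ the comparison map ${}^j\bar{E}^1_{j,k-j}\to E^1_{j,k-j}$ is surjective. My plan here is to compute both groups by means of the inductive hypothesis \eqref{IH}. Using the relative Serre spectral sequence of the bundle $R_k(M)^j\to\widetilde{C}_{j+1}(M)$ with fibre $R_{k-j-1}(M_{j+1})$ and the vanishing $\widetilde{H}_t(R_{k-j-1}(M_{j+1}))=0$ for $t\leq k-j-1$ coming from \eqref{IH}, only the $s=0$ row survives in total degree $k-j$, so
\[
E^1_{j,k-j}=\widetilde{H}_{k-j}(R_k(M)^j)\cong H_0\bigl(\widetilde{C}_{j+1}(M);\widetilde{H}_{k-j}(R_{k-j-1}(M_{j+1}))\bigr),
\]
the $\pi_1\widetilde{C}_{j+1}(M)$-coinvariants of $\widetilde{H}_{k-j}(R_{k-j-1}(M_{j+1}))$. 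On the other side, $C_j(\bR^n)^j=\emptyset$ forces $R_j(\bR^n)^j$ to be the homotopy cofibre of $\emptyset\hookrightarrow\widetilde{C}_{j+1}(\bR^n)$, so that $\widetilde{H}_*(R_j(\bR^n)^j)\cong H_*(\widetilde{C}_{j+1}(\bR^n))$; the K\"unneth theorem together with \eqref{IH} (which annihilates the lower homology of $R_{k-j-1}(M)$ and the relevant $\mathrm{Tor}$ term) collapses ${}^j\bar{E}^1_{j,k-j}=\widetilde{H}_{k-j}(R_{k-j-1}(M)\wedge R_j(\bR^n)^j)$ to $\widetilde{H}_{k-j}(R_{k-j-1}(M))\otimes H_0(\widetilde{C}_{j+1}(\bR^n))\cong\widetilde{H}_{k-j}(R_{k-j-1}(M))$. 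Fixing an embedded open disc $D\cong\bR^n$ in $M$, the comparison map is then induced by pushing a configuration in $M$ off $D$ and including $M\smallsetminus D\hookrightarrow M_{j+1}$ (with the $j+1$ removed points lying inside $D$), followed by passage to $\pi_1$-coinvariants.

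I expect the surjectivity of this last map to be the main obstacle. The intended argument is that any class in $\widetilde{H}_{k-j}(R_{k-j-1}(M_{j+1}))$ becomes, after acting by an element of $\pi_1\widetilde{C}_{j+1}(M)$ that isotopes the $j+1$ removed points into $D$, represented by a cycle supported in $M\smallsetminus D$, and such cycles lie in the image of $\widetilde{H}_{k-j}(R_{k-j-1}(M))$ under the map above; hence the map is onto after passing to $\pi_1$-coinvariants, which is the Claim. Making this precise requires a careful analysis of the $\pi_1\widetilde{C}_{j+1}(M)$-action and of the ``push off $D$'' map, along the lines of \cite[\S 8]{RW-hs-for-ucs}; this is also the step that uses the hypothesis $n\geq 3$. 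Once the Claim is established, the bookkeeping of the first paragraph completes the proof of Fact~\ref{Fact1}.
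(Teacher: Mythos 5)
The formal part of your argument is fine and coincides with the paper's: the identification of $d_1\colon E^1_{0,t}\to E^1_{-1,t}$ with $a_*$, the vacuous surjectivity for $t\leq k-1$ via \eqref{Eq1a}, and the $t=k$ case via the diagram chase and convergence are exactly how Fact~\ref{Fact1} is deduced here. Your computation of the two $E^1$-entries is also a correct repackaging of what the paper does: the identification $E^1_{j,k-j}\cong H_0\bigl(\widetilde{C}_{j+1}(M);\widetilde{H}_{k-j}(R_{k-j-1}(M_{j+1}))\bigr)$ is the content of the surjectivity of $\bar{\beta}$ (the Fact~\ref{Fact3}-type edge-homomorphism argument with the spectral sequence ${}^j\widetilde{E}$), and the K\"unneth collapse of ${}^j\bar{E}^1_{j,k-j}$ is as in the paper. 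So the Claim does reduce, as you say, to surjectivity of the composite \textquotedblleft push off $D$, include into $M_{j+1}$, project to coinvariants\textquotedblright.

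The gap is in your final step. Acting by $\pi_1\widetilde{C}_{j+1}(M)$ does not let you represent a class of $\widetilde{H}_{k-j}(R_{k-j-1}(M_{j+1}))$ by a cycle supported in $M\smallsetminus D$: a compact cycle automatically avoids some small neighbourhood of the punctures, but not the fixed disc $D$, and the monodromy merely drags the punctures (and the fibre along with them) around $M$ -- it does not change how the cycle sits relative to $D$. Indeed, deciding which classes become equal after dividing by this action is precisely what you are trying to compute, so the assertion that every class is, up to the action, in the image of the push-off map is the whole content of the Claim rather than a simplification of it. What the paper actually proves is the stronger statement that $\bar{\gamma}=p\circ\dotsb\circ p\colon R_{k-j-1}(M)\to R_{k-j-1}(M_{j+1})$ is already surjective on $A$-homology in degrees $\leq k-j$ \emph{before} passing to coinvariants: this is Fact~\ref{Fact2}, whose proof is not soft -- it uses $u\circ p\simeq\mathrm{id}$ together with the estimate $\hconn_A(u)\geq \hconn(s)+\dim(M)$ obtained by the methods of \cite[Proposition 6.3]{RW-hs-for-ucs}, plus the inductive hypothesis \eqref{IH}, and it is exactly here that $n\geq 3$ enters (one needs $k-2+\dim(M)\geq k+1$). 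Your sketched mechanism never actually invokes the dimension hypothesis, which is a warning sign: the slope-one stability being proved by this induction fails for $n=2$ and odd $p$ (cf.\ Lemma~\ref{lem:improved-range}), so a dimension-free argument at this point cannot be correct as stated. To complete your proof you must either prove the coinvariants surjectivity by a genuine argument or, as the paper does, establish Fact~\ref{Fact2} and feed it in.
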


Now consider the maps $p\colon C_k(M) \to C_k(M_1)$ and $u\colon C_k(M_1) \to C_k(M)$, defined as follows. The map $p$ is defined similarly to the stabilisation map. Write $M = \mathrm{int}(\mbar)$ for a manifold $\mbar$ with non-empty boundary and choose a self-embedding $e^\prime\colon \mbar\hookrightarrow\mbar$ which is isotopic to the identity and whose image does not contain the missing point of $M_1$. Then $p$ is defined by applying $e^\prime$ to each point of the configuration. The map $u$ is simply the map induced by the inclusion $M_1 \hookrightarrow M$. Since $e^\prime$ is isotopic to the identity the composition $u\circ p$ is homotopic to the identity, and so the induced maps $u_*$ and $p_*$ on homology are semi-inverses: $u_*\circ p_*={\id}$. If we are careful to define $p$ using a self-embedding $e^\prime\colon\mbar\hookrightarrow\mbar$ whose support is disjoint from the self-embedding $e\colon\mbar\hookrightarrow\mbar$ used to define the stabilisation map $s$, then $p$ commutes on the nose with $s$ and there are induced maps $p\colon R_k(M)\to R_k(M_1)$ and $u\colon R_k(M_1) \to R_k(M)$ on mapping cones. Again we have $u\circ p \simeq {\id}$ so $u_*\circ p_*={\id}$.

The methods of the proof of Proposition 6.3 in \cite{RW-hs-for-ucs} show that
\[
\hconn_A(u\colon R_{k-1}(M_1) \to R_{k-1}(M)) \geq \hconn(s\colon C_{k-2}(M) \to C_{k-1}(M)) + \dim(M)
\]
where $\hconn_A(f)$ is the $A$-homology-connectivity of $f$, i.e.\ the largest $*$ such that $\widetilde{H}_*(\mathit{mc}(f);A)=0$, where $\mathit{mc}(f)$ is the mapping cone of $f$. By inductive hypothesis the right-hand side is at least $k-2+\dim(M)\geq k+1$ since we have assumed that $M$ is at least $3$-dimensional. Therefore the $A$-homology-connectivity of $p\colon R_{k-1}(M) \to R_{k-1}(M_1)$ is at least $k$. In particular we have:

\begin{fact}\label{Fact2}
The map $p\colon R_{k-1}(M) \to R_{k-1}(M_1)$ induces surjections on $A$-homology up to degree $k$.
\end{fact}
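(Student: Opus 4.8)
The plan is to read Fact~\ref{Fact2} as a reformulation of the connectivity bound $\hconn_A(p)\ge k$ and then to derive that bound from a connectivity estimate for the homotopy inverse $u$ together with the inductive hypothesis \eqref{IH}. For the reformulation: since $u\circ p\simeq\id$, the induced map $p_*$ on reduced $A$-homology is split injective with left inverse $u_*$, so the long exact sequence of the cofibre sequence $R_{k-1}(M)\xrightarrow{p}R_{k-1}(M_1)\to\mathit{mc}(p)$ collapses into short exact sequences $0\to\widetilde{H}_*(R_{k-1}(M))\to\widetilde{H}_*(R_{k-1}(M_1))\to\widetilde{H}_*(\mathit{mc}(p))\to 0$. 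Hence $\widetilde{H}_*(\mathit{mc}(p);A)\cong\Coker(p_*)$, and the assertion that $p_*$ is surjective in degrees $*\le k$ is precisely the assertion $\hconn_A(p)\ge k$. So it remains to establish $\hconn_A(p)\ge k$.

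To do this I would first use $u\circ p\simeq\id$ once more to compare the two mapping cones. On $A$-homology $u_*$ is split surjective and $\widetilde{H}_*(R_{k-1}(M_1);A)=\operatorname{im}(p_*)\oplus\Ker(u_*)$ degreewise; running the long exact sequence of $R_{k-1}(M_1)\xrightarrow{u}R_{k-1}(M)\to\mathit{mc}(u)$ against this splitting identifies $\widetilde{H}_*(\mathit{mc}(u);A)\cong\Ker(u_*)_{*-1}\cong\Coker(p_*)_{*-1}\cong\widetilde{H}_{*-1}(\mathit{mc}(p);A)$, so that $\hconn_A(p)=\hconn_A(u)-1$; it therefore suffices to show $\hconn_A(u)\ge k+1$. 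For this I would invoke the estimate
\[
\hconn_A\bigl(u\colon R_{k-1}(M_1)\to R_{k-1}(M)\bigr)\ \ge\ \hconn\bigl(s\colon C_{k-2}(M)\to C_{k-1}(M)\bigr)+\dim(M),
\]
obtained by adapting the proof of Proposition~6.3 in \cite{RW-hs-for-ucs}. The homotopy cofibre of the stabilisation map $s\colon C_{k-2}(M)\to C_{k-1}(M)$ is $R_{k-2}(M)$, so the inductive hypothesis \eqref{IH} (with $l=k-2<k$) gives $\hconn\bigl(s\colon C_{k-2}(M)\to C_{k-1}(M)\bigr)\ge k-2$, whence $\hconn_A(u)\ge(k-2)+\dim(M)\ge(k-2)+3=k+1$ because $\dim(M)\ge3$. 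Thus $\hconn_A(p)\ge k$, which is Fact~\ref{Fact2}.

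The only step carrying genuine content, and the one I expect to be the main obstacle, is the displayed connectivity estimate for $u$; everything preceding it is formal homological algebra. It is the analogue for the spaces $R_\bullet$ of the comparison between configuration spaces on $M$ and on $M_1$ treated in \cite[Proposition~6.3]{RW-hs-for-ucs}, and I would prove it by transporting that argument through the defining cofibre sequences: one compares the semi-simplicial resolutions of the spaces involved near the extra puncture -- in the spirit of the excision and cofibre-sequence discussion of \S\ref{ssCofibresequence} -- and feeds the resulting relative Serre spectral sequences into the same bookkeeping used in the cited proof. The factor $\dim(M)=n\ge3$ is exactly what gives the estimate enough slack to absorb the one-degree loss incurred by routing the induction through $R_{k-2}(M)$.
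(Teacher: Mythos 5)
Your argument is correct and is essentially the paper's own: both reduce Fact~\ref{Fact2} to the estimate $\hconn_A(u)\geq \hconn(s\colon C_{k-2}(M)\to C_{k-1}(M))+\dim(M)$ obtained by the methods of \cite[Proposition~6.3]{RW-hs-for-ucs}, then feed in the inductive hypothesis and $\dim(M)\geq 3$ to get $\hconn_A(u)\geq k+1$ and hence $\hconn_A(p)\geq k$. The only difference is that you spell out the formal step (via the splitting $u_*\circ p_*=\id$) passing from the connectivity of $u$ to that of $p$, which the paper leaves implicit.
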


For our third and final fact, consider the spectral sequence ${}^0\widetilde{E}$ with $l=k$ and recall from just before \eqref{Eq1} that ${}^0\widetilde{E}^2_{s,t}=0$ for $t\leq k-1$. This is the relative Serre spectral sequence for the map of fibre bundles $C_k(M)^0 \to C_{k+1}(M)^0$ over $\widetilde{C}_1(M)=M$. The inclusion of the fibre over a point $*\in M$ is the map $C_{k-1}(M_1)=C_{k-1}(M\smallsetminus\{*\}) \to C_k(M)^0$ which adds the point $*$ to a configuration and labels it by $0$. This induces a map $f\colon R_{k-1}(M_1) \to R_k(M)^0$ on mapping cones. The map on $\widetilde{H}_*$ induced by $f$ can be identified with the composition of the edge homomorphism
\[
\widetilde{H}_*(R_{k-1}(M_1)) = {}^0\widetilde{E}^2_{0,*} \twoheadrightarrow {}^0\widetilde{E}^{\infty}_{0,*}
\]
and the inclusion
\[
{}^0\widetilde{E}^{\infty}_{0,*} \hookrightarrow \widetilde{H}_*(R_k(M)^0)
\]
given by all the extension problems in total degree $*$. But since the second page is trivial for $t\leq k-1$ there are no extension problems in total degree $*\leq k$, and so this inclusion is an isomorphism. Hence we have:

\begin{fact}\label{Fact3}
The map $f\colon R_{k-1}(M_1) \to R_k(M)^0$ induces surjections on $A$-homology up to degree $k$.
\end{fact}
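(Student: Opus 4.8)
The plan is to prove Fact~\ref{Fact3} by analysing the relative Serre spectral sequence of a map of fibre bundles over $\widetilde{C}_1(M)=M$ and identifying the map $f_*$ on homology with its bottom edge homomorphism. First I would set up the two fibre bundles $\pi\colon C_l(M)^0 \to M$ for $l=k$ and $l=k+1$: the fibre over a point $*\in M$ is $C_{l-1}(M_1)$ with $M_1 = M\smallsetminus\{*\}$, and $\pi$ intertwines the stabilisation maps on total spaces $C_l(M)^0 \to C_{l+1}(M)^0$ with the stabilisation maps on fibres $C_{l-1}(M_1)\to C_l(M_1)$. The inclusion of the fibre over $*$ then induces, on homotopy cofibres, exactly the map $f\colon R_{k-1}(M_1)\to R_k(M)^0$ of the statement, and the associated relative Serre spectral sequence is the one already denoted ${}^0\widetilde{E}$ (with $l=k$), with ${}^0\widetilde{E}^2_{s,t}\cong H_s(M;\widetilde{H}_t(R_{k-1}(M_1)))$ converging to $\widetilde{H}_*(R_k(M)^0)$.

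The key input is the inductive hypothesis. Since the induction in Proposition~\ref{pImprovedRange} runs uniformly over all smooth, connected, open $n$-manifolds, and $M_1$ is such a manifold with $k-1<k$, \eqref{IH} gives $\widetilde{H}_t(R_{k-1}(M_1))=0$ for $t\leq k-1$; hence ${}^0\widetilde{E}^2_{s,t}=0$, and a fortiori ${}^0\widetilde{E}^\infty_{s,t}=0$, for $t\leq k-1$ and all $s$. Consequently, in total degree $*\leq k$ the only possibly nonzero filtration quotient of $\widetilde{H}_*(R_k(M)^0)$ is ${}^0\widetilde{E}^\infty_{0,*}$ (for $s\geq 1$ the index $t=*-s$ is at most $k-1$), so the canonical inclusion ${}^0\widetilde{E}^\infty_{0,*}\hookrightarrow\widetilde{H}_*(R_k(M)^0)$ is an isomorphism there. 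Since $f_*$ is the composite of the surjective edge homomorphism $\widetilde{H}_*(R_{k-1}(M_1))={}^0\widetilde{E}^2_{0,*}\twoheadrightarrow{}^0\widetilde{E}^\infty_{0,*}$ with that inclusion, $f_*$ is surjective (indeed onto everything) for $*\leq k$, which is Fact~\ref{Fact3}.

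There is no deep obstacle in this particular step: essentially it is a bookkeeping argument with the Serre spectral sequence once the $E^2$-page is known to vanish in the relevant range, and the one point requiring care is that \eqref{IH} must be invoked for the \emph{punctured} manifold $M_1$ rather than for $M$ itself, which is precisely why the inductive statement is phrased for all open $n$-manifolds simultaneously. The genuinely substantial work that remains after Fact~\ref{Fact3} is the verification of the deferred Claim (surjectivity of ${}^j\bar{E}^1_{j,k-j}\to E^1_{j,k-j}$) and the final assembly: composing the surjections of Facts~\ref{Fact1}, \ref{Fact2} and \ref{Fact3}, namely $R_{k-1}(M)\xrightarrow{p}R_{k-1}(M_1)\xrightarrow{f}R_k(M)^0\xrightarrow{a}R_k(M)$, and combining this with \eqref{IH} and the already-established convergence of the spectral sequence $E$ to zero in low total degrees, to conclude that $\widetilde{H}_*(R_k(M))=0$ for $*\leq k$.
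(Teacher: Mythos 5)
Your proposal is correct and follows essentially the same route as the paper: both identify $f_*$ with the composite of the edge homomorphism ${}^0\widetilde{E}^2_{0,*}\twoheadrightarrow{}^0\widetilde{E}^{\infty}_{0,*}$ and the filtration inclusion into $\widetilde{H}_*(R_k(M)^0)$, and use the vanishing ${}^0\widetilde{E}^2_{s,t}=0$ for $t\leq k-1$ (coming from the inductive hypothesis applied to the punctured manifold $M_1$) to see that this inclusion is an isomorphism in total degree $*\leq k$. Your explicit remark that the induction must be run uniformly over all open connected $n$-manifolds so that it applies to $M_1$ is exactly the point the paper uses implicitly.
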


The composition $s^\prime \coloneqq a\circ f\circ p\colon R_{k-1}(M) \to R_k(M)$ is defined exactly like the stabilisation map $s\colon R_{k-1}(M) \to R_k(M)$ except that it uses the self-embedding $e^\prime$ of $\mbar$ instead of $e$. Since we chose $e$ and $e^\prime$ to have disjoint support, the maps $s$ and $s^\prime$ commute. If we now ensure that we picked $e$ and $e^\prime$ to be isotopic, we have that $s$ and $s^\prime$ are homotopic. The square $s\circ s^\prime = s^\prime\circ s$ induces a map of long exact sequences:
\begin{center}
\begin{tikzpicture}
[x=1mm,y=1mm]
\node (t1) at (0,15) {$\widetilde{H}_t(C_k(M))$};
\node (t2) at (30,15) {$\widetilde{H}_t(R_{k-1}(M))$};
\node (t3) at (60,15) {$\widetilde{H}_{t-1}(C_{k-1}(M))$};
\node (t4) at (95,15) {$\widetilde{H}_{t-1}(C_k(M))$};
\node (b1) at (0,0) {$\widetilde{H}_t(C_{k+1}(M))$};
\node (b2) at (30,0) {$\widetilde{H}_t(R_k(M))$};
\draw[->] (t1) to node[above,font=\small]{$c$} (t2);
\draw[->] (t2) to (t3);
\draw[->] (t3) to node[above,font=\small]{$b=s_*$} (t4);
\draw[->] (b1) to node[below,font=\small]{$d$} (b2);
\draw[->] (t1) to node[left,font=\small]{$s^{\prime}_*$} (b1);
\draw[->] (t2) to node[right,font=\small]{$a=s^{\prime}_*$} (b2);
\draw[->] (-15,15) to node[above,font=\small]{$s_*$} (t1);
\draw[->] (-15,0) to node[below,font=\small]{$s_*$} (b1);
\draw[->] (b2) to (45,0);
\draw[->] (t4) to (110,15);
\end{tikzpicture}
\end{center}
Let $t\leq k$ -- our aim is to show that $\widetilde{H}_t(R_k(M))=0$. By Facts \ref{Fact1}, \ref{Fact2} and \ref{Fact3} above, the map $a$ in this diagram is surjective. As mentioned at the beginning of the proof, the stabilisation map is split-injective on homology in all degrees \cite[page 103]{McDuff}, so the map $b$ is injective, and so by exactness the map $c$ is surjective. Hence the composite $a\circ c$ is surjective. But
\[
a\circ c = d\circ s^{\prime}_* = d\circ s_* = 0,
\]
so its codomain $\widetilde{H}_t(R_k(M))$ must be trivial.

It now remains to prove the claim we made earlier in the proof, namely that the map
\[
{}^j\bar{E}^1_{j,k-j} = \widetilde{H}_{k-j}(R_{k-j-1}(M)\wedge R_j(\bR^n)^j) \longrightarrow \widetilde{H}_{k-j}(R_k(M)^j) = E^1_{j,k-j}
\]
is surjective. In fact we will show that the map $R_{k-j-1}(M)\wedge R_j(\bR^n)^j \to R_k(M)^j$ induces surjections on homology in degrees $t\leq k-j$. First note that
\begin{align*}
R_j(\bR^n)^j &= \mathit{mc}(C_j(\bR^n)^j \to C_{j+1}(\bR^n)^j) \\
&= \mathit{mc}(\varnothing \to \widetilde{C}_{j+1}(\bR^n)) \\
&= \widetilde{C}_{j+1}(\bR^n)_+
\end{align*}
and the map $R_{k-j-1}(M)\wedge \widetilde{C}_{j+1}(\bR^n)_+ \to R_k(M)^j$ is given by taking mapping cones of the horizontal arrows in the commutative square:
\begin{center}
\begin{tikzpicture}
[x=1mm,y=1mm]
\node (tl) at (0,15) {$C_{k-j-1}(M)\times \widetilde{C}_{j+1}(\bR^n)$};
\node (tr) at (60,15) {$C_{k-j}(M)\times \widetilde{C}_{j+1}(\bR^n)$};
\node (bl) at (0,0) {$C_k(M)^j$};
\node (br) at (60,0) {$C_{k+1}(M)^j$};
\draw[->] (tl) to node[above,font=\small]{$s\times{\id}$} (tr);
\draw[->] (bl) to node[below,font=\small]{$s$} (br);
\draw[->] (tl) to (bl);
\draw[->] (tr) to (br);
\end{tikzpicture}
\end{center}

To do this we begin by defining some more explicit models for various maps. As before, write $M=\mathrm{int}(\mbar)$ for a manifold $\mbar$ with non-empty boundary and choose two isotopic self-embeddings $e,e^\prime \colon \mbar\hookrightarrow\mbar$ which are both non-surjective and have disjoint support. Choose an embedding $\phi\colon \bR^n \hookrightarrow M\smallsetminus e^\prime(\mbar)$ and pairwise disjoint points $p_0,\dotsc,p_j \in \bR^n$. Write $M_{j+1} = M\smallsetminus\{\phi(p_0),\dotsc,\phi(p_j)\}$. We have a square of maps
\begin{equation}\label{abcd}
\centering
\begin{split}
\begin{tikzpicture}
[x=1mm,y=1mm]
\node (tl) at (0,15) {$C_{k-j-1}(M)$};
\node (tr) at (40,15) {$C_{k-j-1}(M)\times \widetilde{C}_{j+1}(\bR^n)$};
\node (bl) at (0,0) {$C_{k-j-1}(M_{j+1})$};
\node (br) at (40,0) {$C_k(M)^j$};
\draw[->] (tl) to node[above,font=\small]{$\alpha$} (tr);
\draw[->] (bl) to node[below,font=\small]{$\beta$} (br);
\draw[->] (tl) to node[left,font=\small]{$\gamma$} (bl);
\draw[->] (tr) to node[right,font=\small]{$\delta$} (br);
\end{tikzpicture}
\end{split}
\end{equation}
defined by
\begin{align*}
\alpha(c) &= (c,(p_0,\dotsc,p_j)) \\
\gamma(c) &= e^\prime(c) \\
\beta(c) &= c\cup \{\phi(p_0),\dotsc,\phi(p_j)\};i\mapsto \phi(p_i) \\
\delta(c,(q_0,\dotsc,q_j)) &= e^\prime(c)\cup \{\phi(q_0),\dotsc,\phi(q_j)\};i\mapsto \phi(q_i).
\end{align*}

Choose a point $*\in M\smallsetminus e(\mbar)$ and take an explicit model for the stabilisation map to be defined by $c\mapsto e(c)\cup\{*\}$. Since $e$ and $e^\prime$ have disjoint support this induces a map of squares from \eqref{abcd} to $\eqref{abcd}[k\mapsto k+1]$. Taking mapping cones along this map of squares gives us the following:
\begin{center}
\begin{tikzpicture}
[x=1mm,y=1mm]
\node (tl) at (0,15) {$R_{k-j-1}(M)$};
\node (tr) at (40,15) {$R_{k-j-1}(M)\wedge \widetilde{C}_{j+1}(\bR^n)_+$};
\node (bl) at (0,0) {$R_{k-j-1}(M_{j+1})$};
\node (br) at (40,0) {$R_k(M)^j$};
\draw[->] (tl) to node[above,font=\small]{$\bar{\alpha}$} (tr);
\draw[->] (bl) to node[below,font=\small]{$\bar{\beta}$} (br);
\draw[->] (tl) to node[left,font=\small]{$\bar{\gamma}$} (bl);
\draw[->] (tr) to node[right,font=\small]{$\bar{\delta}$} (br);
\end{tikzpicture}
\end{center}

We need to show that $\bar{\delta}$ induces surjections on homology in degrees $t\leq k-j$. This will follow if we can prove this for $\bar{\gamma}$ and $\bar{\beta}$. But $\bar{\gamma}$ is the composition of $j+1$ instances of the map $p$ from Fact \ref{Fact2}, and so this does induce surjections on homology up to degree $k-j$ by Fact \ref{Fact2}.\footnote{The proofs of Facts \ref{Fact2} and \ref{Fact3} earlier did not depend on the claim which we are currently proving, so this is not circular.} When $j=0$ the map $\bar{\beta}$ is surjective on homology up to degree $k$ by Fact \ref{Fact3}. Moreover, the argument proving Fact \ref{Fact3} generalises (using the spectral sequence ${}^j\widetilde{E}$ instead of ${}^0\widetilde{E}$) to prove precisely that the map $\bar{\beta}$ is surjective on homology up to degree $k-j$ in general.
\end{proof}

\begin{remark}\label{rSESofcoeffs}
When $\dim(M)\geq 3$ we have homological stability in the range $*\leq k$ for $\bQ$ coefficients (by \cite[Theorem B]{RW-hs-for-ucs}) and for $\bZ/p$ coefficients with $p$ odd (by Proposition \ref{pImprovedRange} above). Using the short exact sequences of coefficients $0\to \bZ/p \to \bZ/p^{l+1} \to \bZ/p^l \to 0$ and
\[
0\to \bZ[\tfrac12]\to \bQ\to \textstyle{\bigoplus}_{p\neq 2}\colim_{l\to\infty}\bZ/p^l \to 0
\]
this implies homological stability in the range $*\leq k-1$ for $\bZ[\frac12]$ coefficients. This recovers Theorem 1.4 of \cite{KupersMiller2014}, except without surjectivity in degree $k$.
\end{remark}

\section{Homological stability for configuration spaces with labels in a fibre bundle}\label{appendixB}

\begin{df}[Configuration spaces and stabilisation maps with labels in a fibre bundle]\label{dConfigLabels}
Let $\theta\colon E\to M$ be a fibre bundle with path-connected fibres $F$ and define
\[
C_k(M;\theta) \coloneqq \{ \{p_1,\dotsc,p_k\} \subset E \mid \theta(p_i) \neq \theta(p_j) \text{ for } i\neq j \}.
\]
Choose a self-embedding $e\colon\mbar\hookrightarrow\mbar$ which is non-surjective and isotopic to the identity. Choose an open neighbourhood $U\subseteq \mbar$ containing the support of $e$, write $V=U\smallsetminus \partial\mbar$ and choose a trivialisation $\phi\colon \theta^{-1}(V) \to V\times F$ of $E$ over $V$. Define a self-embedding $\widetilde{e}\colon E\hookrightarrow E$ by
\[
p\mapsto \begin{cases}
p & p\notin \theta^{-1}(V) \\
\phi^{-1}\circ (e\times\mathrm{id})\circ \phi(p) & p\in \theta^{-1}(V)
\end{cases}
\]
and note that $\theta\circ \widetilde{e} = e\circ\theta$. Also choose points $*\in M\smallsetminus e(\mbar) \subseteq V$ and $x\in F$. We can then define the stabilisation map $C_k(M;\theta) \to C_{k+1}(M;\theta)$ by
\[
\{p_1,\dotsc,p_k\} \mapsto \{\widetilde{e}(p_1),\dotsc,\widetilde{e}(p_k),\phi^{-1}(*,x)\}.
\]
\end{df}

We may generalise Proposition \ref{pImprovedRange} to configuration spaces with labels in $\theta$ using the following fact.

\begin{remark}\label{rTwistedCoefficients}
Configuration spaces also satisfy homological stability with respect to finite-degree twisted coefficient systems: for the case of symmetric groups this was proved by \cite{Betley2002}, and the general case was proved in \cite{Palmer2013a}. A twisted coefficient system for $M$ is a functor from the partial braid category $\cB(M)$ to $\bZ\text{-mod}$. The partial braid category $\cB(M)$ has objects $\{0,1,2,\dotsc\}$ and a morphism from $m$ to $n$ is a path in $C_k(M)$ from a subset of $\{p_1,\dotsc,p_m\}$ to a subset of $\{p_1,\dotsc,p_n\}$, up to endpoint-preserving homotopy, where $\{p_1,p_2,p_3,\dotsc\}$ is a fixed injective sequence in $M$.

If the twisted coefficient system has degree $d$ the stable range obtained is $*\leq\frac{k-d}{2}$, which arises since homological stability with untwisted $\bZ$ coefficients in the range $*\leq \frac{k}{2}$ is an input for the proof. However, if the twisted coefficient system takes values in the subcategory $\bZ[\frac12]$-mod of $\bZ$-mod and $\dim(M)\geq 3$, then we may instead input \cite{KupersMiller2014} or Proposition \ref{pImprovedRange} to obtain a stable range of $*\leq k-d$ for $C_k(M)$ with coefficients in a functor $\cB(M)\to \bZ[\frac12]\text{-mod}$ of degree $d$.
\end{remark}

\begin{proposition}\label{pImprovedRangeTwisted}
Let $M$ be a smooth, connected, open $n$-manifold with $n\geq 2$ and $\theta\colon E\to M$ a fibre bundle with path-connected fibres. Then the stabilisation map $C_k(M;\theta) \to C_{k+1}(M;\theta)$ induces isomorphisms on $H_*(-;\bZ)$ in the range $*\leq \frac{k}{2}-1$. It induces isomorphisms in the range $*\leq k$ on $H_*(-;\bQ)$, unless $M$ is an orientable surface in which case the range is only $*\leq k-1$. If $n\geq 3$ it induces isomorphisms on $H_*(-;\bZ[\frac12])$ in the range $*\leq k-1$.
\end{proposition}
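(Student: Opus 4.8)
The plan is to reduce the statement to homological stability for $C_k(M)$ with twisted coefficients, by means of the Leray--Serre spectral sequence of the fibre bundle $\pi\colon C_k(M;\theta)\to C_k(M)$ which forgets the labels and remembers only the underlying configuration. The fibre of $\pi$ over a configuration $\q$ is the product $\prod_{q\in\q}\theta^{-1}(q)$ of $k$ copies of the typical fibre $F$ of $\theta$, so for every coefficient group $A$ there is a spectral sequence
\[
E^2_{s,t}=H_s\bigl(C_k(M);\mathcal{H}^{\theta}_{t}(A)\bigr)\ \Longrightarrow\ H_{s+t}(C_k(M;\theta);A),
\]
where $\mathcal{H}^{\theta}_{t}(A)$ is the local coefficient system on $C_k(M)$ whose stalk over $\q$ is $H_t\bigl(\prod_{q\in\q}\theta^{-1}(q);A\bigr)$. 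The stabilisation map of Definition~\ref{dConfigLabels} covers the ordinary stabilisation map on $C_k(M)$ and on fibres merely appends a factor $\theta^{-1}(*)\cong F$, so it induces a morphism of these spectral sequences and it suffices to understand its effect on the $E^2$-pages in a range and then invoke a spectral sequence comparison.

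First I would check that, as $k$ varies, the systems $\mathcal{H}^{\theta}_{t}(A)$ assemble into a twisted coefficient system on the partial braid category $\cB(M)$ of Remark~\ref{rTwistedCoefficients} of degree at most $t$. This is the standard fact that $k\mapsto H_t(F^{\times k};A)$ is a polynomial functor of degree $\leq t$: since $F$ is path-connected, $\mathcal{H}^{\theta}_{0}(A)$ is the constant system $\underline A$, and the K\"unneth isomorphism expresses the difference functor $\Delta\mathcal{H}^{\theta}_{t}(A)$ as a sum of terms obtained by tensoring or applying $\mathrm{Tor}$ with the (constant) homology of one additional copy of $F$ to the systems $\mathcal{H}^{\theta}_{a}(A)$ with $a<t$; an induction on $t$ then gives the degree bound, the $\mathrm{Tor}$ terms contributing only in strictly lower internal degree.

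The heart of the argument is then to feed the twisted stability ranges of Remark~\ref{rTwistedCoefficients} into the comparison of the two spectral sequences. For $A=\bZ$ the stabilisation map is an isomorphism on $E^2_{s,t}$ whenever $2s+t\leq k$; imposing this for all $(s,t)$ of total degree at most $N+1$ -- which is what is needed to control the differentials landing on the diagonal $s+t=N$ -- forces the map to be an isomorphism on the abutment for $*\leq\tfrac{k}{2}-1$. For $A=\bZ[\tfrac12]$ and $n\geq 3$ one uses instead the slope-one twisted range $*\leq k-t$ supplied (via Remark~\ref{rTwistedCoefficients}) by Proposition~\ref{pImprovedRange}; this makes $E^2_{s,t}$ an isomorphism for $s+t\leq k$, hence the abutment an isomorphism for $*\leq k-1$. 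For $A=\bQ$ the untwisted input improves to $*\leq k$ (by \cite{RW-hs-for-ucs,Knudsen2014}) when $n\geq 3$ or $M$ is non-orientable, and to $*\leq k-1$ (by \cite{Church,Knudsen2014}) when $M$ is an orientable surface, and to recover the full ranges $*\leq k$ (resp.\ $*\leq k-1$) without losing a degree at the edge of the spectral sequence I would use that, for open $M$, the labelled stabilisation maps $C_k(M;\theta)\to C_{k+1}(M;\theta)$ are split injective on homology in all degrees -- by the argument of Corollary~\ref{cor:inj} with labels, using the labelled version of McDuff's theorem (Theorem~\ref{thm:McDufflabelssection4}) -- so that mere surjectivity on $E^2$ in total degrees $\leq N$ already suffices.

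I expect the main obstacle to be precisely this bookkeeping at the edge of the spectral sequence: passing from ``isomorphism on $E^2$ in a range'' to ``isomorphism on the abutment in essentially the same range'' costs one degree unless split injectivity of the labelled stabilisation maps is available, and one must then match the resulting constants to the statement -- in particular the orientable-surface exception and the asymmetry between the $\bZ[\tfrac12]$ and $\bQ$ conclusions, both of which are simply inherited from the corresponding untwisted inputs. A secondary technical point is the degree bound on $\mathcal{H}^{\theta}_{t}(\bZ)$, where the $\mathrm{Tor}$ contributions to the K\"unneth formula must be tracked carefully, together with (if $F$ does not have finitely generated homology in each degree) a reduction to a finite skeleton of $F$ in order to stay within the hypotheses of the twisted stability theorems cited in Remark~\ref{rTwistedCoefficients}.
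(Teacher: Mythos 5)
Your overall strategy is the paper's: compare the Serre spectral sequences of the label-forgetting bundles $C_k(M;\theta)\to C_k(M)$ with fibre $F^k$, observe that $k\mapsto H_t(F^k;A)$ is a twisted coefficient system of degree $\leq t$ on $\cB(M)$, and feed in the twisted stability ranges of Remark \ref{rTwistedCoefficients}. Two deviations are worth noting. First, you run the spectral sequence directly with $\bZ$ and $\bZ[\frac12]$ coefficients, whereas the paper deliberately restricts the spectral-sequence step to field coefficients ($\bQ$ and $\bF_p$) --- where the K\"unneth formula makes the degree bound immediate via Lemma 4.2 of \cite{Palmer2013a} --- and then deduces the $\bZ$ and $\bZ[\frac12]$ statements (with the loss of one degree visible in the ranges $\frac k2-1$ and $k-1$) from the coefficient short exact sequences of Remark \ref{rSESofcoeffs}. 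Your direct route is plausible (since $H_0(F)$ is free the Tor terms in the K\"unneth cross-effect only involve lower internal degree), but you would have to verify that the integral coefficient system really satisfies the hypotheses of the twisted stability theorems cited in Remark \ref{rTwistedCoefficients}, a point the paper's choice of field coefficients is designed to sidestep; the numerical outcome is the same either way.

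The genuine problem is your treatment of the edge of the range in the $\bQ$ case: you propose to use split injectivity of the labelled stabilisation maps, obtained ``by the argument of Corollary \ref{cor:inj} with labels, using the labelled version of McDuff's theorem (Theorem \ref{thm:McDufflabelssection4})''. This is circular within the paper's logical structure: Theorem \ref{thm:McDufflabelssection4} is proved in Appendix \ref{appendix2} by \emph{using} Proposition \ref{pImprovedRangeTwisted} (the appendix says so explicitly, and the key homology-fibration step invokes it), so it cannot be an input here. The circularity is avoidable in two ways: either note that the twisted and untwisted stability results you are quoting (Theorem 1.3 of \cite{Palmer2013a}, Theorem C of \cite{RW-hs-for-ucs}, Proposition \ref{pImprovedRange}) come with surjectivity one degree above the isomorphism range, which is exactly what the spectral-sequence comparison needs to conclude an isomorphism on the abutment in the full range (this is what the paper's proof implicitly uses); or prove split injectivity of the labelled stabilisation map directly by the classical Segal--McDuff subconfiguration-sum (transfer) argument, which does not involve scanning and hence does not pass through Theorem \ref{thm:McDufflabelssection4}. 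As written, though, this step of your proposal rests on a statement that is downstream of the proposition you are proving.
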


The worse range $*\leq k-1$ for rational homology of configurations on orientable surfaces is necessary: for example $H_1(C_1(\bR^2);\bQ) = 0 \not\cong \bQ \cong H_1(C_2(\bR^2);\bQ)$.

\begin{remark}\label{rImprovedRangeTwisted}
A version of Proposition \ref{pImprovedRangeTwisted} is also proved in the appendix of \cite{KupersMiller2014Encellattachments}. One of the proofs given there is essentially the same as the proof we give below, and a sketch proof using semi-simplicial resolutions by collections of disjoint arcs in $M$ is also given. This latter method has the advantage that it gives a range of $*\leq \frac{k}{2}$ for $\bZ$ coefficients (at least when $M$ is orientable), rather than the smaller range $*\leq \frac{k}{2}-1$ for $\bZ$ coefficients obtained in Proposition \ref{pImprovedRangeTwisted}.
\end{remark}

\begin{proof}
This will follow by the same considerations as in Remark \ref{rSESofcoeffs} if we show that it induces isomorphisms on $H_*(-;A)$ in the range $*\leq \frac{k}{2}$ when $A=\bF_p$, in the range $*\leq k$ if either (a) $A=\bQ$ and $M$ is not an orientable surface or (b) $A=\bF_p$ for $p$ odd and $n\geq 3$, and in the range $*\leq k-1$ when $A=\bQ$ and $M$ is an orientable surface. The loss of one degree from the range occurs when going from $\bQ$ and $\bQ/\bZ$ coefficients to $\bZ$ coefficients (resp.\ $\bQ$ and $\bQ/\bZ[\frac12]$ coefficients to $\bZ[\frac12]$ coefficients).

Let $A = \bQ \text{ or } \bF_p$ for a prime $p$. There are fibre bundles $C_k(M;\theta) \to C_k(M)$, given by forgetting labels, with fibre $F^k$. The stabilisation maps $C_k(M) \to C_{k+1}(M)$ and $C_k(M;\theta) \to C_{k+1}(M;\theta)$ commute with these fibre bundles and the map of fibres is the inclusion $F^k \hookrightarrow F^{k+1}$. There is then a map of Serre spectral sequences
\begin{center}
\begin{tikzpicture}
[x=1mm,y=1mm]
\node (tl) at (0,15) [anchor=west] {$E^2_{s,t}\cong H_s(C_k(M);H_t(F^k;A))$};
\node (tr) at (80,15) {$H_*(C_k(M;\theta);A)$};
\node (bl) at (0,0) [anchor=west] {$E^2_{s,t}\cong H_s(C_{k+1}(M);H_t(F^{k+1};A))$};
\node (br) at (80,0) {$H_*(C_{k+1}(M;\theta);A)$};
\coordinate (ya) at (bl.north);
\coordinate (yb) at ($ (bl.north)+(1,0) $);
\coordinate (xa) at (tl.south);
\coordinate (xb) at ($ (tl.south)+(0,1) $);
\draw[->] (tl.south) to (intersection of ya--yb and xa--xb);
\draw[->] (tr) to (br);
\node at (60,0) {$\Rightarrow$};
\node at (60,15) {$\Rightarrow$};
\end{tikzpicture}
\end{center}
and our aim is to prove that the map in the limit is an isomorphism in a certain range depending on $A$ and $M$.

Now by Lemma 4.2 of \cite{Palmer2013a} and since $A$ is a field the assignment $k\mapsto H_t(F^k;A)$ extends to form a twisted coefficient system of degree at most $\frac{t}{h+1}\leq t$ where $h=\hconn_A(F)$. Hence the map on the second page is an isomorphism in the range $s\leq \frac{k-t}{2}$ by Theorem 1.3 of \cite{Palmer2013a}. In particular it is an isomorphism in total degree at most $\frac{k}{2}$ and therefore the same holds for the map in the limit.

To obtain the improved range in certain cases note that, by Remark 6.5 of \cite{Palmer2013a}, if homological stability with (untwisted) $A$ coefficients holds for (unlabelled) configuration spaces on $M$ in the range $*\leq f(k)$, then twisted homological stability will hold in the range $*\leq f(k-d)$ for any twisted coefficient system of degree $d$ which factors through the forgetful functor $A\text{-mod} \to \bZ\text{-mod}$ (c.f.\ Remark \ref{rTwistedCoefficients}).

If $A=\bQ$ then the above twisted coefficient system factors through the inclusion $\bQ\text{-mod} \to \bZ\text{-mod}$. By Theorem C of \cite{RW-hs-for-ucs} we may take $f(k)=k$ if $n=\dim(M)\geq 3$. For orientable surfaces we may take $f(k)=k-1$ by Corollary 3 of \cite{Church} or Theorem 1.3 of \cite{Knudsen2014}, and for non-orientable surfaces we may take $f(k)=k$ by Theorem 1.3 of \cite{Knudsen2014}. By the above paragraph the map of spectral sequences is an isomorphism on the second page in the range $s\leq f(k-t)$, and therefore in total degree at most $k$ (resp.\ total degree at most $k-1$ for orientable surfaces). Hence so is the map in the limit.

If $A=\bF_p$ for $p$ odd and $n\geq 3$ then the twisted coefficient system factors through the inclusion $\bZ[\frac12]\text{-mod} \to \bZ\text{-mod}$. By Proposition \ref{pImprovedRange} (or Theorem 1.4 of \cite{KupersMiller2014}) we may take $f(k)=k$. So as above the map of spectral sequences is an isomorphism in total degree at most $k$, and therefore so is the map in the limit.
\end{proof}

\section{Stable homology of configuration spaces with labels in a fibre bundle}\label{appendix2}
In this appendix we prove Theorem \ref{thm:McDufflabelssection4}. Another proof can be obtained adapting step by step the proof in \cite{McDuff} for trivial labels, as pointed out in the introduction to that paper. We give here a sketch of this proof with some shortcuts, taking advantage of knowing the homology stability theorem with labels (Proposition \ref{pImprovedRangeTwisted}) in the spirit of \cite{GMTW}.

\begin{df} Let $M$ be an open manifold, let $c\colon D^{n-1}\times (0,1]$ be a proper embedding and let $M_1 = M\cup_c (\mathring{D}^{n-1}\times (-1,1])$. Define $\psi^{\delta,\gamma}(M;\theta)$ to be the space whose underlying set is $C^{\delta,\gamma}(M;\theta):= \coprod_k{C_k^{\delta,\gamma}(M;\theta)}$ with the following topology: Consider the quotient $Y$ of $C^{\delta,\gamma}(M_1;\theta)$ under the relation $\sim$ where $(\q,\epsilon,\{f_q\}_{q\in\q})\sim(\q',\epsilon',\{f'_{q'}\}_{q'\in\q'})$ if and only if 
\begin{enumerate}
\item $\q\cap M = \q'\cap M$,
\item if the above intersection is non-empty, then $\epsilon=\epsilon'$,
\item if the above intersection is non-empty, then $f_q = f'_q$ for all $q\in \q\cap M$.
\end{enumerate}
The natural inclusion $C^{\delta,\gamma}(M;\theta)\to C^{\delta,\gamma}(M_1;\theta)$ induces a bijection $C^{\delta,\gamma}(M;\theta)\cong Y$, which we use to endow $C^{\delta,\gamma}(M;\theta)$ with a new topology.
\end{df}
Recall that we defined the non-linear scanning map with labels
\[\ssnl{\theta,\gamma}\colon C_k^{\delta,\gamma}(M;\theta)\lra \Gamma_c(\psi^\delta(T^{1}M;\theta))\]
in \S\ref{sUpToDim}. Following the same recipe we can define a scanning map
\[\ssnl{\theta,\gamma}\colon \psi^{\delta,\gamma}(M;\theta)\lra \Gamma(\psi^\delta(T^{1}M;\theta))\]
whose target is now the whole space of sections.

\begin{lemma}[{\cite[\S 2.3]{Hesselholt:derivative}}]\label{lemma:hesselholt}  If $M$ is connected, then the scanning map is a homotopy equivalence.
\end{lemma}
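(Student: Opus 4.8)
The plan is to prove that the non-linear scanning map
\[
\ssnl{\theta,\gamma}\colon \psi^{\delta,\gamma}(M;\theta)\lra \Gamma(\psi^\delta(T^{1}M;\theta))
\]
is a weak homotopy equivalence by a standard ``scanning is a local-to-global'' argument (as in \cite[\S 2.3]{Hesselholt:derivative}, \cite{McDuff}, and the surgery-theoretic reformulations of \cite{GMTW}): both sides are the global sections of sheaf-like assignments on open subsets of $M$, and one checks that the map is an equivalence on a basis of charts and that both sides turn an open cover into a homotopy pullback square. First I would set up both sides as contravariant functors (``pre-cosheaves'' on the poset of opens, or equivalently sheaves of spaces): for an open $U\subseteq M$, let $\cF(U)=\psi^{\delta,\gamma}(U;\theta|_U)$ (note that $\psi^{\delta,\gamma}$ only depends on the germ near $U$, since configurations outside $U$ are forgotten up to the equivalence relation), and let $\cG(U)=\Gamma(\psi^\delta(T^1M;\theta)|_U)$. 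The scanning map is then a natural transformation $\ssnl{\theta,\gamma}\colon\cF\to\cG$.

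The key steps, in order, are: (1) \emph{Excision/Mayer--Vietoris.} Show that for an open cover $U=A\cup B$ with $A,B$ sufficiently ``large'' (e.g. such that the $\gamma$-balls around configuration points are controlled), the square obtained by restricting $\cF$ and $\cG$ to $A$, $B$, $A\cap B$ is a homotopy pullback. For $\cG$ this is automatic, since $\Gamma(-)$ is literally a sheaf (sections glue). For $\cF$ this is the geometric heart: a point of $\psi^{\delta,\gamma}(M;\theta)$ restricted to $A$ and to $B$ agrees on $A\cap B$ because the topology on $\psi^{\delta,\gamma}$ was precisely rigged (via the one-point-compactification-at-infinity relation $\sim$ on $M_1$) so that configuration data localises; the homotopy-pullback statement then follows from the fact that the forgetful maps $\psi^{\delta,\gamma}(M;\theta)\to\psi^{\delta,\gamma}(A;\theta)$ etc.\ behave like microfibrations. (2) \emph{Local case.} Verify that $\ssnl{\theta,\gamma}$ is a weak equivalence over a Euclidean chart $U\cong\bR^n$ on which $TM$ and $\theta$ are trivial. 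Over such a $U$, the target is $\Gamma(\psi^\delta(T^1U;\theta|_U))\simeq \map(U_+,\psi^\delta(D;F))$, and contractibility of $U$ reduces this to $\psi^\delta(D;F)$; on the source, $\psi^{\delta,\gamma}(\bR^n;\theta)$ is by definition (a fibrewise version of) the same model $\psi^\delta(D;F)$ of scanned configurations, and one identifies the map with a homotopy equivalence using the maps $i$, $h$ of \S\ref{sUpToDim}. Here is where connectedness of $M$ enters only through the statement being phrased for all of $M$; over a single chart no hypothesis is needed. (3) \emph{Induction / colimit.} Write $M=\bigcup_i M^i$ as an increasing union of finite unions of charts, apply Steps (1) and (2) inductively (using that a map of homotopy-pullback squares which is an equivalence on three corners is an equivalence on the fourth), and pass to the colimit, using that homology and homotopy groups commute with the relevant (filtered) colimits and that both $\cF$ and $\cG$ send the cover to a limit.

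The main obstacle I expect is Step (1), specifically the verification that $\psi^{\delta,\gamma}(-;\theta)$ takes open covers to homotopy pullbacks --- i.e.\ that the ``presheaf'' $\cF$ is homotopy-cartesian. This is where the precise, slightly delicate definition of the topology on $\psi^{\delta,\gamma}(M;\theta)$ (via $C^{\delta,\gamma}(M_1;\theta)$ and the equivalence relation $\sim$, which encodes ``points near $\partial M$ or near infinity don't matter'') does all the work, and one must check that the relevant restriction maps are microfibrations and that the pullback of spaces computes the homotopy pullback. Once this cartesianness is in hand, together with the easy local computation of Step (2), the conclusion follows formally. A clean alternative, which I would mention, is to quote Lemma~\ref{lemma:hesselholt} essentially verbatim from \cite[\S 2.3]{Hesselholt:derivative}, where exactly this scanning equivalence (in the labelled setting) is established; the present statement is the specialisation of that result to the bundle $\theta$ and the manifold $M$, so the proof can be reduced to checking that Hesselholt's hypotheses are met, namely that $M$ is connected (so that the target has the right set of components, matching the scanning degree) and that $\theta$ has path-connected fibres (used implicitly when identifying connected components of the section space, as on page~\pageref{intrinsicreplication}).
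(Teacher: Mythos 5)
The paper does not actually prove this lemma: its ``proof'' is the citation itself, together with the one-sentence remark that Hesselholt's theorem (stated for fibre bundles of based spaces) specialises to the present setting by adjoining a disjoint basepoint to each fibre and taking his submanifold $N$ to be connected. That is precisely the fallback you mention in your last paragraph, so that part of your proposal coincides with the paper's route and is fine.

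Your primary, from-scratch argument, however, has genuine gaps beyond the one you flag. First, the presheaf $\cF(U)=\psi^{\delta,\gamma}(U;\theta|_U)$ is not well defined, let alone functorial in $U$: the topology on $\psi^{\delta,\gamma}(M;\theta)$ depends on the chosen proper collar $c$, and configuration points are allowed to escape \emph{only} through that one end. Consequently ``restriction to $A$, $B$, $A\cap B$'' is not available as stated, and the local statement fails on charts away from the end: for such a chart the source is just $\coprod_k C_k^{\delta,\gamma}(U;\theta)$, split by cardinality, while the target of scanning over $U$ is $\simeq\Sigma^n F_+$, so scanning is nowhere near an equivalence there. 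Fixing this forces the relative setup (configurations allowed to vanish on a prescribed boundary portion, pieces glued along collars compatible with the escape end) and a handle/collar induction -- which is exactly the content of Hesselholt's and McDuff's proofs, not a formality one can defer. Second, the homotopy-cartesianness/microfibration claim in your Step (1) is asserted rather than proved, and it is the heart of the theorem. Third, in Step (2) the identification of $\psi^{\delta,\gamma}(\bR^n;\theta)$ with the fibre model $\psi^\delta(D;F)$ is not ``by definition'': in the former points escape only through one end, in the latter everything outside $D$ is forgotten in all directions; that these agree up to homotopy (both being $\simeq\Sigma^n F_+$) is the nontrivial local case of the theorem. Relatedly, connectedness of $M$ is not merely an artefact of global phrasing: it guarantees that every region of $M$ can be joined to the unique escape end (matching Hesselholt's hypothesis that $N$ be connected), and the statement genuinely fails on components of a disconnected $M$ not meeting that end. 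So as a standalone proof the sketch does not go through; as a reduction to \cite{Hesselholt:derivative} it matches the paper.
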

In the paper, Hesselholt considers $E$ to be a fibre bundle of based spaces. This lemma is a particular case of his theorem, taking a disjoint basepoint in each fibre and the submanifold $N$ in his theorem to be connected.

Let $\pi_1\colon M\dasharrow I$ be the partially defined function that sends a point in the image of $D^{n-1}\times I$ to the second coordinate. Define $\psi^{\delta,\gamma}(M;\theta)_\bullet$ to be the semi-simplicial space whose space of $i$-simplices is the space of tuples $(\q,\epsilon,\{f_q\}_{q\in\q},a_0,\ldots,a_i)$, where $(\q,\epsilon,\{f_q\}_{q\in\q})\in \psi^{\delta,\gamma}(M;\theta)$ and $(a_0,\ldots,a_i)\in I^{i+1}$ and $\pi_1(\q)\cap \{a_0,\ldots,a_i\}=\emptyset$. The $j$th face map forgets $a_j$, and there is an augmentation to $\psi^{\delta,\gamma}(M;\theta)$ that forgets all the $a_j$'s. 
\begin{lemma} The realization of the augmentation
\[\|\psi^{\delta,\gamma}(M;\theta)_\bullet\|\to \psi^{\delta,\gamma}(M;\theta)\]
 is a weak homotoy equivalence.
\end{lemma}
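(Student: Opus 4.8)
The plan is to run the standard semi-simplicial resolution argument: first identify the homotopy fibres of the augmentation, and then upgrade this pointwise information to a global weak equivalence.

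\textbf{Fibres of the augmentation.} Fix a point $y=(\q,\epsilon,\{f_q\}_{q\in\q})\in\psi^{\delta,\gamma}(M;\theta)$. Since $\q$ is a genuine finite configuration, the set $P:=\pi_1(\q)\subseteq I$ of collar-coordinates occupied by points of $\q$ is finite, hence a proper subset of $I$. Because fat realisation of semi-simplicial spaces commutes with fibre products, the fibre of the augmentation over $y$ is the fat realisation of the semi-simplicial space $[i]\mapsto (I\smallsetminus P)^{i+1}$ with face maps given by deletion of coordinates; this is the infinite join of $I\smallsetminus P$ with itself, i.e.\ the colimit of the iterated $k$-fold joins $(I\smallsetminus P)^{*k}$. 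As $I\smallsetminus P$ is non-empty, the $k$-fold join is $(k-2)$-connected, so the colimit is weakly contractible. Hence every fibre of the augmentation is weakly contractible.

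\textbf{From contractible fibres to a weak equivalence.} I would then invoke the standard fact that a Serre microfibration with weakly contractible fibres is a weak homotopy equivalence, so it remains to check that the augmentation $\|\psi^{\delta,\gamma}(M;\theta)_\bullet\|\to\psi^{\delta,\gamma}(M;\theta)$ is a Serre microfibration. Given a map $D^j\times[0,1]\to\psi^{\delta,\gamma}(M;\theta)$ and a lift of its restriction to $D^j\times\{0\}$, the partial lift has compact image, hence lands in a finite skeleton $\|\psi^{\delta,\gamma}(M;\theta)_{\leq N}\|$ and is described by a bounded family of separating slices $a_0,\dots,a_N$ varying continuously with the base point, whose union over the family is a compact subset $K\subseteq I$ disjoint from $\pi_1(\q)$ for each corresponding configuration. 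One checks that this disjointness persists when the base point is moved for a short time $\eta>0$: a collar point of a configuration that does not lie in $K$ stays outside $K$ for a short time, and new configuration points can only enter the collar near its open end, which one arranges to lie away from $K$. The same slices then give a lift over $D^j\times[0,\eta]$, which establishes the microfibration property and hence the lemma.

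\textbf{Expected obstacle.} The main difficulty is precisely this last verification: one must argue carefully in the scanning (Gromov-type) topology on $\psi^{\delta,\gamma}(M;\theta)$ --- where configuration points may escape to the open end of the collar and where points are collapsed by the defining identification $\sim$ --- in order to see that ``$a\notin\pi_1(\q)$'' is a sufficiently open condition that continuously-chosen finite families of separating slices survive small deformations of the base point. The fibre computation and the appeal to the microfibration criterion are routine. Alternatively one could follow the original method of \cite{McDuff} and prove directly that the augmentation is a quasifibration by filtering the base and applying the gluing lemma for quasifibrations; the same subtleties with the escaping end reappear there.
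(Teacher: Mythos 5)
Your identification of the fibres is fine: over a point with configuration $\q$ the levelwise fibre is $(I\smallsetminus\pi_1(\q))^{i+1}$, so the fibre of the augmentation is the infinite join of a non-empty space and hence weakly contractible. The gap is in the microfibration step, which you yourself flag as the main difficulty, and the argument you sketch for it does not work as stated. First, a map $D^j\to\|\psi^{\delta,\gamma}(M;\theta)_\bullet\|$ is not globally ``described by a bounded family of separating slices $a_0,\dots,a_N$ varying continuously with the base point'': over the locus where a barycentric coordinate vanishes the corresponding slice is forgotten, and neither the number of slices nor the slices themselves can be chosen continuously over all of $D^j$. Second, and more seriously, the compact set $K$ (the union of all slices occurring in the initial lift) need \emph{not} be disjoint from $\pi_1(\q(d,0))$ for every $d$: a slice used with positive weight at parameters $d>0$ may, at a parameter $d_0$ where its weight has shrunk to $0$, pass through a point of the configuration sitting over $d_0$. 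Near such a $d_0$ the margin between that slice and the configuration is arbitrarily small, so the slice can become invalid after an arbitrarily short time, and no single $\eta>0$ makes ``the same slices'' a lift over $D^j\times[0,\eta]$. (Concretely: take a $1$-simplex $(a_0,a_1)$ whose $a_0$-weight equals $d$, and let two configuration points pinch together across the value $a_0$ at times $\sigma(d)\to 0$ as $d\to 0$; at $d=0$ the initial lift only involves $a_1$, so nothing is violated at time $0$, yet the transported lift fails before time $\sigma(d)$.) A correct microfibration argument would have to deform the barycentric coordinates, damping the weight of a slice before it is hit; that is exactly the content that is missing, and it is not a routine openness-plus-compactness check.

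The paper avoids any fibration-type property of the augmentation altogether: $\psi^{\delta,\gamma}(M;\theta)_\bullet$ is an augmented \emph{topological flag complex} in the sense of \cite{GR-W2} (a tuple $(a_0,\dots,a_i)$ is a simplex if and only if each $a_j$ separately avoids $\pi_1(\q)$, so the condition is vertexwise and in particular flag), and Theorem 6.2 of that paper yields the weak equivalence from purely local, fibrewise hypotheses: the augmentation $X_0\to X_{-1}$ admits local sections because ``$a\notin\pi_1(\q)$'' is an open condition in the $\psi$-topology (avoiding the compact slice $c(D^{n-1}\times\{a\})$ is open and the condition is saturated for the defining quotient), the fibres of $X_0\to X_{-1}$ are non-empty since $\pi_1(\q)$ is finite, and any finite collection of $0$-simplices over a point admits a further $0$-simplex forming simplices with all of them, for the same reason. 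So either supply the weight-damping argument needed to establish the Serre microfibration property before invoking the contractible-fibre criterion, or replace that step by the flag-complex criterion as the paper does.
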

\begin{proof} This is an augmented topological flag complex \cite{GR-W2} satisfying the conditions of Theorem 6.2 in that paper, hence a weak homotopy equivalence. 
\end{proof}

\begin{proposition} If $\theta\colon E\to M$ has path-connected fibres, then the restriction of the scanning map
\[\ssnl{\gamma,\theta}\colon C_k^{\delta,\gamma}(M\setminus c;\theta) \lra \Gamma_c(\Psi^{\delta}(T^1M\setminus c;\theta))\]
is a homology isomorphism in the range in which the stabilisation map of Proposition \ref{pImprovedRangeTwisted} is a homology isomorphism. Since $M\setminus c\cong M$, the same holds for $M$.
\end{proposition}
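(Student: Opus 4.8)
The plan is to deduce the scanning isomorphism from the homological stability theorem (Proposition \ref{pImprovedRangeTwisted}) together with the group-completion-style argument, following the template of \cite{GMTW} rather than redoing McDuff's original argument. First I would assemble the spaces $C_k^{\delta,\gamma}(M\smallsetminus c;\theta)$ into a single monoid-like object: the disjoint union $C^{\delta,\gamma}(M\smallsetminus c;\theta)=\coprod_k C_k^{\delta,\gamma}(M\smallsetminus c;\theta)$ carries a stabilisation endomorphism (the labelled stabilisation map of Definition \ref{dConfigLabels}, realised via the collar $c$), and Lemma \ref{lemma:hesselholt} together with the semisimplicial resolution $\psi^{\delta,\gamma}(M;\theta)_\bullet$ identifies the \emph{stable} space -- the homotopy colimit under stabilisation -- with a component of $\Gamma(\psi^\delta(T^1M;\theta))$, since adding the extra half-open collar and scanning trivialises the ``end'' of $M$.

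**Key steps.** (1) Observe that because $M\smallsetminus c\cong M$ is open, the stabilisation maps $C_k^{\delta,\gamma}(M\smallsetminus c;\theta)\to C_{k+1}^{\delta,\gamma}(M\smallsetminus c;\theta)$ are defined, and by Proposition \ref{pImprovedRangeTwisted} induce homology isomorphisms in a range $*\leq f(k)$ (with $f$ the relevant slope-$1$ or slope-$\tfrac12$ function depending on coefficients and $\dim M$). Hence the natural map $C_k^{\delta,\gamma}(M\smallsetminus c;\theta)\to \hocolim_j C_j^{\delta,\gamma}(M\smallsetminus c;\theta)$ is a homology isomorphism in that same range. (2) Identify the target of scanning in the colimit: the scanning maps $\ssnl{\theta,\gamma}$ are compatible with stabilisation (stabilisation corresponds to post-composition with an inclusion of section spaces that is a homotopy equivalence onto a neighbouring path-component, as in the discussion around \eqref{eq:11} and in the proof of Corollary \ref{cor:inj}), so they assemble to a map $\hocolim_j C_j^{\delta,\gamma}(M\smallsetminus c;\theta)\to \Gamma_c(\psi^\delta(T^1M;\theta))$ landing componentwise. (3) Show this colimit map is a homology equivalence. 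Here I would use the augmented semisimplicial space $\psi^{\delta,\gamma}(M;\theta)_\bullet$ from the excerpt: its realisation is weakly equivalent to $\psi^{\delta,\gamma}(M;\theta)$, which by Lemma \ref{lemma:hesselholt} scans to the full section space $\Gamma(\psi^\delta(T^1M;\theta))$; and a standard ``scanning the end'' / swindle argument (using that $M\smallsetminus c$ has a free end coming from the half-open collar) identifies $\psi^{\delta,\gamma}(M;\theta)$ with the homotopy colimit of the $C_k^{\delta,\gamma}(M;\theta)$ along stabilisation, at least after restricting to the relevant path-components. Combining (1)--(3), and noting that the path-component of $\Gamma_c$ hit by $C_k$ is exactly $\Gamma_c(\psi^\delta(T^1M\smallsetminus c;\theta))_k$, gives the homology isomorphism in the stated range.

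**Main obstacle.** The delicate point is step (3): matching the homotopy colimit of the \emph{compactly supported} section spaces $\Gamma_c(\psi^\delta(T^1M;\theta))_k$ under stabilisation with the \emph{full} section space $\Gamma(\psi^\delta(T^1M;\theta))$ reached by scanning $\psi^{\delta,\gamma}(M;\theta)$. One must be careful that ``bringing in points from the end'' along the collar $c$ corresponds precisely, on section spaces, to relaxing the compact-support condition near that end, and that this identification is compatible with the semisimplicial resolution so that Theorem 6.2 of \cite{GR-W2} and Lemma \ref{lemma:hesselholt} can be applied in tandem. The bookkeeping of path-components (the degree-$k$ component, and how it shifts under stabilisation) also needs care, since the labelled degree is defined via the forgetful map $\xi$ to $\dot TM$ as in \eqref{eq:99}; but the path-connectedness of the fibres of $\theta$, already used in \S\ref{sUpToDim} to show $\Gamma_c(\dot T^\theta M)\to\Gamma_c(\dot TM)$ is a $\pi_0$-bijection, makes this a routine check. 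Once these identifications are in place, the range is simply inherited from Proposition \ref{pImprovedRangeTwisted} via step (1), giving exactly $\srscan[M;\theta](k)\geq$ the stabilisation range, hence the equality $\srscan[M;\theta](k)=\min_{j\geq k}\{\sr[M;\theta](j)\}$ of Theorem \ref{thm:McDufflabelssection4}.
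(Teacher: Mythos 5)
Your outline correctly isolates two of the paper's ingredients (the flag-complex resolution $\psi^{\delta,\gamma}(M;\theta)_\bullet$ with Theorem 6.2 of \cite{GR-W2}, and Hesselholt's scanning equivalence, Lemma \ref{lemma:hesselholt}), and your step (1) is exactly the final comparison the paper also makes between $C_k^{\delta,\gamma}(M;\theta)$ and the colimit under stabilisation. But the core of your step (3) contains a genuine gap, and in fact a false claim: there is no ``standard swindle'' identifying $\psi^{\delta,\gamma}(M;\theta)$ with $\hocolim_k C_k^{\delta,\gamma}(M;\theta)$, not even in homology. Take $M=\bR^n$ with trivial $\theta$: by Lemma \ref{lemma:hesselholt} and contractibility of the base, $\psi^{\delta,\gamma}(\bR^n)\simeq \Gamma(\psi^\delta(T^1\bR^n))\simeq \mathrm{Map}(\bR^n,S^n)\simeq S^n$, whereas the mapping telescope of the stabilisation maps has the homology of $C_\infty(\bR^n)$, i.e.\ of $\Omega^n_0S^n$ (for $n=2$ this already has $H_1\cong\bZ$, while $H_1(S^2)=0$). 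The same discrepancy kills the companion identification you need of the \emph{full} section space $\Gamma(\psi^\delta(T^1M;\theta))$ with the colimit of the compactly supported components $\Gamma_c(\psi^\delta(T^1M;\theta))_k$. The point is that the $\psi$-space lets points escape through one specific collar and scans to the full, unsupported section space; the stable configuration space only sees the compactly supported components, and relating the two is precisely the group-completion step, which cannot be a formal ``scanning the end'' argument and is not a matter of bookkeeping path-components (all the spaces involved are connected).

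What the paper does instead is a relative argument: it stabilises the whole resolution to get $P(M;\theta)_\bullet$, shows that the scanned stabilisation maps are homotopy equivalences (the reflection trick), and then applies the McDuff--Segal homology-fibration criterion \cite[Proposition~4]{McDuff-Segal} to the augmented semi-simplicial map $P(M;\theta)_\bullet\to \psi^{\delta,\gamma}(D^{n-1}\times I;\theta)_\bullet$ -- this is where Proposition \ref{pImprovedRangeTwisted} is used a second time, to verify the criterion's hypotheses in the colimit. Comparing this homology fibration, fibrewise over the cylinder end, with the genuine Serre fibration of section spaces $G(M;\theta)\to\Gamma(\psi^{\delta}(T^1(D^{n-1}\times I);\theta))$, whose base and total space are identified with those of the first via Lemma \ref{lemma:hesselholt} and the resolution, one deduces that the induced map of \emph{fibres} $\colim C^{\delta,\gamma}(M;\theta)\to\colim\Gamma_c(\psi^\delta(T^1M);\theta)$ is a homology equivalence; only then does your step (1) finish the proof. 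Without this homology-fibration (group-completion) mechanism, or some substitute for it, your argument does not go through, since the absolute identifications you propose in step (3) are false.
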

\begin{proof}
We have constructed the following commutative diagram:
\begin{equation}\label{eq:488}
\begin{split}
\xymatrix@C-1.5ex{
\|\psi^{\delta,\gamma}(M;\theta)_\bullet\|\ar[r]\ar[d] & \psi^{\delta,\gamma}(M;\theta) \ar[r]\ar[d]& \Gamma(\Psi^\delta(T^1M;\theta)) \ar[d] \\
\|\psi^{\delta,\gamma}(D^{n-1}\times I;\theta)_\bullet\|\ar[r] & \psi^{\delta,\gamma}(D^{n-1}\times I;\theta) \ar[r]& \Gamma(\psi^{\delta}(T^1(D^{n-1}\times I);\theta)) 
}
\end{split}
\end{equation}
All the horizontal maps are homotopy equivalences, by the previous two lemmas. The rightmost vertical map is a fibration. We now choose another properly embedded ray $D^{n-1}\times (0,1]\cong L\subset \partial M\setminus c$, and take the colimit
\[P(M;\theta)_\bullet := \colim \left(\psi^{\delta,\gamma}(M;\theta)_\bullet\overset{s}{\lra} \psi^{\delta,\gamma}(M;\theta)_\bullet\lra \ldots\right)\]
with respect to the stabilisation maps $s$ that push the configurations outside $L$ and adds a point in $L$ with some prescribed label. The scanning of this operation $\ssnl{\gamma,\theta}(s)$ gives also a sequence of maps between spaces of sections, whose colimit we denote by
\[G(M;\theta) := \colim \left(\Gamma(\psi^{\delta}(T^1M;\theta))\overset{\ssnl{\theta,\gamma}(s)}{\lra} \Gamma(\psi^{\delta}(T^1M;\theta)) \lra \ldots\right)\]
Observe that the maps $\ssnl{\gamma,\theta}(s)$ increase the degree by $1$. We can consider instead the maps that push the source of the scanning map away from $L$ and glue there the \emph{reflection} of the scanning of some point in $L$, together with some prescribed label. This latter map is a homotopy inverse of $\ssnl{\gamma,\theta}(s)$, hence the maps $\ssnl{\gamma,\theta}(s)$ are homotopy equivalences. 

By Proposition \ref{pImprovedRangeTwisted}, it follows that the semi-simplicial map
\[P(M;\theta)_\bullet\lra \psi^{\delta,\gamma}(D^{n-1}\times I;\theta)_\bullet\]
satisfies the hypotheses of \cite[Proposition~4]{McDuff-Segal}, so the realization
\[\|P(M;\theta)_\bullet\|\lra \|\psi^{\delta,\gamma}(D^{n-1}\times I;\theta)_\bullet\|\]
 is a homology fibration. Its fibre over any point is the colimit of the space $C^{\delta,\gamma}(M;\theta)$ with respect to the stabilisation map $s$. The map
\[G(M;\theta)\lra \Gamma(\psi^{\delta,\gamma}(T^1M;\theta))\]
is a Serre fibration (it is a union of Serre fibrations). Its fibre over any point is the colimit of $\Gamma_c(\psi^\delta(T^1M);\theta)$ with respect to the map obtained by scanning $s$:
\begin{equation}\label{eq:489}\begin{gathered}
\xymatrix{
\colim C^{\delta,\gamma}(M;\theta)\ar[rr]\ar[d] && \colim \Gamma_c(\psi^\delta(T^1M);\theta)\ar[d] \\
\|P(M;\theta)_\bullet\|\ar[rr]^\simeq\ar[d]^{a} & & G(M;\theta) \ar[d]^b \\
\|\psi^{\delta,\gamma}(D^{n-1}\times I;\theta)_\bullet\|\ar[rr]^\simeq & & \Gamma(\psi^{\delta}(T^1M;\theta)) 
}
\end{gathered}
\end{equation}

The fibres of \eqref{eq:489} together with the maps to the fibres of \eqref{eq:489} give the following commutative diagram
\[\xymatrix{
C^{\delta,\gamma}(M;\theta)\ar[rr]^{\ssnl{\gamma,\theta}}\ar[d] && \Gamma_c(\psi^\delta(T^1M);\theta) \ar[d] \\
\colim C^{\delta,\gamma}(M;\theta) \ar[rr] && \colim \Gamma_c(\psi^{\delta}(T^1M);\theta).
}\]
The bottom map is a homology equivalence because the horizontal maps in diagram \eqref{eq:488} are homotopy equivalences. The left vertical map is a homology equivalence in the stable range of Proposition \ref{pImprovedRangeTwisted}. The right vertical map is a homotopy equivalence. As a consequence, the upper horizontal map is a homology equivalence in the range provided by Proposition \ref{pImprovedRangeTwisted}
\end{proof}
The following is proved in the same way as Theorem 1.1 at the bottom of page 34 in \cite{McDuff}. 
\begin{corollary}\label{cor:blabla} If $M$ is a manifold with empty boundary, then the non-linear scanning map
\[\ssnl{\theta,\gamma}\colon C^{\delta,\gamma}(M;\theta) \lra \Gamma_c(\psi^\delta(T^1M;\theta))\]
is a homology isomorphism in the range in which the stabilisation map is a homology isomorphism. 
\end{corollary}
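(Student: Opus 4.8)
The plan is to reduce everything to the case already in hand. If $M$ is non-compact the assertion is exactly the proposition proved just above, applied with any properly embedded ray $c$ (since $M\smallsetminus c\cong M$), so I only need to treat $M$ closed, and I may assume $M$ connected, treating components separately otherwise. Following McDuff, I would fix a point $0\in M$, an open coordinate ball $U\cong\bR^n$ around it, and set $M^\star=M\smallsetminus\point$ and $\theta^\star=\theta|_{M\smallsetminus\point}$, so that $M^\star$ is open and diffeomorphic to $M$ with an open ball removed. The key structural fact is that the non-linear scanning map is \emph{local}: it is natural for inclusions of open submanifolds and commutes with extension of a section by the value $\infty$, so it is compatible with ``cutting'' both $C^{\delta,\gamma}(M;\theta)$ and $\Gamma_c(\psi^\delta(T^1M;\theta))$ into a contribution over $U$ and a contribution over $M^\star$. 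This is precisely the mechanism used by McDuff at the bottom of page~34 of \cite{McDuff} to pass from open to closed manifolds.

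Concretely I would produce, on the two sides of the scanning map, a pair of compatible fibration sequences (equivalently, cofibre sequences on homology) relative to the disc $U$, and intertwine them by scanning. On the section side the relevant sequence comes from restricting a section over the disc $U$, with the $\infty$-boundary condition carried by the scanning model, and has fibre $\Gamma_c(\psi^\delta(T^1M^\star;\theta^\star))$, total space $\Gamma_c(\psi^\delta(T^1M;\theta))$, and base the scanning target over $U\cong\bR^n$. On the configuration side I would run the argument of the proposition above essentially verbatim, with the half-open collar $D^{n-1}\times I$ replaced by the disc $U$: the semi-simplicial resolution together with \cite[Proposition~4]{McDuff-Segal} yields a homology fibration whose fibre is the colimit along stabilisation of $C^{\delta,\gamma}(M^\star;\theta^\star)$ (exactly as in the proof of the proposition above), whose total space is $C^{\delta,\gamma}(M;\theta)$, and whose base is the scanning-topology configuration space of $U$ — which by Lemma \ref{lemma:hesselholt} agrees, under scanning, with the section-side base. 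On the base, scanning for $U\cong\bR^n$ is a homology isomorphism (Lemma \ref{lemma:hesselholt}, or the open case applied to $\bR^n$); on the fibre, scanning for the \emph{open} manifold $M^\star$ is a homology isomorphism in the range $*\leq\sr[M^\star;\theta^\star](k)$ by the proposition above. Comparing the two relative Serre spectral sequences — equivalently chasing the five lemma through the two long exact sequences on homology — then forces scanning for $M$ to be a homology isomorphism in that same range, which is exactly the conclusion of the corollary and supplies the inequality $\srscan[M;\theta]\geq\srscan[M\smallsetminus\point;\theta^\star]$ needed in Theorem \ref{thm:McDufflabelssection4}.

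The part I expect to be genuinely delicate is setting up these two fibration sequences and checking that scanning gives an honest map between them, rather than the closing diagram chase. On the section side one must verify that restriction over $U$ is a (Serre) fibration with the claimed fibre, which is the standard evaluation/cofibration argument for section spaces of a bundle over a manifold glued from two pieces. On the configuration side the subtle point is the homology-fibration property fed into \cite[Proposition~4]{McDuff-Segal}, whose hypothesis is a homological-stability statement for the fibre $C^{\delta,\gamma}(M^\star;\theta^\star)$; since $M^\star$ is open this is supplied by Proposition \ref{pImprovedRangeTwisted}, exactly as in the proof of the proposition above. Once these compatibilities are in place, the conclusion follows just as in \cite{McDuff}. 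As an alternative to the fibration argument one may instead compare, via the five lemma, the long exact sequence of Lemma \ref{lCofibreSequence} in its $\theta$-labelled form with the analogous long exact sequence for the section spaces; the ingredients are the same.
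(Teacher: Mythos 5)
Your overall strategy is the one the paper intends: the paper's ``proof'' of this corollary is literally a citation of McDuff's argument at the bottom of page 34 of \cite{McDuff}, which is exactly your disc-comparison -- a Serre fibration of section spaces given by restriction over an embedded disc $U$, a corresponding ``restriction to $U$'' map on the configuration side which is only a homology fibration, the scanning maps intertwining the two, Lemma \ref{lemma:hesselholt} (or the Euclidean case) on the base, the already-proved open case (via Proposition \ref{pImprovedRangeTwisted}) on the fibre $M^\star=M\smallsetminus\point$, and a spectral-sequence/five-lemma comparison in the stable range. The reduction of the non-compact case to the preceding proposition is also correct.

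The one step that does not work as you wrote it is the configuration-side fibration. You cannot run the proof of the preceding proposition ``essentially verbatim'' with the collar $D^{n-1}\times I$ replaced by $U$: that proof applies \cite[Proposition~4]{McDuff-Segal} only after passing to the colimit along the stabilisation map (both on the fibre \emph{and} on the total space, giving $\|P(M;\theta)_\bullet\|$), and for closed $M$ no stabilisation map on $C^{\delta,\gamma}(M;\theta)$ exists -- this is the raison d'\^{e}tre of the whole paper. Your statement that the homology fibration has total space $C^{\delta,\gamma}(M;\theta)$ but fibre the \emph{colimit} of $C^{\delta,\gamma}(M^\star;\theta^\star)$ is therefore not coherent: the actual point-preimages of the restriction map are spaces of configurations in $M\smallsetminus \bar U$ with finitely many points, and the comparison maps between preimages induced by paths in the base are stabilisation maps for the open manifold $M^\star$, which are homology isomorphisms only in the range of Proposition \ref{pImprovedRangeTwisted}, not in all degrees; so \cite[Proposition~4]{McDuff-Segal} does not apply as stated. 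The repair is standard and is what McDuff actually does: one shows the restriction map is a homology fibration \emph{up to the stable degree} (or, equivalently, compares the Serre spectral sequence of the section-space fibration with the Leray-type spectral sequence of the configuration-side map, using that the fibre comparison maps are isomorphisms in the stable range), and the conclusion then holds in that range, which is all the corollary claims. Relatedly, your proposed ``alternative'' via the labelled version of Lemma \ref{lCofibreSequence} is not a drop-in five-lemma argument, since on the section side the analogous sequence is a fibration sequence (long exact in homotopy, not homology); the fibration/spectral-sequence comparison is the route to take.
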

We showed in Section \S\ref{sUpToDim} that the triangle
\begin{equation}
\centering
\begin{split}
\begin{tikzpicture}
[x=1mm,y=1mm]
\node (tl) at (0,15) {$C_k^{\delta,\gamma}(M;\theta)$};
\node (tr) at (40,15) {$\Gamma_c(\psi^\delta (T^1 M;\theta))$};
\node (br) at (40,0) {$\Gamma_c(\dot{T}^{1,\theta} M).$};
\draw[->] (tl) to node[above,font=\small]{$\ssnl{\theta,\gamma}$} (tr);
\draw[->] (tl) to node[anchor=north east,font=\small]{$\ssl{}$} (br);
\draw[->] (tr) to[out=300,in=60] node[right,font=\small]{$h$} (br);
\draw[->] (br) to[out=120,in=240] node[left,font=\small]{$i$} (tr);
\node at (40,7.5) {$\simeq$};
\end{tikzpicture}
\end{split}
\end{equation}
commutes, hence from Corollary \ref{cor:blabla} it follows that:
\begin{theorem}[McDuff's Theorem with labels]\label{thm:McDufflabels} The linear scanning map with labels
\[\ssl{\delta,\theta}\colon C_k(M;\theta)\lra \Gamma_c(\dot{T}^\theta(M))_k\]
induces an isomorphism on homology groups in the stable range provided by Proposition \ref{pImprovedRangeTwisted}.
\end{theorem}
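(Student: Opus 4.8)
The plan is to deduce the theorem formally from Corollary~\ref{cor:blabla} together with the commuting triangle recalled immediately above, by matching up the various models for the configuration space and for the target section space; no new geometric input is needed.

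First I would observe that the non-linear scanning map $\ssnl{\theta,\gamma}$ preserves the decomposition of $C^{\delta,\gamma}(M;\theta)$ and of $\Gamma_c(\psi^\delta(T^1M;\theta))$ into the path-components indexed by the degree $k$, and that Corollary~\ref{cor:blabla} gives, component by component, that $\ssnl{\theta,\gamma}\colon C_k^{\delta,\gamma}(M;\theta)\to\Gamma_c(\psi^\delta(T^1M;\theta))_k$ is a homology isomorphism in the range in which the stabilisation map is one, i.e.\ the range of Proposition~\ref{pImprovedRangeTwisted}. Next I would use that the fibrewise maps $i\colon\dot{T}^{1,\theta}M\longleftrightarrow\psi^\delta(T^1M;\theta)\colon h$ are mutually fibrewise homotopy inverse (by the argument on page~\pageref{ssnl}), so that the induced maps on compactly supported section spaces are mutually homotopy inverse and in particular restrict to homotopy equivalences on each degree-$k$ component. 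Since the triangle commutes we have $\ssl{}=h\circ\ssnl{\theta,\gamma}$ on $C_k^{\delta,\gamma}(M;\theta)$, and therefore $\ssl{}\colon C_k^{\delta,\gamma}(M;\theta)\to\Gamma_c(\dot{T}^{1,\theta}M)_k$ is a homology isomorphism in the same range.

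It then remains to identify the source and target of this map with the ones in the statement. For the source, Lemma~\ref{lTwoEquivalences} provides weak homotopy equivalences $C_k^{\delta,\gamma}(M;\theta)\to C_k^\delta(M;\theta)\to C_k(M;\theta)$ compatible with scanning, so $C_k^{\delta,\gamma}(M;\theta)\simeq C_k(M;\theta)$. For the target, the open unit disc bundle $T^1M$ is fibrewise homeomorphic to $TM$, hence the fibrewise Thom constructions $\dot{T}^{1,\theta}M$ and $\dot{T}^\theta M$ are isomorphic as bundles over $M$ and $\Gamma_c(\dot{T}^{1,\theta}M)_k\cong\Gamma_c(\dot{T}^\theta M)_k$; here one only needs to check that the degree, defined via the map $\xi$ of \eqref{eq:99}, is transported correctly under this identification, so that components carrying the same label $k$ correspond. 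Putting these identifications together shows that $\ssl{\delta,\theta}\colon C_k(M;\theta)\to\Gamma_c(\dot{T}^\theta M)_k$ induces an isomorphism on homology in the stable range of Proposition~\ref{pImprovedRangeTwisted}.

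The genuine content has already been done: it lies in Corollary~\ref{cor:blabla} and the homology-fibration argument preceding it. The present argument is essentially bookkeeping, and the only point where I would be careful is to make sure that every weak equivalence and every fibrewise homotopy equivalence above respects the indexing by degree, so that the conclusion is obtained on each $C_k(M;\theta)$ separately and in the correct range, rather than only after passing to the colimit over $k$. I do not expect any obstacle beyond this.
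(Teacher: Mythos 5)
Your proposal is correct and follows essentially the same route as the paper: the paper's own proof is precisely to invoke Corollary \ref{cor:blabla} together with the commuting triangle relating $\ssnl{\theta,\gamma}$ and $\ssl{}$ via the mutually inverse fibrewise equivalences $i$ and $h$, with the identifications of the models $C_k^{\delta,\gamma}(M;\theta)\simeq C_k(M;\theta)$ (Lemma \ref{lTwoEquivalences}) and $\dot{T}^{1,\theta}M\cong\dot{T}^{\theta}M$ left implicit. Your extra care about compatibility with the degree-$k$ components is sound bookkeeping but not a departure from the paper's argument.
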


\bibliographystyle{apalike}
\bibliography{hs-for-config-spaces-on-closed-manifolds}
\end{document}